\theoremstyle{definition}
\def\ns{\footnotesize \it}
\theoremstyle{plain}
\newtheorem{theorem}{Theorem}[section]
\newtheorem{proposition}[theorem]{Proposition}
\newtheorem{lemma}[theorem]{Lemma}
\newtheorem{rmk}[theorem]{Remark}
\theoremstyle{definition}
\newtheorem{definition}[theorem]{Definition}
\newtheorem{example}[theorem]{Example}
\newtheorem{thm}{Theorem}
\newtheorem{transl}[theorem]{Note of Translators}
\newcommand{\subtitle}[1]{%
  \posttitle{%
    \par\end{center}
    \begin{center}\large#1\end{center}
    \vskip0.5em}%
    }
\newcommand\blfootnote[1]{%
  \begingroup
  \renewcommand\thefootnote{}\footnote{#1}%
  \addtocounter{footnote}{-1}%
  \endgroup
}
\def\Pgl{\mathrm{PGL}}
\def\Gl{\mathrm{GL}}
\def\Grass{\mathrm{Grass}}
\def\m{\mathfrak{m}}
\def\Hilb{\mathrm{Hilb}}
\def\Hom{\mathrm{Hom}}
\def\Jac{\mathrm{Jac}}
\def\Gor{\mathrm{Gor}}
\def\GL{\mathrm{GL}}
\def\PGL{\mathrm{PGL}}
\def\Stab{\mathrm{Stab}}
\def\SL{\mathrm{SL}}
\def\mod{\mathrm{mod}}
\newcommand{\V}{\mathcal{V}}
\title{Nets of Conics and associated Artinian algebras of length 7}
\author{
	Nancy Abdallah\\[0.05in]{\ns University of Bor\r{a}s, Bor\r{a}s, Sweden.}\\[0.2in]
	Jacques Emsalem\\[0.05in]{\ns Paris, France.}\\[0.2in]
Anthony Iarrobino\\[.05in]
{\ns Department of Mathematics, Northeastern University, Boston, MA 02115,
USA.
}}
\begin{document}
\date{November 22, 2021; revised August 8, 2022}
\maketitle
\blfootnote{
\textbf{Keywords}: nets of conics, Artinian algebra, deformation, Hilbert function, isomorphism class, normal form, parametrization, pencils of conics, planar conics, cubic curve, Hessian, smoothable, ternary cubic.\par
 \textbf{2021 Mathematics Subject Classification}: Primary: 14C21;  Secondary: 14-02, 13E10.}\par
\newpage
\begin{abstract}
We classify the orbits of nets of conics under the action of the projective linear group and we determine the specializations of these orbits, using geometric and algebraic methods. We study related geometric questions, as the parametrization of planar cubics. We show that Artinian algebras of Hilbert function $ H=(1,3,3,0)$ determined by nets, can be smoothed - deformed to a direct sum of fields; and that algebras of Hilbert function $H=(1,r,2,0)$, determined by pencils of quadrics, can also be smoothed. This portion is a translation and update of a 1977 version, a typescript  by the second two authors that was distributed as a preprint of University of Paris VII.  In a new Historical Appendix A we describe related work prior to 1977. In an Update Appendix B we survey some developments since 1977 concerning nets of conics, related geometry, and deformations of Artinian algebras of small length.
\end{abstract}
 \tableofcontents\noindent
\newpage
\addcontentsline{toc}{subsection}{Translators' Note and 
Acknowledgment}

\subsection*{Translators' Note and Acknowlegment} The original ``R\'{e}seaux de Coniques et alg\`{e}bres de longueur sept associ\'{e}es'', was typed with hand-drawn diagrams, and distributed in 1977 as a preprint of Universit\'{e} de Paris VII, where Jacques Emsalem was Assistant Titulaire. It was drafted in 1975-1976 during a one year visit to \'{E}cole Poytechnique, then in Paris, by Tony Iarrobino under an exchange program of Centre Nationale de Recherche Scientifique (CNRS) and the National Science Foundation (NSF). They were grateful to L\^{e} D\~{u}ng Trang, Monique Lejeune-Jalabert, and Bernard Teissier, who sponsored this visit. In particular L\^{e} realized that Jacques and Tony were working on the same problem from different viewpoints, and this led to their collaboration. \par
The original goal of this classification of nets of planar conics was to establish the classification of the related graded commutative Artinian algebras $A$ of length~7, having Hilbert function $H(A)=(1,3,3)$. Jacques Emsalem had independently determined the classification up to isomorphism of those having length at most 6, and understood that $H(A)=(1,3,3)$ or $(1,4,2)$ were the smallest  with moduli. See the historical note for further comment (Appendix~A).
\par
We should like to thank Ivan Cheltsov for the encouragement to write this translation. He was interested in a portion of the classification, relevant to the divisor $X$ in $\mathbb{P}^2\times \mathbb{P}^2$ of degree $ (1,2)$ (which is the same thing as a net of conics),  and showed independently that if $X$ is smooth, then $X $ is in one of the three classes in the first line of Table \ref{table1}. Here $X$ is a Fano threefold No. 2-24 in \cite{Fan} and the result helps determine which $X$ in the family are Kahler-Einstein \cite{Ch1,AC-V}. We should like to thank also Sandro Verra, who had a copy of the preprint, and informed Ivan of it - and we used the pdf file Sandro sent!  We thank Aldo Conca, Steve Kleiman, Hal Schenck, Igor Dolgachev, and Joachim Jelisiejew for helpful comments. We appreciate the detailed comments of a referee.\par
Nancy and Tony made the first draft of the translation, then corrected with Jacques. In making the translation, we have in places updated or clarified the wording; we have introduced main results as Theorems in the Introduction. We have corrected some mistakes, mostly typos in the original (in particular the top right entry of Table \ref{table1}).  We have added a few translator notes when we felt it would help the reader. The many figures in the original were hand-drawn by Jacques; we have used tikz and made new figures. 
\par
We have added a Historical Appendix A to give some further references to work earlier than 1977 on nets of planar conics and Artinian algebras having small length.\par
We have added an Appendix B where we discuss selectively some developments since 1977 related to nets of conics, small-length Artinian algebras, and possible directions for future research.\par
We have retained the original seven references, but added a separate Bibliography, 2021, to include further references, in particular for this note and the Appendices.\par \qquad\qquad {\it Nancy Abdallah, Jacques Emsalem\footnote{{Jacques Emsalem, friend and colleague died on July 12, 2022, after submission of the completed MS. Nancy and Tony made final corrections in response to referee comments. We have greatly appreciated the opportunity to work together with Jacques -N.A and A.I.}}, and Tony Iarrobino}.

\newpage
\addcontentsline{toc}{subsection}{Notation.}
\subsection*{Notation}
Here ${\sf k}$ is an algebraically closed field of characteristic zero.\footnote{{\it Note of Translators:} The classification results for pencils of ternary conics, nets of ternary conics, ternary cubics, and length 7 algebras, except those involving Hessians,  extend to algebraically closed fields of characteristic $p\not=2$ or $3$. For duality results one needs to replace differentiation by the contraction pairing from $S={\sf k}[x,y,z]$ to the divided power ring related to $R$ \cite{Em}. See page \pageref{A1thm}.} We let $R={\sf k}[[X,Y,Z]]$ the formal power series over $\sf k$. Here $R_i$ denotes the vector subspace of $R$ constituted by $0$ and the homogeneous polynomials of degree $i$.\par
One denotes by $P$ the projective plane whose generic point admits $(X,Y,Z)$ as system of homogeneous coordinates. If $V$ is the dual vector space to $R_1$ it amounts to the same to say that $P$ is the projective space associated to $V$.\par
As $V$ is a vector space of dimension 3, we denote by $\Gl(3)$ the group of linear automorphisms of $V$. Thus, $\Gl(3)$ has a canonical action on each $R_i$. We denote by $Pgl(3)$ the quotient $\Gl(3)/{\sf k}^\ast$ of $\Gl(3)$ by the group of non-null homotheties of $V$ with center the origin (scalings). 
One denotes by $P^\ast$ the projective space associated to the vector space $R_1$. It is what we classically call the dual of $P$ and it can be canonically identified with the set of lines of $P$.\par
More generally, one may consider for each $i$ the projective space associated to $R_i$, which is canonically identified with the set of projective curves of $\mathbb P^2$ having degree $i$. Here, we consider primarily the projective space associated to $R_2$, which we will denote $C_5 $ ($C$ for conic, the index $5$ indicating the dimension of the space).  If $F\in R_i$ we will denote by $(F)$ the corresponding curve, which we will term indifferently the curve of equation $F=0$ or the curve $(F)$.\par
The group $Pgl(3)$ has a natural action on $P,P^\ast,C_5$, and, more generally on the projective space associated to $R_i$ for each $i$, and, more generally still, on the $k$-th Grassmannian $\Grass(k,R_i)$ of $R_i$. \par
One terms \emph{pencil of conics}, respectively \emph{net of conics} a line (respectively, a plane) of $C_5$. More generally, one terms \emph{pencil of hypersurfaces of degree $n$} a line in the projective space associated to $R_n$.
\newpage
\section{Introduction}\label{introsec}
We let ${\sf k}$ be an algebraically closed field of characteristic zero, recall $R={\sf k}[[X,Y,Z]]$ and $R_i$ the vector space of degree-$i$ forms in $R$. The group $\Gl(3)$ acts on $R_i$ and its quotient $Pgl(3)=\Gl(3)/{\sf k}^\ast$ acts on the set of vector subspaces of  $R_i$: in particular if $V=\langle v_1,v_2,v_3\rangle$ is a subspace of $R_2$ of dimension 3 (the case that interests us) and $\sigma\in\Gl(3)$ is a representative of $\sigma'\in Pgl(3)$ then $ \sigma'(V)=\langle\sigma(v_1),\sigma(v_2),\sigma(v_3)\rangle$. Here, we classify the orbits of the dimension-three subspaces under this natural action of $Pgl(3)$. It is the same as classifying up to isomorphism the ``nets of projective plane conics", or, also, to classify up to isomorphism the commutative local ${\sf k}$-algebras $A$ of Hilbert function\footnote{The \emph{Hilbert function} of a local algebra is the sequence $(n_i)$, where $n_i=\dim \m^i/\m^{i+1}, \m$ being the maximal ideal.  {\it Note of translators}: the original article used ``type'' meaning Hilbert function; we no longer do so, as we reserve ``type'' for the sense in Definition \ref{typedef}).} $(1,3,3),$ so of the form $A={\sf k}[X,Y,Z]/(V,(X,Y,Z)^3)$.
We describe also the closures of the orbits, as subvarieties of the Grassmannian $\Grass(3,R_2)$, parametrizing the vector spaces in $R_2$ having dimension three. This is the problem of ``specialization of orbits".\par
The Artinian algebras that we have just considered have dimension $7$ as vector spaces over ${\sf k}$. This is the smallest dimension for which there appear continuous families of orbits.\footnote{For the algebras of dimension at most $6$, their classification has been obtained by different persons. The most interesting case is treated implicitly here in \S 3. The other cases are simple enough. [{\it Note of translators}: see Appendix A].}\label{smalllength} That such a family occurs in the case of Hilbert function $(1,3,3)$ Artinian algebras can be readily seen: the variety $\Grass(3,R_2)$ has dimension $3(6-3)=9$, but $Pgl(3)$ has dimension $3^2-1=8$. One expects, then, to find a one-parameter family of orbits, that we will describe. An analogous calculation (section 8) shows that there is also a one-parameter family of isomorphism classes of algebras of Hilbert function $(1,4,2)$. We have here the two Hilbert functions for length-$7$ local algebras that are isomorphic to their associated graded algebras, and that have a continuous family of orbits.\par
Returning to our main goal, the algebras of Hilbert function $(1,3,3)$, the one-parameter family is obtained as follows: let $\phi$ be a general enough cubic form in $R$; then the vector space $J_\phi$
spanned by the first-order partial derivatives of $\phi$, has dimension $3$, and we will see that ``$\phi\cong \phi'$ (mod $Pgl(3)$)'' is equivalent to $J_\phi=J_\phi'$ (mod $Pgl(3))$". Thus, there is a family of vector spaces $V=J_\phi$ that, like the cubics $\phi$ are parametrized up to isomorphism by the ${\sf j}$-invariant of $\phi$.
We denote by $\Gamma(V)$ the cubic parametrizing the decomposable elements of the vector space $V$: those elements $v$ that can be written as a product of linear forms. We will also denote by $\pi$ the plane of $V$, and by $\Gamma_\pi=\Gamma(V)$ the cubic.\par
Our first main result concerns the nets of conics corresponding to these smooth cubics $\Gamma(V)$ and their
$\sf j$-constant specializations, and as well the nets for which $\Gamma(V)$ is a cubic with a node having distinct tangents. An \emph{elementary} specialization will be from $A$ to $B$ where there is no intermediate specialization $C$ not equal to $A$ or $B$. We will refer to the orbits in Table \ref{table1} by their dimension and
their position from the left, thus \#7b refers to the second orbit in the dimension seven row.
\begin{thm}(Theorem \ref{mainthm}A, \S 6.5 (2), Proposition \ref{elemprop}) The nets of conics corresponding to smooth $\Gamma(V)$ are Jacobian nets $V=J_\phi$ of smooth cubics $\phi$, whose isomorphism classes are determined by the $\sf j$ invariant of $\phi$.  There are elementary $\sf j$-constant specializations of the net $J_\phi$ for each permissible $\sf j$ to the net \#7b $\langle X^2+YZ,XY,Z^2\rangle$ corresponding to a cuspidal cubic $\Gamma(V)$. There are likewise $\sf j$-constant specializations for permissible $\sf j$ to each of the specializations \#6b,6c, 5a,5b,4,2a,2b of \#7b. The specializations of the two nets \#8a,8c whose $\Gamma(V)$ is a cubic having a node with two distinct tangents are as shown in Table \ref{table1}.
\end{thm}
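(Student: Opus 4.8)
The plan is to reduce the theorem to two essentially independent classification problems, one geometric and one about explicit specializations, and then to carry out each via normal forms. First I would establish the correspondence between a net $V=\langle v_1,v_2,v_3\rangle\subset R_2$ and the cubic $\Gamma(V)$ cutting out its decomposable (rank $\le 2$) members: writing a general element as $\lambda_1v_1+\lambda_2v_2+\lambda_3v_3$ and imposing that the associated $3\times 3$ symmetric matrix be singular gives a determinantal cubic in $(\lambda_1,\lambda_2,\lambda_3)$, which is $\Gamma_\pi$ on the plane $\pi$. The key structural input is the claim, to be proved earlier in the paper, that for a general net the map $\phi\mapsto J_\phi=\langle \phi_X,\phi_Y,\phi_Z\rangle$ is well defined with $\dim J_\phi=3$, and that $\Gamma(J_\phi)$ is exactly the Hessian-type cubic attached to $\phi$. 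I would then invoke the stated equivalence ``$\phi\cong\phi'\pmod{Pgl(3)}$ iff $J_\phi=J_{\phi'}\pmod{Pgl(3)}$'' to transfer the classical classification of smooth plane cubics by the $\sf j$-invariant directly onto the Jacobian nets; this gives the first sentence, namely that smooth $\Gamma(V)$ forces $V=J_\phi$ and that isomorphism classes are recorded by $\sf j$.

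Next I would treat the specializations. The heart is to exhibit, for every permissible value of $\sf j$, an explicit one-parameter degeneration of the net $J_\phi$ landing on the cuspidal net $\#7b=\langle X^2+YZ,\,XY,\,Z^2\rangle$ while holding $\sf j$ fixed. Concretely I would choose a Weierstrass-type normal form for $\phi$ with the given $\sf j$, compute $J_\phi$ in coordinates, and produce a family $g_t\in Pgl(3)$ (or an explicit curve in $\Grass(3,R_2)$) whose generic member is $Pgl(3)$-equivalent to $J_\phi$ and whose limit as $t\to 0$ is $\#7b$; the fact that $\Gamma$ of the limit is cuspidal is checked by degenerating the determinantal cubic. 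Because $\sf j$ is constant along the family, this realizes the asserted $\sf j$-constant elementary specialization, and I would verify elementarity by the orbit-dimension count (the net row is dimension $7$, its target strata have strictly smaller dimension and nothing intervenes). The chain to $\#6b,6c,5a,5b,4,2a,2b$ then follows by composing $\#7b$'s own specializations, which are independent of $\sf j$, so the same $\sf j$-constant families push forward.

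For the last sentence I would handle the two nodal-cubic nets $\#8a,\#8c$ separately: here $\Gamma(V)$ is a nodal cubic with two distinct tangents, the net is \emph{not} Jacobian for a smooth cubic, and there is no $\sf j$ to preserve. I would compute the full list of orbit closures containing $\#8a$ and $\#8c$ by degenerating their normal forms coordinate-by-coordinate and reading off which strata of Table~\ref{table1} appear, using the incidence relations already assembled in the paper. The main obstacle I expect is the specialization half rather than the geometric classification: constructing $Pgl(3)$-families that simultaneously (i) stay in the orbit of $J_\phi$ generically, (ii) keep $\sf j$ exactly constant, and (iii) limit onto $\#7b$ is delicate, because a naive one-parameter subgroup tends to change $\sf j$ or to overshoot into a smaller orbit. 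The way I would control this is to work on the moduli side first, parametrize the Jacobian nets by $\sf j$ via the elliptic normal form, and only then degenerate the cubic $\phi$ to its cuspidal limit in a manner that visibly fixes $\sf j$; tracking that the induced limit on $J_\phi$ is genuinely $\#7b$ (and not a neighboring orbit) is the step that needs the most careful bookkeeping with the determinantal description of $\Gamma$ and a dimension count to rule out intermediate orbits.
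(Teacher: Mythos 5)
Your overall architecture does track the paper's: part A via the Jacobian correspondence and the ${\sf j}$-invariant, the specializations via explicit one-parameter families of nets with elementarity read off from the dimension count, and \#8a, \#8c via explicit degenerations of normal forms plus semicontinuity. But there are two genuine gaps. The smaller one is in part A: knowing that $\phi\mapsto J_\phi$ is well defined for general $\phi$, that $\Gamma(J_\phi)$ is the Hessian-type cubic, and that isomorphism of cubics matches isomorphism of their Jacobian nets does \emph{not} give the first sentence, namely that \emph{every} net with smooth $\Gamma(V)$ is of the form $J_\phi$. That surjectivity is exactly what Theorem \ref{mainthm}A asserts, so it cannot be deferred to ``earlier in the paper''; the paper earns it by first putting any net with smooth discriminant into the normal form $\langle YZ-X^2,\,(Y-Z)(Y-\alpha Z),\,XY\rangle$, $\alpha\neq 0,1$ (the geometric argument of \S 4.1.1 with the inflection point and tangents), and then solving the explicit rank-$8$ linear system \eqref{partialeq} to produce the unique cubic $F$ with $J_F=V$ (\S 4.1.2).

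The more serious gap is your mechanism for the key elementary ${\sf j}$-constant specialization \#8b$_{\sf j}\to$\#7b: ``degenerate the cubic $\phi$ to its cuspidal limit \dots tracking that the induced limit on $J_\phi$ is genuinely \#7b.'' This cannot work. The Jacobian map is regular at the cuspidal cubic (its three partials are independent), so if $\phi_t$ degenerates to a cuspidal $\phi_0$ then $J_{\phi_t}\to J_{\phi_0}=\langle Y^2,XY,Z^2\rangle$, which is the orbit \#5b, not \#7b; indeed \#7b is not the Jacobian net of \emph{any} cubic (see \S\ref{polarsec} and Table \ref{polartable}). Your construction therefore produces only the composite specialization \#8b$\to$\#5b and skips precisely the elementary step the theorem claims. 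The paper's device is to degenerate the net directly in $\Grass(3,R_2)$ rather than through the cubic: starting from the \S 4.1.1 normal form it takes the family $\langle ZX,\,YZ-X^2,\,(Y-bZ)(Y-\mu^2bZ)\rangle$, which for every $b\neq 0$ lies in a single orbit \#8b$_{\sf j}$ (rescaling absorbs $b$ and leaves the parameter $\mu^2$, hence ${\sf j}$, unchanged), and whose $b=0$ limit is $\langle ZX,\,YZ-X^2,\,Y^2\rangle$, i.e.\ \#7b. The fact that this limit net is not a Jacobian net is exactly why a purely moduli-side degeneration of $\phi$ cannot reach it, and is the point your proposal would run aground on.
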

We will use two methods to study other orbits, not corresponding to smooth cubics: the geometric method is more useful. Consider for each vector space $V=\langle v_1,v_2,v_3\rangle$ in $R_2$, the isomorphism class of the cubic $\Gamma(V)$ with equation $f(a_1,a_2,a_3)=0$ which characterizes the singular conics: 	that is, $\Gamma(V)$ parametrizes the elements ${v=a_1v_1+a_2v_2+a_3v_3\in V}$ that are decomposable: $\exists \ell,b\in R_1 \mid v=\ell \cdot b$. That is, if $P(V)$ is the projective plane associated to $V$, we let $\Gamma(V)=P(V)\cap S_4$ where $S_4$ is the cubic hypersurface of degenerate conics (that can be factored).
The geometric method takes into account also the set $D_2$ of ``double lines" (singular conics whose equation is the square of a linear form). We will show, for those orbits for which $\Gamma(V)$ is singular,
\begin{thm}\label{1thm} \begin{enumerate}[(1)] 
\item (Theorem \ref{mainthm}B) Each of the orbits $Pgl(3)\circ V$ for which $\Gamma(V)$ is singular is completely determined by the isomorphism classes of triples of projective algebraic sets $$[D_2\cap P(V),\Gamma(V)=S_4\cap P(V), P(V)],$$ viewed as schemes. 
\item (\S \ref{dimsec}, \S 6.6, Proposition \ref{elemprop}) The orbits, their dimensions, and the specializations of these orbits are as shown in Table~\ref{table1}.
\item (\S 6.1, Proposition \ref{dualprop}, and Note of Translators \ref{dualnote}). The equivalence classes and their specializations satisfy a left right symmetry, that arises from a duality between an orbit and its dual.\par
\end{enumerate}
\end{thm}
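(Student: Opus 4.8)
The plan is to organize everything around the two $\PGL(3)$-invariant subvarieties of $C_5=\mathbb{P}(R_2)$: the discriminant cubic hypersurface $S_4$ of degenerate conics (symmetric $3\times 3$ matrices of rank $\le 2$) and its singular locus $D_2$, the Veronese surface of double lines (rank $\le 1$). Because congruence preserves rank, both $S_4$ and $D_2$ are stable under the image of $\PGL(3)$ in $\mathrm{Aut}(C_5)=\mathrm{PGL}_6$; hence for any $\sigma\in\PGL(3)$ the plane $P(\sigma V)$ meets $S_4$ and $D_2$ in the $\sigma$-images of $\Gamma(V)=S_4\cap P(V)$ and $D_2\cap P(V)$. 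This gives the easy direction of (1): the isomorphism type of the triple $[D_2\cap P(V),\Gamma(V),P(V)]$ is an invariant of the orbit $\PGL(3)\circ V$.

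For the converse in (1) I would argue that this invariant is complete. The conceptual input is the classical rigidity of the Veronese surface: the subgroup of $\mathrm{PGL}_6$ stabilizing $D_2$ is exactly the image of $\PGL(3)$ (which embeds via $\mathrm{Sym}^2$). So if $\phi\colon P(V)\to P(V')$ is a projective isomorphism of planes matching the two triples, the task is to promote $\phi$ to an element of $\Stab(D_2)=\PGL(3)$ carrying $P(V)$ to $P(V')$. I expect this promotion to be the main obstacle, and I would resolve it by a finite case analysis: the singular plane cubics $\Gamma(V)$ fall into the familiar list (nodal, cuspidal, conic-plus-line, triangle, concurrent lines, line-plus-double-line, triple line), and on each stratum the scheme $D_2\cap P(V)$ pins down which points of $\Gamma(V)$ are double lines. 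For each combinatorial type I would exhibit an explicit normal form $V$ (the generators recorded in Table \ref{table1}) and show every net of that type is $\PGL(3)$-equivalent to it, by first normalizing a smooth member of the net to a fixed conic and then using the residual stabilizer (an orthogonal group) to fix the remaining generators. Verifying that the list of types is exhaustive and that distinct orbits never share a type is the bulk of the work.

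For (2) the dimensions are computed orbit by orbit from $\dim \PGL(3)\circ V = 8-\dim \Stab_{\PGL(3)}(V)$, so it suffices to determine for each normal form the subgroup of $\PGL(3)$ preserving the plane $V$; this is a direct Lie-algebra computation with the explicit generators. For the specialization poset I would establish each claimed degeneration $O'\subset\overline{O}$ by writing down a one-parameter family $V_t\in\Grass(3,R_2)$ with $V_t$ in $O$ for $t\ne 0$ and $V_0\in O'$ (letting $\Gamma(V_t)$ acquire a worse singularity, or letting a smooth member degenerate to a double line), and I would rule out all other specializations by semicontinuity: the orbit dimension must strictly drop in a proper closure, the singularity type of $\Gamma$ can only worsen, and the length of the scheme $D_2\cap P(V)$ behaves semicontinuously. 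Proposition \ref{elemprop} then records the elementary steps, i.e.\ the covering relations of this poset.

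For (3) I would make the duality explicit through the apolarity (contraction) pairing between $R_2$ and $S_2$ in the dual divided-power ring $S$: send a net $V\subset R_2$ to its apolar net $V^{\perp}\cap S_2$, which is again three-dimensional, so the assignment is an involution on nets. Since the pairing is $\PGL(3)$-equivariant for the contragredient action, this involution descends to an involution on orbits, giving the left--right symmetry of Table \ref{table1}, with self-dual types sitting on the axis of symmetry. The remaining check, carried out in Proposition \ref{dualprop}, is that passing to $V^{\perp}$ transforms the triple $[D_2\cap P(V),\Gamma(V),P(V)]$ into the triple of the dual net in the manner demanded by the table; this simultaneously matches dual pairs of specializations and so is consistent with the poset found in (2).
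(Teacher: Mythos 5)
Your treatment of parts (1) and (3) follows the paper's own architecture. Part (1) is proved there exactly as you outline: invariance of the triple because $S_4$ and $D_2$ are $Pgl(3)$-stable (with the rigidity statement you cite appearing as (2.5) of Section 2), followed by the exhaustive case analysis of \S 4.2 producing the normal forms of Table \ref{table1}, completeness then following because the fourteen resulting types are pairwise non-isomorphic as abstract triples. (Your normalization strategy -- fix a smooth member, then use the residual stabilizer -- is closer in spirit to the algebraic method of \S 5 than to the paper's geometric method via distinguished points of $\Gamma_\pi$ and the pencil classification, but both are case analyses of the same fourteen cases.) For part (3), your equivariant contraction pairing is in fact the intrinsic formulation that the translators recommend in Note \ref{dualnote} in place of the paper's non-equivariant scalar product; the dimension counts via stabilizers are \S\ref{dimsec} verbatim.

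The gap is in part (2), in the elimination of non-specializations. First, the semicontinuous length you invoke is not the one that does the work: the paper's numerical obstruction (\S\ref{lengthsec}) is the length of the base locus $\mathrm{Proj}\,{\sf k}[X,Y,Z]/(V)$ of the net, not of $D_2\cap P(V)$. It is the former that kills, for instance, \#7a $\to$ \#6d and \#7b $\to$ \#6d (base-locus length drops $2\to 0$, resp.\ $1\to 0$, which semicontinuity forbids), whereas the length of $D_2\cap P(V)$ merely jumps up from $0$ (resp.\ $1$) to $3$, which semicontinuity allows; the discriminant cubic also degenerates admissibly (conic plus secant line, or cusp, to a triangle), so none of your listed criteria excludes these. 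Second, and more seriously, even with the correct length invariant four candidate specializations survive all such arguments: from the ${\sf j}$-constant family \#8b to \#7a, \#7c, \#6a and \#6d. Your criterion that ``the singularity type of $\Gamma$ can only worsen'' is too coarse to exclude them, since a conic plus a secant line, and likewise $XYZ$, are degenerations of smooth cubics. What is needed is the finer statement that these cubics do not lie in the closure of the orbit of any \emph{single} smooth cubic, only in the closure of the full nine-dimensional family; this is the content of \S\ref{specjsec} (Theorem \ref{specialthm}, Propositions \ref{2prop}--\ref{4prop}, Lemma \ref{7.13lem}), proved by an invariant-theoretic argument: the closures of the ${\sf j}$-constant orbits together with that of the nodal-cubic orbit form a pencil of degree-$12$ hypersurfaces $\beta S_4^3+\alpha S_6^2=0$, so a cubic lying on two members lies on all, and the closure of the Fermat orbit is characterized as the locus of cubics $F$ admitting $G\neq F$ with the same Jacobian net. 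Nothing in your proposal supplies this step, and without it the specialization poset of Table \ref{table1} is not established.
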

An alternative method, used to verify this classification, is more algebraic. It is easy to show, according to the theory of intersections on $\Grass(2,R_2)$, that every $3$-dimensional vector space contains a dimension 2 subspace having a particular form, that is, a vector subspace belonging to the closure of the $Pgl(3)$ orbit of $\langle xy,x^2+yz\rangle$. This method easily reduces the problem of classification to eliminating repetitions in a finite list (Section 5.2).\par
We give without further delay in Table \ref{table1} the table of orbits of nets of conics, and their specializations, reserving for later a detailed explanation. We give for each orbit
\begin{enumerate}[1)]
\item a vector space belonging to it
\item a diagram indicating the isomorphism type of of its associated cubic $\Gamma(V)$, where each point of $\Gamma(V)\cap D_2$ is marked by a circle around it,
\item the dimension of the orbit is indicated by its height in the table.
\end{enumerate}
The arrows oriented downward $V\to V'$ indicate that the orbit of $V'$ is included in the closure of the orbit of $V$.  The arrows downward from the one-parameter family $V_{\sf j}\to V'$ indicate that the orbit of $V'$ is in the closure of each orbit $V_{\sf j}$, with $\sf j$, the invariant of the cubic, held constant. Naturally, the union of orbits of $V_{\sf j}$ for all the values of $\sf j$ form an open dense in $\Grass(3,R_2)$, but the analysis made here is more fine. In what follows, we will show that the table of orbits and their closures is complete and without repetition.\footnote{{\it Note of translators}: A typo in the top right entry in the table has been corrected from the original. The correct entry is $\langle Z^2,X^2-YZ,Y^2-XZ\rangle$. The original top right entry $\langle XY,X^2+YZ,Z^2+X^2$  is in fact isomorphic to $\langle XY, X^2+YZ, Y^2+XZ\rangle$ at the top left.}$ ^,$\footnote{${\sf j}(\lambda)$ is the ${\sf j}$-invariant of the cubic $\phi$: here $\phi$ is always smooth, not isomorphic to the cubic of equation $X^3+Y^3+Z^3=0$. A smooth cubic can always be put into this form of Hesse $X^3+Y^3+Z^3-3\lambda XYZ=0$, satisfying the hypotheses for $\lambda$ listed in the Table under \#8b. The symbol $\omega$ denotes a primitive cube root of $1$. Note that when $\lambda =-1,-\omega$ or $-\omega^2$ the associated cubic is singular and isomorphic to \#6a - then the net has a common point; if $\lambda $ is $0,2,2\omega,2\omega^2$ then the net is isomorphic to \#6d (p.\pageref{excludedvalues}). According to \cite[Proposition~2]{Fr} we may take ${\sf j}(\lambda)=
\frac{\lambda^3(\lambda^3-8)^3}{27(\lambda +1)^3(\lambda +\omega)^3(\lambda+\omega^2)^3}.$ See Proposition~\ref{4.2prop}ff pp. \pageref{4.2prop} -\pageref{Table1pres}.}


\begin{table}
    \caption{The orbits of $Pgl(3)$ acting on the planes of conics.}
    \label{table1}
    \centering

\tikzset{every picture/.style={line width=0.75pt}} 

\begin{tikzpicture}[x=0.75pt,y=0.75pt,yscale=-0.9,xscale=0.9]

\draw  [color={rgb, 255:red, 0; green, 0; blue, 0 }  ][line width=0.75] [line join = round][line cap = round] (111.5,54.32) .. controls (101.2,62.56) and (85.71,77.09) .. (74.5,83.32) .. controls (67.02,87.47) and (55.94,84.96) .. (54.5,76.32) .. controls (53.74,71.73) and (57.91,65.15) .. (62.5,64.32) .. controls (70.34,62.89) and (91.25,69.57) .. (99.5,72.32) .. controls (102.05,73.17) and (113.5,75.71) .. (113.5,77.32) ;
\draw    (80.78,116.5) -- (80.07,185.17) ;
\draw [shift={(80.05,187.17)}, rotate = 270.59000000000003] [color={rgb, 255:red, 0; green, 0; blue, 0 }  ][line width=0.75]    (10.93,-3.29) .. controls (6.95,-1.4) and (3.31,-0.3) .. (0,0) .. controls (3.31,0.3) and (6.95,1.4) .. (10.93,3.29)   ;
\draw   (59,227) .. controls (59,213.19) and (70.19,202) .. (84,202) .. controls (97.81,202) and (109,213.19) .. (109,227) .. controls (109,240.81) and (97.81,252) .. (84,252) .. controls (70.19,252) and (59,240.81) .. (59,227) -- cycle ;
\draw    (24,230.25) -- (143.5,230.75) ;
\draw    (79.78,284.5) -- (80.04,347.17) ;
\draw [shift={(80.05,349.17)}, rotate = 269.76] [color={rgb, 255:red, 0; green, 0; blue, 0 }  ][line width=0.75]    (10.93,-3.29) .. controls (6.95,-1.4) and (3.31,-0.3) .. (0,0) .. controls (3.31,0.3) and (6.95,1.4) .. (10.93,3.29)   ;
\draw    (71.72,359.38) -- (115.72,429.38) ;
\draw    (42.37,420.38) -- (123.72,420.38) ;
\draw    (46.37,429.38) -- (84.37,359.38) ;
\draw   (298.23,71.88) .. controls (298.23,63.11) and (305.34,56) .. (314.12,56) .. controls (322.89,56) and (330,63.11) .. (330,71.88) .. controls (330,80.66) and (322.89,87.77) .. (314.12,87.77) .. controls (305.34,87.77) and (298.23,80.66) .. (298.23,71.88) -- cycle ;
\draw    (336.9,114.77) .. controls (376.9,84.77) and (305.9,55.77) .. (345.9,25.77) ;
\draw  [color={rgb, 255:red, 0; green, 0; blue, 0 }  ][line width=0.75] [line join = round][line cap = round] (580.5,54.32) .. controls (570.2,62.56) and (554.71,77.09) .. (543.5,83.32) .. controls (536.02,87.47) and (524.94,84.96) .. (523.5,76.32) .. controls (522.74,71.73) and (526.91,65.15) .. (531.5,64.32) .. controls (539.34,62.89) and (560.25,69.57) .. (568.5,72.32) .. controls (571.05,73.17) and (582.5,75.71) .. (582.5,77.32) ;
\draw   (556.23,70.38) .. controls (556.23,66.86) and (559.09,64) .. (562.62,64) .. controls (566.14,64) and (569,66.86) .. (569,70.38) .. controls (569,73.91) and (566.14,76.77) .. (562.62,76.77) .. controls (559.09,76.77) and (556.23,73.91) .. (556.23,70.38) -- cycle ;
\draw  [color={rgb, 255:red, 0; green, 0; blue, 0 }  ][line width=0.75] [line join = round][line cap = round] (299.53,231.67) .. controls (311.24,231.67) and (322.89,223.89) .. (329.15,218.65) .. controls (332.61,215.76) and (341.19,209.03) .. (341.19,206.87) ;
\draw   (291.83,230.07) .. controls (292.21,225.06) and (296.58,221.31) .. (301.59,221.7) .. controls (306.59,222.08) and (310.34,226.45) .. (309.96,231.46) .. controls (309.58,236.47) and (305.21,240.22) .. (300.2,239.83) .. controls (295.19,239.45) and (291.44,235.08) .. (291.83,230.07) -- cycle ;
\draw   (528,227) .. controls (528,213.19) and (539.19,202) .. (553,202) .. controls (566.81,202) and (578,213.19) .. (578,227) .. controls (578,240.81) and (566.81,252) .. (553,252) .. controls (539.19,252) and (528,240.81) .. (528,227) -- cycle ;
\draw    (492.05,229.5) -- (612.05,228.5) ;
\draw   (246.28,418) .. controls (246.28,414.47) and (249.14,411.62) .. (252.67,411.62) .. controls (256.19,411.62) and (259.05,414.47) .. (259.05,418) .. controls (259.05,421.53) and (256.19,424.38) .. (252.67,424.38) .. controls (249.14,424.38) and (246.28,421.53) .. (246.28,418) -- cycle ;
\draw   (522.62,229) .. controls (522.62,225.47) and (525.47,222.62) .. (529,222.62) .. controls (532.53,222.62) and (535.38,225.47) .. (535.38,229) .. controls (535.38,232.53) and (532.53,235.38) .. (529,235.38) .. controls (525.47,235.38) and (522.62,232.53) .. (522.62,229) -- cycle ;
\draw   (570.62,229) .. controls (570.62,225.47) and (573.47,222.62) .. (577,222.62) .. controls (580.53,222.62) and (583.38,225.47) .. (583.38,229) .. controls (583.38,232.53) and (580.53,235.38) .. (577,235.38) .. controls (573.47,235.38) and (570.62,232.53) .. (570.62,229) -- cycle ;
\draw   (228,392) .. controls (228,378.19) and (239.19,367) .. (253,367) .. controls (266.81,367) and (278,378.19) .. (278,392) .. controls (278,405.81) and (266.81,417) .. (253,417) .. controls (239.19,417) and (228,405.81) .. (228,392) -- cycle ;
\draw    (192.05,418.5) -- (312.05,417.5) ;
\draw    (319.78,162.5) -- (320.03,194.17) ;
\draw [shift={(320.05,196.17)}, rotate = 269.55] [color={rgb, 255:red, 0; green, 0; blue, 0 }  ][line width=0.75]    (10.93,-3.29) .. controls (6.95,-1.4) and (3.31,-0.3) .. (0,0) .. controls (3.31,0.3) and (6.95,1.4) .. (10.93,3.29)   ;
\draw    (547.72,357.38) -- (591.72,427.38) ;
\draw    (518.37,418.38) -- (599.72,418.38) ;
\draw    (522.37,427.38) -- (560.37,357.38) ;
\draw    (551.78,113.5) -- (552.04,183.17) ;
\draw [shift={(552.05,185.17)}, rotate = 269.79] [color={rgb, 255:red, 0; green, 0; blue, 0 }  ][line width=0.75]    (10.93,-3.29) .. controls (6.95,-1.4) and (3.31,-0.3) .. (0,0) .. controls (3.31,0.3) and (6.95,1.4) .. (10.93,3.29)   ;
\draw    (410.43,359.75) -- (410.43,429.75) ;
\draw    (368.62,384.67) -- (453.78,384.73) ;
\draw   (374.14,388.99) .. controls (374.53,383.98) and (378.9,380.23) .. (383.91,380.62) .. controls (388.91,381) and (392.66,385.37) .. (392.28,390.38) .. controls (391.89,395.38) and (387.52,399.13) .. (382.52,398.75) .. controls (377.51,398.37) and (373.76,394) .. (374.14,388.99) -- cycle ;
\draw   (427.98,388.99) .. controls (428.36,383.98) and (432.73,380.23) .. (437.74,380.62) .. controls (442.75,381) and (446.5,385.37) .. (446.11,390.38) .. controls (445.73,395.38) and (441.36,399.13) .. (436.35,398.75) .. controls (431.34,398.37) and (427.59,394) .. (427.98,388.99) -- cycle ;
\draw   (548.62,368) .. controls (548.62,364.47) and (551.47,361.62) .. (555,361.62) .. controls (558.53,361.62) and (561.38,364.47) .. (561.38,368) .. controls (561.38,371.53) and (558.53,374.38) .. (555,374.38) .. controls (551.47,374.38) and (548.62,371.53) .. (548.62,368) -- cycle ;
\draw   (580.62,418) .. controls (580.62,414.47) and (583.47,411.62) .. (587,411.62) .. controls (590.53,411.62) and (593.38,414.47) .. (593.38,418) .. controls (593.38,421.53) and (590.53,424.38) .. (587,424.38) .. controls (583.47,424.38) and (580.62,421.53) .. (580.62,418) -- cycle ;
\draw   (521.62,418) .. controls (521.62,414.47) and (524.47,411.62) .. (528,411.62) .. controls (531.53,411.62) and (534.38,414.47) .. (534.38,418) .. controls (534.38,421.53) and (531.53,424.38) .. (528,424.38) .. controls (524.47,424.38) and (521.62,421.53) .. (521.62,418) -- cycle ;
\draw    (106.78,117.5) -- (275.05,213.51) ;
\draw [shift={(276.78,214.5)}, rotate = 209.71] [color={rgb, 255:red, 0; green, 0; blue, 0 }  ][line width=0.75]    (10.93,-3.29) .. controls (6.95,-1.4) and (3.31,-0.3) .. (0,0) .. controls (3.31,0.3) and (6.95,1.4) .. (10.93,3.29)   ;
\draw    (533.78,113.5) -- (358.52,213.51) ;
\draw [shift={(356.78,214.5)}, rotate = 330.28999999999996] [color={rgb, 255:red, 0; green, 0; blue, 0 }  ][line width=0.75]    (10.93,-3.29) .. controls (6.95,-1.4) and (3.31,-0.3) .. (0,0) .. controls (3.31,0.3) and (6.95,1.4) .. (10.93,3.29)   ;
\draw    (102.78,286.5) -- (213.11,359.4) ;
\draw [shift={(214.78,360.5)}, rotate = 213.45] [color={rgb, 255:red, 0; green, 0; blue, 0 }  ][line width=0.75]    (10.93,-3.29) .. controls (6.95,-1.4) and (3.31,-0.3) .. (0,0) .. controls (3.31,0.3) and (6.95,1.4) .. (10.93,3.29)   ;
\draw    (309.78,285.5) -- (261.92,354.85) ;
\draw [shift={(260.78,356.5)}, rotate = 304.61] [color={rgb, 255:red, 0; green, 0; blue, 0 }  ][line width=0.75]    (10.93,-3.29) .. controls (6.95,-1.4) and (3.31,-0.3) .. (0,0) .. controls (3.31,0.3) and (6.95,1.4) .. (10.93,3.29)   ;
\draw    (340.78,286.73) -- (395.56,357.92) ;
\draw [shift={(396.78,359.5)}, rotate = 232.42000000000002] [color={rgb, 255:red, 0; green, 0; blue, 0 }  ][line width=0.75]    (10.93,-3.29) .. controls (6.95,-1.4) and (3.31,-0.3) .. (0,0) .. controls (3.31,0.3) and (6.95,1.4) .. (10.93,3.29)   ;
\draw    (122,287) -- (378.86,361.94) ;
\draw [shift={(380.78,362.5)}, rotate = 196.26] [color={rgb, 255:red, 0; green, 0; blue, 0 }  ][line width=0.75]    (10.93,-3.29) .. controls (6.95,-1.4) and (3.31,-0.3) .. (0,0) .. controls (3.31,0.3) and (6.95,1.4) .. (10.93,3.29)   ;
\draw    (554.78,284.5) -- (555.04,347.17) ;
\draw [shift={(555.05,349.17)}, rotate = 269.76] [color={rgb, 255:red, 0; green, 0; blue, 0 }  ][line width=0.75]    (10.93,-3.29) .. controls (6.95,-1.4) and (3.31,-0.3) .. (0,0) .. controls (3.31,0.3) and (6.95,1.4) .. (10.93,3.29)   ;
\draw    (539.78,284.5) -- (428.45,358.39) ;
\draw [shift={(426.78,359.5)}, rotate = 326.43] [color={rgb, 255:red, 0; green, 0; blue, 0 }  ][line width=0.75]    (10.93,-3.29) .. controls (6.95,-1.4) and (3.31,-0.3) .. (0,0) .. controls (3.31,0.3) and (6.95,1.4) .. (10.93,3.29)   ;
\draw    (516.78,286.5) -- (289.7,354.92) ;
\draw [shift={(287.78,355.5)}, rotate = 343.23] [color={rgb, 255:red, 0; green, 0; blue, 0 }  ][line width=0.75]    (10.93,-3.29) .. controls (6.95,-1.4) and (3.31,-0.3) .. (0,0) .. controls (3.31,0.3) and (6.95,1.4) .. (10.93,3.29)   ;
\draw    (151.43,542.75) -- (151.43,612.75) ;
\draw    (130.62,567.67) -- (198.78,567.73) ;
\draw    (129.52,577.75) -- (199.35,577.75) ;
\draw   (174.98,571.99) .. controls (175.36,566.98) and (179.73,563.23) .. (184.74,563.62) .. controls (189.75,564) and (193.5,568.37) .. (193.11,573.38) .. controls (192.73,578.38) and (188.36,582.13) .. (183.35,581.75) .. controls (178.34,581.37) and (174.59,577) .. (174.98,571.99) -- cycle ;
\draw    (85.78,463.5) -- (136.67,539.84) ;
\draw [shift={(137.78,541.5)}, rotate = 236.31] [color={rgb, 255:red, 0; green, 0; blue, 0 }  ][line width=0.75]    (10.93,-3.29) .. controls (6.95,-1.4) and (3.31,-0.3) .. (0,0) .. controls (3.31,0.3) and (6.95,1.4) .. (10.93,3.29)   ;
\draw    (236.78,465.5) -- (171.13,538.02) ;
\draw [shift={(169.78,539.5)}, rotate = 312.15999999999997] [color={rgb, 255:red, 0; green, 0; blue, 0 }  ][line width=0.75]    (10.93,-3.29) .. controls (6.95,-1.4) and (3.31,-0.3) .. (0,0) .. controls (3.31,0.3) and (6.95,1.4) .. (10.93,3.29)   ;
\draw    (402.78,467.5) -- (201.67,538.83) ;
\draw [shift={(199.78,539.5)}, rotate = 340.47] [color={rgb, 255:red, 0; green, 0; blue, 0 }  ][line width=0.75]    (10.93,-3.29) .. controls (6.95,-1.4) and (3.31,-0.3) .. (0,0) .. controls (3.31,0.3) and (6.95,1.4) .. (10.93,3.29)   ;
\draw    (453.43,539.75) -- (453.43,609.75) ;
\draw    (431.52,574.75) -- (501.35,574.75) ;
\draw   (477.98,573.99) .. controls (478.36,568.98) and (482.73,565.23) .. (487.74,565.62) .. controls (492.75,566) and (496.5,570.37) .. (496.11,575.38) .. controls (495.73,580.38) and (491.36,584.13) .. (486.35,583.75) .. controls (481.34,583.37) and (477.59,579) .. (477.98,573.99) -- cycle ;
\draw    (274.78,465.5) -- (438.95,537.69) ;
\draw [shift={(440.78,538.5)}, rotate = 203.74] [color={rgb, 255:red, 0; green, 0; blue, 0 }  ][line width=0.75]    (10.93,-3.29) .. controls (6.95,-1.4) and (3.31,-0.3) .. (0,0) .. controls (3.31,0.3) and (6.95,1.4) .. (10.93,3.29)   ;
\draw    (560,469) -- (466.42,534.35) ;
\draw [shift={(464.78,535.5)}, rotate = 325.07] [color={rgb, 255:red, 0; green, 0; blue, 0 }  ][line width=0.75]    (10.93,-3.29) .. controls (6.95,-1.4) and (3.31,-0.3) .. (0,0) .. controls (3.31,0.3) and (6.95,1.4) .. (10.93,3.29)   ;
\draw    (420,466) -- (448.01,532.66) ;
\draw [shift={(448.78,534.5)}, rotate = 247.20999999999998] [color={rgb, 255:red, 0; green, 0; blue, 0 }  ][line width=0.75]    (10.93,-3.29) .. controls (6.95,-1.4) and (3.31,-0.3) .. (0,0) .. controls (3.31,0.3) and (6.95,1.4) .. (10.93,3.29)   ;
\draw    (277.62,703.67) -- (345.78,703.73) ;
\draw    (276.52,713.75) -- (346.35,713.75) ;
\draw   (299.98,707.99) .. controls (300.36,702.98) and (304.73,699.23) .. (309.74,699.62) .. controls (314.75,700) and (318.5,704.37) .. (318.11,709.38) .. controls (317.73,714.38) and (313.36,718.13) .. (308.35,717.75) .. controls (303.34,717.37) and (299.59,713) .. (299.98,707.99) -- cycle ;
\draw    (276.62,708.67) -- (344.78,708.73) ;
\draw    (168.78,647.5) -- (268.29,701.55) ;
\draw [shift={(270.05,702.5)}, rotate = 208.51] [color={rgb, 255:red, 0; green, 0; blue, 0 }  ][line width=0.75]    (10.93,-3.29) .. controls (6.95,-1.4) and (3.31,-0.3) .. (0,0) .. controls (3.31,0.3) and (6.95,1.4) .. (10.93,3.29)   ;
\draw    (451,645) -- (347.84,696.61) ;
\draw [shift={(346.05,697.5)}, rotate = 333.41999999999996] [color={rgb, 255:red, 0; green, 0; blue, 0 }  ][line width=0.75]    (10.93,-3.29) .. controls (6.95,-1.4) and (3.31,-0.3) .. (0,0) .. controls (3.31,0.3) and (6.95,1.4) .. (10.93,3.29)   ;
\draw  [draw opacity=0] (488.58,801.82) .. controls (488.79,802.97) and (488.93,804.16) .. (488.98,805.37) .. controls (489.51,817.97) and (480.82,828.57) .. (469.58,829.04) .. controls (458.33,829.52) and (448.78,819.69) .. (448.25,807.1) .. controls (448.15,804.63) and (448.39,802.25) .. (448.94,800) -- (468.61,806.23) -- cycle ; \draw   (488.58,801.82) .. controls (488.79,802.97) and (488.93,804.16) .. (488.98,805.37) .. controls (489.51,817.97) and (480.82,828.57) .. (469.58,829.04) .. controls (458.33,829.52) and (448.78,819.69) .. (448.25,807.1) .. controls (448.15,804.63) and (448.39,802.25) .. (448.94,800) ;
\draw    (346,746.33) -- (454.19,789.1) ;
\draw [shift={(456.05,789.83)}, rotate = 201.57] [color={rgb, 255:red, 0; green, 0; blue, 0 }  ][line width=0.75]    (10.93,-3.29) .. controls (6.95,-1.4) and (3.31,-0.3) .. (0,0) .. controls (3.31,0.3) and (6.95,1.4) .. (10.93,3.29)   ;
\draw    (276,751.33) -- (183.93,784.16) ;
\draw [shift={(182.05,784.83)}, rotate = 340.38] [color={rgb, 255:red, 0; green, 0; blue, 0 }  ][line width=0.75]    (10.93,-3.29) .. controls (6.95,-1.4) and (3.31,-0.3) .. (0,0) .. controls (3.31,0.3) and (6.95,1.4) .. (10.93,3.29)   ;
\draw   (444.98,573.99) .. controls (445.36,568.98) and (449.73,565.23) .. (454.74,565.62) .. controls (459.75,566) and (463.5,570.37) .. (463.11,575.38) .. controls (462.73,580.38) and (458.36,584.13) .. (453.35,583.75) .. controls (448.34,583.37) and (444.59,579) .. (444.98,573.99) -- cycle ;
\draw   (152.98,813.99) .. controls (153.36,808.98) and (157.73,805.23) .. (162.74,805.62) .. controls (167.75,806) and (171.5,810.37) .. (171.11,815.38) .. controls (170.73,820.38) and (166.36,824.13) .. (161.35,823.75) .. controls (156.34,823.37) and (152.59,819) .. (152.98,813.99) -- cycle ;
\draw    (163.67,834.19) -- (186.56,796.56) ;
\draw    (138.11,832.13) -- (161,794.5) ;
\draw    (144.53,832.8) -- (167.42,795.17) ;
\draw    (150.6,833.5) -- (173.49,795.87) ;
\draw    (156.67,835.19) -- (179.56,797.56) ;
\draw    (451.87,830.99) -- (474.76,793.36) ;
\draw    (426.31,828.93) -- (449.2,791.3) ;
\draw    (432.73,829.6) -- (455.62,791.97) ;
\draw    (438.8,830.3) -- (461.69,792.67) ;
\draw    (444.87,831.99) -- (467.76,794.36) ;
\draw    (483.87,833.99) -- (506.76,796.36) ;
\draw    (458.31,831.93) -- (481.2,794.3) ;
\draw    (464.73,832.6) -- (487.62,794.97) ;
\draw    (470.8,833.3) -- (493.69,795.67) ;
\draw    (476.87,834.99) -- (499.76,797.36) ;
\draw    (368.62,395.67) -- (453.78,395.73) ;
\draw  [color={rgb, 255:red, 0; green, 0; blue, 0 }  ][line width=0.75] [line join = round][line cap = round] (341.6,253.41) .. controls (335.2,244.49) and (322.89,239) .. (315.46,236.51) .. controls (311.37,235.14) and (301.53,231.53) .. (299.88,232.48) ;

\draw (17,93.4) node [anchor=north west][inner sep=0.75pt]    {$XY,\ X^{2} +YZ,Y^{2} +XZ$};
\draw (24,257.4) node [anchor=north west][inner sep=0.75pt]    {$X^{2} +YZ,XY,XZ$};
\draw (482,92.4) node [anchor=north west][inner sep=0.75pt]    {$Z^{2} ,X^{2} -YZ ,Y^{2} -XZ$};
\draw (220,111.4) node [anchor=north west][inner sep=0.75pt]    {$X^{2} +\lambda YZ,Y^{2} +\lambda XZ,Z^{2} +\lambda XY$};
\draw (236,132.4) node [anchor=north west][inner sep=0.75pt]    {$\lambda \neq -1,-\omega,-\omega^{2} ,0,2,2\omega,2\omega^{2}$};
\draw (367,69.4) node [anchor=north west][inner sep=0.75pt]    {${\sf j}( \lambda )\,\,^{( 6)}$};
\draw (256,257.4) node [anchor=north west][inner sep=0.75pt]    {$X^{2} +YZ,XY,Z^{2}$};
\draw (499,257.4) node [anchor=north west][inner sep=0.75pt]    {$X^{2} +YZ,Y^{2} ,Z^{2}$};
\draw (39,440.4) node [anchor=north west][inner sep=0.75pt]    {$XY,YZ,XZ$};
\draw (198,438.4) node [anchor=north west][inner sep=0.75pt]    {$X^{2} ,( Y+Z) Z,XY$};
\draw (354,439.4) node [anchor=north west][inner sep=0.75pt]    {$XZ,X^{2} +Z^{2} ,YZ$};
\draw (521,440.4) node [anchor=north west][inner sep=0.75pt]    {$X^{2} ,Y^{2} ,Z^{2}$};
\draw (116,619.4) node [anchor=north west][inner sep=0.75pt]    {$XY,XZ,Z^{2}$};
\draw (413,620.4) node [anchor=north west][inner sep=0.75pt]    {$Y^{2} ,Z^{2} ,XY$};
\draw (253,724.4) node [anchor=north west][inner sep=0.75pt]    {$X^{2} +YZ,XY,Y^{2}$};
\draw (410,845.08) node [anchor=north west][inner sep=0.75pt]    {$X^{2} ,Y^{2} ,( X+Y)^{2}$};
\draw (119,843.05) node [anchor=north west][inner sep=0.75pt]    {$X^{2} ,XY,XZ$};
\draw (-18,66.4) node [anchor=north west][inner sep=0.75pt]    {$8$};
\draw (-19,218.4) node [anchor=north west][inner sep=0.75pt]    {$7$};
\draw (-14,396.4) node [anchor=north west][inner sep=0.75pt]    {$6$};
\draw (-17,558.4) node [anchor=north west][inner sep=0.75pt]    {$5$};
\draw (-17,700.4) node [anchor=north west][inner sep=0.75pt]    {$4$};
\draw (-15,811.2) node [anchor=north west][inner sep=0.75pt]    {$2$};

\end{tikzpicture}

\end{table}
\normalsize\newpage
Our third main result concerns Artinian algebras.
\begin{thm}(Theorem \ref{smooththm}) The classification up to isomorphism of graded Artinian algebras having Hilbert function $H(A)=(1,3,3)$ is that determined implicitly by Table \ref{table1}. Each such algebra is smoothable - has a flat deformation to ${\sf k}^7$.\par
Every graded algebra of Hilbert function $H(A)=(1,r,2)$ is smoothable - has a flat deformation to ${\sf k}^{r+3}$.
\end{thm}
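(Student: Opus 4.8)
The plan is to prove both smoothability statements by the same two-step philosophy: first reduce, using the orbit/specialization structure, to a handful of ``most generic'' algebras, and then smooth those explicitly. The reduction rests on one soft fact. Inside the Hilbert scheme $\Hilb^n(\mathbb{A}^N)$ the smoothable locus $\overline{\mathcal R}$ --- the closure of the open set of reduced length-$n$ subschemes --- is closed and invariant under the linear group action. Consequently smoothability is inherited by specializations: if $A$ is smoothable and $B$ lies in the orbit closure of $A$, then $B\in\overline{\mathcal R}$ as well; and in an irreducible family of algebras, smoothability of the generic member forces smoothability of every member, since the closure of that member is contained in $\overline{\mathcal R}$. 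The assignment $V\mapsto A_V=R/(V+\m^3)$ is a $\Gl(3)$-equivariant morphism from $\Grass(3,R_2)$ to $\Hilb^7(\mathbb{A}^3)$, so it carries the orbit closures of Table~\ref{table1} to orbit closures of the corresponding algebras, and the downward arrows of Table~\ref{table1} become specializations of points of $\Hilb^7$.

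For the Hilbert function $(1,3,3)$ this reduces the first assertion to smoothing the three maximal (dimension-$8$) orbits $\#8a,\#8b,\#8c$, since every lower orbit is a specialization of one of them. For the Jacobian family $\#8b$, with $V=J_\phi$ and $\phi$ a smooth cubic, I would exploit that the three partials form a regular sequence (as $\phi$ smooth gives $V(J_\phi)=\emptyset$ in $\mathbb{P}^2$), so that $B=R/J_\phi$ is an Artinian \emph{complete intersection} of length $8$ with Hilbert function $(1,3,3,1)$ and one-dimensional socle, and one checks $A_V=B/\mathrm{socle}(B)$. Complete intersections are smoothable: deforming the three quadric generators to a generic complete intersection of three conics produces a flat family $\mathcal B\to\mathbb{A}^1$ with $\mathcal B_0=\mathrm{Spec}(B)$ and $\mathcal B_t$ equal to $8$ reduced points (B\'ezout). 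I would then pass to the relative Hilbert scheme $\Hilb^7(\mathcal B/\mathbb{A}^1)\to\mathbb{A}^1$: its general fibre consists of the $8$ colength-one subschemes obtained by deleting one of the $8$ points, each reduced, while its special fibre is the \emph{single} point $A_V$, because a colength-one ideal of $B$ is a one-dimensional ideal, hence lies in $\mathrm{socle}(B)$, which is one-dimensional. Properness then forces each deleted-point branch to limit to $A_V$, exhibiting $A_V$ as a flat limit of $7$ distinct points. For the nodal-cubic orbits $\#8a,\#8c$ (which are \emph{not} limits of $\#8b$, their cubics degenerating instead to the nets $\#6a,\#6d$), the three defining quadrics share a common projective zero, so the complete-intersection shortcut fails; here I would write down explicit flat families of $7$ points degenerating to the normal forms $\langle XY,X^2+YZ,Y^2+XZ\rangle$ and $\langle Z^2,X^2-YZ,Y^2-XZ\rangle$, using the duality of Theorem~\ref{1thm}(3) to transport one family to the other.

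For the Hilbert function $(1,r,2)$ the structural input is that such graded algebras are governed by their inverse system: the apolar picture presents $A$ through a two-dimensional pencil of quadrics $\langle Q_1,Q_2\rangle$ in the dual variables whose first partials span the linear forms. The family of these pencils is an open subset of a Grassmannian, hence irreducible, and its generic member is a regular pencil with distinct eigenvalues, that is, a simultaneously diagonalizable pair $Q_1=\sum_{i=1}^r X_i^2$, $Q_2=\sum_{i=1}^r\lambda_iX_i^2$ with the $\lambda_i$ distinct. By the irreducibility principle above it suffices to smooth this one generic algebra, in which $x_ix_j=0$ for $i\neq j$ and the classes of the $x_i^2$ are the points $(1,\lambda_i)\in A_2\cong{\sf k}^2$. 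I would smooth it by placing $r+3$ points on the coordinate cross symmetrically about the origin, generalizing the case $r=2$, where the five points $(\pm t,0),(0,\pm s),(0,0)$ have ideal $(xy,\,x^3-t^2x,\,y^3-s^2y)$ with flat limit $(xy,x^3,y^3)=A$; the limiting relations degenerate to the required degree-two relations as the parameters tend to $0$.

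The main obstacle in both parts is the explicit construction and the flatness verification --- that the chosen families of points have constant length $r+3$ (respectively $7$) and limit \emph{exactly} to the prescribed algebra rather than to some other scheme of the same length. For $(1,r,2)$ with $r\ge 3$ this is delicate because the $r-2$ linear relations among the $x_i^2$ couple the different axes, so the naive ``independent points on each axis'' recipe must be tuned so that these relations appear in the limit; arranging the point positions so that the degree-two part of the limiting ideal is precisely $\mathrm{Ann}\langle Q_1,Q_2\rangle$ in degree two is the crux of the computation. For the nets $\#8a,\#8c$ the analogous task is to produce seven points whose limiting ideal reproduces the given normal form, which is where I expect the bulk of the bookkeeping to lie.
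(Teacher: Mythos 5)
Your treatment of the generic net \#8b is correct and genuinely different from the paper's. The paper writes down an explicit flat family
$A_t={\sf k}[X,Y,Z]/(Y^2+\lambda XZ,\,Z^2+\lambda XY,\,X^2+\lambda YZ+tX,\,XYZ+\lambda^2t(1+\lambda^3)^{-1}X^2)$
which for $t\neq0$ splits into three reduced points and a local factor isomorphic to ${\sf k}[X,Y]/(X^2,Y^2)$, then invokes Fogarty; you instead observe that $B=R/(J_\phi)$ is a length-$8$ complete intersection with one-dimensional socle, that $A_V=B/\mathrm{socle}(B)$, deform $B$ to eight reduced points, and use properness of the relative $\Hilb^7$ together with the uniqueness of the colength-one ideal of $B$. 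That is a valid and rather cleaner route to the generic case. However, your handling of \#8a and \#8c rests on a misreading: these orbits are indeed not in the closure of any single fixed-${\sf j}$ orbit (both have dimension $8$), but they \emph{do} lie in the closure of the union of the \#8b orbits, which the paper notes is dense in the nine-dimensional $\Grass(3,R_2)$ (its complement is a finite union of orbits of dimension at most $8$). Since the smoothable locus is closed and the assignment $V\mapsto R/((V)+\m^3)$ is a morphism to $\Hilb^7(\mathbb A^3)$, smoothing the generic $J_\phi$ already disposes of \#8a, \#8c and everything below them. The separate explicit seven-point families you defer to are therefore unnecessary --- but as written they are also absent, so your argument for the $(1,3,3)$ half is incomplete unless you add this density observation.

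The $(1,r,2)$ half contains a genuine obstruction, not just unfinished bookkeeping. If all $r+3$ points lie on the union of the coordinate axes, then for every axis $i$ carrying at most one non-origin point the polynomial $x_i\prod_j(x_i-p_{ij}t)$ (or $x_i^2$ itself, or even $x_i$, if the axis is empty) has degree at most two, lies in the ideal of the configuration, and limits to $x_i^2$; hence $X_i^2$ is absent from the degree-two inverse system of the flat limit. For $r\ge4$ at most $\lfloor (r+3)/2\rfloor<r$ axes can carry two or more points, so some axis always dies, and the limiting pencil of dual quadrics is supported on fewer than $r$ variables. It therefore can never equal the generic pencil $\langle\sum X_i^2,\ \sum\lambda_iX_i^2\rangle$, whose general member has full rank $r$. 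The ``coupling'' you correctly identify as the crux cannot be produced by tuning point positions on the cross; the construction must be abandoned for $r\ge4$ (it happens to work for $r=3$ with two points on each axis and no origin). The paper avoids this entirely by deforming the defining ideal rather than a point configuration: it replaces the generator $h_r$ by $h_r+tX_r$, proves flatness by exhibiting a basis of the syzygies and lifting each one, observes that the deformed algebra splits as ${\sf k}\times A''$ with $H(A'')=(1,r-1,2)$, and inducts down to the codimension-two case where Fogarty applies. Some argument of that ideal-theoretic kind is needed to complete your second half.
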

\subsection{Outline}\label{outlinesec}
We give now the main lines of the paper. In Section 2, we recall general facts about $S_4$, the cubic hypersurface parametrizing the singular conics in $\mathbb P^5=\mathbb P(R_2)$, and on its subset $D_2$ parametrizing the ``double lines" (perfect squares of linear forms). In Section 3.1, we classify up to isomorphism the pencils of planar conics: 2-dimensional vector spaces of homogeneous forms in $R_2$, which are lines of $\mathbb P(R_2)$; or, equivalently, the Artinian algebras of Hilbert function $(1,3,4)$ or their duals of Hilbert function $(1,3,2)$. This classification was already known by Corrado Segre in 1883 (see \cite{Wall}) and J. Emsalem had learned it in ``Sp\'{e}ciales", in the course of E. Riche. It gives an idea of the general method we will follow later. 

In Section 3.2, we give the closures of the orbits of pencils. In Section~4.1.1, we begin the analysis of vector spaces of dimension 3 in $R_2$, in considering the case where $\Gamma(V)=\mathbb P(V)\cap S_4$ is smooth. In Section 4.1.2, we show that such a vector space has the form $J_\phi$, the space generated by the partial derivatives of order 1 of a cubic form $\phi$. Remark \ref{betarem} shows that the map of $\sf j$ invariants ${\sf j}(\phi)$ to $ {\sf j}(\Gamma(J_\phi))$ is generically a $3:1$ map.

In Section 4.1.3, we will show (Proposition \ref{4.4prop}) that when $\Gamma_\pi$ for the net $\pi$ is a smooth cubic, 
then $\pi=J\phi$ for a cubic $\phi$ and $\Gamma$ has a canonical involution $\iota$ without double point, such that the three-dimensional vector space $J_\phi$ has the following property:
$$J_\phi=\{ v\in R_2\mid \forall C\in \Gamma,\ C \text { and } \iota(C) \text { are conjugate in relation to } v\}.$$
Then, in Section 4.2, we classify the vector spaces $V$ for which $\Gamma(V)$ is singular. The idea consists of using a geometric construction to find a basis of a space isomorphic to $V$, in a canonical form. The algebraic method of classification is presented in Section 5: in the present work, we have used it as a verification of the results of the geometric method. But the idea of this method may furnish some simplifications of other problems of algebraic classification.

The specializations, or in other words the closures of orbits, are discussed in Section 6. In view of shortening that section, we use the notion of duality among orbits (\S 6.1), a calculation of the dimension of each orbit (\S 6.2) and an upper semi-continuous invariant, the length of the projective scheme associated to the ideal generated by the vector space. This is useful also for considering the specializations of the orbits of low dimension (\S 6.3). In Section 6.4, we apply these notions to restrict the set of potential specializations of nets. In Section 6.5 we study specializations of cubics. In Section \ref{specjsec}, we give an argument from invariant theory about the closure of each orbit in the one parameter family of smooth orbits, or the orbit of the singular cubic with a double point having distinct tangents. We show: \par\vskip 0.2cm \noindent
{\bf Theorem \ref{specialthm}.} Let $F$ denote an orbit of cubics of dimension less than 8. Either all cubics of the above family specialize to $F$; or exactly one does.
 \par\vskip 0.2cm
In Section \ref{singsec} we give the specializations of singular cubics; we also specify the discriminant cubics of Table \ref{table1}; in the new \S \ref{polarsec} we for completeness list the polar nets $J_\phi$ - those determined by the partial derivatives of a cubic form $\phi$.  In Section \ref{specialnetsec}, we establish explicitly the existence of the specializations of nets of conics announced in Table \ref{table1}. 

In Section 7, we examine the deformations of Artinian algebras. We show in Section 7.1 that the algebras of Hilbert function $(1,3,3)$ are \emph{smoothable}, that is that they deform to algebras isomorphic to ${\sf k}^7$ (given the product structure of the algebras). In Section 7.2, we discuss the form of the algebras of Hilbert function $(1,4,2)$ and, more generally, algebras of Hilbert function
$(1,r,2)$.  The algebras of Hilbert function $(1,4,2)$ constitute the other case of algebras of length 7 presenting a continuous family of distinct isomorphism classes; we describe the family with $r-3$ parameters of isomorphism classes of algebras of Hilbert function $(1,r,2)$ that are sufficiently general. Then in exhibiting the generators and the relations among these generators, we show that these algebras also have smooth deformations. 

Finally, in Section 8 we will show that the study of the regular maps of degree 4 from the projective plane into itself is equivalent to the study of nets of conics. We give an interpretation of the invariants of a net of cubic curves in terms of the critical locus and the discriminant of a regular map of degree 4.  In Section 9 we discuss Hesse pencils of cubics and their connection to nets.

The geometric arguments, including the existence of specializations, in particular the use of the theory of invariants in Section \ref{specjsec}, and the Section~8 are due to J.~Emsalem.
A.~Iarrobino has determined algebraically the list of orbits (Section~6), the duality (Section 6.1) and the deformations of Section 7.2. We thank Bernard Teissier for his encouragement and comments. We note that C.T.C.~Wall (see \cite{Wall}) has considered independently the same problem and found very succinctly an analogous table (except for the specializations of constant $\sf j$, our Section 6.5), both in the complex case that we study, and in the real case that we do not consider; we develop the algebra and the geometry of the situation in more detail (Sections 4.1, 4.2, 5, etc $\dots$). The non-separability of the space of orbits is a phenomenon similar to that described by M.~Nagata in \cite{Nag}.

We thank Martine Aeschbacher who promptly deciphered and made readable a handwritten manuscript that was often obscure.\par
\section{Space of Projective Conics}
{\bf Introduction}: Here we describe the main aspects of the geometry of the space of conics in the projective plane.\par
We denote by $C_5$ the projective space associated to $R_2$ (which has dimension 5), that we identify with the space of planar conics. In this space, the set of singular conics form a remarkable cubic hypersurface that we call $S_4$
in the sequel:
\par $S_4=\{$the $\mathbb C^*$ classes of $F,F\in R_2$, such that  $\det_{X,Y,Z}(F'_X,F'_Y,F'_Z)=0\}$.\newline
$S_4$ in turn contains $D_2$, the set of ``double lines''. $D_2$ is a variety isomorphic to $\mathbb P^2$, so of dimension 2:
$$ D_2=\{ \text {classes of } L^2, L\in R_1\}.$$
\begin{rmk}\label{rmkS2}
It is easy to see that $D_2$ is exactly the singular locus of $S_4$ and that $S_4$ is the union of the planes of $C_5$ tangent to $D_2$. Thus, $S_4$ is also a rational variety. $D_2$ and $S_4$ are evidently stable under the action of $Pgl(3)$. In the general case where a plane $\pi$ of $C_5$ intersects $S_4$ properly, the scheme intersection with $S_4$ is a cubic $\Gamma_\pi$ in $\pi$, and we will try to conclude the maximum information on the class of the plane modulo $Pgl(3)$ from the pair $(\Gamma_\pi, \pi)$. We note finally that a plane of $C_5$ does not in general meet $D_2$ for the obvious reason of dimension, and that an intersection point of a plane with $D_2$, when it exists, can only be a singular point of the intersection cubic in $S_4$, as it is already a singular point of $S_4$. \footnote{We list the discriminant cubics in Section \ref{singsec}, Table \ref{discrimtable}; see Example \ref{8adiscex}.}\end{rmk}

We give now some details on certain families of linear projective subvarieties of $C_5$:\par
\noindent
(2.1) The tangent plane to $D_2$ at the point corresponding to a double line is the set of singular conics having as one of its irreducible components the support of that double line.\par\noindent
(2.2) The hyperplane tangent to $S_4$ at a smooth point - that is, a point corresponding to a singular conic having two distinct irreducible components, is the set of all conics passing through the unique singular point of that conic.\par\noindent
(2.3) The Zariski tangent cone to $S_4$ at a singular point of $S_4$, that is, at a double line, is a quadratic cone of dimension 4. This cone is the set of conics tangent to the support of the double line. It contains $D_2$ and the tangent plane to $D_2$ at the
point considered. One may regard it also as the set of all classes comprised of a sum of a representative of a point of this tangent plane, and of a representative of a point of $D_2$.\par\noindent
(2.4) The maximal linear subvarieties of $S_4$ are the planes comprising the following two disjoint sets:\par
(2.4a) The set of planes $\pi_x:x\in P$ (the plane corresponding to the dual of $R_1)$ where $\pi_x=\{\gamma:\gamma\in S_4,x$ is a singular point of  $\gamma$.\}\par
(2.4b) The set of planes $\pi_\delta: \delta \in P^\ast$ where $\pi_\delta =\{\gamma, \gamma\in S_4$, such that $\delta$ is an irreducible component of $\gamma.\}$\par
One has the following properties where, if $\gamma \in S_4-D_2,$ $ m_\gamma$ denotes the unique singular point of $\gamma$ and $T_\gamma(S_4)$ denotes the hyperplane of $C_5$ tangent at $\gamma$ to $S_4$:\par
\begin{enumerate}[(i)]
\item $\forall \gamma \in S_4-D_2, \pi_{m_\gamma}-D_2=\{\gamma': \gamma'\in S_4-D_2, T_{\gamma'}S_4=T_{\gamma}S_4\}$. Also, $\pi_{m_\gamma}=\overline{\pi_{m_\gamma}-D_2}$.
\item $\forall x,y\in P, x\neq y \rightarrow \pi_x\cap \pi_y=\{u_{xy}\}$, where $u_{xy}$ is the element of $D_2$ whose support contains $x$ and $y$.
\item $\forall \delta\in P^\ast, \pi_\delta$ is the tangent plane to $D_2$ at the point whose support is $\delta$.
\item $\forall \delta,\delta' \in P^\ast, \delta\neq \delta' \Rightarrow \pi_\delta\cap \pi_{\delta'}=\{v_{\delta\delta'}\}$ where $ v_{\delta\delta'}$ is the element of $S_4-D_2$ having as irreducible components $\delta$ and $\delta'$.
\end{enumerate}
\par\noindent
(2.5) Consider the action of $\PGL(5)$ on the projective space of conics $C_5$. The preceding considerations show 
 the following assertion:\par
Let $\sigma\in \PGL(5)=\mathrm{Aut}(C_5)$, the following conditions are equivalent:
\begin{enumerate}[(i)]
\item The automorphism $\sigma$ arises from an element of $Pgl(3)$ acting on $\mathrm{Proj}(R_2)$;
\item $\sigma$ stabilizes $S_4$ (maps $S_4$ onto $S_4$);
\item $\sigma$ stabilizes $D_2$.
\end{enumerate}
For the proofs keep in mind that $D_2$ is the singular locus of $S_4$.

\section{Pencils of conics}

Before discussing the classification of the planes of $C_5$, we will present the classification of the lines of $C_5$. Table 2 in Section 3.1 sets out this classification. The accompanying proof uses as main tools a weak form of Noether's theorem and Bezout's theorem. In Section 3.2, we follow this classification by determining the specializations, which are listed in Table 3, p. \pageref{table3}.

\subsection{Classification of pencils}

The notations are those in \cite{Ful}. We need the following version of Noether's theorem that can be deduced immediately from \cite[p.121, prop.1]{Ful} 

\begin{proposition}
Let $F$ and $G$ be two distinct plane projective curves with the same degree $n$, where $F$ is assumed to be smooth. Let $H$ be a plane projective curve of degree $n$. For $H$ to belong to the pencil generated by $F$ and $G$, it is necessary and sufficient that for every point $M$ in the intersection of the supports of $F$ and $G$, the intersection number $I(M; F\cap H)\geq I(M;F\cap G).$ 
\end{proposition}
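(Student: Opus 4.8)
The plan is to prove the two implications separately: necessity by a direct computation in local rings, and sufficiency as a consequence of Noether's fundamental theorem (the $AF+BG$ theorem) via \cite[p.121, Prop.~1]{Ful}. First I would record a preliminary reduction. Since $F$ is smooth it is irreducible: any factorization $F=F_1F_2$ into nonconstant forms would force $F_1$ and $F_2$ to meet (by B\'ezout) in a point that is then singular on $F$. As $F\neq G$ and $\deg F=\deg G=n$, an irreducible $F$ can share no component with $G$, so $F\cap G$ is finite and $\sum_M I(M;F\cap G)=n^2$. This is what makes both Noether's theorem and the local computations below legitimate.

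For necessity, suppose $H=\lambda F+\mu G$ lies in the pencil and fix $M\in F\cap G$, so that automatically $H(M)=0$. Working in the local ring $\mathcal{O}_M=\mathcal{O}_{\mathbb{P}^2,M}$ with local equations $f,g,h$ of $F,G,H$, one has $h=\lambda f+\mu g$ in $\mathcal{O}_M$ and $I(M;F\cap H)=\dim_{\sf k}\mathcal{O}_M/(f,h)$. If $\mu\neq 0$ then $\mu$ is a unit and $(f,h)=(f,\lambda f+\mu g)=(f,g)$, whence $I(M;F\cap H)=I(M;F\cap G)$; if $\mu=0$ then $H=\lambda F$ shares the component $F$ and $I(M;F\cap H)=\infty$. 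In either case the asserted inequality holds, so this direction is essentially formal.

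For sufficiency, assume $I(M;F\cap H)\geq I(M;F\cap G)$ for every $M\in F\cap G$. Here I would invoke \cite[p.121, Prop.~1]{Ful} in the form: at a point $M$ that is a simple point of $F$, the inequality $I(M;F\cap H)\geq I(M;F\cap G)$ implies that Noether's conditions for $F,G,H$ are satisfied at $M$. Because $F$ is smooth, \emph{every} $M\in F\cap G$ is a simple point of $F$, so the hypothesis guarantees that Noether's conditions hold at all points of $F\cap G$. Noether's fundamental theorem then produces forms $A,B$ with $H=AF+BG$, $\deg A=\deg H-\deg F=0$ and $\deg B=\deg H-\deg G=0$; thus $A,B$ are scalars and $H\in\langle F,G\rangle$, i.e.\ $H$ belongs to the pencil (it is nonzero, so the coefficients do not both vanish).

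The one delicate point, and the step I expect to be the main obstacle, is the sufficiency bookkeeping: one must verify that at a simple point of $F$ the single intersection-multiplicity inequality really does certify Noether's conditions (this is precisely where smoothness of $F$ enters, replacing the transversality and distinct-tangent hypotheses appearing in the other cases of \cite[Prop.~1]{Ful}), and that the degree count $\deg A=\deg B=0$ forces $A,B$ to be constants, so that $H$ lands in the pencil rather than in a larger linear system spanned by $F,G$ together with lower-degree multipliers. Everything else reduces to the standard properties of intersection numbers.
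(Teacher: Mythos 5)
Your proposal is correct and is exactly the deduction the paper intends: the paper gives no written proof, simply stating that the proposition follows immediately from \cite[p.~121, Prop.~1]{Ful}, whose case (2) (``$P$ a simple point of $F$ and $I(P;H\cap F)\geq I(P;G\cap F)$ implies Noether's conditions at $P$'') is precisely the ``delicate point'' you flag, so no further verification is needed there. Your necessity computation and the degree count $\deg A=\deg B=0$ forcing $A,B$ to be scalars are both sound.
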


In Table \ref{table2}, $U=\langle F,G\rangle$ denotes a line of $C_5$. The first diagram represents $U$ marked by points of $U\cap S_4$ and, possibly, of $U\cap D_2$. The points of $U\cap D_2$ are represented by \begin{tikzpicture}[scale=0.2]
\draw (0,0) circle (1);
\draw (-1.5,0) -- (1.5,0);
\filldraw (0,0) circle (5pt);
\end{tikzpicture}. The points of $U\cap S_4$ which do not belong to $D_2$ are represented by \begin{tikzpicture}[baseline={([yshift=-.8ex]current bounding box.center)},scale=0.2]
\draw (-1.5,0) -- (1.5,0);
\filldraw (0,0) circle (5pt);
\end{tikzpicture}. When a point of $U$ intervenes without belonging to $S_4$, it is noted \begin{tikzpicture}[scale=0.15]
\draw (-1.5,0) -- (1.5,0);
\draw (0,-1.5) -- (0,1.5);
\end{tikzpicture}.
When a point of $U\cap S_4$ has multiplicity $k$ in this intersection, it is repeated $k$ times. 

The second diagram represents the relative situation in the ambient plane $P$ of the conics which correspond to the points in the first diagram. Finally, a system of equations of these conics is provided for a suitably chosen coordinate system. However, in part II of the table, all the points of $U$ belong to $S_4$; we only indicate those which intervene in the chosen presentation. We continue, however, to systematically point out the elements of $U\cap D_2$.

\begin{table}
\caption{Classification of pencils of conics}\label{table2}
\begin{enumerate}[I.]
    \item \underline{$U\not\subset S_4$}.
    
\newcolumntype{L}[1]{>{\raggedright\arraybackslash}m{#1}}
\newcolumntype{C}[1]{>{\centering\arraybackslash}m{#1}}
\newcolumntype{R}[1]{>{\raggedleft\arraybackslash}m{#1}}

{
\setlength{\extrarowheight}{30pt}
\begin{tabular}{ L{3cm} C{3.5cm} L{9cm} } 
 \begin{tikzpicture}[scale=0.4]
\draw (-2.3,0) -- (2.3,0);
\filldraw[black] (-1.5,0) circle (4pt) node[anchor=south] {A};
\filldraw[black] (0,0) circle (4pt) node[anchor=south] {B};
\filldraw[black] (1.5,0) circle (4pt) node[anchor=south] {C};
\draw (2.3,0) node[anchor=west] {$U$};
\draw (-2.3,0) node[anchor=east] {a)};
\end{tikzpicture}
& \begin{tikzpicture}[x=0.75pt,y=0.75pt,yscale=-1,xscale=1] 

\draw    (296.5,51.67) -- (390,123) ;
\draw    (394.5,53.67) -- (314.5,134.67) ;
\draw    (282.5,86.67) -- (381.5,109.67) ;
\draw    (290.5,108.67) -- (404.5,75.67) ;
\draw    (346.5,32.67) -- (348,139) ;
\draw    (332.5,23.67) -- (381.5,134.67) ;
\draw    (305.5,92.25) .. controls (301.5,94.25) and (310.5,106.25) .. (311.5,102.25) ;
\draw    (349.5,98.25) .. controls (349.5,101.25) and (357.5,101.25) .. (358.5,97.25) ;
\draw    (346.5,42.25) .. controls (343.5,37.25) and (339.5,39.25) .. (341.5,44.25) ;

\draw (284.5,89.67) node [anchor=north west][inner sep=0.75pt]   [align=left] {B};
\draw (351,107) node [anchor=north west][inner sep=0.75pt]   [align=left] {C};
\draw (338,12) node [anchor=north west][inner sep=0.75pt]   [align=left] {A};

\end{tikzpicture} & $(A,B,C)=((X^2-Z^2),(Y^2-Z^2),(X^2-Y^2))$ \\

\begin{tikzpicture}[scale=0.4]
\draw (-2.3,0) node[anchor=east] {b)};
\draw (-2.3,0) -- (2.3,0);
\filldraw[black] (-1.5,-0.2) circle (4pt) ;
\filldraw[black] (0,0) circle (4pt) node[anchor=south] {A};
\filldraw[black] (-1.5,0.2) circle (4pt)node[anchor=south] {2B};
\draw (2.3,0) node[anchor=west] {$U$};
\end{tikzpicture}  & 
\tikzset{every picture/.style={line width=0.75pt}} 

\begin{tikzpicture}[x=0.75pt,y=0.75pt,yscale=-1,xscale=1]

\draw    (282.5,86.67) -- (395.5,111.66) ;
\draw    (290.5,108.67) -- (393.5,81.66) ;
\draw    (351.5,27.66) -- (323.5,132.66) ;
\draw    (332.5,23.67) -- (381.5,134.67) ;
\draw    (305.5,92.25) .. controls (301.5,94.25) and (310.5,106.25) .. (311.5,102.25) ;
\draw    (346.5,42.25) .. controls (343.5,37.25) and (339.5,39.25) .. (341.5,44.25) ;
\draw (284.5,89.67) node [anchor=north west][inner sep=0.75pt]   [align=left] {B};
\draw (338,12) node [anchor=north west][inner sep=0.75pt]   [align=left] {A};

\end{tikzpicture}
&$(B,A)=((X^2-Z^2),(ZY))$\\
\begin{tikzpicture}[scale=0.4]
\draw (-2.3,0) node[anchor=east] {c)};
\draw (-2.3,0) -- (2.3,0);
\filldraw[black] (-1.5,-0.3) circle (1.5pt) ;
\draw (-1.5,-0.3) circle (0.25);
\filldraw[black] (0,0) circle (4pt) node[anchor=south] {A};
\filldraw[black] (-1.5,0.3) circle (1.5pt);
\draw (-1.5,0.3) circle (0.25)node[anchor=south] {2B};
\draw (2.3,0) node[anchor=west] {$U$};
\end{tikzpicture}   & 
\begin{tikzpicture}[x=0.75pt,y=0.75pt,yscale=-1,xscale=1]

\draw    (43,106.37) -- (148,106.37) ;
\draw    (41,97.37) -- (147,97.37) ;
\draw    (104,36.37) -- (83,142.37) ;
\draw    (88.5,30.67) -- (137.5,141.67) ;
\draw    (154.5,95.48) .. controls (159.5,97.48) and (159.5,108.48) .. (154,109.37) ;
\draw    (102,47.37) .. controls (99,42.37) and (94,42.37) .. (96,47.37);
\draw (161.5,93.67) node [anchor=north west][inner sep=0.75pt]   [align=left] {B};
\draw (94,19) node [anchor=north west][inner sep=0.75pt]   [align=left] {A};
\end{tikzpicture}
& $(A,B)=((XY),(Z^2))$\\

\begin{tikzpicture}[scale=0.4]
\draw (-2.3,0) node[anchor=east] {d)};
\draw (-2.3,0) -- (2.3,0);
\filldraw[black] (-1.5,-0.3) circle (3pt) ;
\filldraw[black] (-1.5,0) circle (3pt);
\filldraw[black] (0,-0.4)-- (0,0.4) node[anchor=south] {C};
\filldraw[black] (-1.5,0.3) circle (3pt) node[anchor=south] {3B};
\draw (2.3,0) node[anchor=west] {$U$};
\end{tikzpicture} &
\begin{tikzpicture}[x=0.75pt,y=0.75pt,yscale=-1,xscale=1]

\draw    (41,98.37) -- (147,98.37) ;
\draw    (59,70) -- (59,130) ;
\draw    (51,98.28) .. controls (51,89.28) and (59,87.28) .. (59,87.28) ;
\draw   (59,98.25) .. controls (59,89.69) and (72.21,82.75) .. (88.5,82.75) .. controls (104.79,82.75) and (118,89.69) .. (118,98.25) .. controls (118,106.81) and (104.79,113.75) .. (88.5,113.75) .. controls (72.21,113.75) and (59,106.81) .. (59,98.25) -- cycle ;
\draw (31.5,77.67) node [anchor=north west][inner sep=0.75pt]   [align=left] {B};
\draw (110,62) node [anchor=north west][inner sep=0.75pt]   [align=left] {C};
\end{tikzpicture}
& $(B,C)=((XY),(XZ-Y^2))$\\
\begin{tikzpicture}[scale=0.4]
\draw (-2.3,0) node[anchor=east] {e)};
\draw (-2.3,0) -- (2.3,0);
\filldraw[black] (-1.5,-0.5) circle (1.5pt) ;
\draw (-1.5,-0.5) circle (0.25);
\filldraw[black] (-1.5,0) circle (1.5pt) ;
\draw (-1.5,0) circle (0.25);
\filldraw[black] (0,-0.4)-- (0,0.4) node[anchor=south] {C};
\filldraw[black] (-1.5,0.5) circle (1.5pt);
\draw (-1.5,0.5) circle (0.25)node[anchor=south] {3B};
\draw (2.3,0) node[anchor=west] {$U$};
\end{tikzpicture} &

\tikzset{every picture/.style={line width=0.75pt}} 

\begin{tikzpicture}[x=0.75pt,y=0.75pt,yscale=-1,xscale=1]

\draw    (60,64.75) -- (60,132.28) ;
\draw    (66,136.98) .. controls (64,143.98) and (52,141.63) .. (53,136.98) ;
\draw   (60,98.25) .. controls (60,89.69) and (73.21,82.75) .. (89.5,82.75) .. controls (105.79,82.75) and (119,89.69) .. (119,98.25) .. controls (119,106.81) and (105.79,113.75) .. (89.5,113.75) .. controls (73.21,113.75) and (60,106.81) .. (60,98.25) -- cycle ;
\draw    (57,64.75) -- (57,132.28) ;

\draw (51.5,146.67) node [anchor=north west][inner sep=0.75pt]   [align=left] {B};
\draw (110,62) node [anchor=north west][inner sep=0.75pt]   [align=left] {C};

\end{tikzpicture}
& $(B,C)=((X^2),(XZ-Y^2))$

\end{tabular}
}
\end{enumerate}
\end{table}

 \begin{table}

\begin{enumerate}
\item[II.] \underline{$U\subset S_4$}.

\newcolumntype{L}[1]{>{\raggedright\arraybackslash}m{#1}}
\newcolumntype{C}[1]{>{\centering\arraybackslash}m{#1}}
\newcolumntype{R}[1]{>{\raggedleft\arraybackslash}m{#1}}

{
\setlength{\extrarowheight}{30pt}
\begin{tabular}{ L{3cm} C{3.5cm} L{9cm} } 
\begin{tikzpicture}[scale=0.4]
\draw (-2.3,0) -- (2.3,0);
\filldraw[black] (-1.5,0) circle (4pt) node[anchor=south] {A};
\filldraw[black] (0,0) circle (4pt) node[anchor=south] {B};
\draw (2.3,0) node[anchor=west] {$U$};
\draw (-2.3,0) node[anchor=east] {f)};
\end{tikzpicture}  &

\tikzset{every picture/.style={line width=0.75pt}} 

\begin{tikzpicture}[x=0.75pt,y=0.75pt,yscale=-1,xscale=1]

\draw    (41,98.37) -- (147,98.37) ;
\draw    (48,84.88) -- (119,128.8) ;
\draw    (51,98.28) .. controls (51,92.88) and (54,90.88) .. (57,90.88) ;
\draw    (72,127.88) -- (147,85.8) ;
\draw    (140,89.8) .. controls (143,88.8) and (142,97.8) .. (140,97.8) ;

\draw (153.5,84.67) node [anchor=north west][inner sep=0.75pt]   [align=left] {B};
\draw (35,81) node [anchor=north west][inner sep=0.75pt]   [align=left] {A};

\end{tikzpicture}
& $(A,B)=((XY),(XZ))$\\

\begin{tikzpicture}[scale=0.4]
 \draw (-2.3,0) node[anchor=east] {g)};
\draw (-2.3,0) -- (2.3,0);
\filldraw[black] (-1.5,0) circle (3pt) ;
\draw (-1.5,0) circle (0.3) node[anchor=south] {A};
\filldraw[black] (0,0) circle (4pt) node[anchor=south] {B};
\draw (2.3,0) node[anchor=west] {$U$};
\end{tikzpicture} & 

\tikzset{every picture/.style={line width=0.75pt}} 

\begin{tikzpicture}[x=0.75pt,y=0.75pt,yscale=-1,xscale=1]

\draw    (41,96.37) -- (120,96.37) ;
\draw    (37,119.2) -- (43.61,112.59) -- (105,51.2) ;
\draw    (50,106.43) .. controls (47,108.98) and (43,96.98) .. (48,96.98) ;
\draw    (100,92.98) -- (140,92.8) ;
\draw    (145,87.98) .. controls (151,89.98) and (151,102.98) .. (144,105.98) ;
\draw    (100,99.98) -- (140,99.8) ;

\draw (26.5,101.67) node [anchor=north west][inner sep=0.75pt]   [align=left] {B};
\draw (155,87) node [anchor=north west][inner sep=0.75pt]   [align=left] {A};

\end{tikzpicture}
& $(A,B)=((X^2),(XY))$\\

\begin{tikzpicture}[scale=0.4]
\draw (-2.3,0) node[anchor=east] {h)};
\draw (-2.3,0) -- (2.3,0);
\filldraw[black] (-1.5,0) circle (3pt) ;
\draw (-1.5,0) circle (0.3) node [anchor= south]{B};
\filldraw[black] (0,0) circle (3pt);
\draw (0,0) circle (0.3) node [anchor= south]{C};
\draw (2.3,0) node[anchor=west] {$U$};
\end{tikzpicture}   &

\tikzset{every picture/.style={line width=0.75pt}} 

\begin{tikzpicture}[x=0.75pt,y=0.75pt,yscale=-1,xscale=1]

\draw    (43,106.37) -- (148,106.37) ;
\draw    (42,99.37) -- (148,99.37) ;
\draw    (87,37.38) -- (87,141.38) ;
\draw    (93,37.37) -- (93,141.38) ;
\draw    (154.5,95.48) .. controls (159.5,97.48) and (159.5,108.48) .. (154,109.37) ;
\draw    (99,144.38) .. controls (95,150.38) and (84,150.38) .. (83,143.38) ;

\draw (161.5,93.67) node [anchor=north west][inner sep=0.75pt]   [align=left] {B};
\draw (84,154) node [anchor=north west][inner sep=0.75pt]   [align=left] {C};

\end{tikzpicture}
& $(A,B)=((X^2),(Y^2))$

\end{tabular}
}
\end{enumerate}
\end{table}

\paragraph{Exhaustion of the classes of pencils of conics not contained in $S_4$.} 

We will apply the previous proposition to all the nets of conics not entirely contained in $S_4$ to find in each case a particular algebraic representation of a system of generators.

Let $U$ be such a pencil and $F$ and $G$ two generators of $U$, $F$ chosen to be smooth. By Bezout's theorem  \cite[\S 5.3 p. 57]{Ful} the intersection of $F$ and $G$ can consist of: 
\begin{enumerate}
    \item four distinct points, each of multiplicity one. According to the proposition, the conics of $U$ are those which pass through these four points. Three of these points are never aligned. We can therefore choose a homogeneous coordinate system where these points have the following representations: $(1,1,1), \ (-1,1,1)\ (1,-1,1),\ (1,1,-1)$. Three (and only three) singular conics pass through these four points. They have for equations $S:\ (X^2-Y^2), (Y^2-Z^2), (Z^2-X^2)$ and two of them determine the pencil. 
    \item three distinct points $J$, $K$ and $L$ such that $I(J;F\cap G)= I(K; F\cap G)=1$ and $ I(L;F\cap G)=2$. $J,\ K,\ L$ cannot be aligned. Let $T$ be the tangent to $F$ in $J$; $T$ cannot contain either of $J,K$. 
    We can choose a homogeneous coordinate system such that the equation for $T$ is $Z=0$ and the points $J,\ K,\ L$ have coordinates $(0,1,0),\ (1,0,1),\ (-1,0,1)$ respectively. We verify that the only two singular conics that intersect $F$ in $J,K,L$ with the suitable multiplicities to belong to $U=(\kappa ZY+\mu(X^2-Z^2))$ have equations $(ZY=0)$ and $(X^2-Z^2=0)$. The condition for which this conic is singular is written: $\kappa^2\mu=0$. In the intersection between $U$ with $S_4$, the conic $(ZY)$ has multiplicity one and $(X^2-Z^2)$ has multiplicity 2. This explains case (b) in Table
    \ref{table2}.     
    
    In the following cases, we only give a sketch of proof.
    
    \item two distinct points $J$ and $K$ with multiplicity 2, for example $J=(0,1,0)$ and $K=(1,0,0)$. Let $R$ and $S$ be the tangents to $F$ in $J$ and $K$ respectively. We choose $Z=0$ for the equation of $JK$, $X=0$ for the equation of $R$ and $Y=0$ for the equation of $S$. The only singular conics of the pencil $(\kappa Z^2+\mu XY)$  have for equations: $Z^2=0$ and $XY=0$. The condition for which this conic is singular is written $\kappa^2\mu=0$, which explains the result in case c) in Table \ref{table2}. 
    
    \item one point $J$ with multiplicity three and one single point $K$, as $J=(0,0,1)$ and $K=(1,0,0)$. Let $R$ and $S$ be the tangents to $F$ in $J$ and $K$ respectively. In a suitable homogeneous coordinate system, $R$, $JK$ and $S$ have equations $X=0$, $Y=0$ and $Z=0$ respectively , and $F$ has equation $XZ-Y^2=0$. $(XY)$ is the only singular conic of $U=(\kappa XY+\mu (XZ-Y^2))$, and the condition for which this conic is singular is written $\mu^3=0$, which explains the result in case d) in Table~\ref{table2}. 
    
    \item one point of multiplicity 4. With the conventions of the previous case, the only singular conic of the pencil $U=\kappa X^2+\mu(XZ-Y^2)$ has equation $(X^2=0)$ and we find all the results of case e) in Table \ref{table2}. 
\end{enumerate} 

\paragraph{Exhaustion of the classes of pencils of conics contained in $S_4$} Let $\alpha$ and $\beta$ be lines in $P$. We denote by $v_{\alpha\beta}$ the element of $S_4$ whose irreducible components are $\alpha$ and $\beta$ if $\alpha\neq \beta$ and the element of $D_2$ whose support is $\alpha$ if   $\alpha= \beta$.

From the previous examination, and by elimination, we can affirm that two singular conics $v_{\alpha \alpha'}$ and $v_{\beta \beta'}$ determine a pencil contained in $S_4$ if and only if they admit a common irreducible component or a common singular point, these conditions not being exclusive. The conjunction of the two conditions gives the case g) of Table \ref{table2}, the first condition without the second gives the case f) and the second without the first gives case h).

\paragraph{General classification of pencils of projective plane conics}  According to the preceding discussion, the table presented is exhaustive. On the other hand, it is without repetition and therefore constitutes a classification: indeed, according to the remarks given in Section 2, it becomes clear that for two pencils $U$ and $U'$ to belong to the same orbit under $Pgl(3)$, $U\cap D_2$ and $U'\cap D_2$ on the one hand and $U\cap S_4$ and $U'\cap S_4$ on the other hand, must have the same isomorphism type under linear maps: the types of these intersections described in the different cases are all distinct. 
\subsection{Specialization of pencils}
See Table \ref{table3} on the following page.\par
\begin{table}
\caption{Specializations of orbits of pencils in $C_5$.}\label{table3}

\tikzset{every picture/.style={line width=0.75pt}} 

\begin{tikzpicture}[x=0.75pt,y=0.75pt,yscale=-0.8,xscale=0.8]

\draw    (158.05,56.2) -- (231.57,147.99) ;
\draw    (252.67,81.79) -- (155.49,141.1) ;
\draw    (136.02,86.79) -- (236.49,138.56) ;
\draw    (138.71,109.1) -- (257.31,104.58) ;
\draw    (212.12,50.07) -- (185.98,153.14) ;
\draw    (199.74,37.72) -- (220.51,157.26) ;
\draw    (199.5,163.37) -- (199.5,199.37) ;
\draw [shift={(199.5,201.37)}, rotate = 270] [color={rgb, 255:red, 0; green, 0; blue, 0 }  ][line width=0.75]    (10.93,-3.29) .. controls (6.95,-1.4) and (3.31,-0.3) .. (0,0) .. controls (3.31,0.3) and (6.95,1.4) .. (10.93,3.29)   ;
\draw    (249.37,261.84) -- (149.14,315.82) ;
\draw    (132.62,260.55) -- (230.16,317.66) ;
\draw    (210.59,227.98) -- (178.94,329.49) ;
\draw    (198.89,214.98) -- (213.19,335.46) ;
\draw    (211.5,351.78) -- (240.07,379.6) ;
\draw [shift={(241.5,381)}, rotate = 224.24] [color={rgb, 255:red, 0; green, 0; blue, 0 }  ][line width=0.75]    (10.93,-3.29) .. controls (6.95,-1.4) and (3.31,-0.3) .. (0,0) .. controls (3.31,0.3) and (6.95,1.4) .. (10.93,3.29)   ;
\draw    (181.5,350.78) -- (152.93,378.6) ;
\draw [shift={(151.5,380)}, rotate = 315.76] [color={rgb, 255:red, 0; green, 0; blue, 0 }  ][line width=0.75]    (10.93,-3.29) .. controls (6.95,-1.4) and (3.31,-0.3) .. (0,0) .. controls (3.31,0.3) and (6.95,1.4) .. (10.93,3.29)   ;
\draw    (62,466.37) -- (167,466.37) ;
\draw    (60,457.37) -- (166,457.37) ;
\draw    (123,396.37) -- (102,502.37) ;
\draw    (107.5,390.67) -- (156.5,501.67) ;
\draw    (236,451.37) -- (342,451.37) ;
\draw    (255,417.75) -- (254,498.28) ;
\draw   (254,451.25) .. controls (254,442.69) and (267.21,435.75) .. (283.5,435.75) .. controls (299.79,435.75) and (313,442.69) .. (313,451.25) .. controls (313,459.81) and (299.79,466.75) .. (283.5,466.75) .. controls (267.21,466.75) and (254,459.81) .. (254,451.25) -- cycle ;
\draw    (126.5,519.37) -- (156.11,550.13) ;
\draw [shift={(157.5,551.57)}, rotate = 226.09] [color={rgb, 255:red, 0; green, 0; blue, 0 }  ][line width=0.75]    (10.93,-3.29) .. controls (6.95,-1.4) and (3.31,-0.3) .. (0,0) .. controls (3.31,0.3) and (6.95,1.4) .. (10.93,3.29)   ;
\draw    (277.5,520) -- (247.92,549.16) ;
\draw [shift={(246.5,550.57)}, rotate = 315.4] [color={rgb, 255:red, 0; green, 0; blue, 0 }  ][line width=0.75]    (10.93,-3.29) .. controls (6.95,-1.4) and (3.31,-0.3) .. (0,0) .. controls (3.31,0.3) and (6.95,1.4) .. (10.93,3.29)   ;
\draw    (169,561.75) -- (169,629.28) ;
\draw   (169,595.25) .. controls (169,586.69) and (182.21,579.75) .. (198.5,579.75) .. controls (214.79,579.75) and (228,586.69) .. (228,595.25) .. controls (228,603.81) and (214.79,610.75) .. (198.5,610.75) .. controls (182.21,610.75) and (169,603.81) .. (169,595.25) -- cycle ;
\draw    (166,561.75) -- (166,629.28) ;
\draw    (64.52,720.68) -- (140.88,792.74) ;
\draw    (68.6,714.9) -- (145.69,787.65) ;
\draw    (143.86,700.7) -- (72.49,776.34) ;
\draw    (148.24,704.81) -- (76.85,780.46) ;
\draw    (220,739.37) -- (290.5,739.75) ;
\draw    (227,725.88) -- (298,769.8) ;
\draw    (251,768.88) -- (326,726.8) ;
\draw    (147,914) -- (252,914) ;
\draw    (146,911) -- (252,911) ;
\draw    (191,849.02) -- (191,953.02) ;
\draw    (191,901.02) .. controls (195.5,897.78) and (200.5,904.78) .. (199,908) ;
\draw    (146,908) -- (252,908) ;
\draw    (265,743.37) -- (321.5,742.75) ;
\draw    (180.5,650.78) -- (151.91,679.52) ;
\draw [shift={(150.5,680.93)}, rotate = 314.86] [color={rgb, 255:red, 0; green, 0; blue, 0 }  ][line width=0.75]    (10.93,-3.29) .. controls (6.95,-1.4) and (3.31,-0.3) .. (0,0) .. controls (3.31,0.3) and (6.95,1.4) .. (10.93,3.29)   ;
\draw    (221.5,650.78) -- (249.11,679.49) ;
\draw [shift={(250.5,680.93)}, rotate = 226.11] [color={rgb, 255:red, 0; green, 0; blue, 0 }  ][line width=0.75]    (10.93,-3.29) .. controls (6.95,-1.4) and (3.31,-0.3) .. (0,0) .. controls (3.31,0.3) and (6.95,1.4) .. (10.93,3.29)   ;
\draw    (121.5,811.37) -- (149.12,840.43) ;
\draw [shift={(150.5,841.88)}, rotate = 226.46] [color={rgb, 255:red, 0; green, 0; blue, 0 }  ][line width=0.75]    (10.93,-3.29) .. controls (6.95,-1.4) and (3.31,-0.3) .. (0,0) .. controls (3.31,0.3) and (6.95,1.4) .. (10.93,3.29)   ;
\draw    (260.5,811) -- (232.92,838.47) ;
\draw [shift={(231.5,839.88)}, rotate = 315.12] [color={rgb, 255:red, 0; green, 0; blue, 0 }  ][line width=0.75]    (10.93,-3.29) .. controls (6.95,-1.4) and (3.31,-0.3) .. (0,0) .. controls (3.31,0.3) and (6.95,1.4) .. (10.93,3.29)   ;

\draw (438,66) node [anchor=north west][inner sep=0.75pt]   [align=left] {Dimension of the orbit};
\draw (499,96) node [anchor=north west][inner sep=0.75pt]   [align=left] {8};
\draw (500,265) node [anchor=north west][inner sep=0.75pt]   [align=left] {7};
\draw (500,436) node [anchor=north west][inner sep=0.75pt]   [align=left] {6};
\draw (501,585) node [anchor=north west][inner sep=0.75pt]   [align=left] {5};
\draw (500,725) node [anchor=north west][inner sep=0.75pt]   [align=left] {4};
\draw (499,897) node [anchor=north west][inner sep=0.75pt]   [align=left] {3};

\end{tikzpicture}

\end{table}
 \begin{enumerate}[A)]
    \item We begin by calculating the dimension of each orbit. These dimensions are indicated on Table \ref{table3}. The reasonings leading to the calculation of this dimension are, in fact simple, of the same type as those we will take up again later (\S 6.2) for the orbits of planes in $C_5$. So we do not give them here, except for the following example of type (1): To give a pencil of type 1 is equivalent to give 4 points of the projective plane, none being colinear with two others. There is a variety parametrizing subsets of the plane comprised of four points having this property, which is the quotient of an open in the product of four copies of the projective plane by a finite group, and that variety thus has dimension 8.
    \item The only specializations that we might envision given the dimension calculation, and the fact that the specialization of a type having an element of $D_2$ should, itself, contain an element of $D_2$, are the specializations indicated by the arrows of the Table, and their composites.
    \item
    One verifies by the following calculations that all the specializations
    exist.\par
    \begin{enumerate}[(1)]
  \item Existence of      
  
  \tikzset{every picture/.style={line width=0.75pt}} 
\begin{tikzpicture}[x=0.75pt,y=0.75pt,yscale=-0.9,xscale=0.9]

\draw    (169.05,58.2) -- (242.57,149.99) ;
\draw    (263.67,83.79) -- (166.49,143.1) ;
\draw    (147.02,88.79) -- (247.49,140.56) ;
\draw    (149.71,111.1) -- (268.31,106.58) ;
\draw    (223.12,52.07) -- (196.98,155.14) ;
\draw    (210.74,39.72) -- (231.51,159.26) ;
\draw    (301.5,111.55) -- (403.5,110.57) ;
\draw [shift={(405.5,110.55)}, rotate = 539.45] [color={rgb, 255:red, 0; green, 0; blue, 0 }  ][line width=0.75]    (10.93,-3.29) .. controls (6.95,-1.4) and (3.31,-0.3) .. (0,0) .. controls (3.31,0.3) and (6.95,1.4) .. (10.93,3.29)   ;
\draw    (541.37,87.37) -- (441.14,141.36) ;
\draw    (424.62,86.08) -- (522.16,143.19) ;
\draw    (502.59,53.51) -- (470.94,155.03) ;
\draw    (490.89,40.51) -- (505.19,161) ;

\draw (135,171.4) node [anchor=north west][inner sep=0.75pt]    {$\left[\left( X^{2} -Z^{2} ,Y^{2} -X^{2}\right)\right]$};
\draw (434,167.4) node [anchor=north west][inner sep=0.75pt]    {$[\left( X^{2} -Z^{2} ,( ZY)\right]$};

\end{tikzpicture}

    One considers the family of pencils: $U_t=[(X-Z)(X+tZ),(Y^2-X^2)]$. For $t\not=0,\ U_t$ belongs to the orbit of [$(X^2-Z^2),(Y^2-X^2)$], and for $t=0,\ U_0$ belongs to the orbit of [$(ZY),(X^2-Z^2)$].
    \par
    \item Existence of 

\tikzset{every picture/.style={line width=0.75pt}} 

\begin{tikzpicture}[x=0.75pt,y=0.75pt,yscale=-0.8,xscale=0.8]

\draw    (440,108.37) -- (545,108.37) ;
\draw    (438,99.37) -- (544,99.37) ;
\draw    (501,38.37) -- (480,144.37) ;
\draw    (485.5,32.67) -- (534.5,143.67) ;
\draw    (302.5,110.55) -- (404.5,109.57) ;
\draw [shift={(406.5,109.55)}, rotate = 539.45] [color={rgb, 255:red, 0; green, 0; blue, 0 }  ][line width=0.75]    (10.93,-3.29) .. controls (6.95,-1.4) and (3.31,-0.3) .. (0,0) .. controls (3.31,0.3) and (6.95,1.4) .. (10.93,3.29)   ;
\draw    (261.37,74.37) -- (161.14,128.36) ;
\draw    (144.62,73.08) -- (242.16,130.19) ;
\draw    (222.59,40.51) -- (190.94,142.03) ;
\draw    (210.89,27.51) -- (225.19,148) ;

\draw (136,162.4) node [anchor=north west][inner sep=0.75pt]    {$[( X^2-Z^2) ,( ZY)]$};
\draw (454,161.4) node [anchor=north west][inner sep=0.75pt]    {$[( XY) ,( Z^2)]$};

\end{tikzpicture}

    One considers the family of pencils $U_t=$ [$(X^2-tZ^2),(ZY)$]. The desired specialization occurs for $t=0$.
    \item Existence of 

\tikzset{every picture/.style={line width=0.75pt}} 

\begin{tikzpicture}[x=0.75pt,y=0.75pt,yscale=-0.9,xscale=0.9]

\draw    (450,116.37) -- (556,116.37) ;
\draw    (469,82.75) -- (468,163.28) ;
\draw   (468,116.25) .. controls (468,107.69) and (481.21,100.75) .. (497.5,100.75) .. controls (513.79,100.75) and (527,107.69) .. (527,116.25) .. controls (527,124.81) and (513.79,131.75) .. (497.5,131.75) .. controls (481.21,131.75) and (468,124.81) .. (468,116.25) -- cycle ;
\draw    (301.5,130.55) -- (403.5,129.57) ;
\draw [shift={(405.5,129.55)}, rotate = 539.45] [color={rgb, 255:red, 0; green, 0; blue, 0 }  ][line width=0.75]    (10.93,-3.29) .. controls (6.95,-1.4) and (3.31,-0.3) .. (0,0) .. controls (3.31,0.3) and (6.95,1.4) .. (10.93,3.29)   ;
\draw    (260.37,94.37) -- (160.14,148.36) ;
\draw    (143.62,93.08) -- (241.16,150.19) ;
\draw    (221.59,60.51) -- (189.94,162.03) ;
\draw    (209.89,47.51) -- (224.19,168) ;

\draw (135,182.4) node [anchor=north west][inner sep=0.75pt]    {$[( X^2-Z^2) ,( ZY)]$};
\draw (453,181.4) node [anchor=north west][inner sep=0.75pt]    {$\left[( XY) ,XZ-Y^{2}\right]$};

\end{tikzpicture}

    One considers the family of pencils: $U_t=[(Z(X+tY)),(ZY-X^2)]$ which realizes for $t=0$ the looked for specialization.
\newpage 
    \item Existence of

\tikzset{every picture/.style={line width=0.75pt}} 

\begin{tikzpicture}[x=0.75pt,y=0.75pt,yscale=-1,xscale=1]

\draw    (140,118.37) -- (245,118.37) ;
\draw    (138,109.37) -- (244,109.37) ;
\draw    (201,48.37) -- (180,154.37) ;
\draw    (185.5,42.67) -- (234.5,153.67) ;
\draw    (292.5,109.55) -- (394.5,108.57) ;
\draw [shift={(396.5,108.55)}, rotate = 539.45] [color={rgb, 255:red, 0; green, 0; blue, 0 }  ][line width=0.75]    (10.93,-3.29) .. controls (6.95,-1.4) and (3.31,-0.3) .. (0,0) .. controls (3.31,0.3) and (6.95,1.4) .. (10.93,3.29)   ;
\draw    (420.92,120.48) -- (488.46,120.44) ;
\draw   (454.42,120.46) .. controls (445.86,120.47) and (438.92,112.49) .. (438.91,102.64) .. controls (438.91,92.79) and (445.84,84.81) .. (454.4,84.8) .. controls (462.96,84.79) and (469.91,92.77) .. (469.91,102.62) .. controls (469.92,112.47) and (462.98,120.45) .. (454.42,120.46) -- cycle ;
\draw    (420.93,123.48) -- (488.46,123.44) ;
\end{tikzpicture}\par 
One considers the family $U_t=[(YZ-X^2-tZ^2),(Y^2)]$, which for $t=0$ realizes the specialization shown.
    \item 
    Existence of 

\tikzset{every picture/.style={line width=0.75pt}} 

\begin{tikzpicture}[x=0.75pt,y=0.75pt,yscale=-0.9,xscale=0.9]

\draw    (150,82.37) -- (256,82.37) ;
\draw    (169,48.75) -- (168,129.28) ;
\draw   (168,82.25) .. controls (168,73.69) and (181.21,66.75) .. (197.5,66.75) .. controls (213.79,66.75) and (227,73.69) .. (227,82.25) .. controls (227,90.81) and (213.79,97.75) .. (197.5,97.75) .. controls (181.21,97.75) and (168,90.81) .. (168,82.25) -- cycle ;
\draw    (436,45.75) -- (436,113.28) ;
\draw   (436,79.25) .. controls (436,70.69) and (449.21,63.75) .. (465.5,63.75) .. controls (481.79,63.75) and (495,70.69) .. (495,79.25) .. controls (495,87.81) and (481.79,94.75) .. (465.5,94.75) .. controls (449.21,94.75) and (436,87.81) .. (436,79.25) -- cycle ;
\draw    (433,45.75) -- (433,113.28) ;
\draw    (288.5,81.55) -- (390.5,80.57) ;
\draw [shift={(392.5,80.55)}, rotate = 539.45] [color={rgb, 255:red, 0; green, 0; blue, 0 }  ][line width=0.75]    (10.93,-3.29) .. controls (6.95,-1.4) and (3.31,-0.3) .. (0,0) .. controls (3.31,0.3) and (6.95,1.4) .. (10.93,3.29)   ;
\end{tikzpicture}

 One considers the family $U_t=[(Y(tX+Y)),(YZ-X^2)]$ as $t= 0$.
    \item 
    Existence of 

\tikzset{every picture/.style={line width=0.75pt}} 

\begin{tikzpicture}[x=0.75pt,y=0.75pt,yscale=-0.9,xscale=0.9]

\draw    (174,56.75) -- (174,124.28) ;
\draw   (174,90.25) .. controls (174,81.69) and (187.21,74.75) .. (203.5,74.75) .. controls (219.79,74.75) and (233,81.69) .. (233,90.25) .. controls (233,98.81) and (219.79,105.75) .. (203.5,105.75) .. controls (187.21,105.75) and (174,98.81) .. (174,90.25) -- cycle ;
\draw    (171,56.75) -- (171,124.28) ;
\draw    (435.41,43.44) -- (435.81,148.44) ;
\draw    (442.4,42.41) -- (442.81,148.41) ;
\draw    (504.56,87.17) -- (400.56,87.57) ;
\draw    (504.6,93.17) -- (400.58,93.57) ;
\draw    (270.5,90.55) -- (372.5,89.57) ;
\draw [shift={(374.5,89.55)}, rotate = 539.45] [color={rgb, 255:red, 0; green, 0; blue, 0 }  ][line width=0.75]    (10.93,-3.29) .. controls (6.95,-1.4) and (3.31,-0.3) .. (0,0) .. controls (3.31,0.3) and (6.95,1.4) .. (10.93,3.29)   ;
\end{tikzpicture}
\par The family $U_t=[(X^2),(tXZ-Y^2)]$ realizes for $t= 0$ the specialization.
    \item 
    Existence of 

\tikzset{every picture/.style={line width=0.75pt}} 

\begin{tikzpicture}[x=0.75pt,y=0.75pt,yscale=-0.9,xscale=0.9]

\draw    (148,89.37) -- (254,89.37) ;
\draw    (167,55.75) -- (166,136.28) ;
\draw   (166,89.25) .. controls (166,80.69) and (179.21,73.75) .. (195.5,73.75) .. controls (211.79,73.75) and (225,80.69) .. (225,89.25) .. controls (225,97.81) and (211.79,104.75) .. (195.5,104.75) .. controls (179.21,104.75) and (166,97.81) .. (166,89.25) -- cycle ;
\draw    (427,85.37) -- (497.5,85.75) ;
\draw    (434,71.88) -- (505,115.8) ;
\draw    (458,114.88) -- (533,72.8) ;
\draw    (472,89.37) -- (528.5,88.75) ;
\draw    (281.5,90.55) -- (383.5,89.57) ;
\draw [shift={(385.5,89.55)}, rotate = 539.45] [color={rgb, 255:red, 0; green, 0; blue, 0 }  ][line width=0.75]    (10.93,-3.29) .. controls (6.95,-1.4) and (3.31,-0.3) .. (0,0) .. controls (3.31,0.3) and (6.95,1.4) .. (10.93,3.29)   ;
\end{tikzpicture}
\par Use the family $U_t=[(XY),(YZ-tX^2)]$ for $t= 0.$
    \item 

\tikzset{every picture/.style={line width=0.75pt}} 

\begin{tikzpicture}[x=0.75pt,y=0.75pt,yscale=-0.9,xscale=0.9]

\draw    (173.52,61.68) -- (249.88,133.74) ;
\draw    (177.6,55.9) -- (254.69,128.65) ;
\draw    (252.86,41.7) -- (181.49,117.34) ;
\draw    (257.24,45.81) -- (185.85,121.46) ;
\draw    (423,84) -- (528,84) ;
\draw    (422,81) -- (528,81) ;
\draw    (467,41.02) -- (467,123.02) ;
\draw    (467,71.02) .. controls (471.5,67.78) and (476.5,74.78) .. (475,78) ;
\draw    (422,78) -- (528,78) ;
\draw    (281.5,80.55) -- (383.5,79.57) ;
\draw [shift={(385.5,79.55)}, rotate = 539.45] [color={rgb, 255:red, 0; green, 0; blue, 0 }  ][line width=0.75]    (10.93,-3.29) .. controls (6.95,-1.4) and (3.31,-0.3) .. (0,0) .. controls (3.31,0.3) and (6.95,1.4) .. (10.93,3.29)   ;
\end{tikzpicture} \newline is realized by the
    family $U_t=[(X^2),(XY+tY^2)]$ as $t= 0.$
    \item 

\tikzset{every picture/.style={line width=0.75pt}} 

\begin{tikzpicture}[x=0.75pt,y=0.75pt,yscale=-0.9,xscale=0.9]

\draw    (422,84) -- (527,84) ;
\draw    (421,81) -- (527,81) ;
\draw    (466,41.02) -- (466,123.02) ;
\draw    (466,71.02) .. controls (470.5,67.78) and (475.5,74.78) .. (474,78) ;
\draw    (421,78) -- (527,78) ;
\draw    (281.5,80.55) -- (383.5,79.57) ;
\draw [shift={(385.5,79.55)}, rotate = 539.45] [color={rgb, 255:red, 0; green, 0; blue, 0 }  ][line width=0.75]    (10.93,-3.29) .. controls (6.95,-1.4) and (3.31,-0.3) .. (0,0) .. controls (3.31,0.3) and (6.95,1.4) .. (10.93,3.29)   ;
\draw    (149,78.37) -- (219.5,78.75) ;
\draw    (156,64.88) -- (227,108.8) ;
\draw    (180,107.88) -- (255,65.8) ;
\draw    (194,82.37) -- (250.5,81.75) ;

\end{tikzpicture}
\newline  is realized by the family $U_t=[(XY),(X(tZ+X))]$ as $t= 0.$
    \end{enumerate}
\end{enumerate}
\section{Nets of conics}\label{netsection}
\setcounter{subsection}{-1}
\subsection{Introduction to \S4.1 and \S4.2: Classification of planes of conics}\label{intro4sec}\vskip 0.2cm
{\underline{Notations}:}\label{typenotation}
If $\pi$ is a plane of $C_5$ we denote by $\Gamma_\pi$ the schematic intersection of $\pi$ with $S_4$, the degenerate conics.  When that intersection is proper in $S_4$, then $\Gamma_\pi$ will be entirely characterized by the class of a cubic form in the projective space associated to $R_3$. We denote by  $\delta_\pi$ the schematic intersection of $\pi$ with the family of double lines $D_2$. 
\begin{definition}[Definition and Note]\label{typedef} One says that the sequences
$(\delta_\pi\subset \Gamma_\pi\subset \pi)$ and $(\delta_{\pi'}\subset \Gamma_{\pi'}\subset {\pi'})$ are isomorphic if there is a projective linear isomorphism from $\pi$ to $\pi'$ (that is, arising from an element of $Pgl(3)$ mapping $\mathbb P^5=\mathbb P(R_2)$ to itself) that transforms $\Gamma_\pi$ to $\Gamma_{\pi'}$ and $\delta_\pi$ to $\delta_{\pi'}$; or, equivalently, from an element of $\Pgl(5)$ stabilizing $S_4$ or $D_2$ (as in (2.5) of Section 2).
\vskip 0.2cm\par\noindent
The isomorphism class of a triple $(\delta_\pi\subset \Gamma_\pi\subset \pi)$ will be called its type. When $\Gamma_\pi$ is a cubic, a figure representing $\Gamma_\pi$ with the points of $\delta_\pi$ indicated, will symbolize the type of $(\delta_\pi\subset \Gamma_\pi\subset \pi)$. When $\Gamma_\pi=\pi$ the points or curve $ \delta$ pictured within a
hatched plane (that represents $\Gamma_\pi=\pi$) will symbolize the type of $(\delta_\pi\subset \Gamma_\pi\subset \pi)$.
\end{definition}
Examples:\quad\begin{tikzpicture}[baseline={([yshift=-.8ex]current bounding box.center)},scale=0.4]
\draw  [line width=0.75] [line join = round][line cap = round] (0,0) .. controls  (-0.5,0.4) and (-1.3,1.1) .. (-1.9,1.5) .. controls (-2.2,1.7) and (-2.8,1.5) .. (-2.85,1.1) .. controls (-2.9,0.9) and (-2.7,0.55) .. (-2.5,0.5) .. controls (-2,0.4) and (-1,0.8) .. (-0.6,0.9) .. controls (-0.5,0.94) and (0.103,1.0695) .. (0.1,1.15) ;
\draw (-0.9,0.8) circle (0.3);
\end{tikzpicture}\quad 
\begin{tikzpicture}[baseline={([yshift=-.8ex]current bounding box.center)},scale=0.4]
\draw (-0.8,1) .. controls (-0.3,0) and (0.7,0) .. (1.2,1);
\draw[color=gray] (0.5,-0.75) -- (1.5,1.25);
\draw[color=gray] (0.3,-0.7) -- (1.3,1.3);
\draw[color=gray] (0.1,-0.65) -- (1.1,1.35);
\draw[color=gray] (-0.1,-0.6) -- (0.9,1.4);
\draw[color=gray] (-0.3,-0.55) -- (0.7,1.45);
\draw[color=gray] (-0.5,-0.5) -- (0.5,1.5);
\draw[color=gray] (-0.7,-0.45) -- (0.3,1.55);
\draw[color=gray] (-0.9,-0.4) -- (0.1,1.6);
\draw[color=gray] (-1.1,-0.35) -- (-0.1,1.65);
\draw[color=gray] (-1.3,-0.3) -- (-0.3,1.7);
\draw[color=gray] (-1.5,-0.25) -- (-0.5,1.76);
\end{tikzpicture}.\par
On the other hand if $\phi$ is a smooth cubic with defining equation $F$, we will denote by $J_\phi$, the plane  of $C_5$  containing the conics of equations the partials
$F'_X,F'_Y,F'_Z,$ and we will term this the \emph{Jacobian plane} of the cubic $\phi.$
\vskip 0.2cm\par\noindent
\underline{ General introduction to the classification of the planes of $C_5$.} We show the following result. 
\setcounter{theorem}{0}
\begin{theorem}\label{mainthm}
There are two disjoint collections of isomorphism classes of planes $\pi\subset C_5$.
\begin{enumerate}[A)]
\item The set of classes of planes $\pi$ such that $\Gamma_\pi$ is smooth. For such a plane $\pi$, there is a unique cubic $\phi$ such that $\pi=J_\phi$. This is a smooth cubic, not isomorphic to the cubic $(X^3+Y^3+Z^3)$. The isomorphism class of the plane is completely determined by the isomorphism class of $\phi$. If $\sf j$ is the fundamental invariant of $\phi$ we will denote the corresponding class $\pi$ by the symbol 
\begin{tikzpicture}[scale=0.3]
\draw (0,0) circle (0.3);
\draw    (0.7,0.8) .. controls  (0,-0.3) and (1.2,0.2) .. (0.4,-0.8);
\end{tikzpicture}$_{\sf j}$, representing the smooth cubic indexed by its ${\sf j}$-invariant.
\item\label{classification} The set of classes of planes $\pi$ such that $\Gamma_\pi$ is not a smooth cubic. Each isomorphism class in this set is completely determined by the type of the sequence $(\delta_\pi\subset\Gamma_\pi\subset \pi)$ with respect to any one of its representatives. The admissible types are those underlying the isomorphism classes that are pictured in Table \ref{table1}.
\end{enumerate}
\end{theorem}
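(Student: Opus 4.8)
The two collections are disjoint and exhaustive by their very definition: for any net $\pi$ the scheme $\Gamma_\pi=\pi\cap S_4$ is either a smooth plane cubic or it is not, the non-smooth alternative including singular cubics as well as the degenerate situation $\pi\subset S_4$ (where $\Gamma_\pi=\pi$). So the task is to establish the claimed invariant within each collection. For Part~A I would first record the reverse direction and the exclusion of the Fermat cubic. For any cubic $\phi$ the space $J_\phi=\langle \phi'_X,\phi'_Y,\phi'_Z\rangle$ is the net of first polars, and $\phi\mapsto J_\phi$ is manifestly $Pgl(3)$-equivariant; a direct computation gives $J_{X^3+Y^3+Z^3}=\langle X^2,Y^2,Z^2\rangle$, whose discriminant is the triangle $\{abc=0\}$, which is not smooth. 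This is exactly why the Fermat cubic must be excluded. The substantive claim is the existence and uniqueness of $\phi$ when $\Gamma_\pi$ is smooth, and for this I would follow the geometric reconstruction of Sections~4.1.2--4.1.3: sending each $C\in\Gamma_\pi$ to the singular point of the line-pair $C$ identifies $\Gamma_\pi$ birationally, hence (both being smooth cubics) isomorphically, with a cubic $\mathcal{J}\subset P$ in the original plane, and $\Gamma_\pi$ carries a canonical fixed-point-free involution $\iota$. I would then recover $\phi$ as the cubic whose polar net realizes the conjugacy characterization of Proposition~\ref{4.4prop}, namely $\pi=\{v\in R_2\mid C\text{ and }\iota(C)\text{ are conjugate with respect to }v\text{ for all }C\in\Gamma_\pi\}$. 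Since $\iota$ and this linear system of conditions are intrinsic to $\pi$, the resulting $\phi$ is unique up to scalar, giving $\pi=J_\phi$ for a unique smooth $\phi$.

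Because the reconstruction $\pi\mapsto\phi$ is entirely canonical, it is $Pgl(3)$-equivariant, so $\pi\cong\pi'$ if and only if $\phi\cong\phi'$, that is if and only if ${\sf j}(\phi)={\sf j}(\phi')$ by the classical classification of smooth plane cubics. Here I would stress the point of Remark~\ref{betarem}: the invariant classifying the net is ${\sf j}(\phi)$, read off from $\pi$ through the canonical $\phi$, and \emph{not} the ${\sf j}$-invariant of $\Gamma_\pi$ itself, the map ${\sf j}(\phi)\mapsto{\sf j}(\Gamma_\pi)$ being generically $3:1$. Since a smooth $\Gamma_\pi$ meets $D_2$ in no point (any point of $\Gamma_\pi\cap D_2$ would be a singular point of $\Gamma_\pi$, by Remark~\ref{rmkS2}), the type of the triple $(\delta_\pi\subset\Gamma_\pi\subset\pi)$ only records ${\sf j}(\Gamma_\pi)$ and cannot separate the three nets lying over a given value. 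This is precisely why Part~A needs its own finer invariant and is stated separately from Part~B.

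For Part~B, that the type of $(\delta_\pi\subset\Gamma_\pi\subset\pi)$ is an isomorphism invariant is immediate: an element of $Pgl(3)$ realizing $\pi\cong\pi'$ acts on $C_5$ stabilizing $S_4$ and $D_2$ (item (2.5)), hence carries $\Gamma_\pi$ to $\Gamma_{\pi'}$ and $\delta_\pi$ to $\delta_{\pi'}$. The converse, that the type determines the orbit, is the heart of the matter, and I would prove it by the geometric normal-form method of Section~4.2: for each admissible singular configuration of $\Gamma_\pi$ (node, cusp, conic-plus-line, triangle, double line, triple line, and the degenerate cases $\Gamma_\pi=\pi$) together with each admissible position of $\delta_\pi$, I would use transitivity of $Pgl(3)$ on the corresponding configurations of points and lines in $P$ to place a basis of $\pi$ in a canonical form depending on the type alone, so that two nets of the same type are carried to the same normal form. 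Running through all combinatorial possibilities for the triple yields exactly the entries of Table~\ref{table1}, and the list is irredundant because distinct rows display visibly distinct types (different singularities of $\Gamma_\pi$, or the same $\Gamma_\pi$ with different numbers or positions of circled points of $\delta_\pi$). Exhaustiveness can be double-checked by the algebraic argument of Section~5, using that every $3$-dimensional $V$ contains a $2$-dimensional subspace in the closure of the orbit of $\langle XY,X^2+YZ\rangle$, which reduces the enumeration to eliminating repetitions in a finite list.

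The two delicate steps are the reconstruction in Part~A and the normal-form step in Part~B. In Part~A the difficulty is proving that the involution $\iota$ is canonically attached to $\pi$ and fixed-point-free, and that the linear conjugacy conditions cut out a net that is equal to $\pi$ and arises from an honest cubic (Proposition~\ref{4.4prop}); this is where smoothness of $\Gamma_\pi$ is essential and where the Fermat degeneration is genuinely avoided. In Part~B the difficulty is organizational and geometric: in the most degenerate types, in particular when $\pi\subset S_4$ so that $\Gamma_\pi=\pi$ records nothing, the orbit must be separated using the finer scheme structure of $\delta_\pi$, and the case analysis must be carried out so as to neither omit a type nor count one twice.
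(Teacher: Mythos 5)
Your treatment of Part B --- case-by-case normal forms indexed by the type of $(\delta_\pi\subset\Gamma_\pi\subset\pi)$, irredundancy because the types are visibly non-isomorphic, exhaustiveness cross-checked by the intersection-theoretic argument on $\Grass(2,R_2)$ --- is exactly the paper's route (\S 4.2 together with \S 5), and your remarks on disjointness, on the Fermat exclusion via $J_{X^3+Y^3+Z^3}=\langle X^2,Y^2,Z^2\rangle$, and on ${\sf j}(\phi)$ rather than ${\sf j}(\Gamma_\pi)$ being the classifying invariant all agree with the paper.

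The gap is in your construction of $\phi$ in Part A. The paper does \emph{not} obtain $\phi$ from the involution: it first puts $\pi$ in the normal form $\langle YZ-X^2,\,(Y-Z)(Y-\alpha Z),\,XY\rangle$, $\alpha\neq 0,1$, by the pencil-type analysis of \S 4.1.1, and then in \S 4.1.2 solves explicitly the $9\times 9$ linear system \eqref{partialeq} expressing that the three generators are simultaneously the partials of one cubic; that system has rank exactly $8$, which gives existence of $F$ and uniqueness up to scalar in one stroke. Your route runs instead through Proposition \ref{4.4prop}, but that proposition (like Proposition \ref{Hesse2prop}) is stated and proved \emph{after} $\phi$ is known to exist --- part (a) uses $\phi$ and its Hessian, and even the converse part (b) invokes ``the cubic $\phi$ such that $\pi=J_\phi$''. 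Invoking it to produce $\phi$ is therefore circular unless you prove, from $\pi$ alone, that (i) the locus $H$ of singular points of the singular conics of $\pi$ is a smooth cubic, (ii) the common-conjugate relation defines a regular involution of $H$, and (iii) that involution is fixed-point-free --- the last amounting to showing that smoothness of $\Gamma_\pi$ forces the net to have no base point, which you assert but do not argue. Finally, ``the construction is intrinsic, hence $\phi$ is unique'' only shows that your recipe is well defined; the uniqueness claimed in Theorem \ref{mainthm}A is that \emph{no other} cubic $G$ satisfies $J_G=\pi$ (the fact later exploited in Proposition \ref{4prop}), and that is precisely what the one-dimensionality of the solution space in \eqref{2eq} delivers and what canonicity of a particular construction does not.
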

Demonstrating these results is the object of \S 4.1 and \S 4.2: \S 4.1 treats the case $\Gamma_\pi$ is smooth (Case 1) and \S 4.2 treats the case $\Gamma_\pi$ is singular
(Case 2-10).\vskip 0.2cm
Suppose we are given a plane $\pi$ of $C_5.$ We will organize our study in \S 4.2 following the situation of $\pi$ with respect to the locus $S_4$ of degenerate conics.\par
\paragraph{General classification of cases for $\Gamma$.}
\begin{enumerate}[I)]
\item We consider first the case $
\pi\nsubseteq S_4$. \par
In this case $\pi\cap S_4$, considered as a scheme, is a cubic $\Gamma_
\pi$. A priori, following the well known classification of cubics in the projective plane over an algebraically closed field\footnote{Establishing that classification is an easy exercise after \cite[Exercise 5.24]{Ful}.} one has to consider the following cases.
\begin{enumerate}[1)]
\item $\Gamma$ is smooth (\S 4.1): \begin{tikzpicture}[baseline={([yshift=-.8ex]current bounding box.center)},scale=0.4] 
\draw (0,0) circle (0.3);
\draw    (0.7,0.8) .. controls  (0,-0.3) and (1.2,0.2) .. (0.4,-0.8);
\end{tikzpicture};
\vskip 0.2cm\par {\bf Or} $\Gamma $ is singular\, (\S 4.2,\,\,\# 2-10):
\item $\Gamma$ is singular, with singularity a point having two distinct tangents:
\begin{tikzpicture}[scale=0.3]

\draw  [line width=0.75] [line join = round][line cap = round] (0,0) .. controls  (-0.5,0.4) and (-1.3,1.1) .. (-1.9,1.5) .. controls (-2.2,1.7) and (-2.8,1.5) .. (-2.85,1.1) .. controls (-2.9,0.9) and (-2.7,0.55) .. (-2.5,0.5) .. controls (-2,0.4) and (-1,0.8) .. (-0.6,0.9) .. controls (-0.5,0.94) and (0.103,1.0695) .. (0.1,1.15) ;
\end{tikzpicture}
;
\item $\Gamma$ is singular at a cusp:\quad \begin{tikzpicture}[baseline={([yshift=-.8ex]current bounding box.center)},scale=0.3]
\draw  [color={rgb, 255:red, 0; green, 0; blue, 0 }  ][line width=0.75] [line join = round][line cap = round] (0,1.3) .. controls (0.5,1.3) and (0.95,1) .. (1.2,0.75) .. controls (1.4,0.6) and (1.5,0.4) .. (1.7,0.15) ;
\draw  [color={rgb, 255:red, 0; green, 0; blue, 0 }  ][line width=0.75] [line join = round][line cap = round] (0,1.3) .. controls (0.5,1.3) and (0.95,1.6) .. (1.2,1.85) .. controls (1.4,2) and (1.5,2.2) .. (1.7,2.55) ;
\end{tikzpicture}
\item $\Gamma$ decomposes into a conic union a secant line:\quad\begin{tikzpicture}[baseline={([yshift=-.8ex]current bounding box.center)},scale=0.3]
\draw (0,0) circle (1);
\draw (-1.5,0) -- (1.5,0);
\end{tikzpicture};
\item $\Gamma$ decomposes into a conic union a tangent line:\quad\begin{tikzpicture}[scale=0.3]
\draw (0,0) circle (1);
\draw (-1.5,-1) -- (1.5,-1);
\end{tikzpicture}
;
\item $\Gamma$ decomposes into three non-concurrent lines: \begin{tikzpicture}[scale=0.2]
\draw (-0.4,0) -- (3.4,0);
\draw (-0.2,-0.4) -- (1.7,3.4);
\draw (3.2,-0.4) -- (1.3,3.4);
\end{tikzpicture};
\item $\Gamma$ decomposes into three concurrent lines: \begin{tikzpicture}[scale=0.4]
\draw (0,-0.5) -- (0,1);
\draw (-0.4,-0.4) -- (0.8,0.8);
\draw (0.4,-0.4) -- (-0.8,0.8);
\end{tikzpicture}
;
\item $\Gamma$ decomposes into a double line union a simple line: \begin{tikzpicture}[baseline={([yshift=-.8ex]current bounding box.center)},scale=0.15]
\draw (1,2) -- (1,-2);
\draw (-1,0.2) -- (3,0.2);
\draw (-1,-0.2) -- (3,-0.2);
\end{tikzpicture};
\item $\Gamma$ consists of a triple line:
\begin{tikzpicture}[scale=0.4]
\draw (0,0) -- (2,0);
\draw (0,-0.2) -- (2,-0.2);
\draw (0,0.2) -- (2,0.2);
\end{tikzpicture}.

\end{enumerate}
\item We consider next the case where $\pi\subset S_4$. There are two families of such planes, according to whether\par
(10a) $\delta_\pi$ consists of a parabola \begin{tikzpicture}[baseline={([yshift=-.8ex]current bounding box.center)},scale=0.4]
\draw (-0.8,1) .. controls (-0.3,0) and (0.7,0) .. (1.2,1);
\draw[color=gray] (0.5,-0.75) -- (1.5,1.25);
\draw[color=gray] (0.3,-0.7) -- (1.3,1.3);
\draw[color=gray] (0.1,-0.65) -- (1.1,1.35);
\draw[color=gray] (-0.1,-0.6) -- (0.9,1.4);
\draw[color=gray] (-0.3,-0.55) -- (0.7,1.45);
\draw[color=gray] (-0.5,-0.5) -- (0.5,1.5);
\draw[color=gray] (-0.7,-0.45) -- (0.3,1.55);
\draw[color=gray] (-0.9,-0.4) -- (0.1,1.6);
\draw[color=gray] (-1.1,-0.35) -- (-0.1,1.65);
\draw[color=gray] (-1.3,-0.3) -- (-0.3,1.7);
\draw[color=gray] (-1.5,-0.25) -- (-0.5,1.76);
\end{tikzpicture}\quad
 or\par
 (10b) $\delta_\pi$ consists of a point\quad 
\begin{tikzpicture}[baseline={([yshift=-.8ex]current bounding box.center)},scale=0.4]
\draw (0,0) circle (0.5);
\filldraw (0,0) circle (2pt);
\draw[color=gray] (-0.5,-1) -- (0.5,1);
\draw[color=gray] (-0.3,-1.1) -- (0.7,0.9);
\draw[color=gray] (-0.1,-1.2) -- (0.9,0.8);
\draw[color=gray] (0.1,-1.3) -- (1.1,0.7);
\draw[color=gray] (-0.7,-0.9) -- (0.3,1.1);
\draw[color=gray] (-0.9,-0.8) -- (0.1,1.2);
\draw[color=gray] (-1.1,-0.7) -- (-0.1,1.3);
\end{tikzpicture}.

\end{enumerate}
\subsection{The planes of conics whose associated cubic $\Gamma_\pi$ is smooth}\par
\underline{Introduction}.\par 
In this paragraph we will give first an exhaustion of the cases where $\Gamma_\pi$ is smooth, by examining the cubic
$\Gamma_\pi$  geometrically. Then we introduce from the representation of a plane $\pi$, a cubic $\phi$ such that $\pi=J_{\phi}$. Hence the classification (Section 4.1.2). Finally, we develop in Section 4.1.3 a new point of view by associating to each plane $J_{\phi}$ a third cubic $H$ endowed with a regular involution $i$ without double point such that:
$$\pi=\{\gamma : \gamma\in C_5,\ \forall x\in H,\ x \text{ and } i(x) \text{ are conjugate with respect to }\gamma \}.$$
Here $H$ is the Hessian of $\phi$, and the problem of classification done in Section~4.1 comes down to classifying the smooth cubics endowed with a regular involution without a double point.
\subsubsection{The case where the associated cubic is smooth (Case 1)}\label{4.1sec}

Let $\pi$ be a plane in $C_5$ such that $\Gamma_{\pi}$ is smooth. In what follows, we write $\Gamma$ for $\Gamma_{\pi}$. We can choose in $\pi$ distinct points $O,A,B,C,D$ (see figure below) 

\begin{center}

\tikzset{every picture/.style={line width=0.75pt}} 

\begin{tikzpicture}[x=0.75pt,y=0.75pt,yscale=-1,xscale=1]

\draw    (100,100) -- (253.5,100) ;
\draw    (100,80) -- (250.5,231) ;
\draw    (199.5,100) -- (200,204) ;
\draw    (146.5,164.33) -- (199.5,100) ;
\draw    (199.5,100) -- (235.5,228) ;
\draw   (201.32,179.91) .. controls (204.92,176.63) and (213.53,180.24) .. (220.55,187.96) .. controls (227.57,195.68) and (230.35,204.59) .. (226.75,207.87) .. controls (223.14,211.14) and (214.53,207.54) .. (207.51,199.81) .. controls (200.49,192.09) and (197.72,183.18) .. (201.32,179.91) -- cycle ;
\draw    (136.5,148) .. controls (189.5,159) and (148.5,96) .. (199.5,100) ;
\draw    (199.5,100) .. controls (216.5,100) and (219.5,92) .. (221.5,73) ;

\draw (192,77.4) node [anchor=north west][inner sep=0.75pt]    {$A$};
\draw (234,195.4) node [anchor=north west][inner sep=0.75pt]    {$D$};
\draw (182,175.4) node [anchor=north west][inner sep=0.75pt]    {$C$};
\draw (159,153.4) node [anchor=north west][inner sep=0.75pt]    {$B$};
\draw (119,81.4) node [anchor=north west][inner sep=0.75pt]    {$O$};

\end{tikzpicture}

\end{center}

\noindent verifying the following properties: $A$ is an inflection point of $\Gamma$, $OA$ is the inflectional tangent to $\Gamma$ at $A$; $B, C, D$ are aligned with $O$ and belong to $\Gamma$, $AB,$ $AC,$ $AD$ being the respective tangents to $\Gamma$ in $B, C, D$. (This is the geometric version of putting the cubic equation in Weierstrass form).

According to Remark \ref{rmkS2}, none of the points of $\pi$ belong to $D_2$, because otherwise it would be a singular point of $\Gamma$. We use the notation of Table \ref{table2} to identify the type of pencils which then appear: the pencil OBCD is of type a); the pencils $AB,\ AC,\ AD$ are of type b); $AO$ is of type d). According to Table \ref{table2}, we can choose a coordinate system where: $B=(X^2-Y^2)$, $C=(Y^2-Z^2)$, $D=(Z^2-X^2)$. The bi-ratio $(O, A, B, C)$ then determines the equation of a conic $O$ which is of the form: $\kappa X^2-(1+\kappa)Y^2+Z^2=0$, with $\kappa$ uniquely determined and $\kappa\neq 0,-1$. The pencils $AB, AC, AD$ being of type b), the singular points of $B, C, D$ lie on the irreducible components of the conic determined by $A$ outside its singular point. $OA$ being of type d), an irreducible component of $A$ must be tangent to the conic determined by $O$ at the singular point of $A$. 

\begin{center}

\tikzset{every picture/.style={line width=0.75pt}} 

\begin{tikzpicture}[x=0.75pt,y=0.75pt,yscale=-1,xscale=1]

\draw    (100,110) -- (231.5,110.27) ;
\draw    (100,160) -- (231.5,160.27) ;
\draw   (157,134.75) .. controls (157,117.21) and (171.21,103) .. (188.75,103) .. controls (206.29,103) and (220.5,117.21) .. (220.5,134.75) .. controls (220.5,152.29) and (206.29,166.5) .. (188.75,166.5) .. controls (171.21,166.5) and (157,152.29) .. (157,134.75) -- cycle ;
\draw    (170.5,90.25) -- (170.5,180.25) ;
\draw    (190.5,90.25) -- (190.5,180.25) ;
\draw    (209.5,90.25) -- (209.5,180.25) ;
\draw    (242.5,119.25) -- (227.18,129.16) ;
\draw [shift={(225.5,130.25)}, rotate = 327.09000000000003] [color={rgb, 255:red, 0; green, 0; blue, 0 }  ][line width=0.75]    (10.93,-3.29) .. controls (6.95,-1.4) and (3.31,-0.3) .. (0,0) .. controls (3.31,0.3) and (6.95,1.4) .. (10.93,3.29)   ;
\draw    (100,167) -- (231.5,167.27) ;
\draw  [color={rgb, 255:red, 0; green, 0; blue, 0 }  ][line width=0.75] [line join = round][line cap = round] (101.5,102.06) .. controls (98.64,99.45) and (92.28,101.23) .. (91.5,104.79) .. controls (90.08,111.24) and (94.13,127.89) .. (93.5,133.91) .. controls (93.32,135.65) and (90.31,139.29) .. (88.5,138.47) .. controls (85.69,137.19) and (88.15,131.47) .. (90.5,135.73) .. controls (94.08,142.25) and (87.23,165.52) .. (90.5,168.5) .. controls (91.98,169.84) and (93.52,175.77) .. (97.5,172.14) ;
\draw  [color={rgb, 255:red, 0; green, 0; blue, 0 }  ][line width=0.75] [line join = round][line cap = round] (166.35,179.84) .. controls (164.53,182.69) and (165.69,189.06) .. (168.14,189.86) .. controls (172.57,191.32) and (184.1,187.39) .. (188.24,188.06) .. controls (189.44,188.25) and (191.92,191.29) .. (191.33,193.09) .. controls (190.42,195.89) and (186.5,193.39) .. (189.47,191.07) .. controls (194,187.53) and (209.99,194.54) .. (212.07,191.29) .. controls (213.01,189.82) and (217.11,188.33) .. (214.65,184.32) ;

\draw (246,103.4) node [anchor=north west][inner sep=0.75pt]    {$O$};
\draw (233.5,163.67) node [anchor=north west][inner sep=0.75pt]    {$A_{2}$};
\draw (182.5,70.67) node [anchor=north west][inner sep=0.75pt]    {$A_{1}$};
\draw (61,126.4) node [anchor=north west][inner sep=0.75pt]    {$B$};
\draw (187,201.4) node [anchor=north west][inner sep=0.75pt]    {$C$};

\end{tikzpicture}

\end{center}

Finally, the conic $A$ is determined as follows: one of its irreducible components $A_1$ must pass through two of the singular points $B,C$ and $D$. The other component $A_2$ is the tangent to $O$ in one of the two intersection points of $A_1$ and $O$. The last condition ($A_2$ passes through the singular point of the third conic) is also ipso facto satisfied. Then there are six possibilities for $A$, two by two isomorphic: 
$$A=(Y[\kappa^{1/2}X\pm iZ]),\ (X[Z\pm(1+\kappa)^{1/2}Y]),\ (Z[\kappa^{1/2}X\pm (1+\kappa)^{1/2}Y]).$$
By reordering $B, C, D$ we therefore have in all cases the following presentation of $\pi$: $$B=(X^2-Y^2);\ C=(Y^2-Z^2);\  D=(Z^2-X^2);$$ $$A = (X[Z+(1+\kappa)^{1/2}Y]);\ O=(\kappa X^2-(1+\kappa)Y^2+Z^2).$$
A change of coordinates leads to a new representation of $\pi$: $$\pi = (YZ-X^2), (Y-Z)(Y-\alpha Z), (XY); \ \alpha \neq 0,1.$$
We have then the following proposition. 
\begin{proposition}
Let $\pi$ be a plane in $C_5$ such that $\Gamma_{\pi}$ is a smooth cubic. There exists a homogeneous coordinate system in P such that: 
$$\pi=[(YZ-X^2), (Y-Z)(Y-\alpha Z), (XY)] \text{ with } \alpha \neq 0,1.$$
Conversely, any plane $\pi$ in $C_5$ having in a suitable coordinate system such a presentation has an associated smooth cubic $\Gamma_{\pi}$.
\end{proposition}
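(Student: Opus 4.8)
The geometric construction preceding the statement already reduces any plane $\pi$ with $\Gamma_\pi$ smooth, up to the action of $Pgl(3)$, to the explicit normal form
$$\pi=\langle X^2-Y^2,\ Y^2-Z^2,\ X(Z+(1+\kappa)^{1/2}Y)\rangle,\qquad \kappa\neq 0,-1,$$
in which the member $O=\kappa X^2-(1+\kappa)Y^2+Z^2\in\pi$ is a smooth conic precisely because $\kappa\neq 0,-1$. To finish I would exhibit an explicit linear substitution in $X,Y,Z$ carrying this $\kappa$-form to the asserted $\alpha$-form. The cleanest route is to fix the target incidence data: send the smooth member $O$ to $YZ-X^2$, and normalize two suitable reducible members of $\pi$, with singular points placed at $[1:0:0]$ and $[0:0:1]$, to $(Y-Z)(Y-\alpha Z)$ and $XY$. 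Solving the resulting linear system expresses $\alpha$ as a function of $\kappa$, and one checks that $\kappa\neq 0,-1$ corresponds exactly to $\alpha\neq 0,1$ (these being the values at which $O$, hence $\Gamma_\pi$, degenerates).

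\textbf{The converse.} This is a direct computation. Writing a general element of the asserted plane as $v=a(YZ-X^2)+b(Y-Z)(Y-\alpha Z)+c\,XY$ and forming its symmetric $3\times 3$ matrix $M(a,b,c)$, one finds, up to a nonzero scalar,
$$\det M(a,b,c)=a^3-2(1+\alpha)a^2b+(1-\alpha)^2ab^2-\alpha bc^2=:f(a,b,c).$$
Since $f$ is not identically zero (the $a^3$ term survives) and $\pi$ contains the smooth conic $YZ-X^2$, the intersection $\Gamma_\pi=\pi\cap S_4$ is a genuine cubic, namely $\{f=0\}$ in the coordinates $(a:b:c)$ on $\mathbb{P}(\pi)$. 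It remains to show $\{f=0\}$ is smooth when $\alpha\neq 0,1$. By Euler's relation it suffices to show the three partials have no common nontrivial zero. From $\partial_c f=-2\alpha bc$ and $\alpha\neq 0$ one gets $b=0$ or $c=0$; feeding each case into $\partial_a f=3a^2-4(1+\alpha)ab+(1-\alpha)^2b^2$ and $\partial_b f=-2(1+\alpha)a^2+2(1-\alpha)^2ab-\alpha c^2$, a short case check (treating $\alpha=-1$ separately) and the hypothesis $\alpha\neq 1$ force $a=b=c=0$. Hence $\Gamma_\pi$ is smooth.

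\textbf{Main obstacle.} The substantive new verification is the converse, but it collapses to the determinant evaluation and the brief gradient analysis above, so it is essentially routine. The genuinely fiddly part is the bookkeeping in the forward direction: producing the explicit substitution from the $\kappa$-form to the $\alpha$-form and confirming that the excluded parameter values match, so that the normal form is attained for, and only for, $\alpha\neq 0,1$. One could instead bypass the explicit substitution by comparing the two discriminant cubics directly---both have the Weierstrass-type shape of a binary cubic that splits into three lines, minus the square of the third coordinate times a linear form---and matching them by a linear change of $(a:b:c)$; but one must then invoke the characterization in (2.5), that an automorphism of $C_5$ arises from $Pgl(3)$ iff it stabilizes $S_4$, to confirm that this plane-level isomorphism is actually induced by an element of $Pgl(3)$.
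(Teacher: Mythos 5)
Your proposal is correct. For the forward direction you follow the paper's own route: the geometric exhaustion (inflection point $A$, tangents, the pencil types from Table 2) already pins down the $\kappa$-form $\langle X^2-Y^2,\,Y^2-Z^2,\,X(Z+(1+\kappa)^{1/2}Y)\rangle$, and the paper, like you, passes to the $\alpha$-form by an unexhibited change of coordinates; your plan for producing that substitution and matching the excluded values $\kappa\neq 0,-1\leftrightarrow\alpha\neq 0,1$ is exactly the bookkeeping the paper leaves implicit, and your caveat that a match of discriminant cubics alone must be promoted to a $Pgl(3)$-isomorphism via (2.5) is well taken. Where you genuinely diverge is the converse. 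The paper does not verify smoothness of $\Gamma_\pi$ in the $\alpha$-coordinates at all: it routes the converse through Section 4.1.2, first solving the linear system \eqref{partialeq} to realize the net as a Jacobian plane $J_\phi$ (formula \eqref{3eqn}), then putting $\phi$ in Hesse form and checking smoothness of the discriminant via Equation \eqref{Gammaeq}, i.e.\ $-\frac{2^3+2\lambda^3}{2\lambda^2}\neq -3,-3\omega,-3\omega^2$. Your direct computation of $\det M(a,b,c)=a^3-2(1+\alpha)a^2b+(1-\alpha)^2ab^2-\alpha bc^2$ (which I have checked) together with the gradient analysis is a self-contained and more elementary proof of the same fact; I also verified the delicate sub-case $a=\frac{(1-\alpha)^2}{1+\alpha}b$, where the relevant quantity reduces to $-12\alpha(1-\alpha)^2b^2/(1+\alpha)^2\neq 0$. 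What your approach gives up is what the paper's detour buys: the Hesse-form computation is reused immediately afterwards to parametrize these orbits by the ${\sf j}$-invariant of $\phi$, whereas your determinant in the $\alpha$-basis plays no further role.
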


\subsubsection{Jacobian plane of a smooth cubic}
The above provides an exhaustion of all planes $\pi$ such that $\Gamma_{\pi}$ is smooth, but is not in itself a classification up to $Pgl(3)$ isomorphism. To obtain this classification, let us prove the following proposition:

\begin{proposition}\label{4.2prop}
Let $\pi$ be a plane of $C_5$ such that $\Gamma_{\pi}$ is smooth.
\begin{enumerate}[a)]
    \item There exists a unique cubic projective plane curve $\phi$ of $P$ such that $\pi=J_{\phi}$.
    \item This cubic $\phi$ is smooth, not isomorphic to $X^3+Y^3+Z^3$.
    \item Conversely, for every smooth cubic $\phi$ not isomorphic to $X^3+Y^3+Z^3$, $\Gamma_{J_{\phi}}$ is smooth. 
    \item The fundamental invariant ${\sf j}_\phi$ of the cubic $\phi$ classifies the plane $\pi$ up to $Pgl(3)$ isomorphism.
\end{enumerate}
\end{proposition}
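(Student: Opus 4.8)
The plan is to reduce all four assertions to classical facts about the cubic $\phi$ and its Hessian, the bridge being the polar construction. The guiding geometric observation I would record first is that, for any cubic $\phi=(F)$ whose three partials are linearly independent, the \emph{polar map} $p=(a:b:c)\mapsto P_p(\phi)=aF'_X+bF'_Y+cF'_Z$ is a linear, hence projective, isomorphism from $P=\mathbb P(V)$ onto $\pi=\mathbb P(J_\phi)$. Since the coefficient matrix of the conic $P_p(\phi)$ is exactly the Hessian matrix of $\phi$ evaluated at $p$, the conic $P_p(\phi)$ is degenerate iff $\det\mathrm{Hess}_\phi(p)=0$, i.e. iff $p$ lies on the Hessian cubic $H$ of $\phi$. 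Thus the polar isomorphism carries $H$ onto $\Gamma_{J_\phi}=\pi\cap S_4$, yielding a projective isomorphism $\Gamma_{J_\phi}\cong H$. This single identity powers parts (b) and (c); it is also consistent with Remark \ref{betarem}, as the passage $\phi\mapsto H$ is the classical Hessian correspondence on smooth cubics.

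For part (a) I would establish existence and uniqueness simultaneously from the normal form of the preceding proposition: in suitable coordinates $\pi=\langle YZ-X^2,\ (Y-Z)(Y-\alpha Z),\ XY\rangle$ with $\alpha\neq 0,1$. A quadric lies in $\pi$ exactly when its $XZ$-coefficient vanishes and its $Z^2$- and $YZ$-coefficients are prescribed linear functions of its $X^2$-, $Y^2$-, $XY$-coefficients. Writing the three partials $F'_X,F'_Y,F'_Z$ of a general ternary cubic $F=\sum c_{ijk}X^iY^jZ^k$ and imposing these conditions turns ``$J_\phi\subseteq\pi$'' into a homogeneous linear system in the ten coefficients $c_{ijk}$. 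I expect this system to cut out a one-dimensional space of solutions, which delivers both halves of (a) at once: a cubic $\phi$ with $J_\phi\subseteq\pi$ exists, and it is unique up to scalar, hence unique as a projective curve. The one point needing care is to confirm $\dim J_\phi=3$ for this $\phi$ (so that $J_\phi=\pi$, not a proper subspace), equivalently that $\phi$ is not a cone; I would verify this on the explicit solution, checking the degenerate parameter value $\alpha=-1$ (where the generic elimination divides by $1+\alpha$) by a separate direct computation.

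Parts (b) and (c) then follow from the Hessian identification once we control when $H$ degenerates. For (c): a smooth $\phi$ is not a cone, so its partials are independent, the polar map is an isomorphism, and $\Gamma_{J_\phi}\cong H$; it remains to see that $H$ is smooth precisely when $\phi\not\cong X^3+Y^3+Z^3$. I would verify this in the Hesse pencil $\phi_\lambda=X^3+Y^3+Z^3+3\lambda XYZ$: a determinant computation gives $H(\phi_\lambda)\propto\phi_\mu$ with $\mu=-(4+\lambda^3)/(3\lambda^2)$, and $\phi_\mu$ is singular only for $\mu\in\{\infty,-1,-\omega,-\omega^2\}$; solving, among smooth $\phi_\lambda$ the Hessian degenerates exactly at $\lambda\in\{0,2,2\omega,2\omega^2\}$, which is the locus ${\sf j}=0$, i.e. $\phi\cong X^3+Y^3+Z^3$. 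For (b): the cubic $\phi$ furnished by (a) has $\dim J_\phi=3$, so it is not a cone and $\Gamma_{J_\phi}\cong H$ again; since the Hessian of a singular cubic is itself singular, and the Hessian of the Fermat cubic is the triangle $XYZ$ (whose net $\langle X^2,Y^2,Z^2\rangle$ is entry \#6d of Table \ref{table1}, with singular discriminant $\{abc=0\}$), the smoothness of $\Gamma_\pi=\Gamma_{J_\phi}$ forces $\phi$ to be smooth and $\not\cong X^3+Y^3+Z^3$.

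Finally, for part (d) I would combine the uniqueness in (a) with the $\Gl(3)$-equivariance of the polar construction, $J_{\sigma\phi}=\sigma\,J_\phi$, which is immediate from the chain rule. If $\pi\cong\pi'$ via $\sigma\in Pgl(3)$, then $J_{\sigma\phi}=\sigma J_\phi=\pi'=J_{\phi'}$, and since $\pi'$ again has smooth $\Gamma$, the uniqueness of (a) forces $\sigma\phi=\phi'$ up to scalar, whence ${\sf j}(\phi)={\sf j}(\phi')$; conversely, two smooth plane cubics with equal $\sf j$ are projectively equivalent (the classical fact that $\sf j$ is a complete projective invariant of smooth plane cubics), and equivariance propagates this equivalence to $J_\phi\cong J_{\phi'}$, i.e. $\pi\cong\pi'$. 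I expect the \textbf{main obstacle} to be the uniqueness half of (a): it is the genuinely non-automatic statement that the polar net remembers the cubic, and it is exactly what makes (d) go through. Although it reduces to the finiteness (one-dimensionality) of the linear system above, the care lies in ensuring that the solution yields a \emph{three-dimensional} $J_\phi=\pi$ with $\phi$ smooth, rather than merely some $\phi$ with $J_\phi\subseteq\pi$. Once this is secured, parts (b), (c), and (d) reduce to the Hessian dictionary $\Gamma_{J_\phi}\cong H$ and the classical theory of the ${\sf j}$-invariant.
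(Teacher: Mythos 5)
Your proposal is correct, and for the crux of the proposition --- part (a) --- it is the paper's own argument: impose the symmetry of mixed partials on a general cubic $F$ with $J_\phi\subseteq\pi$ in the normal form $\langle YZ-X^2,(Y-Z)(Y-\alpha Z),XY\rangle$, obtain a linear system (the paper's system of $9$ equations of rank exactly $8$) with a one-dimensional solution space, and check that the resulting $F$ has independent partials. Where you diverge is in the organization of (b) and (c): you make the identification of $\Gamma_{J_\phi}$ with the Hessian $H(\phi)$ via the polar map the engine of both parts, whereas the paper proves (c) by a direct computation of the discriminant of the net in Hesse coordinates (its equation $(2^3+2\lambda^3)abc-2\lambda^2(a^3+b^3+c^3)=0$) and only develops the Hessian correspondence afterwards, in \S 4.1.3 (Propositions \ref{4.4prop} and \ref{Hesse2prop}). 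The two computations coincide --- the discriminant cubic of $J_\phi$ \emph{is} the Hessian of $\phi$ under the polar isomorphism, as your Euler-identity argument for $\det\mathrm{Hess}_\phi(p)$ shows --- so the difference is one of presentation; your version has the advantage of explaining \emph{why} the excluded values $\lambda=0,2,2\omega,2\omega^2$ are exactly the Fermat locus ${\sf j}=0$, and of making Remark \ref{betarem} transparent. For (b) the paper argues by semicontinuity (non-smoothness of $\Gamma_{J_{\phi_0}}$ persists under specialization) rather than by the classical fact that singular cubics have singular Hessians, but both are sound. Finally, your spelled-out argument for (d) via the equivariance $J_{\sigma\phi}=\sigma J_\phi$ and the uniqueness of (a) is exactly the content the paper compresses into ``d) is clear,'' and is worth having on record.
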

Let $\pi$ be a plane of $C_5$ and $V$ the corresponding vector space in $R_2$. By Section 4.1.1, up to a change of coordinates we can suppose that there exists $\alpha\neq 0,1$ such that $V$ is generated by $YZ-X^2,(Y-Z)(Y-\alpha Z),XY$. We want to find an element $F=\phi$ of $R_3$  whose partial derivatives of order 1 also generate $V$. It is therefore a matter of finding $a_i,b_i,c_i$ such that:
$$\exists F\in R_3,\ \forall i,\ 1\leq i\leq 3\ \Rightarrow\ a_i(YZ-X^2)+b_iXY+c_i(Y-Z)(Y-\alpha Z)=F'_{X_i} $$ 
where $X_1=X,$ $X_2=Y,$ $X_3=Z$.

This condition is equivalent to the following condition:
\small
\begin{align}
&\forall i, \forall j \mid i,j\in\{1,2,3\}, i\neq j\text { we have }\notag\\
&\frac{d}{dX_i}[a_j(YZ-X^2)+b_jXY+c_j(Y-Z)(Y-\alpha Z)]=\notag\\&\frac{d}{dX_j}[a_i(YZ-X^2)+b_iXY+c_i(Y-Z)(Y-\alpha Z)].\label{partialeq}
\end{align}
\normalsize
By identifying the coefficients of the monomials of the two sides (which are of degree 1, therefore there are three coefficients) of each of these three relations, we obtain a system of 9 linear equations with 9 unknowns (the $a_i$, $b_i$, $c_i$) easy to solve, which is exactly of rank 8; we find:
\begin{equation}\label{2eq}\begin{pmatrix} a_1&b_1&c_1\\a_2&b_2&c_2\\a_3&b_3&c_3\end{pmatrix} = \rho \begin{pmatrix} 0 & 2(\alpha-1)^2&0\\ -(\alpha-1)^2&0&-(\alpha+1)\\ 0&0&2\alpha\end{pmatrix}.
\end{equation}
So $F$ therefore exists, unique up to multiplication by a non-zero scalar, and is defined by\small
\begin{align} 3F&=XF'_X+YF'_Y+ZF'_Z\notag\\&=\frac{\rho}{3}(3(\alpha-1)^2X^2Y+2\alpha^2Z^3-3\alpha(\alpha+1)YZ^2+6\alpha Y^2Z-(\alpha+1)Y^3).\label{3eqn}
\end{align}
\normalsize
We verify that $\{F'_X,F'_Y,F'_Z\}$ is of rank 3 and thus indeed constitutes a system of generators of V, and we take $\phi: F=0$. Which ends up proving a).

Let us show b). The partial derivatives of $X^3 + Y^3 + Z^3$ generate a vector space $V$ whose associated plane $\pi$ has a cubic $\Gamma_{\pi}$ constituted by three non-intersecting lines. Therefore $\phi$  is not isomorphic to $(X^3 + Y^3 + Z^3)$ since otherwise $\Gamma_{J_{\phi}}$ would not be smooth. 

Let us show in the same way that $\phi$ is not singular, that is, $\phi$ is not a specialization of a family of cubics isomorphic to $(X^3+Y^3+XYZ)=\phi_0$ (double point cubics with distinct tangents, \#8a in Table \ref{table1}). However, $\Gamma_{J_{\phi_0}}$ is not smooth, and this property holds for a cubic isomorphic to $\phi_0$ and for specializations. Therefore, $\phi$ is not singular.\qquad\qquad\qquad\qquad\qquad \qquad \qquad Q.E.D.

Let us now show c). If $\phi$ is smooth, we can find a system of
coordinates in which $\phi$ has ``Hessian form": $\phi = (X^3 + Y^3 + Z^3 + 3\lambda XYZ)$, where $\lambda \neq -1, -\omega, -\omega^2$ ($\omega$ and $\omega^2$ are the cube roots of 1).  That $\phi$  is not isomorphic to $(X^3+Y^3+Z^3)$ means then that $\lambda\neq 0,2,2\omega,2\omega^2$. Saying that ${(a F'_X+b F'_Y+c F'_Z)}$ is a singular conic is equivalent to the relation: 
\begin{equation}(2^3+2\lambda^3)abc-2\lambda^2(a^3+b^3+c^3)=0.\label{Gammaeq}
\end{equation} 
``$\Gamma_{J_{\phi}}$ is smooth'' is equivalent then to $-\frac{2^3+2\lambda^3}{2\lambda^2}\neq -3, -3\omega, -3\omega^2$ and $\lambda\neq 0$. The first condition decomposes into the conditions: $\lambda\neq -1,2,-\omega,2\omega,-\omega^2,2\omega^2$ which are all satisfied by hypothesis. \qquad \qquad\qquad \qquad\qquad \qquad Q.E.D.

d) is clear. 

\paragraph{Presentation in the Table 1.} \label{Table1pres} Considering the cubic $\phi$ of the form: $$(X^3+Y^3+Z^3+3\lambda XYZ),\ \lambda\neq -1,-\omega,-\omega^2,0,2,2\omega,2\omega^2,$$ we write the plane $\pi$ in the form $(X^2+\lambda YZ,Y^2+\lambda ZX, Z^2+\lambda XY)$. In doing so, we do not forget that it is not $\lambda$ which classifies $\pi$, but rather the fundamental invariant ${\sf j}_\lambda$ of $\phi$, which is the rational function of $\lambda$ noted in the table ${\sf j}_\lambda$.\footnote{{\it Note of translators}: this formula is found in \cite{Serre}, as follows:\par
p. 138, (18): Weierstrass formula $y^2=4x^3-g_2x-g_3$;\par
p.138,  (15): definition of the discriminant $\Delta=g_2^3-27g_3^2$;\par
p.145 (22): expression for the ${\sf j}$-invariant, ${\sf j}=1728g_2^3/\Delta$.\par
Any homography - rational function of this $\sf j$ can be used as an invariant, such as  $\Delta/(g_3)^2$ or $\Delta/(g_2)^3$. As we noted earlier, \cite[Proposition 2]{Fr} identifies ${\sf j}_\lambda=
\frac{\lambda^3(\lambda^3-8)^3}{27(\lambda +1)^3(\lambda +\omega)^3(\lambda+\omega^2)^3}.$ The mapping $\lambda\to {\sf j}_\lambda$ for smooth cubics $\phi$ is $12:1$, except for the homographic cubic $\phi_0=X^3+Y^3+Z^3$, where the mapping is $4:1$ \cite[Prop. 3]{Fr}.
We corrected some of the notation to avoid a double use of $\beta$ in the original.}
\footnote{The Hessian cubic $\Gamma(V), V=J_\phi$ has $\lambda^\prime=-\frac{4+\lambda^3}{3\lambda^2}$ in terms of $\lambda$ for $\phi$, when written $\Gamma(V):X^3+Y^3+Z^3 +3\lambda^\prime XYZ$ (from \cite[Theorem~4.7]{CilOt}).}
\begin{rmk}\label{betarem}
The equation $\frac{2^3+2\lambda^3}{2\lambda^2}=\beta$ generically admits three roots $\alpha_1, \alpha_2, \alpha_3$ such that the curves, $(X^3 + Y^3 + Z^3 + 3\alpha_i XYZ)$, $1 \leq i \leq 3$ are two by two non-isomorphic. Thus, generally, given an isomorphism type  ${\sf j}_\Gamma$ of a smooth cubic curve $\Gamma$, there exist three types of plane $\pi$ (that is, isomorphism classes ${\sf j}_\phi$ of nets), giving rise to the type of curve $\Gamma$: that is, $\Gamma_\phi$ is in the class ${\sf j}_\Gamma$. 
\end{rmk}
\vskip 0.2cm\noindent
\subsubsection{Canonical involution of the Hessian of a smooth cubic}\label{involutionsec}
Let $\pi$ be a plane in $C_5$. A conic belongs then to $\pi$ if  three linear conditions on its coefficients hold. Among such conditions there are some remarkable ones, such as being forced to go through a point, a condition which is obviously very particular. A remarkable condition of a more general nature is that the conic admits two conjugate points. We will therefore wonder if, being given a plane $\pi$ of conics, there are enough pairs of conjugate points with respect to all the conics of $\pi$ to determine $\pi$.

In the case where $\Gamma_{\pi}$ is smooth, the answer is positive. It is given with other information by the following proposition whose proof is easy.

\begin{proposition}\label{4.4prop}
Let $\pi$ be a plane in $C_5$ such that $\Gamma_{\pi}$ is smooth. Let $\phi$ be the cubic such that $\pi=J_{\phi}$ and let $H$ be the Hessian of $\phi$.
\begin{enumerate}[a)]
    \item For a point of $P$ to admit a common conjugate with respect to all the conics of $\pi$, it is necessary and sufficient that it belongs to $H$. Its common conjugate is unique, distinct from it, and belongs also to $H$. There is then a canonical and regular involution on $H$ without double point, $i: H \rightarrow H$ such that $${\pi} = \{\gamma\ :\ \gamma \in C_5 \text{ and } \forall x\in H,\ x \text{ and } i(x) \text{ are conjugate with respect to } \gamma\}$$
    \item Conversely, given a smooth cubic $H$, endowed with a regular involution, without double point $i$, the set $$\{\gamma\ :\ \gamma \in C_5 \text{ and } \forall x\in H,\ x \text{ and } i(x) \text{ are conjugate with respect to } \gamma\},$$ is a plane $\pi$ of $C_5$ such that $\Gamma_{\pi}$ is smooth. If $\phi$ is the cubic such that $\pi = J_{\phi}$, $H$ is the Hessian of $\phi$ and $i$ is  obtained on $H$ by the process described in the direct proposition in (a).
    \item The classification of smooth cubics endowed with a regular involution without a double point is therefore equivalent to the classification of planes $\pi$ of $C_5$ such that $\Gamma_{\pi}$ is smooth.
    \item There are exactly three involutions without double points on a smooth cubic which, as soon as we have fixed an algebraic group structure on the cubic (i.e. an origin), can be considered as the translations by the three points of order two of this group.
\end{enumerate}
\end{proposition}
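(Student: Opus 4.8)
The plan is to prove (a) by an explicit polarity computation in the Hesse normal form, to deduce (b) and (c) from it formally, and to settle (d) by the structure theory of automorphisms of an elliptic curve. For (a) I first put $\phi$ in Hesse form $\phi=X^3+Y^3+Z^3+3\lambda XYZ$ with $\lambda$ in the admissible range, so that by Proposition \ref{4.2prop} the net is $\pi=J_\phi=\langle X^2+\lambda YZ,\,Y^2+\lambda XZ,\,Z^2+\lambda XY\rangle$. Writing $M_\gamma$ for the symmetric matrix of a conic $\gamma$ and $B_\gamma(x,y)=x^{t}M_\gamma y$ for the polarity pairing (so $x,y$ are conjugate with respect to $\gamma$ iff $B_\gamma(x,y)=0$), I expand $B_\gamma$ for the generic member $\gamma=a(X^2+\lambda YZ)+b(Y^2+\lambda XZ)+c(Z^2+\lambda XY)$ and collect the coefficients of $a,b,c$. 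These three coefficients are symmetric bilinear forms in $(x,y)$, and $x,y$ are conjugate with respect to \emph{every} conic of $\pi$ precisely when $N_x\,y=0$, where
\begin{equation*}
N_x=\begin{pmatrix} x_1 & \tfrac{\lambda}{2}x_3 & \tfrac{\lambda}{2}x_2\\ \tfrac{\lambda}{2}x_3 & x_2 & \tfrac{\lambda}{2}x_1\\ \tfrac{\lambda}{2}x_2 & \tfrac{\lambda}{2}x_1 & x_3\end{pmatrix}=\tfrac{1}{6}\,\mathrm{Hess}(\phi)\big|_{x}.
\end{equation*}
The decisive point is that $N_x$ is, up to the scalar $1/6$, the Hessian matrix of $\phi$ at $x$; hence $x$ admits a common conjugate iff $\det N_x=0$, i.e. iff $x\in H$. (One checks $\det N_x$ is the Hesse-form Hessian with parameter $-\tfrac{4+\lambda^3}{3\lambda^2}$, so $H=\Gamma_\pi$ under the identification of the parameter plane of $J_\phi$ with $P$.) This gives the ``necessary and sufficient'' clause.

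The remaining assertions of (a) then fall out. Symmetry of the three bilinear forms makes the relation ``conjugate with respect to all of $\pi$'' symmetric, so $N_x y=0\Leftrightarrow N_y x=0$; the latter forces $\det N_y=0$, whence the common conjugate $y$ again lies on $H$ and the assignment $x\mapsto y$ is involutive. For a point of the smooth curve $H$ the corank of $N_x$ is exactly $1$ (corank $2$ would produce a singular point of $H$, and for admissible $\lambda$ the curve $H$ is smooth, checked exactly as in Proposition \ref{4.2prop}(c)), so $\ker N_x$ is a line and $i(x)$ is unique; moreover $i$ is given by a nonvanishing column of $\mathrm{adj}(N_x)$, with entries quadratic in $x$, hence $i$ is regular. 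There is no double point because $N_x x=\tfrac13(\phi'_X,\phi'_Y,\phi'_Z)(x)\neq 0$ for all $x$ (a common zero would be a singular point of the smooth $\phi$), so $i(x)\neq x$. Finally, for the displayed description of $\pi$: the right-hand side is a linear subspace of $C_5$ containing $\pi$ by construction, and choosing three points of $H$ whose conjugacy conditions are independent among the hyperplanes of $\mathbb P^5$ through $\pi$ cuts it back down to $\pi$.

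For (b) and (c) I argue in reverse. Given a smooth cubic $H$ with a regular fixed-point-free involution $i$, the set $\pi$ is visibly linear in $C_5$. Using (d) I fix a group law and present $(H,i)$ in Hesse coordinates with $i$ the translation by a nonzero $2$-torsion point; solving the Hessian relation $\lambda'=-\tfrac{4+\lambda^3}{3\lambda^2}$ for $\lambda$ produces, by Remark \ref{betarem}, exactly three cubics $\phi$ with Hessian $H$, and part (a) attaches to each one fixed-point-free involution of $H$. Since by (d) there are exactly three such involutions, $\phi\mapsto i_\phi$ is a bijection, so the given $i$ equals $i_\phi$ for a unique $\phi$; then $\pi=J_\phi$ is a plane with $\Gamma_\pi$ smooth by Proposition \ref{4.2prop}. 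Part (c) is then immediate, since (a) and (b) exhibit mutually inverse, $Pgl(3)$-equivariant assignments $\pi\leftrightarrow(H,i)$.

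For (d) I regard the smooth cubic as an elliptic curve $E$ with origin $O$; every regular automorphism has the form $x\mapsto\epsilon(x)+a$ with $\epsilon\in\mathrm{Aut}(E,O)$ and $a\in E$, and the involution condition forces $\epsilon^2=\mathrm{id}$ and $\epsilon(a)+a=O$. If $\epsilon=\mathrm{id}$ the map is a translation $T_a$, an involution iff $2a=O$ and fixed-point-free iff $a\neq O$, giving the three translations by the nonzero points of $E[2]\cong(\mathbb Z/2)^2$. If $\epsilon\neq\mathrm{id}$, then since the only order-$2$ element of the cyclic group $\mathrm{Aut}(E,O)$ is $-1$, we must have $\epsilon=-1$; but $x\mapsto-x+a$ fixes every solution of $2x=a$, so it is never fixed-point-free. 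Hence there are exactly three fixed-point-free involutions, as claimed. The hard part is not the local computations but the two gluing steps: the equality $\pi=\{\gamma:\ldots\}$ in (a) (the rank count cutting the linear space down to $\pi$) and the matching in (b) that makes $\pi\mapsto(H,i)$ genuinely surjective; both rest on the corank of $N_x$ being constantly $1$ along the smooth $H$, which is the technical crux.
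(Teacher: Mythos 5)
The paper states Proposition \ref{4.4prop} as one ``whose proof is easy'' and records no argument, so there is nothing to compare against line by line; your write-up supplies the proof the paper omits, and it is essentially correct and the natural one. In particular your matrix $N_x=\tfrac16\,\mathrm{Hess}(\phi)|_x$ is (up to scalar) exactly the matrix of the polar conic of $x$ with respect to $\phi$, so your computation is the same mechanism the paper invokes in Proposition \ref{Hesse2prop} ($i(x)$ is the singular point of the polar of $x$); your treatment of uniqueness (corank $1$ via smoothness of $H$), of $i(x)\neq x$ (via $N_xx=\tfrac13\nabla\phi(x)\neq0$), and of part (d) via $\mathrm{Aut}(E)=\mathrm{Aut}(E,O)\ltimes E$ are all sound.

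Two steps are asserted rather than carried out, and one of them is not where you locate the difficulty. First, the cut-down of $\{\gamma:\forall x\in H,\ B_\gamma(x,i(x))=0\}$ to $\pi$ does not ``rest on the corank of $N_x$ being constantly $1$'': corank $1$ only makes $i$ well defined. What is needed is that the functionals $\ell_x:\gamma\mapsto B_\gamma(x,i(x))$ span the full $3$-dimensional annihilator of $V=J_\phi$, and the obvious symmetric choice fails: for the three points $(0,1,-1),(-1,0,1),(1,-1,0)$ of $H$ one finds $i$-images $(2,\lambda,\lambda),(\lambda,2,\lambda),(\lambda,\lambda,2)$ and the three resulting functionals (evaluated on the complement $\langle XY,YZ,ZX\rangle$ of $V$) are $(1,0,-1),(-1,1,0),(0,-1,1)$, which sum to zero. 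A fourth, non-symmetric point of $H$ does complete the span (e.g.\ for $\lambda=1$ the point $(-1+\sqrt2,1,1)$ gives the functional $(1,-1,1)$), so the claim is true, but it requires an explicit check you have not made. Second, in (b) the conclusion ``$\phi\mapsto i_\phi$ is a bijection'' does not follow from the two sets having three elements; injectivity should instead be read off from the displayed identity in (a) together with the uniqueness in Proposition \ref{4.2prop}(a): $i_{\phi_1}=i_{\phi_2}$ forces $J_{\phi_1}=J_{\phi_2}$, hence $\phi_1=\phi_2$. (Relatedly, Remark \ref{betarem} gives three preimages only generically; that every smooth $H$ is the Hessian of exactly three admissible cubics is asserted by the paper after the proposition but is not something you may simply cite from the remark.) With these two points repaired the proof is complete.
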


A smooth cubic is the Hessian of exactly three smooth cubics not isomorphic to $(X^3+Y^3+Z^3)$. 

The three involutions without a double point of a smooth cubic and their three cubics of which the cubic is the Hessian correspond one-to-one by the process described in a).

We have an equivalent definition of the cubic $H$ thanks to the following proposition, whose proof is left to the reader:

\begin{proposition}\label{Hesse2prop}
Under the same hypotheses as the previous proposition, $H$ is the set of all the singular points of the singular conics of $\pi$. Given a singular point $A$ of a singular conic of $\pi$, its polar with respect to $\phi$ is a singular conic of $\pi$, whose unique singular point $B$ is the image of  $A$ by the  involution $i$ of $H$. The map from $\Gamma_\pi$ to $H$ which sends the singular conic $\gamma$ to its unique singular point is an isomorphism from $\Gamma_\pi$ to $H$.
\end{proposition}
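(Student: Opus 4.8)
The plan is to exploit the \emph{polar map}. Writing $\phi$ for the unique cubic with $\pi=J_\phi$ (Proposition \ref{4.2prop}), I would attach to a point $A=(a_1,a_2,a_3)\in P$ its polar conic with respect to $\phi$, namely $Q_A=a_1\phi'_X+a_2\phi'_Y+a_3\phi'_Z\in J_\phi=\pi$. Two elementary facts come first. Since $\phi'_X,\phi'_Y,\phi'_Z$ are linearly independent, the assignment $p\colon A\mapsto Q_A$ is a linear isomorphism of projective planes $P\xrightarrow{\sim}\pi$. Next, applying Euler's relation to the quadratic forms $\phi'_{X_i}$ shows that the symmetric matrix of $Q_A$ is exactly the Hessian matrix $\mathrm{Hess}(\phi)$ evaluated at $A$; hence $Q_A$ is singular if and only if $\det\mathrm{Hess}(\phi)(A)=0$, i.e. iff $A\in H$. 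Thus $p$ carries the cubic $H$ onto $\Gamma_\pi$, and by equality of degrees $p|_H\colon H\xrightarrow{\sim}\Gamma_\pi$ is an isomorphism of smooth cubics. Because $\Gamma_\pi$ is smooth, no point of $\pi$ lies on $D_2$ (Remark \ref{rmkS2}), so every singular conic of $\pi$ has rank $2$ and a single singular point; for $A\in H$ the matrix $\mathrm{Hess}(\phi)(A)$ thus has a one-dimensional kernel, which is precisely that singular point. Write $S(A)$ for it.

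The crux is to identify $S(A)$ with $i(A)$, and here I would invoke Proposition \ref{4.4prop}(a), by which $i(A)$ is the unique common conjugate of $A$ with respect to all conics of $\pi$. So it suffices to show that $A$ and $S(A)$ are conjugate with respect to every $Q_B\in\pi$ and that $S(A)\neq A$. Conjugacy with respect to $Q_B$ means $A^{\mathsf T}\mathrm{Hess}(\phi)(B)S(A)=0$. Since the array of third partials $\phi_{ijk}=\partial^3\phi/\partial X_i\partial X_j\partial X_k$ is a constant, totally symmetric tensor and $\mathrm{Hess}(\phi)(B)$ is linear in $B$, one computes
\[
A^{\mathsf T}\mathrm{Hess}(\phi)(B)S(A)=\sum_i B_i\bigl(\mathrm{Hess}(\phi)(A)S(A)\bigr)_i=0,
\]
the inner expression vanishing exactly because $S(A)$ spans the kernel of $\mathrm{Hess}(\phi)(A)$. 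Hence $A$ and $S(A)$ are conjugate with respect to all of $\pi$. Moreover $\mathrm{Hess}(\phi)(A)A=2\,\nabla\phi(A)\neq0$ (Euler again, using that $\phi$ is smooth), so $A$ is not in the kernel and $S(A)\neq A$. By the uniqueness in Proposition \ref{4.4prop}(a) we conclude $S(A)=i(A)$; this is assertion (2), and it simultaneously exhibits the polar $Q_A$ as the singular conic of $\pi$ whose unique singular point is $i(A)$.

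The two remaining assertions follow formally. For the first, the set of singular points of the singular conics of $\pi$ is $\{S(A):A\in H\}=\{i(A):A\in H\}=i(H)=H$, since $i$ is an involution of $H$. For the isomorphism in the last assertion, the map $\Gamma_\pi\to H$, $\gamma\mapsto\operatorname{sing}(\gamma)$, sends $Q_A=p(A)$ to $S(A)=i(A)$, so it equals $i\circ(p|_H)^{-1}$, a composite of isomorphisms. I expect the only genuine work to be the conjugacy computation of the middle paragraph, that is, recognizing that total symmetry of $\partial^3\phi$ together with the kernel condition forces the singular point of the polar conic to be the common conjugate $i(A)$; the identification of the matrix of $Q_A$ with $\mathrm{Hess}(\phi)(A)$ and the degree and dimension bookkeeping are routine.
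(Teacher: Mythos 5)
Your proof is correct. The paper itself gives no argument here (the proposition is explicitly ``left to the reader''), so there is nothing to compare against; your route via the polar map $p\colon A\mapsto Q_A$ is the natural one and is exactly what the surrounding text (Proposition \ref{4.2prop}, Proposition \ref{4.4prop}, and the Hessian discussion) sets up. All the key steps check out: by Euler's relation for the linear forms $\phi_{X_jX_k}$ the matrix of $Q_A$ is $\tfrac12\,\mathrm{Hess}(\phi)(A)$ (your statement omits the harmless factor $\tfrac12$, which affects nothing), so $p^{-1}(\Gamma_\pi)=H$ and $p|_H$ is an isomorphism of cubics; the total symmetry of the third-partials tensor gives $A^{\mathsf T}\mathrm{Hess}(\phi)(B)S(A)=\sum_i B_i\bigl(\mathrm{Hess}(\phi)(A)S(A)\bigr)_i=0$, so $S(A)$ is a common conjugate of $A$ for all of $\pi$; and $\mathrm{Hess}(\phi)(A)A=2\nabla\phi(A)\neq0$ by smoothness of $\phi$ rules out $S(A)=A$, whence $S(A)=i(A)$ by the uniqueness clause of Proposition \ref{4.4pro}\ref{4.4prop}(a). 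The three assertions of the proposition then follow formally as you say, with the final map $\gamma\mapsto\operatorname{sing}(\gamma)$ identified as $i\circ(p|_H)^{-1}$, a composite of isomorphisms.
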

See also the ``Note on the Hessian form of a smooth cubic'' at the end of the article, page \pageref{Hessiansec}. 

\subsection{The planes $\pi$ of conics having a singular cubic $\Gamma_\pi$.}
We will henceforth not give the full demonstration that allows us to arrive at the presentation of each case of this exhaustion. We will indicate only how to choose a system of conics in the plane $\pi$ that is distinguished by its disposition relative to $\Gamma_\pi$, and the conclusions that follow relative to the different pencils of conics to consider. The verifications will be easy.  We take again the order of examination adopted in the general Introduction to \S1 and \S2. The Case~1 (smooth) was treated in \S4.1, so we begin here with Case 2.\par
Recall the conclusion of this section, which was announced as Theorem~\ref{mainthm}B in \S 4.0, the general introduction to \S4: the $Pgl(3)$ class of a plane $\pi$ for which $\Gamma_\pi$ is not a smooth cubic is completely determined by the linear isomorphism class of the triplet of projective algebraic schemes $(\delta_\pi\subset \Gamma_\pi\subset \pi$).
On the figures representing the plane $\pi$, the points associated to double lines are noted: $\circ$. We follow the classification that was introduced in ``General classification of cases for $\Gamma$'' in the introduction  \S \ref{intro4sec} to \S 4. In I. we consider $\pi\nsubseteq S_4$ (Case 2-9), and in II. $\pi\subset S_4$ (Case 10a,10b).\vskip 0.2cm\noindent
I. $\pi \nsubseteq S_4$.\par\noindent
\underline{Case 2}). ($\Gamma$ has singularity a point $C$ having two distinct tangents). Two cases occur:\par
\begin{enumerate}[a)]
\item C has two distinct irreducible components ( \begin{tikzpicture}[scale=0.2]

\draw  [line width=0.75] [line join = round][line cap = round] (0,0) .. controls  (-0.5,0.4) and (-1.3,1.1) .. (-1.9,1.5) .. controls (-2.2,1.7) and (-2.8,1.5) .. (-2.85,1.1) .. controls (-2.9,0.9) and (-2.7,0.55) .. (-2.5,0.5) .. controls (-2,0.4) and (-1,0.8) .. (-0.6,0.9) .. controls (-0.5,0.94) and (0.103,1.0695) .. (0.1,1.15) ;
\end{tikzpicture}).

Choose then in $\pi$ points $A,B,C,O$ satisfying the following conditions: $C$ is the double point, $A$ is a point of inflection, $AB$ is the tangent at $B$ to $\Gamma, O\in BC$ and $AO$ is the tangent at $A$ to $\Gamma$. 
\begin{center}
\tikzset{every picture/.style={line width=0.75pt}} 

\begin{tikzpicture}[x=0.75pt,y=0.75pt,yscale=-1,xscale=1]

\draw    (253.57,205.91) -- (105.5,203.32) ;
\draw    (253.58,225.91) -- (109.5,123.32) ;
\draw    (149.83,250.95) -- (149.5,151.08) ;
\draw  [color={rgb, 255:red, 0; green, 0; blue, 0 }  ][line width=0.75] [line join = round][line cap = round] (108.5,226.32) .. controls (108.5,217.21) and (124.3,197.15) .. (133.5,196.32) .. controls (142.02,195.54) and (151.12,195.6) .. (149.5,205.32) .. controls (148.16,213.38) and (126.73,208.55) .. (122.5,204.32) .. controls (120.36,202.17) and (120.7,199.72) .. (116.5,197.32) .. controls (116.65,195.62) and (114.83,193.64) .. (114.5,191.32) .. controls (113.12,181.65) and (113.84,173.3) .. (116.5,165.32) .. controls (118.01,160.79) and (135.91,147.67) .. (142.5,149.32) .. controls (147.96,150.68) and (152.79,155.37) .. (157.5,156.32) .. controls (162.32,157.28) and (173.5,154.07) .. (173.5,150.32) ;

\draw (146,133.4) node [anchor=north west][inner sep=0.75pt]    {$A$};
\draw (157,209.4) node [anchor=north west][inner sep=0.75pt]    {$B$};
\draw (120,210.4) node [anchor=north west][inner sep=0.75pt]    {$C$};
\draw (225,183.4) node [anchor=north west][inner sep=0.75pt]    {$O$};

\end{tikzpicture}
\end{center}
One verifies that $BC, AB, AC$ are of type $b$ and $OA$ is of type $d$. From this we have a relatively unique disposition of $A,B,C,O$ represented by the figure below:
\begin{center}

\tikzset{every picture/.style={line width=0.75pt}} 

\begin{tikzpicture}[x=0.75pt,y=0.75pt,yscale=-1,xscale=1]

\draw   (135.19,105) .. controls (146.23,105.06) and (155.09,123.69) .. (154.96,146.6) .. controls (154.84,169.52) and (145.79,188.05) .. (134.74,187.99) .. controls (123.69,187.93) and (114.84,169.3) .. (114.97,146.39) .. controls (115.09,123.47) and (124.14,104.94) .. (135.19,105) -- cycle ;
\draw    (123.5,83.97) -- (164.5,168.97) ;
\draw    (134.5,84.97) -- (135,209) ;
\draw    (69,105) -- (194.5,105) ;
\draw    (70,150) -- (195.5,150) ;
\draw    (71,188) -- (196.5,188) ;
\draw    (143.5,85.97) -- (105.5,170.97) ;
\draw    (144.5,188) .. controls (144.5,193.45) and (139.5,199.45) .. (134.5,198) ;
\draw    (129.5,96) .. controls (131.5,92) and (138.5,94) .. (138.5,97) ;
\draw  [color={rgb, 255:red, 0; green, 0; blue, 0 }  ][line width=0.75] [line join = round][line cap = round] (199.02,150.43) .. controls (201.89,152.15) and (208.24,150.95) .. (209.02,148.59) .. controls (210.43,144.31) and (206.36,133.28) .. (206.98,129.28) .. controls (207.16,128.13) and (210.16,125.71) .. (211.97,126.26) .. controls (214.78,127.1) and (212.33,130.9) .. (209.97,128.07) .. controls (206.38,123.76) and (213.2,108.31) .. (209.92,106.34) .. controls (208.45,105.46) and (206.9,101.53) .. (202.92,103.95) ;

\draw (146.5,191.4) node [anchor=north west][inner sep=0.75pt]    {$A$};
\draw (126,65.4) node [anchor=north west][inner sep=0.75pt]    {$C$};
\draw (220,115.4) node [anchor=north west][inner sep=0.75pt]    {$B$};

\end{tikzpicture}

\end{center}
From this, we have in a convenient system of homogeneous coordinates, the following representation:
$$A=(XY),B=Z(Y-Z), C=(X^2-Z^2),O=(YZ-X^2).$$
\item $C$ is a double line \begin{tikzpicture}[baseline={([yshift=-.8ex]current bounding box.center)},scale=0.25]
\draw  [line width=0.75] [line join = round][line cap = round] (0,0) .. controls  (-0.5,0.4) and (-1.3,1.1) .. (-1.9,1.5) .. controls (-2.2,1.7) and (-2.8,1.5) .. (-2.85,1.1) .. controls (-2.9,0.9) and (-2.7,0.55) .. (-2.5,0.5) .. controls (-2,0.4) and (-1,0.8) .. (-0.6,0.9) .. controls (-0.5,0.94) and (0.103,1.0695) .. (0.1,1.15) ;
\draw (-0.9,0.8) circle (0.35);
\end{tikzpicture}\par One chooses points $A,B,C,O$ in the plane $\pi$ satisfying the same conditions as in a). $BC$ and $AC$ are of type $c)$, $AB$ is of type $b)$, $OA$ is of type $d)$. From this we have the following disposition:  

\begin{center}
    \tikzset{every picture/.style={line width=0.75pt}} 

\begin{tikzpicture}[x=0.75pt,y=0.75pt,yscale=-1,xscale=1]

\draw   (135.19,105) .. controls (146.23,105.06) and (155.09,123.69) .. (154.96,146.6) .. controls (154.84,169.52) and (145.79,188.05) .. (134.74,187.99) .. controls (123.69,187.93) and (114.84,169.3) .. (114.97,146.39) .. controls (115.09,123.47) and (124.14,104.94) .. (135.19,105) -- cycle ;
\draw    (134.5,92.97) -- (134.5,206.03) ;
\draw    (70.21,146.5) -- (195.71,146.5) ;
\draw    (70,152) -- (195.5,152) ;
\draw    (71,188) -- (196.5,188) ;
\draw    (114.5,92.75) -- (114.5,205.75) ;
\draw    (144.5,188) .. controls (144.5,193.45) and (139.5,199.45) .. (134.5,198) ;
\draw    (198.5,144.03) .. controls (202.5,145.03) and (201.5,153.03) .. (198.5,156.03) ;
\draw  [color={rgb, 255:red, 0; green, 0; blue, 0 }  ][line width=0.75] [line join = round][line cap = round] (159.32,90.79) .. controls (161,87.9) and (159.72,81.56) .. (157.34,80.82) .. controls (153.04,79.47) and (142.07,83.7) .. (138.07,83.14) .. controls (136.91,82.98) and (134.45,80.01) .. (134.97,78.19) .. controls (135.77,75.37) and (139.6,77.77) .. (136.81,80.16) .. controls (132.55,83.81) and (117,77.22) .. (115.09,80.52) .. controls (114.22,82.01) and (110.32,83.62) .. (112.79,87.56) ;
\draw    (154.5,92.75) -- (154.5,205.75) ;

\draw (138.5,191.4) node [anchor=north west][inner sep=0.75pt]    {$A$};
\draw (204,139.4) node [anchor=north west][inner sep=0.75pt]    {$C$};
\draw (132,56.4) node [anchor=north west][inner sep=0.75pt]    {$B$};
\end{tikzpicture}
\end{center}

and a system of coordinates where $\pi$ is represented by the following equations:
$$A=(X(Y+Z), B=(X^2-Z^2), C=(Y^2), O=(X^2+Y^2-Z^2).$$
\end{enumerate}
\underline{Case 3}). ($\Gamma$ is singular at a cusp):\par
\begin{enumerate}[a)]
\item Suppose that the cusp corresponds to a conic having two distinct irreducible components. One verifies the impossibility of this eventuality.
\item  Suppose that the cusp corresponds to a double line (\begin{tikzpicture}[scale=0.2]
\draw  [color={rgb, 255:red, 0; green, 0; blue, 0 }  ][line width=0.75] [line join = round][line cap = round] (0,1.3) .. controls (0.5,1.3) and (0.95,1) .. (1.2,0.75) .. controls (1.4,0.6) and (1.5,0.4) .. (1.7,0.15) ;
\draw  [color={rgb, 255:red, 0; green, 0; blue, 0 }  ][line width=0.75] [line join = round][line cap = round] (0,1.3) .. controls (0.5,1.3) and (0.95,1.6) .. (1.2,1.85) .. controls (1.4,2) and (1.5,2.2) .. (1.7,2.55) ;
\draw (0,0.6) -- (0,2);
\draw (0.5,0.8) -- (-0.5,1.8);
\draw (-0.5,0.8) -- (0.5,1.8);
\end{tikzpicture}).
\begin{center}
    
\tikzset{every picture/.style={line width=0.75pt}} 

\begin{tikzpicture}[x=0.75pt,y=0.75pt,yscale=-1,xscale=1]

\draw    (120,51) -- (210.5,110.47) ;
\draw    (120,51) -- (152.5,113.47) ;
\draw    (152.5,113.47) .. controls (157.5,64.45) and (183.5,107.45) .. (199.5,85.45) ;
\draw    (118.5,137.45) .. controls (104.5,114.45) and (141.5,97.45) .. (152.5,113.47) ;
\draw    (182.5,84.72) -- (184.5,100.27) ;
\draw    (189.5,87.27) -- (178.5,96.72) ;

\draw (154.5,116.87) node [anchor=north west][inner sep=0.75pt]    {$C$};
\draw (109,31.4) node [anchor=north west][inner sep=0.75pt]    {$O$};
\draw (175.5,69.87) node [anchor=north west][inner sep=0.75pt]    {$A$};

\end{tikzpicture}
\end{center}
One chooses in the plane $\Pi$ points $A,O,C$ such that:
$C$ is the cusp, $A$ is an inflection point, $OC$ is the tangent to $\Gamma$ at $C$, $OA$ is the tangent to $\Gamma$ at $A$.  One then finds that $AC$ is of type c), $OA$ of type d), $OC$ of type e). From this we have the disposition in the figure below,
\begin{center}

\tikzset{every picture/.style={line width=0.75pt}} 

\begin{tikzpicture}[x=0.75pt,y=0.75pt,yscale=-1,xscale=1]

\draw   (135.19,105) .. controls (146.23,105.06) and (155.09,123.69) .. (154.96,146.6) .. controls (154.84,169.52) and (145.79,188.05) .. (134.74,187.99) .. controls (123.69,187.93) and (114.84,169.3) .. (114.97,146.39) .. controls (115.09,123.47) and (124.14,104.94) .. (135.19,105) -- cycle ;
\draw    (134.5,84.97) -- (135,209) ;
\draw    (69,105) -- (194.5,105) ;
\draw    (70,99.98) -- (195.5,99.98) ;
\draw    (71,188) -- (196.5,188) ;
\draw    (144.5,188) .. controls (144.5,193.45) and (139.5,199.45) .. (134.5,198) ;
\draw    (199.5,97.03) .. controls (203.5,98.03) and (202.5,106.03) .. (199.5,109.03) ;
\draw    (172.5,137.28) -- (156.73,145.67) ;
\draw [shift={(154.96,146.6)}, rotate = 332.01] [color={rgb, 255:red, 0; green, 0; blue, 0 }  ][line width=0.75]    (10.93,-3.29) .. controls (6.95,-1.4) and (3.31,-0.3) .. (0,0) .. controls (3.31,0.3) and (6.95,1.4) .. (10.93,3.29)   ;

\draw (146.5,191.4) node [anchor=north west][inner sep=0.75pt]    {$A$};
\draw (205,92.4) node [anchor=north west][inner sep=0.75pt]    {$C$};
\draw (177,124.4) node [anchor=north west][inner sep=0.75pt]    {$O$};

\end{tikzpicture}
\end{center}
and the following equations after a suitable change of coordinates:
$$A=(XY),C=(Z^2),O=(YZ-X^2).$$
\end{enumerate}
\underline{Case 4}. ($\Gamma$ decomposes into a conic union a secant line.) Suppose the intersection of these two irreducible components of $\Gamma$ are two distinct points $C$ and $D$. Three cases can be envisioned:
\begin{enumerate}[a)]
\item Neither $C$ nor $D$
is a double line (\begin{tikzpicture}[scale=0.2]
\draw (0,0) circle (1);
\draw (-1.5,0) -- (1.5,0);
\end{tikzpicture}). Choose $O$ the intersection point of the tangents at $C$ and $D$ to the conic that is one of the irreducible components of $\Gamma$. $OC$ and $OD$ are of type d). $CD$ is of type f).

\begin{center}
	
	\tikzset{every picture/.style={line width=0.75pt}} 
	
	\begin{tikzpicture}[x=0.75pt,y=0.75pt,yscale=-1,xscale=1]
		
		\draw   (100,140) .. controls (100,126.19) and (111.19,115) .. (125,115) .. controls (138.81,115) and (150,126.19) .. (150,140) .. controls (150,153.81) and (138.81,165) .. (125,165) .. controls (111.19,165) and (100,153.81) .. (100,140) -- cycle ;
		\draw    (80.5,149.83) -- (169.5,149.83) ;
		\draw    (154.5,135.02) -- (126.5,202.02) ;
		\draw    (94.5,134.02) -- (126.5,202.02) ;
		
		\draw (88.5,153.23) node [anchor=north west][inner sep=0.75pt]    {$C$};
		\draw (149,153.4) node [anchor=north west][inner sep=0.75pt]    {$D$};
		\draw (117,207.4) node [anchor=north west][inner sep=0.75pt]    {$O$};

	\end{tikzpicture}
	
\end{center}

From this we have the following disposition:
\begin{center}

\tikzset{every picture/.style={line width=0.75pt}} 

\begin{tikzpicture}[x=0.75pt,y=0.75pt,yscale=-1,xscale=1]

\draw   (176.46,148.33) .. controls (176.5,159.37) and (157.96,168.4) .. (135.05,168.5) .. controls (112.13,168.59) and (93.51,159.71) .. (93.47,148.66) .. controls (93.43,137.62) and (111.97,128.59) .. (134.88,128.5) .. controls (157.8,128.4) and (176.42,137.28) .. (176.46,148.33) -- cycle ;
\draw    (70.21,146.5) -- (169.71,146.5) ;
\draw    (102,152) -- (195.5,152) ;
\draw    (93.5,109.75) -- (93.5,190.02) ;
\draw    (186.5,152) .. controls (186.5,157.45) and (181.5,163.45) .. (176.5,162) ;
\draw    (85.5,146) .. controls (84.5,142) and (89.5,137) .. (93.5,140) ;
\draw    (176.5,109.75) -- (176.5,190.75) ;
\draw    (137.5,189.92) -- (128.78,170.43) ;
\draw [shift={(127.96,168.6)}, rotate = 425.9] [color={rgb, 255:red, 0; green, 0; blue, 0 }  ][line width=0.75]    (10.93,-3.29) .. controls (6.95,-1.4) and (3.31,-0.3) .. (0,0) .. controls (3.31,0.3) and (6.95,1.4) .. (10.93,3.29)   ;

\draw (74,125.4) node [anchor=north west][inner sep=0.75pt]    {$C$};
\draw (188.5,155.4) node [anchor=north west][inner sep=0.75pt]    {$D$};
\draw (137,191.4) node [anchor=north west][inner sep=0.75pt]    {$O$};
\end{tikzpicture}
\end{center} 

and the system of equations in a convenient coordinate system:
$$C=(XZ),D=(YZ),O=(XY-Z^2).$$

\item Only one of the two conics $C,D$ is a double line.  This case never occurs.
\item $C$ and $D$ are each a double line.  Choose $O$ as before: $OC$ and $OD$ are of type e). $CD$ is of type h).From these we have the disposition below: 
\begin{center}

\tikzset{every picture/.style={line width=0.75pt}} 

\begin{tikzpicture}[x=0.75pt,y=0.75pt,yscale=-1,xscale=1]

\draw   (100,140) .. controls (100,126.19) and (111.19,115) .. (125,115) .. controls (138.81,115) and (150,126.19) .. (150,140) .. controls (150,153.81) and (138.81,165) .. (125,165) .. controls (111.19,165) and (100,153.81) .. (100,140) -- cycle ;
\draw    (99.5,97.02) -- (99.5,189.52) ;
\draw    (97.5,97.02) -- (97.5,189.52) ;
\draw    (152.5,97.02) -- (152.5,189.52) ;
\draw    (150.5,97.02) -- (150.5,189.52) ;
\draw    (130,182.17) -- (125.56,166.92) ;
\draw [shift={(125,165)}, rotate = 433.76] [color={rgb, 255:red, 0; green, 0; blue, 0 }  ][line width=0.75]    (10.93,-3.29) .. controls (6.95,-1.4) and (3.31,-0.3) .. (0,0) .. controls (3.31,0.3) and (6.95,1.4) .. (10.93,3.29)   ;

\draw (58.5,141.23) node [anchor=north west][inner sep=0.75pt]    {$C$};
\draw (160,137.4) node [anchor=north west][inner sep=0.75pt]    {$D$};
\draw (124,183.4) node [anchor=north west][inner sep=0.75pt]    {$O$};

\end{tikzpicture}
\end{center}
and the following equations in a suitably chosen coordinate system:
$$C=(X^2),D=(Y^2), O=(XY-Z^2).$$
\end{enumerate}
\underline{Case 5} ($\Gamma$ decomposes into a conic union a tangent line):
Let $\gamma$ be the conic and $\delta$ the line. Let $U=\delta\cap\gamma$ be the point of tangency. Two cases may be envisioned:
\begin{enumerate}[a)]
    \item  $U$ is not a double line. This case does not occur.
    \item $U$ is a double line. Let $A$ and $B$
be two elements of $\pi$, $A$ on $\delta$ and $B$ on $\gamma$, both distinct from $U$, such that $AB$ is the tangent at $B$ to $\gamma$. $AB$ is of type b),$AU$ of type g),$UB$ of type c). From this we have the following disposition:
\begin{center}
    \tikzset{every picture/.style={line width=0.75pt}} 

\begin{tikzpicture}[x=0.75pt,y=0.75pt,yscale=-0.8,xscale=0.8]

\draw   (150.74,67.83) .. controls (160.33,64.9) and (170.48,70.29) .. (173.42,79.89) .. controls (176.36,89.48) and (170.96,99.64) .. (161.37,102.57) .. controls (151.78,105.51) and (141.62,100.11) .. (138.68,90.52) .. controls (135.75,80.93) and (141.14,70.77) .. (150.74,67.83) -- cycle ;
\draw    (187.06,104.3) -- (105.8,104.38) ;
\draw    (164.81,48.57) -- (107.8,111.38) ;
\draw    (140,184) -- (178.8,261.38) ;
\draw    (108.8,265.38) -- (156.8,186.38) ;
\draw    (87,200) -- (213.8,200.38) ;
\draw    (68,249) -- (194.8,249.38) ;
\draw    (89,252) -- (215.8,252.38) ;
\draw    (68,255) -- (194.8,255.38) ;
\draw  [color={rgb, 255:red, 0; green, 0; blue, 0 }  ][line width=0.75] [line join = round][line cap = round] (213.05,257.65) .. controls (215.92,259.93) and (222.27,258.35) .. (223.05,255.23) .. controls (224.45,249.57) and (220.37,234.99) .. (220.99,229.71) .. controls (221.17,228.19) and (224.17,224.99) .. (225.98,225.71) .. controls (228.79,226.82) and (226.34,231.84) .. (223.98,228.1) .. controls (220.39,222.4) and (227.19,201.98) .. (223.92,199.39) .. controls (222.44,198.21) and (220.89,193.02) .. (216.91,196.21) ;
\draw    (143.5,193) .. controls (145.5,189) and (152.5,191) .. (152.5,194) ;
\draw  [color={rgb, 255:red, 0; green, 0; blue, 0 }  ][line width=0.75] [line join = round][line cap = round] (62.92,242.04) .. controls (61.28,241.35) and (57.63,241.77) .. (57.17,242.69) .. controls (56.34,244.36) and (58.62,248.71) .. (58.24,250.27) .. controls (58.13,250.72) and (56.39,251.65) .. (55.35,251.42) .. controls (53.74,251.07) and (55.17,249.6) .. (56.51,250.72) .. controls (58.55,252.44) and (54.55,258.44) .. (56.42,259.23) .. controls (57.26,259.59) and (58.13,261.14) .. (60.43,260.22) ;

\draw (129,56.4) node [anchor=north west][inner sep=0.75pt]    {$B$};
\draw (84,105.4) node [anchor=north west][inner sep=0.75pt]    {$A$};
\draw (148.43,107.74) node [anchor=north west][inner sep=0.75pt]    {$U$};
\draw (177,68.4) node [anchor=north west][inner sep=0.75pt]    {$\gamma $};
\draw (142,166.4) node [anchor=north west][inner sep=0.75pt]    {$B$};
\draw (231,217.4) node [anchor=north west][inner sep=0.75pt]    {$A$};
\draw (33.43,240.74) node [anchor=north west][inner sep=0.75pt]    {$U$};

\end{tikzpicture}
\end{center} and the equations in a convenient coordinate system: $A=(ZY),$ $B=(X^2+Z^2),\ U=(Y^2).$ We may change coordinates to have the presentation $\langle X^2,XY,(Y+Z)Z\rangle.$
\par\noindent
{\underline{Note}}: The plane $\pi$ in this case can be defined in the following way. There are two points $A$ and $B$
of $P$ and a line $\delta$ of $P$ that contains neither $A$ nor $B$ such that the conics of $\pi$ are exactly the conics passing through $A,B$ and such that the polar of $AB$ with respect to each point belongs to $\delta$.\end{enumerate}
\par\noindent
{\underline{Case 6}} ($\Gamma$ decomposes into three non-concurrent lines): We should consider a priori the following four cases: \par
\begin{enumerate}[a)]
\item \begin{tikzpicture}[baseline={([yshift=-1.6ex]current bounding box.center)},scale=0.2]
\draw  (-0.5,0) -- (3.5,0) node [anchor = west] {C};
\draw  (1.8,3.4) -- (-0.1,-0.6);
\draw (3.1,-0.6) -- (1.2,3.4);

\draw (1.5,3.5) node [anchor = south]{A};
\draw (-0.4,0) node [anchor = east]{B};
\end{tikzpicture}. None of $A,B,C$ are double lines. One obtains the following disposition and the possible representation: 

$A=(XY), B=(YZ), C=(ZX),$ \quad \tikzset{every picture/.style={line width=0.75pt}} 
\begin{tikzpicture}[baseline={([yshift=-.8ex]current bounding box.center)},x=0.75pt,y=0.75pt,yscale=-0.7,xscale=0.7]

\draw    (173,85.66) -- (211.8,163.04) ;
\draw    (116,169) -- (190,90) ;
\draw    (101,146.66) -- (227.8,147.04) ;
\draw    (101,149.66) -- (227.8,150.04) ;
\draw    (176.5,94.66) .. controls (178.5,90.66) and (185.5,92.66) .. (185.5,95.66) ;
\draw    (172,89.66) -- (210.8,167.04) ;
\draw    (119,170) -- (193,91) ;
\draw    (216,150) .. controls (216,155.45) and (213.5,159.45) .. (208.5,158) ;
\draw    (124,150) .. controls (119,153) and (123,160) .. (126,159) ;

\draw (103,153.06) node [anchor=north west][inner sep=0.75pt]    {$B$};
\draw (175,71.06) node [anchor=north west][inner sep=0.75pt]    {$A$};
\draw (218,153.4) node [anchor=north west][inner sep=0.75pt]    {$C$};

\end{tikzpicture}

\par
\underline{Note}: The plane $\pi$ is constituted by the conics passing through the three given points, that are not collinear.\par
\item\begin{tikzpicture}[baseline={([yshift=-1.8ex]current bounding box.center)},scale=0.2]
\draw  (-0.5,0) -- (3.5,0);
\draw  (1.8,3.6) -- (-0.1,-0.6);
\draw (3.1,-0.6) -- (1.2,3.6);
\draw (1.5,3) circle (0.5);
\end{tikzpicture}.\quad One only of the three points $A,B,C$ is a double line. This case does not occur.\par
\item  \begin{tikzpicture}[baseline={([yshift=-1.4ex]current bounding box.center)},scale=0.2]
\draw  (-0.5,0) -- (3.5,0);
\draw  (1.8,3.6) -- (-0.1,-0.6);
\draw (3.1,-0.6) -- (1.2,3.6);
\draw (0,0) circle (0.5);
\draw (3,0) circle (0.5);
\end{tikzpicture}. \quad  Two of the three points $A,B,C$ are double lines.  This case does not occur.\par
\item \begin{tikzpicture}[baseline={([yshift=-1.9ex]current bounding box.center)},scale=0.2]
\draw  (-0.5,0) -- (3.5,0) node [anchor = west] {C};
\draw  (1.8,3.6) -- (-0.1,-0.6);
\draw (3.1,-0.6) -- (1.2,3.6);
\draw (1.5,3) circle (0.5);
\draw (0,0) circle (0.5);
\draw (3,0) circle (0.5);
\draw (1.5,3.5) node [anchor = south]{A};
\draw (-0.4,0) node [anchor = east]{B};
\end{tikzpicture} $A,B$ and $C$ are each double lines. One has the following disposition and the possible representation:

$A=(X^2),B=(Y^2),C=(Z^2)$, \quad \tikzset{every picture/.style={line width=0.75pt}} 
\begin{tikzpicture}[baseline={([yshift=-.8ex]current bounding box.center)},x=0.75pt,y=0.75pt,yscale=-0.7,xscale=0.7]

\draw    (173,85.66) -- (211.8,163.04) ;
\draw    (116,169) -- (190,90) ;
\draw    (101,149.66) -- (227.8,150.04) ;
\draw    (176.5,94.66) .. controls (178.5,90.66) and (185.5,92.66) .. (185.5,95.66) ;
\draw    (172,89.66) -- (210.8,167.04) ;
\draw    (119,170) -- (193,91) ;
\draw    (216,150) .. controls (216,155.45) and (213.5,159.45) .. (208.5,158) ;
\draw    (124,150) .. controls (119,153) and (123,160) .. (126,159) ;
\draw    (101,146.66) -- (227.8,147.04) ;

\draw (103,153.06) node [anchor=north west][inner sep=0.75pt]    {$B$};
\draw (175,71.06) node [anchor=north west][inner sep=0.75pt]    {$A$};
\draw (218,153.4) node [anchor=north west][inner sep=0.75pt]    {$C$};

\end{tikzpicture}
\par
\underline{Note} The plane $\pi$ is constituted in this case by the conics admitting a given autopolar triangle.
\end{enumerate}\par
7) $\Gamma$ decomposes into three concurrent lines.\quad  \begin{tikzpicture}[baseline={([yshift=-1.2ex]current bounding box.center)},scale=0.4] 
\draw (0,-0.5) -- (0,1);
\draw (-0.4,-0.4) -- (0.8,0.8);
\draw (0.4,-0.4) -- (-0.8,0.8);
\end{tikzpicture} \par This case never occurs, with or without double line.\vskip 0.3cm\par
8) ($\Gamma$ decomposes into a double line union a single line):\quad \begin{tikzpicture}[baseline={([yshift=-.8ex]current bounding box.center)},scale=0.2] 
\draw (1,2) -- (1,-2);
\draw (-1,0.2) -- (3,0.2);
\draw (-1,-0.2) -- (3,-0.2);
\end{tikzpicture} \par
Three cases present themselves: 
\begin{enumerate}[a)]
\item  \begin{tikzpicture}[baseline={([yshift=-.8ex]current bounding box.center)},scale=0.4] 
\draw (0.1,1.6)--(0.9,1.6);
\draw (0.5,2) node[anchor=west]{C};
\draw (0.5,2) -- (0.5,-2);
\draw (2,0) circle(0.5);
\draw (2,-0.5) node [anchor=north]{B};
\draw (0,-0.5) node [anchor=north]{A};
\draw (-1,0.2) -- (3,0.2);
\draw (-1,-0.2) -- (3,-0.2);
\end{tikzpicture} \quad $A=(XZ), C=(XY), B=(Z^2)$.\par
\underline{Note} Here it is a question of conics restricted to pass through two given points, and being tangent at one of the two points to a given double line that does not pass through the other point.
\item 
\begin{tikzpicture}[baseline={([yshift=-.8ex]current bounding box.center)},scale=0.4] 
\draw (0.1,1.6)--(0.9,1.6);
\draw (0.5,2) node[anchor=west]{C};
\draw (0.5,2) -- (0.5,-2);
\draw (2,0) circle(0.5);
\draw (2,-0.5) node [anchor=north]{B};
\draw (0,-0.5) node [anchor=north]{A};
\draw (-1,0.2) -- (3,0.2);
\draw (-1,-0.2) -- (3,-0.2);
\draw (0.5, 0) circle (0.5);
\end{tikzpicture} $A=(Y^2),B=(Z^2), C=(XY)$.\par
\underline{Note}: It is a question of conics satisfying the following conditions:
There are two given points $m$ and $m^\prime$ of $P$ that are conjugate with respect to each.  There is a point $n\in P, n\notin [m,m^\prime]$, the line determined by $m$ and $m^\prime$, and a line $\delta$ of $P$, with $m,n\in \delta$ such that each conic passes through $n$ and is tangent at $n$ to $\delta$.
\par
\item  
\begin{tikzpicture}[baseline={([yshift=-.8ex]current bounding box.center)},scale=0.4] 
\draw (0.4,1.6)--(1.4,1.6);
\draw (1,2) node[anchor=west]{C};
\draw (1,2) -- (1,-2);
\draw (2.3,0) circle(0.5);
\draw (2.3,-0.5) node [anchor=north]{V};
\draw (-0.3,-0.5) node [anchor=north]{U};
\draw (0.7,-0.2) node [anchor=north]{A};
\draw (-1.2,0.2) -- (3.2,0.2);
\draw (-1.2,-0.2) -- (3.2,-0.2);
\draw (-0.3, 0) circle (0.5);
\end{tikzpicture} $U=(X^2), V=(Y^2),A=(X^2-Y^2),C=((X+Y)(Z))$.\par
\underline{Note}: There are $m,m^\prime\in P$, $n\in P-[m,m^\prime]$ and $ \delta $ a line of $P$ with $n\in \delta$ but $ m,m^\prime\notin \delta$ such that the conics of $\pi$ are those that admit the pair
$m$ and $m^\prime$ as conjugates, that also pass through $n$, and that are tangent at $n$ to $\delta$.\par
{\underline{Note}}: One can also put the vector space in the form $\langle ZX,ZY,X^2+Z^2\rangle$.
\end{enumerate}
9) ($\Gamma$ consists of a triple line):\quad \begin{tikzpicture}[scale=0.5]
\draw (0,0) -- (2,0);
\draw (0,-0.2) -- (2,-0.2);
\draw (0,0.2) -- (2,0.2);
\end{tikzpicture}\quad Only the following case occurs:\vskip 0.2cm\par
 \begin{tikzpicture}[scale=0.5]
\draw (0,0) -- (2,0);
\draw (0,-0.2) -- (2,-0.2);
\draw (0,0.2) -- (2,0.2);
\draw (1,0) circle (0.4);
\end{tikzpicture} $O=(YZ-X^2), A=(Y^2), B=(XY)$.\par
   \begin{tikzpicture}[baseline={([yshift=-.8ex]current bounding box.center)},scale=0.5]
\draw (0,0) -- (3,0);
\draw (0,-0.2) -- (3,-0.2);
\draw (0,0.2) -- (3,0.2);
\draw (1,0) circle (0.4);
\draw (2.5,0.5) -- (2.5,-0.5);
\draw (2.5,0.5) node[anchor=south]{B};
\draw (1,0.5) node[anchor=south]{A};
\draw (1,-1) node[anchor=north]{O};
\filldraw (1,-1) circle (2pt);
\end{tikzpicture} ($OB$ is of type d), $OA$ of type e), $AB$ of type g)).\par
  \underline{Note}: The plane $\pi$ is constituted by the osculating conics at a given point to a given conic.
\vskip 0.3cm\noindent
  II. {$\pi\subset S_4$.} Two cases present themselves:
  \begin{enumerate}[a)]
  \item[10a)] $\pi\cap D_2$ is a parabola.\par
  \begin{tikzpicture}[baseline={([yshift=-.8ex]current bounding box.center)},scale=0.6]
\draw (-0.8,1) .. controls (-0.3,0) and (0.7,0) .. (1.2,1);
\draw (-0.8,1) circle (0.3);
\draw (-0.9,1) node [anchor=east]{A};
\draw (1.2,1) circle (0.3);
\draw (1.3,1) node [anchor=west]{B};
\draw (0.2,0.25) circle (0.3);
\draw (0.2,0) node [anchor=north]{C};
\draw[color=gray] (0.5,-0.75) -- (1.5,1.25);
\draw[color=gray] (0.3,-0.7) -- (1.3,1.3);
\draw[color=gray] (0.1,-0.65) -- (1.1,1.35);
\draw[color=gray] (-0.1,-0.6) -- (0.9,1.4);
\draw[color=gray] (-0.3,-0.55) -- (0.7,1.45);
\draw[color=gray] (-0.5,-0.5) -- (0.5,1.5);
\draw[color=gray] (-0.7,-0.45) -- (0.3,1.55);
\draw[color=gray] (-0.9,-0.4) -- (0.1,1.6);
\draw[color=gray] (-1.1,-0.35) -- (-0.1,1.65);
\draw[color=gray] (-1.3,-0.3) -- (-0.3,1.7);
\draw[color=gray] (-1.5,-0.25) -- (-0.5,1.76);
\end{tikzpicture} $A=(X^2), B=(Y^2), C=((X+Y)^2)$.\par
   \item[10b)] $\pi\cap D_2$ is reduced to a point.\par
    \begin{tikzpicture}[baseline={([yshift=-.8ex]current bounding box.center)},scale=0.6]
\draw (-0.8,1) .. controls (-0.3,0) and (0.7,0) .. (1.2,1);
\draw (0.2,0.25) circle (0.3);
\draw (0.2,0) node [anchor=north]{A};
\draw[color=gray] (0.5,-0.75) -- (1.5,1.25);
\draw[color=gray] (0.3,-0.7) -- (1.3,1.3);
\draw[color=gray] (0.1,-0.65) -- (1.1,1.35);
\draw[color=gray] (-0.1,-0.6) -- (0.9,1.4);
\draw[color=gray] (-0.3,-0.55) -- (0.7,1.45);
\draw[color=gray] (-0.5,-0.5) -- (0.5,1.5);
\draw[color=gray] (-0.7,-0.45) -- (0.3,1.55);
\draw[color=gray] (-0.9,-0.4) -- (0.1,1.6);
\draw[color=gray] (-1.1,-0.35) -- (-0.1,1.65);
\draw[color=gray] (-1.3,-0.3) -- (-0.3,1.7);
\draw[color=gray] (-1.5,-0.25) -- (-0.5,1.76);
\end{tikzpicture}\quad $A=(X^2),B=(XY),C=(XZ)$.
\end{enumerate}
\vskip 0.2cm\par\noindent
    \underline{Conclusion: Classification of the planes $\pi$ of $C_5$}.\par
    The preceding study constitutes not only an exhaustion of all the cases, but also a classification. In effect, there are no isomorphisms between any two of the cases described: for, if there were, the dispositions of $\pi\cap D_2\subset \pi\cap S_4$ would be algebraically isomorphic, which is never the case. The classification of planes $\pi$ up to $Pgl(3)$ action is thus well described by Table \ref{table1}. 

\section{Algebraic method for the classification of planes of conics}

We prove with an algebraic method the following proposition.
\begin{proposition}\label{6.1prop}
Every three dimensional vector space $V$ either has an associated smooth cubic $\Gamma(V)$ \footnote{If $V$ is a vector space of dimension 3 of $R_2$, let $\pi$ be the correspondent plane of $C_5$. We denote by $\Gamma(V)$ the cubic $\Gamma_{\pi}$.} or belongs to one of the 14 special orbits in Table \ref{table1}.
\end{proposition}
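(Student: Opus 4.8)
The plan is to carry out the algebraic method announced in the Introduction, in two stages: a geometric reduction using intersection theory on $\Grass(2,R_2)$, followed by a finite bookkeeping of normal forms. First I would prove the key reduction: \emph{every} three-dimensional $V\subset R_2$ contains a pencil $U$ lying in $W:=\overline{Pgl(3)\circ\langle XY,X^2+YZ\rangle}$, the closure of the ``type d)'' orbit of Table~\ref{table2} (the pencils meeting $S_4$ in a single triple point off $D_2$). The tool is intersection theory on $\Grass(2,R_2)=\Grass(2,6)$. The orbit closure $W$ has codimension $2$ there: its dimension is $\dim S_4+2=6$, counting the choice of a smooth point $p$ of $S_4$ (dimension $4$) together with the two-dimensional family of lines having contact order $\ge 3$ with $S_4$ at $p$ (the tangent directions in $T_pS_4$ annihilated by the second fundamental form form a quadric surface). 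On the other hand the lines contained in a fixed plane $\pi=\mathbb P(V)$ form a sub-Grassmannian $\Grass(2,V)\cong\mathbb P^2$ of complementary dimension $2$. Hence $[W]\cdot[\Grass(2,V)]$ is a genuine intersection number, which I would identify with the number of lines of $\pi$ meeting the cubic $\Gamma_\pi=\pi\cap S_4$ at a single point with multiplicity three, i.e. with the count of inflectional tangents of a plane cubic. This number is $9$ for a smooth $\Gamma_\pi$ and stays positive (counted with multiplicity) under degeneration; being a positive number computed by a proper intersection, it forces the intersection to be nonempty for \emph{every} $\pi$, so a pencil $U\subset V$ with $U\in W$ always exists.

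Before moving on I would dispose of the cases where the clean flex count does not literally apply. When $\pi\subset S_4$, so $\Gamma_\pi=\pi$ is not a cubic curve, I treat the two orbits directly: $\langle X^2,XY,XZ\rangle\supset\langle X^2,XY\rangle$ and $\langle X^2,Y^2,(X+Y)^2\rangle\supset\langle X^2,Y^2\rangle$, and both $\langle X^2,XY\rangle$ and $\langle X^2,Y^2\rangle$ (types g), h)) lie in $W$ via the explicit degenerations $\langle XY,tXZ-Y^2\rangle\to\langle XY,Y^2\rangle$ and $\langle X^2,tXZ-Y^2\rangle\to\langle X^2,Y^2\rangle$ as $t\to 0$. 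When $\Gamma_\pi$ is a singular cubic I run through its finitely many isomorphism types (node, cusp, conic plus secant, conic plus tangent, line configurations) and exhibit in each a concrete line landing in $W$; for instance the cuspidal tangent meets a cuspidal cubic in a single triple point at the cusp, and for a conic plus secant the tangent to the conic $\gamma$ at a point of $\gamma\cap\delta$ meets $\Gamma_\pi$ with multiplicity $3$ there.

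With the reduction in hand, the second stage is finite. I would put $U$ into one of the normal forms representing the orbits d), e), g), h) that constitute $W$, and write $V=U+\langle w\rangle$ with $w$ a conic well defined modulo $U$, i.e. a point of $\mathbb P(R_2/U)\cong\mathbb P^3$. For each normal form I compute the stabilizer $\Stab_{Pgl(3)}(U)$ and enumerate its orbits on this $\mathbb P^3$; this yields a finite list of candidate nets, among which appears the one-parameter family whose $\Gamma(V)$ is smooth (the Jacobian nets $J_\phi$ classified in \S4.1 by the ${\sf j}$-invariant) together with finitely many nets having singular $\Gamma(V)$. Collating the lists from the four normal forms and discarding isomorphic repetitions, using as in \S2 that the type of the triple $(\delta_\pi\subset\Gamma_\pi\subset\pi)$ is a complete invariant, leaves exactly the smooth family and the $14$ special orbits of Table~\ref{table1}.

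The hard part is the uniformity of the first stage. Positivity of the intersection number guarantees \emph{some} line of threefold contact in every $\pi$, but one must ensure that the resulting pencil genuinely lands in $W$ and not in the orbit of a type f) pencil (a line component of $\Gamma_\pi$ of the wrong sort), which lies outside $W$. Securing this across all singular cubics and the $\pi\subset S_4$ cases is precisely the point of the case-by-case verification above; once past it, the stabilizer computation and the elimination of repetitions in the second stage, though tedious, are entirely mechanical.
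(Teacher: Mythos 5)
Your overall strategy is the paper's own (\S 5): use intersection theory on $\Grass(2,R_2)$ to show that every net contains a pencil lying in the six-dimensional closed set $\mathfrak Z=\overline{Pgl(3)\circ\langle XY,X^2+YZ\rangle}$, then enumerate the possible third generators over each normal form of such a pencil. But there is a genuine gap in your description of $\mathfrak Z$. You assert that the orbit of $\langle XY,XZ\rangle$ (type f) of Table \ref{table2}, a pencil contained in $S_4$ whose members share a line component) lies \emph{outside} the closure, and you accordingly restrict the second stage to the normal forms d), e), g), h). This is false: the family $U_t=\langle XY,\,YZ-tX^2\rangle$ is of type d) for $t\neq 0$ and degenerates at $t=0$ to $\langle XY,YZ\rangle\cong\langle XY,XZ\rangle$, so type f) is one of the five orbits making up $\mathfrak Z$ (this is specialization (7) of Table \ref{table3}, and is stated explicitly at the start of \S 5.1).

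The omission is not cosmetic. For the net $V=\langle XY,YZ,XZ\rangle$ (orbit \#6a of Table \ref{table1}) the discriminant $\Gamma_\pi$ is a triangle and $V$ contains no perfect square, so $V$ has no pencil of type e), g) or h); and since no line of $\pi$ that is not an edge of the triangle can meet it in a single point of multiplicity three (the three edges have no common point), $V$ has no pencil of type d) either. The only pencils of $V$ lying in $\mathfrak Z$ are the three type f) edges $\langle XY,XZ\rangle$, $\langle XY,YZ\rangle$, $\langle XZ,YZ\rangle$. Your second stage therefore never produces \#6a, and your promise to ``exhibit in each singular cubic a concrete line landing in $W$'' while avoiding type f) cannot be kept for the triangle. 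The repair is to add $\langle XY,XZ\rangle\subset V$ as a fifth case of the enumeration (the paper's Case 3 in \S 5.2); after that, the rest of your plan --- stabilizer orbits on $\mathbb P(R_2/U)$ and elimination of repetitions via the type of $(\delta_\pi\subset\Gamma_\pi\subset\pi)$ --- goes through. Your first stage is otherwise sound: the paper certifies properness of $\mathfrak Z\cap P(V)$ on the cuspidal net $\langle XY,X^2+YZ,Z^2\rangle$, where the intersection is exactly the two points $\langle XY,X^2+YZ\rangle$ and $\langle X^2+YZ,Z^2\rangle$, rather than on a smooth $\Gamma_\pi$ with its nine flex tangents, but either witness establishes that the intersection class is nonzero.
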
\par
There are two stages in this exposition: in Section 5.1, we notice with the help of intersection theory on the Grassmannian G = $\Grass(2, R_2)$, that every vector space $V$ of dimension 3 of $R$ contains at least one 2 dimensional subspace that belongs to the closure of the orbit under $Pgl(3)$ of the vector space $W = \langle XY, X^2 + YZ\rangle $. We know, from Section 3, that this closure is the union of the orbits of $W$, $\langle Y^2, X^2 + YZ\rangle $, $\langle XY,XZ\rangle$, $\langle XY,X^2\rangle$ and of $\langle X^2,Y^2\rangle$ (see tables \ref{table2} and \ref{table3}). Then, in Section 5.2, we show the possibilities for a third generator of $V$ up to an action of an element of $Pgl(3)$. We then check the list of special orbits in Table \ref{table1}. We find that when $\Gamma(V)$ is smooth, $V$ is isomorphic to a vector space $W_\lambda, \lambda\neq 0,1$ of a one-parameter family. However, we need the considerations of Section 4.1.2 to make the classification in this case, which then completes the general classification.

\subsection{The intersections on $\Grass(2,R_2$)}

The Grassmannian variety $G = \Grass(2, R_2)$ which parametrizes the 2-dimensional vector subspaces of $R_2$ is of dimension 8. If $V$ is a vector subspace of dimension 3 of $R_2$, there is a plane $P(V)$ in $G$ which parametrizes the 2 dimensional subspaces of $V$. The class of $P(V)$ in the cohomology ring $H^2(\Grass(2, R_2))$ is independent of $V$, and is in fact a Schubert cycle. We would like to find, in a general space $V$, a subspace of dimension 2 of the most particular form to find a canonical expression of $V$; we will find a closed subvariety $\mathfrak Z$ of $G$ such that $\mathfrak Z\cap P(V)$ is always non-empty and in general finite. Since the cohomology class of $P(V)$ is independent of $V$, the intersection theory on $G$ affirms that if a closed subvariety $\mathfrak Z$ of $G$ of dimension 6 intersects $P(V)$ in a non-empty finite set for a given $V$ (that is, the intersection between $\mathfrak Z$ and $P(V)$ is proper and non-empty) then $\mathfrak Z$ intersects effectively all the $P(V)$; furthermore $\mathfrak Z$ will intersect a generic $P(V)$ in a non-empty finite set. In other words, if a 6-dimensional closed subvariety $\mathfrak Z$ of $G$ contains a finite number of subspaces of dimension 2 of a particular 3-dimensional vector space, then $\mathfrak Z$ contains at least a 2 dimensional vector subspace of every 3-dimensional vector space in $R_2$.

We prove now that if $W=\langle XY,X^2+YZ\rangle$, the closure $\mathfrak Z$ of the orbit of $W$ (which, according to the results in Table \ref{table3} of Section 3, is a sub-variety of $G$ of dimension 6), only contains as 2 dimensional subspaces of the 3 dimensional vector space $V_0=\langle XY,X^2+YZ,Z^2\rangle$, the space $W$ itself\footnote{Typo in original corrected.} and the space $\langle X^2+YZ,Z^2\rangle$.

Indeed, there is no vector subspace of dimension two of $V_0$ whose elements all have in common a linear factor, and there is no subspace isomorphic to $\langle X^2,Y^2\rangle$. The only elements of $\mathfrak Z$ that can belong to $P(V_0)$ belong then either to the orbit of $W$ or to that of $\langle X^2+YZ,Z^2\rangle$. In any case, the elements correspond to lines in the plane $\pi$ associated to $V_0$ that meet $\Gamma_\pi$ in a triple point. However, $\Gamma_\pi$ is here an irreducible cubic with a cusp, and the plane $\pi$ contains only two such lines, the tangent in the unique inflection point that corresponds to $W$ and the tangent at the cusp that corresponds to $\langle X^2+YZ,Z^2\rangle$. 

Algebraic arguments that are not reproduced here can replace these geometric arguments. 

\subsection{Algebraic classification}

We classify now completely the orbits of the subspace $V$ of dimension 3 of $R_2$, such that $\Gamma(V)$ is singular, and we classify, up to an action of finite group of automorphisms, the orbits of $V$ such that $\Gamma(V)$ is smooth. We give a representation for each orbit with an index that corresponds to its dimension and the symbol of the associated cubic $\Gamma(V)$ as in Table \ref{table1}. 

\begin{enumerate}
    \item[\underline{Case 1}] $\langle XY,X^2+YZ\rangle \,\subset V$. Then $$V=\langle XY,X^2+YZ,aX^2+bY^2+cZ^2+dXZ
    \rangle.$$ 
    \begin{enumerate}
        \item[\underline{Case 1.A}] $c\neq 0$ or $d\neq 0$. By replacing $Z$ by $Z-\alpha X$ with $c\alpha^2-d\alpha+a=0$, we eliminate $a$ (that is, we replace $V$ by $V^g$ with $g\in GL(3)$, $g:(X,Y,Z)\rightarrow (X,Y,Z-\alpha X)$. From now on we will omit such explanations. If now $b\neq 0$, a ``change in scale'' leads to a family: 
        $$V_c=\langle XY,X^2+YZ,Y^2+cZ^2+XZ\rangle\quad \text{if } d\neq 0;$$ and to
        $$V'=\langle XY,X^2+YZ,Y^2+Z^2\rangle\quad \text{if } d= 0.$$
        We study these two cases later.
        
        If $b=0$, $V=\langle XY,X^2+YZ,Z^2\rangle=\quad $ \begin{tikzpicture}[scale=0.2]
\draw  [color={rgb, 255:red, 0; green, 0; blue, 0 }  ][line width=0.75] [line join = round][line cap = round] (0,1.3) .. controls (0.5,1.3) and (0.95,1) .. (1.2,0.75) .. controls (1.4,0.6) and (1.5,0.4) .. (1.7,0.15) ;
\draw  [color={rgb, 255:red, 0; green, 0; blue, 0 }  ][line width=0.75] [line join = round][line cap = round] (0,1.3) .. controls (0.5,1.3) and (0.95,1.6) .. (1.2,1.85) .. controls (1.4,2) and (1.5,2.2) .. (1.7,2.55) ;
\draw (0,1.3) circle (0.4);
\end{tikzpicture}\quad or \par $V=\langle XY,X^2+YZ,XZ\rangle  =$ \begin{tikzpicture}[scale=0.2]
\draw (0,0) circle (1);
\draw (-1.5,0) -- (1.5,0);
\end{tikzpicture} \quad or\par
\par $V=\langle XY,X^2+YZ,Z^2+XZ\rangle$.

        In the latter case, the substitution $Z=(Z'+\frac{1}{2}X)$ and a ``change in scale'' leads to the form $V=\langle XY,X^2+YZ,Z^2+YZ\rangle$, and the substitution $X=(Y'-X')/2,\ Y=(Z'-Y'-X')/2,$ ${Z=(Y'-X')/2}$ provides the expression\par 
        \begin{center}
        $\langle XY,X^2+YZ,Y^2+XZ \rangle_8=$ \begin{tikzpicture}[scale=0.2]

\draw  [line width=0.75] [line join = round][line cap = round] (0,0) .. controls  (-0.5,0.4) and (-1.3,1.1) .. (-1.9,1.5) .. controls (-2.2,1.7) and (-2.8,1.5) .. (-2.85,1.1) .. controls (-2.9,0.9) and (-2.7,0.55) .. (-2.5,0.5) .. controls (-2,0.4) and (-1,0.8) .. (-0.6,0.9) .. controls (-0.5,0.94) and (0.103,1.0695) .. (0.1,1.15) ;
\end{tikzpicture}
 \end{center}
        
        We see that this is a limited subset of the group of cases previously considered.\par
        We now study the continuous family $V_c=\langle XY,X^2+YZ,Y^2+cZ^2+XZ \rangle$ and also $V_0=\langle XY,X^2+YZ,X^2+Y^2 \rangle$. In Section~4.1.2 Proposition \ref{4.2prop}ff we studied the family $\V_\lambda=\langle XY,X^2+YZ,(Y-Z)(Y- \lambda Z) \rangle$ with $\lambda\in {\sf k}$, and we have shown that if $\lambda\neq 0,1$, $\V_\lambda$ has a non singular associated cubic $\Gamma(V_\lambda)$. It is clear that $V_0\simeq \V_0$ and that $\V_{-1}\simeq V'.$ If $c \neq 0$, consider the following substitution in $V_c: Z=rZ'-\frac{1}{rc}X',\ X=r^{1/2}X',\ Y=Y'$. Then $$V_c=\langle XY,X^2+YZ,Y^2+\frac{1}{4c}YZ+cr^2Z^2 \rangle=\V_{\lambda} \text{ if } (\frac{r}{4c}-1)=cr^2=\lambda.$$
        Note that: 
        \begin{enumerate}[a)]
            \item Unless $c = 0$, (case already considered), we can always choose $r$ such that $\frac{4}{rc}-1 = cr^2$. Therefore, each $V_c$ belongs to the orbit of an element of the family $\V_\lambda$.
            \item In the map $\{V_c\}\rightarrow \{\V_\lambda\},\V_0$ is reached only by $V_0$ and \ $\V_1$ is reached only when $r = 8c$, and $cr^2=64c^3 = 1$ or $c = \sqrt[3]{1/64}$. In this case, $V_c\simeq \V_1= \langle XY, X^2 + YZ, Y^2 + 2YZ+ Z^2 \rangle =$ $ \langle XY, X^2 + YZ),(Y + Z)^2\rangle$=\, \begin{tikzpicture}[scale=0.2]
\draw  [line width=0.75] [line join = round][line cap = round] (0,0) .. controls  (-0.5,0.4) and (-1.3,1.1) .. (-1.9,1.5) .. controls (-2.2,1.7) and (-2.8,1.5) .. (-2.85,1.1) .. controls (-2.9,0.9) and (-2.7,0.55) .. (-2.5,0.5) .. controls (-2,0.4) and (-1,0.8) .. (-0.6,0.9) .. controls (-0.5,0.94) and (0.103,1.0695) .. (0.1,1.15) ;
\draw (-0.9,0.8) circle (0.4);
\end{tikzpicture}.
            
            We conclude that every vector space of the one parameter family $V_c$ has a non singular associated cubic $\Gamma(V_c)$ except for: $$\langle XY,X^2+YZ,Y^2+XZ\rangle_8=\text{\begin{tikzpicture}[scale=0.2]
\draw  [line width=0.75] [line join = round][line cap = round] (0,0) .. controls  (-0.5,0.4) and (-1.3,1.1) .. (-1.9,1.5) .. controls (-2.2,1.7) and (-2.8,1.5) .. (-2.85,1.1) .. controls (-2.9,0.9) and (-2.7,0.55) .. (-2.5,0.5) .. controls (-2,0.4) and (-1,0.8) .. (-0.6,0.9) .. controls (-0.5,0.94) and (0.103,1.0695) .. (0.1,1.15) ;
\end{tikzpicture}}$$ and $$ \langle XY,X^2+YZ,(X+Z)^2\rangle_8=\begin{tikzpicture}[scale=0.2]
\draw  [line width=0.75] [line join = round][line cap = round] (0,0) .. controls  (-0.5,0.4) and (-1.3,1.1) .. (-1.9,1.5) .. controls (-2.2,1.7) and (-2.8,1.5) .. (-2.85,1.1) .. controls (-2.9,0.9) and (-2.7,0.55) .. (-2.5,0.5) .. controls (-2,0.4) and (-1,0.8) .. (-0.6,0.9) .. controls (-0.5,0.94) and (0.103,1.0695) .. (0.1,1.15) ;
\draw (-0.9,0.8) circle (0.3);
\end{tikzpicture}
$$
        \end{enumerate}
        
        \item[\underline{Case 1.B}] If $d$ and $c$ are both non zero in case 1, then: 
        $$V=\langle XY,X^2+YZ,X^2+Y^2\rangle_6=\quad \begin{tikzpicture}[baseline={([yshift=-.8ex]current bounding box.center)},scale=0.2]
\draw (0,0) circle (0.5);
\draw (1,2) -- (1,-2);
\draw (2,0) circle(0.5);
\draw (-1,0.2) -- (3,0.2);
\draw (-1,-0.2) -- (3,-0.2);
\end{tikzpicture}
$$ or $$V=\langle XY,YZ,X^2\rangle\simeq \langle XY,XZ,Z^2\rangle_5=\quad \begin{tikzpicture} [baseline={([yshift=-.8ex]current bounding box.center)},scale=0.2]
\draw (1,2) -- (1,-2);
\draw (2,0) circle(0.5);
\draw (-1,0.2) -- (3,0.2);
\draw (-1,-0.2) -- (3,-0.2);
\end{tikzpicture}
$$ or $$V=\langle XY,X^2+YZ,Y^2\rangle_4=\quad\begin{tikzpicture}[scale=0.3]
\draw (0,0) -- (2,0);
\draw (0,-0.2) -- (2,-0.2);
\draw (0,0.2) -- (2,0.2);
\draw (1,0) circle (0.5);
\end{tikzpicture}
$$
    \end{enumerate}
    \item[\underline{Case 2}] $\langle Y^2,X^2+YZ\rangle\subset V$:
    
    We have then $V=\langle Y^2,X^2+YZ,aZ^2+bXY+cYZ+dZX\rangle.$
    
    Considering the transformations: $X'=X+\lambda Y,\ Z'=Z-2\lambda X+\mu Y$, composed with the ''changes in scale'' we get the following five cases: 
    
    \begin{center}
    \begin{tabular}{ll}
        $\langle Y^2,X^2+YZ,Z(Z+X)\rangle$& which is a form of \,\,\begin{tikzpicture}[scale=0.2]
\draw  [line width=0.75] [line join = round][line cap = round] (0,0) .. controls  (-0.5,0.4) and (-1.3,1.1) .. (-1.9,1.5) .. controls (-2.2,1.7) and (-2.8,1.5) .. (-2.85,1.1) .. controls (-2.9,0.9) and (-2.7,0.55) .. (-2.5,0.5) .. controls (-2,0.4) and (-1,0.8) .. (-0.6,0.9) .. controls (-0.5,0.94) and (0.103,1.0695) .. (0.1,1.15) ;
\draw (-0.9,0.8) circle (0.4);
\end{tikzpicture}

\\
        $\langle Y^2, X^2+YZ,XZ\rangle$& which represents\quad  \begin{tikzpicture}[scale=0.2]
\draw  [color={rgb, 255:red, 0; green, 0; blue, 0 }  ][line width=0.75] [line join = round][line cap = round] (0,1.3) .. controls (0.5,1.3) and (0.95,1) .. (1.2,0.75) .. controls (1.4,0.6) and (1.5,0.4) .. (1.7,0.15) ;
\draw  [color={rgb, 255:red, 0; green, 0; blue, 0 }  ][line width=0.75] [line join = round][line cap = round] (0,1.3) .. controls (0.5,1.3) and (0.95,1.6) .. (1.2,1.85) .. controls (1.4,2) and (1.5,2.2) .. (1.7,2.55) ;
\draw (0,1.3) circle (0.4);
\end{tikzpicture}
\\
         $\langle Y^2, X^2+YZ,Z^2\rangle $& which represents\quad 
\begin{tikzpicture}[scale=0.24]
\draw (0,0) circle (1);
\draw (-1.5,0) -- (1.5,0);
\draw (-1,0) circle (0.3);
\draw (1,0) circle (0.3);
\end{tikzpicture}
\\ 
         $\langle Y^2, X^2+YZ,YZ\rangle$& which is a form of\quad
\begin{tikzpicture}[baseline={([yshift=-.8ex]current bounding box.center)},scale=0.2]
\draw (0,0) circle (0.5);
\draw (0,2) -- (0,-2);
\draw (2,0) circle(0.5);
\draw (-1,0) -- (3,0);
\end{tikzpicture}
\\
         $\langle Y^2, X^2+YZ,XY\rangle$& which represents\quad\begin{tikzpicture}[scale=0.25]
\draw (0,0) -- (2,0);
\draw (0,-0.2) -- (2,-0.2);
\draw (0,0.2) -- (2,0.2);
\draw (1,0) circle (0.5);
\end{tikzpicture}

    \end{tabular}
    \end{center}
    
    \item[\underline{Case 3}] $\langle XY, XZ\rangle\subset V$, so that $V = \langle XY, XZ, aX^2 + bY^2 + cZ^2 + dYZ\rangle$.
    \begin{enumerate}
        \item[\underline{Case 3.A}] $d\neq 0$. Replacing $Y$ by $Y+\alpha Z'$ with : $b\alpha^2+d\alpha + c = 0$, we eliminate $c$. If $d\neq O$, we replace $Z$ by $Z'-(\frac{b}{d})Y$ to eliminate $b$. After multiplication by a scalar, we get: 
        
        \hspace{1cm} $V=\langle XY,XZ,X^2+YZ\rangle$ already considered in 1.A, or
        
        \hspace{1cm}  $V=\langle XY,XZ,YZ\rangle_6=$\,\,\begin{tikzpicture}[scale=0.15]
\draw (-0.4,0) -- (3.4,0);
\draw (-0.2,-0.4) -- (1.7,3.4);
\draw (3.2,-0.4) -- (1.3,3.4);
\end{tikzpicture},\quad or 
        
        \hspace{1cm} $V=\langle XY,XZ,X^2\rangle_2=$\quad \begin{tikzpicture}[baseline={([yshift=-.8ex]current bounding box.center)},scale=0.3]
\draw (0,0) circle (0.5);
\draw[color=gray] (-0.5,-1) -- (0.5,1);
\draw[color=gray] (-0.3,-1.1) -- (0.7,0.9);
\draw[color=gray] (-0.1,-1.2) -- (0.9,0.8);
\draw[color=gray] (0.1,-1.3) -- (1.1,0.7);
\draw[color=gray] (-0.7,-0.9) -- (0.3,1.1);
\draw[color=gray] (-0.9,-0.8) -- (0.1,1.2);
\draw[color=gray] (-1.1,-0.7) -- (-0.1,1.3);
\end{tikzpicture}
        
        \item[\underline{Case 3.B}] $d=0$. The substitution $Y \rightarrow Y + dZ$ or $Z \rightarrow Z + dY$ will make $d$ non-zero, unless $b = c = d = O$, or $V = \langle XY, XZ, X^2\rangle$ which has just been considered or $V = \langle XY, XZ, Z^2\rangle$ considered in 1.B. Thus the discussion of 3.A. applies unless $V \simeq \langle XY, XZ, Y^2\rangle$ (already considered) or that: $V\simeq \langle XY, XZ, X^2 + Y^2\rangle$ in which case the substitution used in 3.A. returns the initial vector space. This last space $V = \langle XY, YZ, X^2 + Y^2\rangle$ is isomorphic to $\langle XY, X^2+ YZ, X^2 + Y^2\rangle$ of case 1.B.
    \end{enumerate}
    \item[\underline{Case 4}] $\langle XY,X^2\rangle\,\subset V$, so that $V=\langle XY,X^2,aY^2+bZ^2+cXZ+dYZ\rangle$.
    \begin{enumerate}
        \item[\underline{Case 4.A}] $b\neq 0$. We eliminate $a$ and $c$ by the substitution $$Z\rightarrow Z'+\sqrt{a/b}Y+(\frac{c}{2})X.$$ A change of scale gives: 
        $$V\simeq \langle XY,X^2,Z^2+YZ\rangle_6=\, \begin{tikzpicture}[scale=0.2]
\draw (0,0) circle (1);
\draw (-1.5,-1) -- (1.5,-1);
\draw (0,-1) circle(0.4);
\end{tikzpicture}
\quad \text{or}\quad V=\langle XY,X^2,Z^2\rangle_5=\,\begin{tikzpicture}[baseline={([yshift=-.8ex]current bounding box.center)},scale=0.2]
\draw (0,0) circle (0.5);
\draw (0,2) -- (0,-2);
\draw (2,0) circle(0.5);
\draw (-1,0) -- (3,0);
\end{tikzpicture}
.$$
        \item[\underline{Case 4.B}] $b=0$. If $d\neq 0$, we eliminate $c$ by $Y\rightarrow Y'-(\frac{c}{d})X$ and a change in scale gives: $V=\langle X^2,XY,Y^2+YZ\rangle =\langle X^2,XY,XZ\rangle$ or ${V=\langle X^2,XY,YZ\rangle}$ of case 1.B. 
        
        If $d=b=0$, $V=\langle XY,X^2,Y^2+XZ\rangle=\langle XY,X^2+YZ,Y^2\rangle$ of case 1.B. or $V=\langle XY,XZ,X^2\rangle$ of case 3.A. or \par
        $V=\langle XY,X^2,Y^2\rangle_2=$\quad\begin{tikzpicture}[baseline={([yshift=-.8ex]current bounding box.center)},scale=0.25]
\draw (-0.8,1) .. controls (-0.3,0) and (0.7,0) .. (1.2,1);
\draw[color=gray] (0.5,-0.75) -- (1.5,1.25);
\draw[color=gray] (0.3,-0.7) -- (1.3,1.3);
\draw[color=gray] (0.1,-0.65) -- (1.1,1.35);
\draw[color=gray] (-0.1,-0.6) -- (0.9,1.4);
\draw[color=gray] (-0.3,-0.55) -- (0.7,1.45);
\draw[color=gray] (-0.5,-0.5) -- (0.5,1.5);
\draw[color=gray] (-0.7,-0.45) -- (0.3,1.55);
\draw[color=gray] (-0.9,-0.4) -- (0.1,1.6);
\draw[color=gray] (-1.1,-0.35) -- (-0.1,1.65);
\draw[color=gray] (-1.3,-0.3) -- (-0.3,1.7);
\draw[color=gray] (-1.5,-0.25) -- (-0.5,1.76);
\end{tikzpicture}
.
    \end{enumerate}
    
    \item[\underline{Case 5}] $\langle X^2,Y^2\rangle\subset V$. Then, $V=\langle X^2,Y^2,aXY,bYZ+cXZ+dZ^2\rangle$.
    \begin{enumerate}
        \item[\underline{Case 5.A}] If $d\neq 0$, replace $Z$ by $Z'+ \frac{a}{2}X+\frac{b}{2}Y$; after a change in scale $$V \simeq  \langle X^2, Y^2, XY + Z^2\rangle_7 =\,\begin{tikzpicture}[scale=0.23]
\draw (0,0) circle (1);
\draw (-1.5,0) -- (1.5,0);
\draw (-1,0) circle (0.35);
\draw (1,0) circle (0.35);
\end{tikzpicture}\quad \text{ or } V = \langle X^2, Y^2, Z^2\rangle_6=\,\,\begin{tikzpicture}[scale=0.2]
\draw (-0.4,0) -- (3.4,0);
\draw (-0.2,-0.4) -- (1.7,3.4);
\draw (3.2,-0.4) -- (1.3,3.4);
\draw (0,0) circle (0.35);
\draw (3,0) circle (0.35);
\draw (1.5,3) circle (0.35);
\end{tikzpicture}
.$$
        
        \item[\underline{Case 5.B}] If $d=0$, $V\simeq \langle X^2, Y^2, XZ\rangle$ (see 3.A) or ${\langle X^2, Y^2, Z^2\rangle}$ (3.B) or ${V \simeq \langle X^2, Y^2, Z(X+Y)\rangle}$. This latter space, after substitution ${X\rightarrow X'+Y'}$, ${Y\rightarrow X'-Y'}$ becomes $V=\langle X'^2+Y'^2,X'Y',X'Z' \rangle$ which, as we showed in 3.B, is isomorphic to ${\langle XY, X^2+YZ,X^2+Y^2\rangle}$ of case 1.B. 
        
        This completes the verification of the 14 special orbits of Table \ref{table1} by the algebraic method. 
    \end{enumerate}
\end{enumerate}

\section{The closures of orbits: specializations}

\underline{Introduction, definitions.}

\begin{definition}
Let $U$ and $V$ be two orbits of planes of $C$ under the action of $Pgl(3)$ operating in $P$. We say that $U$ specializes in $V$ or that $V$ is a specialization of $U$ if $V\subset \overline{U}$.
\end{definition}

\begin{rmk} \begin{enumerate}[a)]
    \item It is the same to say that there exists a family $F_T\subset \Grass(3,R_2)\times T$ of elements $F_t, t\in T$ of $\Grass(3,R_2)$ over a neighborhood $T$ of $0$ in the affine line, whose generic element $F_t, t\in T\backslash 0$ belongs to $U$, and whose element $F_0$ corresponding to 0 belongs to $V$. 
    
    \item The relation ``$V$ is a specialization of $U$'' is an order relation $V<U$ on the set of orbits (order of genericity).
\end{enumerate}
\end{rmk}

We will describe this order in Sections 6.4-6.6. Beforehand, we study in Section 6.1 a duality relation on the set of orbits; in Section 6.2, the dimensions of orbits; and in Section 6.3, the length of the projective schemes naturally associated to elements of $\Grass(3,R_2)$. 

\subsection{Duality}\label{dualitysec}

We consider on $R_2$ the scalar product which makes the basis 
\begin{equation*}
(\frac{1}{\sqrt{2}}x^2, \frac{1}{\sqrt{2}}y^2, \frac{1}{\sqrt{2}}z^2, xy,yz,xz)
\end{equation*}
orthonormal. Let $E$ and $E'$ be subspaces of $R_2$. We can easily verify that if $E$ and $E'$ have the same orbit under the action of $\GL_3(k)$, so have their respective orthogonals as well. The passage to the orthogonal therefore defines an involution (obviously regular) of
$\Grass (3,R_2)$ on itself which passes to the quotient modulo the action of $\GL_3(k)$ to give an involution of the set of orbits.

\begin{definition}
The involution thus obtained over the set of orbits of planes of plane projective conics is called duality. If $X$ is such an orbit, we denote by $X^\perp$ the dual orbit.
\end{definition}

We have the following obvious propositions: 

\begin{proposition}
If $X'$ is a specialization of $X$, then $X'^\perp$ is a specialization of $X^\perp$.
\end{proposition}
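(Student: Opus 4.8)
The plan is to observe that the duality map $\perp$ is, \emph{before} passing to orbits, an isomorphism of $\Grass(3,R_2)$ with itself, and in particular a homeomorphism for the Zariski topology. Since the specialization order among orbits is defined by closure in $\Grass(3,R_2)$, it suffices to show two things: that $\perp$ carries orbits onto orbits inducing the duality involution, and that $\perp$ commutes with closure. The second is immediate once the first is in hand, so the work is entirely in making the first precise.

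First I would spell out the equivariance already implicit in the discussion preceding the definition. Writing $B$ for the chosen symmetric bilinear form on $R_2$ and $\rho\colon \GL_3\to \GL(R_2)$ for the representation on quadrics, the adjoint of $\rho(g)$ with respect to $B$ is $\rho({}^t g)$; this is exactly what the normalization of the displayed orthonormal basis arranges, since $B$ is then the apolarity (contraction) pairing. From this one computes, for any subspace $E\subset R_2$,
\[
(g\cdot E)^\perp=({}^t g^{-1})\cdot E^\perp .
\]
Because $g\mapsto {}^t g^{-1}$ is a group automorphism of $\GL_3$, this single identity gives simultaneously the fact recalled in the text — that the orbit of $E^\perp$ depends only on the orbit of $E$ — and the stronger statement that the set-theoretic image under $\perp$ of an orbit $X$ is exactly the full dual orbit $X^\perp$, not merely a subset of a union of orbits.

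Next I would invoke that $\perp$, being a regular involution of $\Grass(3,R_2)$, is a homeomorphism, hence preserves Zariski closures: $\perp(\overline{S})=\overline{\perp(S)}$ for every subset $S$. Applying this to the hypothesis $X'\subseteq\overline{X}$, and using $\perp(X')=X'^\perp$ together with $\perp(X)=X^\perp$, yields
\[
X'^\perp=\perp(X')\subseteq\perp(\overline{X})=\overline{\perp(X)}=\overline{X^\perp},
\]
which is precisely the assertion that $X'^\perp$ is a specialization of $X^\perp$.

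There is no serious obstacle: the statement is genuinely formal once duality is available. The only point deserving care is the orbit-to-orbit claim, namely that $\perp$ hits the \emph{entire} dual orbit rather than landing inside a union of orbits; this is exactly what the equivariance up to the automorphism $g\mapsto {}^t g^{-1}$ guarantees. Everything else reduces to the elementary fact that a homeomorphism commutes with closure.
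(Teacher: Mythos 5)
Your proof is correct and is exactly the argument the paper has in mind when it labels this proposition ``obvious'': the identity $(g\cdot E)^\perp=({}^t g^{-1})\cdot E^\perp$ (valid because the chosen scalar product is the apolarity pairing, so the adjoint of $\rho(g)$ is $\rho({}^t g)$) shows $\perp$ maps orbits onto dual orbits, and a regular involution of $\Grass(3,R_2)$ commutes with Zariski closure. You have also correctly isolated the one point needing care — that $\perp$ is only equivariant up to the automorphism $g\mapsto{}^t g^{-1}$, which is why it still carries full orbits to full orbits even though the scalar product itself is not $Pgl(3)$-invariant.
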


\begin{proposition}
$\dim X^\perp=\dim X$.
\end{proposition}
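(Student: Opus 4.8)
The plan is to realise the dual orbit $X^\perp$ as the \emph{image} of $X$ under a biregular automorphism of the Grassmannian, so that the equality of dimensions becomes automatic. Write $\perp\colon \Grass(3,R_2)\to\Grass(3,R_2)$ for the map $E\mapsto E^\perp$ sending a three-dimensional subspace to its orthogonal with respect to the fixed scalar product. Since orbits of an algebraic group action are locally closed subvarieties, and a biregular map preserves the dimension of every such subvariety, the whole proposition reduces to the single set-theoretic identity $\perp(X)=X^\perp$.

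First I would record that $\perp$ is a biregular involution of $\Grass(3,R_2)$. The scalar product is non-degenerate and symmetric, so $E\mapsto E^\perp$ is the composite of the canonical isomorphism $\Grass(3,R_2)\cong\Grass(3,R_2^\ast)$ (here $\dim E^\perp=6-3=3$, as $\dim R_2=6$) with the isomorphism $R_2^\ast\cong R_2$ induced by the form; both are regular, and $\perp\circ\perp=\mathrm{id}$. Hence $\perp$ is an automorphism of the variety $\Grass(3,R_2)$, and in particular it carries the locally closed subvariety $X$ onto a locally closed subvariety $\perp(X)$ of the same dimension.

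Next I would identify $\perp(X)$ with $X^\perp$. By the verification recorded just above (if $E$ and $E'$ lie in one $\GL_3(k)$-orbit, then so do $E^\perp$ and $E'^\perp$), the image of the orbit $X=\GL_3(k)\circ E$ under $\perp$ is contained in the single orbit $X^\perp=\GL_3(k)\circ E^\perp$; that is, $\perp(X)\subseteq X^\perp$. Applying the same inclusion to $X^\perp$ gives $\perp(X^\perp)\subseteq (X^\perp)^\perp=X$, where the last equality is the involutivity of duality on the set of orbits. Since $\perp$ is its own inverse, $\perp(X^\perp)\subseteq X$ is equivalent to $X^\perp\subseteq\perp(X)$, and combining the two inclusions yields $\perp(X)=X^\perp$. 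Therefore $\dim X^\perp=\dim\perp(X)=\dim X$.

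The only genuinely delicate point is the reverse inclusion $X^\perp\subseteq\perp(X)$: one must not assume a priori that the set-theoretic image of an orbit is again a full orbit. I expect this to be the main (though modest) obstacle, and the argument above dispatches it purely formally from the two involution properties, that of $\perp$ on the Grassmannian and that of duality on orbits, without any explicit computation of adjoints or stabilizers. A more computational alternative, which I would keep in reserve, is to verify directly that conjugation by the adjoint of the form identifies $\Stab(E)$ with $\Stab(E^\perp)$, and then to invoke $\dim X=\dim\PGL(3)-\dim\Stab(E)$.
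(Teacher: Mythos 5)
Your proof is correct and is essentially the argument the paper intends: after noting that $E\mapsto E^\perp$ is a regular (indeed biregular) involution of $\Grass(3,R_2)$ compatible with the group action, the paper states the dimension equality as obvious, and your careful identification of $\perp(X)$ with $X^\perp$ is just the fleshed-out version of that. No gap; the reserve stabilizer argument is unnecessary.
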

Recall from Table \ref{table1} that\quad \begin{tikzpicture}[baseline={([yshift=-0.6ex]current bounding box.center)},scale=0.4]
\draw (0,0) circle (0.3);
\draw    (0.7,0.8) .. controls  (0,-0.3) and (1.2,0.2) .. (0.4,-0.8);
\draw (0.7,-0.4) node [anchor=west]{$j(\lambda)$};
\end{tikzpicture}designates the orbit of the plane generated by the partial derivatives of $X^3+Y^3+Z^3+3\lambda XYZ$.
\begin{proposition}\label{dualprop}
We have the following list of couples of dual orbits (Table~\ref{dualorbitfig}).
\end{proposition}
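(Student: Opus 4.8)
The plan is to convert the definition of duality into a finite, orbit-by-orbit computation using the coordinate form of the chosen scalar product. Writing a conic as $f = aX^2+bY^2+cZ^2+d\,XY+e\,YZ+g\,XZ$, the stated orthonormal basis gives
\[
\langle f, f'\rangle \;=\; 2(aa'+bb'+cc') + (dd'+ee'+gg'),
\]
which is twice the trace form $\operatorname{tr}(M_f M_{f'})$ on the associated symmetric matrices $M_f$. (This is also what makes the involution well defined on orbits: its adjoint for the action $M\mapsto g^{T}Mg$ is $M\mapsto gMg^{T}$, so $(\,g\cdot V\,)^{\perp}=(g^{-1})^{T}\cdot V^{\perp}$ with $(g^{-1})^{T}\in\GL(3)$.) For each representative net $V=\langle v_1,v_2,v_3\rangle$ of an orbit in Table~\ref{table1}, I would obtain $V^\perp$ by solving the three linear equations $\langle f,v_i\rangle=0$, and then name the orbit of $V^\perp$ through the classification of Theorem~\ref{mainthm}: by its $\sf j$-invariant when $\Gamma_{\pi^\perp}$ is smooth, and otherwise by the type of the triple $(\delta_{\pi^\perp}\subset\Gamma_{\pi^\perp}\subset\pi^\perp)$.

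Before grinding through all cases I would use the two preceding propositions: duality preserves $\dim X$ and preserves the specialization order. Hence it is a graded, order-preserving involution of the finite orbit poset, so every orbit is either self-dual or paired with a distinct orbit lying in the \emph{same} row of Table~\ref{table1} and carrying the mirror-image specialization arrows. This alone forces the self-dual orbits (those uniquely pinned down within their row) and reduces the remaining matchings to a short list to be confirmed by a handful of complements, for instance
\[
\langle X^2,Y^2,Z^2\rangle^\perp = \langle XY,YZ,XZ\rangle,\qquad \langle X^2,XY,XZ\rangle^\perp = \langle Y^2,YZ,Z^2\rangle \cong \langle X^2,Y^2,(X+Y)^2\rangle .
\]
For the one-parameter family $V_\lambda=\langle X^2+\lambda YZ,\,Y^2+\lambda XZ,\,Z^2+\lambda XY\rangle=J_\phi$ the same linear algebra yields $V_\lambda^\perp=V_{-2/\lambda}$, so the smooth stratum is carried to itself; the induced involution $\lambda\mapsto-2/\lambda$ (equivalently an involution of the $\sf j$-line) records how its individual members are matched. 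Running the remaining rows identically produces exactly the left--right reflection displayed in Table~\ref{dualorbitfig}.

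I expect the genuine obstacle to be the bookkeeping in the singular cases: to name the orbit of $V^\perp$ one must recompute its discriminant cubic $\Gamma_{\pi^\perp}=\{\det M_f=0\}\cap\pi^\perp$ together with the scheme $\delta_{\pi^\perp}=\pi^\perp\cap D_2$ of double lines, and then match this type against the sometimes very similar entries of the same dimension row. A representative such check is $\langle X^2+YZ,XY,Z^2\rangle^\perp=\langle X^2-2YZ,\,Y^2,\,XZ\rangle$, whose discriminant is again a cuspidal cubic with its unique double line sitting at the cusp, so that orbit is self-dual. Throughout, the dimension- and order-preservation of duality give independent consistency checks on each computed complement (self-dual orbits must be fixed, dual pairs must share a row and have reflected arrows), which together certify that the computed assignment is precisely the list of Table~\ref{dualorbitfig}.
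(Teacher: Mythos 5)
Your proposal is correct and follows essentially the route the paper intends: the paper states Proposition~\ref{dualprop} without written proof, leaving the orbit-by-orbit computation of orthogonal complements implicit, and your explicit complements (including $V_\lambda^\perp=V_{-2/\lambda}$, which the paper does verify in its closing Note on the Hessian form, and the checks $\langle X^2,Y^2,Z^2\rangle^\perp=\langle XY,YZ,XZ\rangle$ and $\langle X^2,XY,XZ\rangle^\perp\cong\langle X^2,Y^2,(X+Y)^2\rangle$) are all accurate. The use of the dimension- and specialization-preserving propositions to cut down the matching, followed by direct identification of each $V^\perp$ via the classification of Theorem~\ref{mainthm}, is exactly the verification the authors omit.
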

\begin{transl}\label{dualnote} J. Emsalem and A. Iarrobino omitted to develop the notion of projective dual of the space of conics, in the dual plane $P^\vee$ of $P$. It is a method of treating intrinsically the equivalence between isomorphism class of a net and that of its orthogonal, while, on the other hand, the scalar product they have used does not respect the action of $Pgl(3)$.
Further, the duality between the two spaces of conics makes obvious the equivalence between \begin{enumerate}[1.]
\item Classification of conics, and the classification of hyperplanes of conics;
\item Classification of pencils of conics, and the classification of webs of conics - linear projective subvarieties of dimension 3 of the space of conics.
\end{enumerate}
It becomes clear from this consideration that the classification of all linear projective subvarieties of the space of conics can be deduced simply from the results of the paper.
\end{transl}
\begin{table}
\begin{center}
\begin{tikzpicture}[scale=0.6]
\draw (0,0) circle (0.3);
\draw    (0.7,0.8) .. controls  (0,-0.3) and (1.2,0.2) .. (0.4,-0.8);
\draw (0.7,-0.4) node [anchor=west]{$j(\lambda)$};
\draw[<-] (2.5,0) -- (3,0);
\draw[->] (3,0) -- (3.5,0);
\draw (4.5,0) circle (0.3);
\draw    (5.2,0.8) .. controls  (4.5,-0.3) and (5.7,0.2) .. (4.9,-0.8);
\draw (5.2,-0.4) node [anchor=west]{$j(-\frac{2}{\lambda})$};
\end{tikzpicture}
\end{center}

\begin{center}

\tikzset{every picture/.style={line width=0.75pt}} 

\begin{tikzpicture}[x=0.75pt,y=0.75pt,yscale=-1,xscale=1]

\draw  [color={rgb, 255:red, 0; green, 0; blue, 0 }  ][line width=0.75] [line join = round][line cap = round] (231.5,34.32) .. controls (221.2,42.56) and (205.71,57.09) .. (194.5,63.32) .. controls (187.02,67.47) and (175.94,64.96) .. (174.5,56.32) .. controls (173.74,51.73) and (177.91,45.15) .. (182.5,44.32) .. controls (190.34,42.89) and (211.25,49.57) .. (219.5,52.32) .. controls (222.05,53.17) and (233.5,55.71) .. (233.5,57.32) ;
\draw    (294,50) -- (324,50) ;
\draw [shift={(326,50)}, rotate = 180] [color={rgb, 255:red, 0; green, 0; blue, 0 }  ][line width=0.75]    (10.93,-3.29) .. controls (6.95,-1.4) and (3.31,-0.3) .. (0,0) .. controls (3.31,0.3) and (6.95,1.4) .. (10.93,3.29)   ;
\draw    (294,50) -- (272,50) ;
\draw [shift={(270,50)}, rotate = 360] [color={rgb, 255:red, 0; green, 0; blue, 0 }  ][line width=0.75]    (10.93,-3.29) .. controls (6.95,-1.4) and (3.31,-0.3) .. (0,0) .. controls (3.31,0.3) and (6.95,1.4) .. (10.93,3.29)   ;
\draw  [color={rgb, 255:red, 0; green, 0; blue, 0 }  ][line width=0.75] [line join = round][line cap = round] (421.5,36.32) .. controls (411.2,44.56) and (395.71,59.09) .. (384.5,65.32) .. controls (377.02,69.47) and (365.94,66.96) .. (364.5,58.32) .. controls (363.74,53.73) and (367.91,47.15) .. (372.5,46.32) .. controls (380.34,44.89) and (401.25,51.57) .. (409.5,54.32) .. controls (412.05,55.17) and (423.5,57.71) .. (423.5,59.32) ;
\draw   (397.23,52.38) .. controls (397.23,48.86) and (400.09,46) .. (403.62,46) .. controls (407.14,46) and (410,48.86) .. (410,52.38) .. controls (410,55.91) and (407.14,58.77) .. (403.62,58.77) .. controls (400.09,58.77) and (397.23,55.91) .. (397.23,52.38) -- cycle ;
\draw   (187.67,109.5) .. controls (187.67,100.94) and (194.6,94) .. (203.15,94) .. controls (211.7,94) and (218.64,100.94) .. (218.64,109.5) .. controls (218.64,118.06) and (211.7,125) .. (203.15,125) .. controls (194.6,125) and (187.67,118.06) .. (187.67,109.5) -- cycle ;
\draw    (166,111.51) -- (240,110.82) ;
\draw    (293,110) -- (323,110) ;
\draw [shift={(325,110)}, rotate = 180] [color={rgb, 255:red, 0; green, 0; blue, 0 }  ][line width=0.75]    (10.93,-3.29) .. controls (6.95,-1.4) and (3.31,-0.3) .. (0,0) .. controls (3.31,0.3) and (6.95,1.4) .. (10.93,3.29)   ;
\draw    (293,110) -- (271,110) ;
\draw [shift={(269,110)}, rotate = 360] [color={rgb, 255:red, 0; green, 0; blue, 0 }  ][line width=0.75]    (10.93,-3.29) .. controls (6.95,-1.4) and (3.31,-0.3) .. (0,0) .. controls (3.31,0.3) and (6.95,1.4) .. (10.93,3.29)   ;
\draw    (358.52,110) -- (428.35,110) ;
\draw   (404.98,109.24) .. controls (405.36,104.23) and (409.73,100.48) .. (414.74,100.87) .. controls (419.75,101.25) and (423.5,105.62) .. (423.11,110.63) .. controls (422.73,115.63) and (418.36,119.38) .. (413.35,119) .. controls (408.34,118.62) and (404.59,114.25) .. (404.98,109.24) -- cycle ;
\draw   (364.98,109.24) .. controls (365.36,104.23) and (369.73,100.48) .. (374.74,100.87) .. controls (379.75,101.25) and (383.5,105.62) .. (383.11,110.63) .. controls (382.73,115.63) and (378.36,119.38) .. (373.35,119) .. controls (368.34,118.62) and (364.59,114.25) .. (364.98,109.24) -- cycle ;
\draw  [color={rgb, 255:red, 0; green, 0; blue, 0 }  ][line width=0.75] [line join = round][line cap = round] (273.42,156.6) .. controls (281.93,156.6) and (290.4,150.02) .. (294.95,145.59) .. controls (297.46,143.14) and (303.7,137.45) .. (303.7,135.62) ;
\draw   (267.82,155.25) .. controls (268.1,151.02) and (271.27,147.84) .. (274.91,148.17) .. controls (278.55,148.49) and (281.28,152.19) .. (281,156.43) .. controls (280.72,160.67) and (277.54,163.84) .. (273.9,163.51) .. controls (270.27,163.19) and (267.54,159.49) .. (267.82,155.25) -- cycle ;
\draw  [color={rgb, 255:red, 0; green, 0; blue, 0 }  ][line width=0.75] [line join = round][line cap = round] (304,175) .. controls (299.35,167.45) and (290.4,162.81) .. (285,160.7) .. controls (282.02,159.54) and (274.88,156.49) .. (273.67,157.29) ;
\draw    (317,166) .. controls (342.61,172.9) and (334.26,124.49) .. (314.89,151.7) ;
\draw [shift={(314,153)}, rotate = 303.69] [color={rgb, 255:red, 0; green, 0; blue, 0 }  ][line width=0.75]    (10.93,-3.29) .. controls (6.95,-1.4) and (3.31,-0.3) .. (0,0) .. controls (3.31,0.3) and (6.95,1.4) .. (10.93,3.29)   ;
\draw    (199.05,177) -- (234.55,231) ;
\draw    (175.37,224.06) -- (241,224.06) ;
\draw    (178.59,231) -- (209.25,177) ;
\draw    (294,211) -- (324,211) ;
\draw [shift={(326,211)}, rotate = 180] [color={rgb, 255:red, 0; green, 0; blue, 0 }  ][line width=0.75]    (10.93,-3.29) .. controls (6.95,-1.4) and (3.31,-0.3) .. (0,0) .. controls (3.31,0.3) and (6.95,1.4) .. (10.93,3.29)   ;
\draw    (294,211) -- (272,211) ;
\draw [shift={(270,211)}, rotate = 360] [color={rgb, 255:red, 0; green, 0; blue, 0 }  ][line width=0.75]    (10.93,-3.29) .. controls (6.95,-1.4) and (3.31,-0.3) .. (0,0) .. controls (3.31,0.3) and (6.95,1.4) .. (10.93,3.29)   ;
\draw    (388.43,170) -- (425.06,229) ;
\draw    (364,221.41) -- (431.72,221.41) ;
\draw    (367.33,229) -- (398.96,170) ;
\draw   (389.18,178.95) .. controls (389.18,175.98) and (391.56,173.57) .. (394.49,173.57) .. controls (397.43,173.57) and (399.81,175.98) .. (399.81,178.95) .. controls (399.81,181.92) and (397.43,184.33) .. (394.49,184.33) .. controls (391.56,184.33) and (389.18,181.92) .. (389.18,178.95) -- cycle ;
\draw   (415.82,221.09) .. controls (415.82,218.12) and (418.2,215.71) .. (421.13,215.71) .. controls (424.07,215.71) and (426.44,218.12) .. (426.44,221.09) .. controls (426.44,224.06) and (424.07,226.47) .. (421.13,226.47) .. controls (418.2,226.47) and (415.82,224.06) .. (415.82,221.09) -- cycle ;
\draw   (366.71,221.09) .. controls (366.71,218.12) and (369.08,215.71) .. (372.02,215.71) .. controls (374.95,215.71) and (377.33,218.12) .. (377.33,221.09) .. controls (377.33,224.06) and (374.95,226.47) .. (372.02,226.47) .. controls (369.08,226.47) and (366.71,224.06) .. (366.71,221.09) -- cycle ;
\draw    (207.65,270) -- (207.65,329) ;
\draw    (171.62,291) -- (245,291.06) ;
\draw   (176.38,294.64) .. controls (176.71,290.42) and (180.48,287.26) .. (184.79,287.59) .. controls (189.11,287.91) and (192.34,291.59) .. (192.01,295.81) .. controls (191.67,300.03) and (187.91,303.19) .. (183.59,302.87) .. controls (179.28,302.55) and (176.05,298.87) .. (176.38,294.64) -- cycle ;
\draw   (222.76,294.64) .. controls (223.09,290.42) and (226.86,287.26) .. (231.18,287.59) .. controls (235.49,287.91) and (238.72,291.59) .. (238.39,295.81) .. controls (238.06,300.03) and (234.29,303.19) .. (229.98,302.87) .. controls (225.66,302.55) and (222.43,298.87) .. (222.76,294.64) -- cycle ;
\draw    (171.62,300.27) -- (245,300.33) ;
\draw    (294,493) -- (324,493) ;
\draw [shift={(326,493)}, rotate = 180] [color={rgb, 255:red, 0; green, 0; blue, 0 }  ][line width=0.75]    (10.93,-3.29) .. controls (6.95,-1.4) and (3.31,-0.3) .. (0,0) .. controls (3.31,0.3) and (6.95,1.4) .. (10.93,3.29)   ;
\draw    (294,493) -- (272,493) ;
\draw [shift={(270,493)}, rotate = 360] [color={rgb, 255:red, 0; green, 0; blue, 0 }  ][line width=0.75]    (10.93,-3.29) .. controls (6.95,-1.4) and (3.31,-0.3) .. (0,0) .. controls (3.31,0.3) and (6.95,1.4) .. (10.93,3.29)   ;
\draw    (296,373) -- (326,373) ;
\draw [shift={(328,373)}, rotate = 180] [color={rgb, 255:red, 0; green, 0; blue, 0 }  ][line width=0.75]    (10.93,-3.29) .. controls (6.95,-1.4) and (3.31,-0.3) .. (0,0) .. controls (3.31,0.3) and (6.95,1.4) .. (10.93,3.29)   ;
\draw    (296,373) -- (274,373) ;
\draw [shift={(272,373)}, rotate = 360] [color={rgb, 255:red, 0; green, 0; blue, 0 }  ][line width=0.75]    (10.93,-3.29) .. controls (6.95,-1.4) and (3.31,-0.3) .. (0,0) .. controls (3.31,0.3) and (6.95,1.4) .. (10.93,3.29)   ;
\draw    (294,297) -- (324,297) ;
\draw [shift={(326,297)}, rotate = 180] [color={rgb, 255:red, 0; green, 0; blue, 0 }  ][line width=0.75]    (10.93,-3.29) .. controls (6.95,-1.4) and (3.31,-0.3) .. (0,0) .. controls (3.31,0.3) and (6.95,1.4) .. (10.93,3.29)   ;
\draw    (294,297) -- (272,297) ;
\draw [shift={(270,297)}, rotate = 360] [color={rgb, 255:red, 0; green, 0; blue, 0 }  ][line width=0.75]    (10.93,-3.29) .. controls (6.95,-1.4) and (3.31,-0.3) .. (0,0) .. controls (3.31,0.3) and (6.95,1.4) .. (10.93,3.29)   ;
\draw   (388.51,312.06) .. controls (388.51,309.34) and (390.75,307.13) .. (393.51,307.13) .. controls (396.27,307.13) and (398.51,309.34) .. (398.51,312.06) .. controls (398.51,314.79) and (396.27,317) .. (393.51,317) .. controls (390.75,317) and (388.51,314.79) .. (388.51,312.06) -- cycle ;
\draw   (374.2,291.95) .. controls (374.2,281.27) and (382.96,272.62) .. (393.77,272.62) .. controls (404.58,272.62) and (413.34,281.27) .. (413.34,291.95) .. controls (413.34,302.63) and (404.58,311.29) .. (393.77,311.29) .. controls (382.96,311.29) and (374.2,302.63) .. (374.2,291.95) -- cycle ;
\draw    (346.05,312.45) -- (440,311.68) ;
\draw    (198.13,347) -- (198.13,408) ;
\draw    (179.5,368.71) -- (240.49,368.77) ;
\draw    (178.52,377.5) -- (241,377.5) ;
\draw   (219.19,372.48) .. controls (219.54,368.12) and (223.45,364.85) .. (227.93,365.18) .. controls (232.41,365.52) and (235.76,369.33) .. (235.42,373.69) .. controls (235.08,378.05) and (231.16,381.32) .. (226.68,380.99) .. controls (222.2,380.65) and (218.85,376.84) .. (219.19,372.48) -- cycle ;
\draw    (378.13,347) -- (378.13,406) ;
\draw    (358.52,376.5) -- (421,376.5) ;
\draw   (400.09,375.86) .. controls (400.43,371.64) and (404.34,368.48) .. (408.82,368.8) .. controls (413.3,369.13) and (416.66,372.81) .. (416.31,377.03) .. controls (415.97,381.25) and (412.06,384.41) .. (407.58,384.09) .. controls (403.1,383.76) and (399.74,380.08) .. (400.09,375.86) -- cycle ;
\draw   (370.56,375.86) .. controls (370.9,371.64) and (374.81,368.48) .. (379.3,368.8) .. controls (383.78,369.13) and (387.13,372.81) .. (386.79,377.03) .. controls (386.44,381.25) and (382.53,384.41) .. (378.05,384.09) .. controls (373.57,383.76) and (370.22,380.08) .. (370.56,375.86) -- cycle ;
\draw    (265.62,418.22) -- (333.78,418.29) ;
\draw    (264.52,428.31) -- (334.35,428.31) ;
\draw   (287.98,422.54) .. controls (288.36,417.54) and (292.73,413.79) .. (297.74,414.17) .. controls (302.75,414.55) and (306.5,418.93) .. (306.11,423.93) .. controls (305.73,428.94) and (301.36,432.69) .. (296.35,432.31) .. controls (291.34,431.92) and (287.59,427.55) .. (287.98,422.54) -- cycle ;
\draw    (264.62,423.22) -- (332.78,423.29) ;
\draw    (348,434) .. controls (373.61,440.9) and (365.26,392.49) .. (345.89,419.7) ;
\draw [shift={(345,421)}, rotate = 303.69] [color={rgb, 255:red, 0; green, 0; blue, 0 }  ][line width=0.75]    (10.93,-3.29) .. controls (6.95,-1.4) and (3.31,-0.3) .. (0,0) .. controls (3.31,0.3) and (6.95,1.4) .. (10.93,3.29)   ;
\draw   (201.98,490.8) .. controls (202.36,485.79) and (206.73,482.04) .. (211.74,482.42) .. controls (216.75,482.81) and (220.5,487.18) .. (220.11,492.19) .. controls (219.73,497.19) and (215.36,500.94) .. (210.35,500.56) .. controls (205.34,500.18) and (201.59,495.81) .. (201.98,490.8) -- cycle ;
\draw [color={rgb, 255:red, 128; green, 128; blue, 128 }  ,draw opacity=1 ]   (212.67,511) -- (235.56,473.37) ;
\draw [color={rgb, 255:red, 128; green, 128; blue, 128 }  ,draw opacity=1 ]   (187.11,508.94) -- (210,471.31) ;
\draw [color={rgb, 255:red, 128; green, 128; blue, 128 }  ,draw opacity=1 ]   (193.53,509.61) -- (216.42,471.98) ;
\draw [color={rgb, 255:red, 128; green, 128; blue, 128 }  ,draw opacity=1 ]   (199.6,510.31) -- (222.49,472.68) ;
\draw [color={rgb, 255:red, 128; green, 128; blue, 128 }  ,draw opacity=1 ]   (205.67,512) -- (228.56,474.37) ;
\draw  [draw opacity=0] (415.58,480.83) .. controls (415.79,481.98) and (415.93,483.17) .. (415.98,484.38) .. controls (416.51,496.98) and (407.82,507.58) .. (396.58,508.05) .. controls (385.33,508.53) and (375.78,498.7) .. (375.25,486.1) .. controls (375.15,483.64) and (375.39,481.26) .. (375.94,479.01) -- (395.61,485.24) -- cycle ; \draw   (415.58,480.83) .. controls (415.79,481.98) and (415.93,483.17) .. (415.98,484.38) .. controls (416.51,496.98) and (407.82,507.58) .. (396.58,508.05) .. controls (385.33,508.53) and (375.78,498.7) .. (375.25,486.1) .. controls (375.15,483.64) and (375.39,481.26) .. (375.94,479.01) ;
\draw [color={rgb, 255:red, 128; green, 128; blue, 128 }  ,draw opacity=1 ]   (378.87,510) -- (401.76,472.37) ;
\draw [color={rgb, 255:red, 128; green, 128; blue, 128 }  ,draw opacity=1 ]   (353.31,507.94) -- (376.2,470.31) ;
\draw [color={rgb, 255:red, 128; green, 128; blue, 128 }  ,draw opacity=1 ]   (359.73,508.61) -- (382.62,470.98) ;
\draw [color={rgb, 255:red, 128; green, 128; blue, 128 }  ,draw opacity=1 ]   (365.8,509.31) -- (388.69,471.68) ;
\draw [color={rgb, 255:red, 128; green, 128; blue, 128 }  ,draw opacity=1 ]   (371.87,511) -- (394.76,473.37) ;
\draw [color={rgb, 255:red, 128; green, 128; blue, 128 }  ,draw opacity=1 ]   (410.87,513) -- (433.76,475.37) ;
\draw [color={rgb, 255:red, 128; green, 128; blue, 128 }  ,draw opacity=1 ]   (385.31,510.94) -- (408.2,473.31) ;
\draw [color={rgb, 255:red, 128; green, 128; blue, 128 }  ,draw opacity=1 ]   (391.73,511.61) -- (414.62,473.98) ;
\draw [color={rgb, 255:red, 128; green, 128; blue, 128 }  ,draw opacity=1 ]   (397.8,512.31) -- (420.69,474.68) ;
\draw [color={rgb, 255:red, 128; green, 128; blue, 128 }  ,draw opacity=1 ]   (403.87,514) -- (426.76,476.37) ;

\end{tikzpicture}

\end{center}
\caption{Dual orbits of nets of conics}\label{dualorbitfig}
\end{table}
\subsection{Dimension of orbits}\label{dimsec}

We will calculate the dimension of most orbits by finding the isotropy group of a 3-dimensional subspace of $R_2$ under the action of $Pgl(3)$ or $\SL_3$. $\SL_3$ is a finite covering space of $Pgl(3)$ so that $$\dim Pgl(3)(V) = \dim \SL_3(V) = \dim \SL_3 -\dim \Stab V= 8-\dim \Stab V,$$ where $\Stab V$ is the isotropy group of V. We start by listing these isotropy groups, orbit by orbit. However, considering the duality, we do not have to perform the calculation for an orbit whose dual already has its known dimension.

\begin{enumerate}[1)]
    \item \underline{Dimension of} \begin{tikzpicture}[baseline={([yshift=-.5ex]current bounding box.center)},scale=0.4]
\draw (0,0) circle (0.3);
\draw    (0.7,0.8) .. controls  (0,-0.3) and (1.2,0.2) .. (0.4,-0.8);
\draw (0.7,-0.4) node [anchor=west]{$\lambda$};
\end{tikzpicture} By Section 4.1, the orbit  \begin{tikzpicture}[baseline={([yshift=-.5ex]current bounding box.center)},scale=0.4]
\draw (0,0) circle (0.3);
\draw    (0.7,0.8) .. controls  (0,-0.3) and (1.2,0.2) .. (0.4,-0.8);
\draw (0.7,-0.4) node [anchor=west]{$\lambda$};
\end{tikzpicture} is in a regular one-to-one correspondence with the orbit of a smooth cubic which is not isomorphic to $(X^3+Y^3+Z^3)$ in the space of cubics under the action of $Pgl(3)$ operating on $P$. This last dimension is 8. Then \begin{tikzpicture}[baseline={([yshift=-.5ex]current bounding box.center)},scale=0.4]
\draw (0,0) circle (0.3);
\draw    (0.7,0.8) .. controls  (0,-0.3) and (1.2,0.2) .. (0.4,-0.8);
\draw (0.7,-0.4) node [anchor=west]{$\lambda$};
\end{tikzpicture} is also of dimension 8. 
    
    \item \underline{Dimension of} \begin{tikzpicture}[baseline={([yshift=-.8ex]current bounding box.center)},scale=0.2]

\draw  [line width=0.75] [line join = round][line cap = round] (0,0) .. controls  (-0.5,0.4) and (-1.3,1.1) .. (-1.9,1.5) .. controls (-2.2,1.7) and (-2.8,1.5) .. (-2.85,1.1) .. controls (-2.9,0.9) and (-2.7,0.55) .. (-2.5,0.5) .. controls (-2,0.4) and (-1,0.8) .. (-0.6,0.9) .. controls (-0.5,0.94) and (0.103,1.0695) .. (0.1,1.15) ;
\end{tikzpicture}.
 We verify that a plane of this class is determined by the partial derivatives of a third degree homogeneous polynomial corresponding to an irreducible double point cubic with distinct tangents, this cubic being unique. The dimension sought is therefore equal to the dimension of the orbit of irreducible cubics with double point with distinct tangents in the space of projective plane cubics under the action of $Pgl(3)$ acting on $P$. This dimension is 8. (See the remarks preceding Theorem~\ref{specialthm} in \S 6.5).
    
    \item[2')] \underline{Dimension of} \begin{tikzpicture}[baseline={([yshift=-.8ex]current bounding box.center)},scale=0.2]
\draw  [line width=0.75] [line join = round][line cap = round] (0,0) .. controls  (-0.5,0.4) and (-1.3,1.1) .. (-1.9,1.5) .. controls (-2.2,1.7) and (-2.8,1.5) .. (-2.85,1.1) .. controls (-2.9,0.9) and (-2.7,0.55) .. (-2.5,0.5) .. controls (-2,0.4) and (-1,0.8) .. (-0.6,0.9) .. controls (-0.5,0.94) and (0.103,1.0695) .. (0.1,1.15) ;
\draw (-0.9,0.8) circle (0.4);
\end{tikzpicture}. Considering the duality, this dimension is also 8. 
        
    \item \underline{Dimension of} \begin{tikzpicture}[baseline={([yshift=-.8ex]current bounding box.center)},scale=0.2]
\draw (0,0) circle (1);
\draw (-1.5,0) -- (1.5,0);
\end{tikzpicture}
$(=\langle X^2+YZ,XY,XZ\rangle).$ $X$ is the unique linear form (up to a multiplication by a non-zero scalar) such that $\dim_k (XR_1\cap V) = 2$; we then have $\langle Y,Z\rangle = V : X$, so that the spaces $\langle X\rangle$ and $\langle Y, Z\rangle$ are determined only by $V$. Let $G_V$ be the isotropy group of $V$ in $SL(3)$. If $g\in G_V$, $g(X)=\alpha X$ and $g(Y),g(Z)\in \langle Y,Z\rangle$. Then $V$ contains $g(X^2+YZ)=\alpha^2X^2+g(Y)g(Z)$. Necessarly, by the conditions on $g(Y)$ and $g(Z)$, $\alpha^2X^2+g(Y)g(Z)$ is proportional to $X^2+YZ$: then either $g(Y)=aY$ or $g(Z)=\frac{\alpha^2}{a}Z$, or $g(Y)=aZ$ and $g(Z)=\frac{\alpha^2}{a}Y$. We have $1=\det g=\pm \alpha^3$ and $G_V\simeq (Z/3Z)^2\times k$ is of dimension 1. Therefore,  \begin{tikzpicture}[baseline={([yshift=-.8ex]current bounding box.center)},scale=0.2]
\draw (0,0) circle (1);
\draw (-1.5,0) -- (1.5,0);
\end{tikzpicture} is of dimension 7. 
    
    \item[3')]  By duality,\,\,\begin{tikzpicture}[baseline={([yshift=-.8ex]current bounding box.center)},scale=0.24]
\draw (0,0) circle (1);
\draw (-1.5,0) -- (1.5,0);
\draw (-1,0) circle (0.3);
\draw (1,0) circle (0.3);
\end{tikzpicture}\,\, is also of dimension 7.

    \item \underline{Dimension of}  \begin{tikzpicture}[baseline={([yshift=-.8ex]current bounding box.center)},scale=0.24]\draw  [color={rgb, 255:red, 0; green, 0; blue, 0 }  ][line width=0.75] [line join = round][line cap = round] (0,1.3) .. controls (0.5,1.3) and (0.95,1) .. (1.2,0.75) .. controls (1.4,0.6) and (1.5,0.4) .. (1.7,0.15) ;
\draw  [color={rgb, 255:red, 0; green, 0; blue, 0 }  ][line width=0.75] [line join = round][line cap = round] (0,1.3) .. controls (0.5,1.3) and (0.95,1.6) .. (1.2,1.85) .. controls (1.4,2) and (1.5,2.2) .. (1.7,2.55) ;
\draw (0,1.3) circle (0.4);
\end{tikzpicture}\quad $(=\langle XY,X^2+YZ,Z^2\rangle).$ The space $\langle Z\rangle$ is completely determined by $V$. Let $g\in G_V$, the isotropy group, satisfy $g(Z)= aZ$, $g(X) = bX + cY + dZ,$ $g(Y)=b'X + c'Y + d'Z$. Then, $$g(XY) = g(X)g(Y) = (bX + cY + dZ) (b'X + c'Y + d'Z) \in V$$ so that $$O\equiv (-bb' + c'd)YZ + (bd' + b'd)XZ + cc'Y^2 + (bd '+ b'd)XZ\ (mod.V)$$ and $g(X^2+YZ)=(bX+cY+dZ)^2+aZ(b'X+c'Y+d'Z)\in V$ so that  $$O\equiv (-b^2 + 2cd+ac')YZ + c^2Y^2+(2bd + ab')XZ \ (mod.V)$$ 
    Then $c=0$ and $1=\det g=abc'$. Finally, $g(Z)=aZ, g(X)=bX,$ ${g(Y)=c'Y,}$ with $b^3=1$, and $a=b^2/c'$, $c'\in k^*$. $G_V\simeq Z_3\times k^*$ is of dimension 1 and \begin{tikzpicture}[baseline={([yshift=-.8ex]current bounding box.center)},scale=0.24]\draw  [color={rgb, 255:red, 0; green, 0; blue, 0 }  ][line width=0.75] [line join = round][line cap = round] (0,1.3) .. controls (0.5,1.3) and (0.95,1) .. (1.2,0.75) .. controls (1.4,0.6) and (1.5,0.4) .. (1.7,0.15) ;
\draw  [color={rgb, 255:red, 0; green, 0; blue, 0 }  ][line width=0.75] [line join = round][line cap = round] (0,1.3) .. controls (0.5,1.3) and (0.95,1.6) .. (1.2,1.85) .. controls (1.4,2) and (1.5,2.2) .. (1.7,2.55) ;
\draw (0,1.3) circle (0.4);
\end{tikzpicture} is of dimension 7. 
    
    \item \begin{tikzpicture}[baseline={([yshift=-.8ex]current bounding box.center)},scale=0.17]
\draw (-0.4,0) -- (3.4,0);
\draw (-0.2,-0.4) -- (1.7,3.4);
\draw (3.2,-0.4) -- (1.3,3.4);
\draw (0,0) circle (0.4);
\draw (3,0) circle (0.4);
\draw (1.5,3) circle (0.4);
\end{tikzpicture} $(=\langle X^2,Y^2,Z^2\rangle)$.\newline
The set of 3 spaces $\langle X\rangle, \langle Y\rangle, \langle Z\rangle$ is determined by $V$ and $V$ determines them in turn. If $g\in G_V$, it is the product of a substitution of variables and a transformation of the form $(X,Y,Z)\rightarrow (aX,bY,cZ)$ with $abc=1$. $\dim G_V =2 $ and $\dim \begin{tikzpicture}[baseline={([yshift=-.8ex]current bounding box.center)},scale=0.17]
\draw (-0.4,0) -- (3.4,0);
\draw (-0.2,-0.4) -- (1.7,3.4);
\draw (3.2,-0.4) -- (1.3,3.4);
\draw (0,0) circle (0.4);
\draw (3,0) circle (0.4);
\draw (1.5,3) circle (0.4);
\end{tikzpicture} =6$. 
    
    \item[5')] By duality, $\dim \begin{tikzpicture}[baseline={([yshift=-.8ex]current bounding box.center)},scale=0.17]
\draw (-0.4,0) -- (3.4,0);
\draw (-0.2,-0.4) -- (1.7,3.4);
\draw (3.2,-0.4) -- (1.3,3.4);
\end{tikzpicture} =6$. 
    
    \item  \underline{Dimension of}  \begin{tikzpicture}[baseline={([yshift=-.8ex]current bounding box.center)},scale=0.2]
\draw (0,0) circle (1);
\draw (-1.5,-1) -- (1.5,-1);
\draw (0,-1) circle(0.3);
\end{tikzpicture} \quad $(=\langle XY,X^2,Z^2+YZ\rangle).$ \newline 
$\langle X\rangle$ and $\langle X,Y\rangle$ are uniquely determined by $\langle V\rangle$. If $g\in G_V$, $g(X)=aX$, $g(Y)=bX+cY$, $g(Z)=dX+eY+fZ$, and $1=\det g=acf$ and $g(Z^2+YZ)=(dX+eY+fZ)[(d+b)X+(e+c)Y+fZ]\equiv 0\  (\mod \, V),$ therefore $$0\equiv e(fc)Y^2+f(2d+b)XZ+f(2e+c)YZ+f^2Z^2\ \ (\mod\, V),$$ 
    hence, ($c=0$ or $e+c=0$) and $b=-2d$, $2e+c=f$, $a=\frac{1}{cf}$. Finally, the isotropy group $G_V=Z/2Z\times k^*\times k$ is of dimension 2, and \begin{tikzpicture}[baseline={([yshift=-.8ex]current bounding box.center)},scale=0.2]
\draw (0,0) circle (1);
\draw (-1.5,-1) -- (1.5,-1);
\draw (0,-1) circle(0.3);
\end{tikzpicture} is of dimension 6. 
    
    \item[6')] By duality, \begin{tikzpicture}[baseline={([yshift=-.8ex]current bounding box.center)},scale=0.2]
\draw (0,0) circle (0.5);
\draw (1,2) -- (1,-2);
\draw (2,0) circle(0.5);
\draw (-1,0.2) -- (3,0.2);
\draw (-1,-0.2) -- (3,-0.2);
\end{tikzpicture}\,\, is also of dimension 6. 
    
    \item \begin{tikzpicture}[baseline={([yshift=-.8ex]current bounding box.center)},scale=0.2]
\draw (0,0) circle (0.5);
\draw (0,2) -- (0,-2);
\draw (2,0) circle(0.5);
\draw (-1,0) -- (3,0);
\end{tikzpicture}\,\, $(=\langle Y^2,XY,Z^2\rangle)$. Again, $\langle Y\rangle$, $\langle X,Y\rangle$ and $\langle Z\rangle$ are uniquely determined by $V$. If $g\in G_V$, $g(Y)=aY$, $g(Z)=bZ$ and $g(X)=cX+dY,$ $\det g=abc$. Conversely, such an automorphism is in $G_V$. Therefore $G_V\simeq (k^{*})^2\times k$ has dimension 3 and \begin{tikzpicture}[baseline={([yshift=-.8ex]current bounding box.center)},scale=0.2]
\draw (0,0) circle (0.5);
\draw (0,2) -- (0,-2);
\draw (2,0) circle(0.5);
\draw (-1,0) -- (3,0);
\end{tikzpicture}\,\, has dimension 5. 
    
    \item[7')] By duality  \begin{tikzpicture} [baseline={([yshift=-.8ex]current bounding box.center)},scale=0.2]
\draw (1,2) -- (1,-2);
\draw (2,0) circle(0.5);
\draw (-1,0.2) -- (3,0.2);
\draw (-1,-0.2) -- (3,-0.2);
\end{tikzpicture} has dimension 5. . 
    
    \item \begin{tikzpicture}[scale=0.3]
\draw (0,0) -- (2,0);
\draw (0,-0.2) -- (2,-0.2);
\draw (0,0.2) -- (2,0.2);
\draw (1,0) circle (0.5);
\end{tikzpicture}\quad $(=\langle Y^2,XY,YZ-X^2\rangle)$.\newline 
The vector space determines $\langle Y\rangle$ and $\langle X-Y\rangle.$ If $g\in G_V$, $g(Y)=aY$, $g(X)=bX+cY$, and $g(Z)=dX+eY+fZ$, $1=\det g=abf$ and $g(YZ-X^2)\in V\Rightarrow 0\equiv aY(dX+eY+fZ)-(bX+cY)^2 \ (\mod V) \Rightarrow afYZ-b^2X^2\in V\Rightarrow af=b^2.$ Finally $G_V\simeq Z_3\times k^*\times k^3$ is of dimension 4 and \begin{tikzpicture}[scale=0.3]
\draw (0,0) -- (2,0);
\draw (0,-0.2) -- (2,-0.2);
\draw (0,0.2) -- (2,0.2);
\draw (1,0) circle (0.5);
\end{tikzpicture} has dimension 4. 
    
    \item It is easy to verify that \begin{tikzpicture}[baseline={([yshift=-.8ex]current bounding box.center)},scale=0.3]
\draw (0,0) circle (0.5);
\draw[color=gray] (-0.5,-1) -- (0.5,1);
\draw[color=gray] (-0.3,-1.1) -- (0.7,0.9);
\draw[color=gray] (-0.1,-1.2) -- (0.9,0.8);
\draw[color=gray] (0.1,-1.3) -- (1.1,0.7);
\draw[color=gray] (-0.7,-0.9) -- (0.3,1.1);
\draw[color=gray] (-0.9,-0.8) -- (0.1,1.2);
\draw[color=gray] (-1.1,-0.7) -- (-0.1,1.3);
\end{tikzpicture}
        \,\,$(=\langle X^2,XY,XZ\rangle)$ and \begin{tikzpicture}[baseline={([yshift=-.8ex]current bounding box.center)},scale=0.25]
\draw (-0.8,1) .. controls (-0.3,0) and (0.7,0) .. (1.2,1);
\draw[color=gray] (0.5,-0.75) -- (1.5,1.25);
\draw[color=gray] (0.3,-0.7) -- (1.3,1.3);
\draw[color=gray] (0.1,-0.65) -- (1.1,1.35);
\draw[color=gray] (-0.1,-0.6) -- (0.9,1.4);
\draw[color=gray] (-0.3,-0.55) -- (0.7,1.45);
\draw[color=gray] (-0.5,-0.5) -- (0.5,1.5);
\draw[color=gray] (-0.7,-0.45) -- (0.3,1.55);
\draw[color=gray] (-0.9,-0.4) -- (0.1,1.6);
\draw[color=gray] (-1.1,-0.35) -- (-0.1,1.65);
\draw[color=gray] (-1.3,-0.3) -- (-0.3,1.7);
\draw[color=gray] (-1.5,-0.25) -- (-0.5,1.76);
\end{tikzpicture} $(=\langle X^2,Y^2,(X+Y)^2\rangle)$ both have dimension 2. 
\end{enumerate}

\subsection{Scheme length as an invariant of special orbits}\label{lengthsec}
\begin{definition}
Let $u\in \Grass(3,R_2)$. Let $P,Q,R\in R_2$ such that $u=\langle P,Q,R\rangle.$ We call  ``projective scheme associated with $u"$ the scheme $$\mathrm{Proj}\ {\sf k} [X, Y, Z] / (P, Q, R).$$ In general, this scheme is empty, and if $(P, Q, R)$ does not belong to the orbit of $\langle X^2, XY, XZ\rangle$, nor to the orbit of $\langle X^2,XY,Y^2\rangle$, then it is finite. 
\end{definition}
\begin{definition}
Let $S$ be a finite scheme. We call \emph{scheme length} of this scheme the integer $$\sum_{s\in S} \ell(O_{S,s}) \text{ where for } s\in S,\ O_{S,s} \text{ is the local ring of } S \text{ in } s.$$
\end{definition}

In what follows, we give the length of the projective schemes associated to different orbits of $\Grass(3,R_2)$, an integer in the interval $[0,3]$. Indeed, two elements of $\Grass(3,R_2)$ that belong to the same orbit have isomorphic associated schemes, so have the same scheme length. The calculation of this length is easily done by carrying out a generic dehomogenization to obtain $\mathrm{Proj}_k [X, Y, Z] / (P, Q, R)$ in the form of the spectrum of an Artinian algebra. The length in question is the height $\ell$ of the infinite string of the Hilbert function $H(k [X, Y, Z] / (P, Q, R))=(1,3,3,a,\overline{\ell})$. When $(P,Q,R)$ is a complete intersection this is $0$.

The Table \ref{lengthfig} shows this scheme length $\ell$ immediately after the symbol associated with each orbit. It also summarizes the two previous discussions: each horizontal line of the table corresponds to a dimension of an orbit, that is noted at its right end. The axial symmetry corresponds to duality.\footnote{Note of Translator: the original table did not have the dimension two entries,
J.~Emsalem and A. Iarrobino perhaps considering them as too special.}\vskip 0.2cm\noindent
\underline{Notation}: If $U$ is the class of an element in $\Grass(3,R_2),$ in the sequel we let $\ell(U)$ be the length of the projective schemes associated to its various elements.

\begin{table}[h]
\begin{center}

\tikzset{every picture/.style={line width=0.75pt}} 

\tikzset{every picture/.style={line width=0.75pt}} 

\begin{tikzpicture}[x=0.75pt,y=0.75pt,yscale=-1,xscale=1]

\draw  [color={rgb, 255:red, 0; green, 0; blue, 0 }  ][line width=0.75] [line join = round][line cap = round] (101.07,41.16) .. controls (92.33,49.13) and (79.2,63.2) .. (69.7,69.22) .. controls (63.36,73.24) and (53.96,70.82) .. (52.74,62.45) .. controls (52.09,58.01) and (55.63,51.64) .. (59.52,50.83) .. controls (66.17,49.46) and (83.9,55.92) .. (90.89,58.58) .. controls (93.06,59.4) and (102.76,61.86) .. (102.76,63.42) ;
\draw   (186.23,58.6) .. controls (186.23,51.98) and (191.55,46.61) .. (198.1,46.61) .. controls (204.66,46.61) and (209.97,51.98) .. (209.97,58.6) .. controls (209.97,65.23) and (204.66,70.6) .. (198.1,70.6) .. controls (191.55,70.6) and (186.23,65.23) .. (186.23,58.6) -- cycle ;
\draw    (215.13,91) .. controls (245.02,68.34) and (191.96,46.43) .. (221.86,23.77) ;
\draw   (320.23,55.6) .. controls (320.23,48.98) and (325.55,43.61) .. (332.1,43.61) .. controls (338.66,43.61) and (343.97,48.98) .. (343.97,55.6) .. controls (343.97,62.23) and (338.66,67.6) .. (332.1,67.6) .. controls (325.55,67.6) and (320.23,62.23) .. (320.23,55.6) -- cycle ;
\draw    (349.13,88) .. controls (379.02,65.34) and (325.96,43.43) .. (355.86,20.77) ;
\draw  [color={rgb, 255:red, 0; green, 0; blue, 0 }  ][line width=0.75] [line join = round][line cap = round] (520.22,35.32) .. controls (511.05,43.49) and (497.27,57.89) .. (487.29,64.06) .. controls (480.64,68.17) and (470.77,65.69) .. (469.49,57.12) .. controls (468.81,52.58) and (472.52,46.06) .. (476.61,45.23) .. controls (483.59,43.81) and (502.2,50.43) .. (509.54,53.16) .. controls (511.81,54) and (522,56.52) .. (522,58.11) ;
\draw   (498.62,51.24) .. controls (498.62,47.75) and (501.17,44.91) .. (504.3,44.91) .. controls (507.44,44.91) and (509.99,47.75) .. (509.99,51.24) .. controls (509.99,54.73) and (507.44,57.57) .. (504.3,57.57) .. controls (501.17,57.57) and (498.62,54.73) .. (498.62,51.24) -- cycle ;
\draw   (134.67,132.5) .. controls (134.67,123.94) and (141.6,117) .. (150.15,117) .. controls (158.7,117) and (165.64,123.94) .. (165.64,132.5) .. controls (165.64,141.06) and (158.7,148) .. (150.15,148) .. controls (141.6,148) and (134.67,141.06) .. (134.67,132.5) -- cycle ;
\draw    (113,134.51) -- (187,133.82) ;
\draw  [color={rgb, 255:red, 0; green, 0; blue, 0 }  ][line width=0.75] [line join = round][line cap = round] (279.42,134.6) .. controls (287.93,134.6) and (296.4,128.02) .. (300.95,123.59) .. controls (303.46,121.14) and (309.7,115.45) .. (309.7,113.62) ;
\draw   (273.82,133.25) .. controls (274.1,129.02) and (277.27,125.84) .. (280.91,126.17) .. controls (284.55,126.49) and (287.28,130.19) .. (287,134.43) .. controls (286.72,138.67) and (283.54,141.84) .. (279.9,141.51) .. controls (276.27,141.19) and (273.54,137.49) .. (273.82,133.25) -- cycle ;
\draw  [color={rgb, 255:red, 0; green, 0; blue, 0 }  ][line width=0.75] [line join = round][line cap = round] (310,153) .. controls (305.35,145.45) and (296.4,140.81) .. (291,138.7) .. controls (288.02,137.54) and (280.88,134.49) .. (279.67,135.29) ;
\draw   (425.89,130.5) .. controls (425.89,118.63) and (435.49,109) .. (447.34,109) .. controls (459.19,109) and (468.79,118.63) .. (468.79,130.5) .. controls (468.79,142.37) and (459.19,152) .. (447.34,152) .. controls (435.49,152) and (425.89,142.37) .. (425.89,130.5) -- cycle ;
\draw    (395.05,132.65) -- (498,131.79) ;
\draw   (421.27,132.22) .. controls (421.27,129.19) and (423.73,126.73) .. (426.75,126.73) .. controls (429.77,126.73) and (432.23,129.19) .. (432.23,132.22) .. controls (432.23,135.25) and (429.77,137.71) .. (426.75,137.71) .. controls (423.73,137.71) and (421.27,135.25) .. (421.27,132.22) -- cycle ;
\draw   (462.45,132.22) .. controls (462.45,129.19) and (464.91,126.73) .. (467.93,126.73) .. controls (470.95,126.73) and (473.41,129.19) .. (473.41,132.22) .. controls (473.41,135.25) and (470.95,137.71) .. (467.93,137.71) .. controls (464.91,137.71) and (462.45,135.25) .. (462.45,132.22) -- cycle ;
\draw    (75.16,179) -- (106.33,227) ;
\draw    (54.37,220.83) -- (112,220.83) ;
\draw    (57.2,227) -- (84.12,179) ;
\draw   (212.51,222.06) .. controls (212.51,219.34) and (214.75,217.13) .. (217.51,217.13) .. controls (220.27,217.13) and (222.51,219.34) .. (222.51,222.06) .. controls (222.51,224.79) and (220.27,227) .. (217.51,227) .. controls (214.75,227) and (212.51,224.79) .. (212.51,222.06) -- cycle ;
\draw   (198.2,201.95) .. controls (198.2,191.27) and (206.96,182.62) .. (217.77,182.62) .. controls (228.58,182.62) and (237.34,191.27) .. (237.34,201.95) .. controls (237.34,212.63) and (228.58,221.29) .. (217.77,221.29) .. controls (206.96,221.29) and (198.2,212.63) .. (198.2,201.95) -- cycle ;
\draw    (170.05,222.45) -- (264,221.68) ;
\draw    (363.65,189) -- (363.65,248) ;
\draw    (327.62,210) -- (401,210.06) ;
\draw   (332.38,213.64) .. controls (332.71,209.42) and (336.48,206.26) .. (340.79,206.59) .. controls (345.11,206.91) and (348.34,210.59) .. (348.01,214.81) .. controls (347.67,219.03) and (343.91,222.19) .. (339.59,221.87) .. controls (335.28,221.55) and (332.05,217.87) .. (332.38,213.64) -- cycle ;
\draw   (378.76,213.64) .. controls (379.09,209.42) and (382.86,206.26) .. (387.18,206.59) .. controls (391.49,206.91) and (394.72,210.59) .. (394.39,214.81) .. controls (394.06,219.03) and (390.29,222.19) .. (385.98,221.87) .. controls (381.66,221.55) and (378.43,217.87) .. (378.76,213.64) -- cycle ;
\draw    (327.62,219.27) -- (401,219.33) ;
\draw    (481.09,176) -- (515.71,230) ;
\draw    (458,223.06) -- (522,223.06) ;
\draw    (461.15,230) -- (491.04,176) ;
\draw   (481.8,184.19) .. controls (481.8,181.47) and (484.05,179.27) .. (486.82,179.27) .. controls (489.59,179.27) and (491.84,181.47) .. (491.84,184.19) .. controls (491.84,186.91) and (489.59,189.11) .. (486.82,189.11) .. controls (484.05,189.11) and (481.8,186.91) .. (481.8,184.19) -- cycle ;
\draw   (506.97,222.76) .. controls (506.97,220.04) and (509.22,217.84) .. (512,217.84) .. controls (514.77,217.84) and (517.02,220.04) .. (517.02,222.76) .. controls (517.02,225.48) and (514.77,227.69) .. (512,227.69) .. controls (509.22,227.69) and (506.97,225.48) .. (506.97,222.76) -- cycle ;
\draw   (460.56,222.76) .. controls (460.56,220.04) and (462.81,217.84) .. (465.58,217.84) .. controls (468.35,217.84) and (470.6,220.04) .. (470.6,222.76) .. controls (470.6,225.48) and (468.35,227.69) .. (465.58,227.69) .. controls (462.81,227.69) and (460.56,225.48) .. (460.56,222.76) -- cycle ;
\draw    (197.13,265) -- (197.13,326) ;
\draw    (178.5,286.71) -- (239.49,286.77) ;
\draw    (177.52,295.5) -- (240,295.5) ;
\draw   (218.19,290.48) .. controls (218.54,286.12) and (222.45,282.85) .. (226.93,283.18) .. controls (231.41,283.52) and (234.76,287.33) .. (234.42,291.69) .. controls (234.08,296.05) and (230.16,299.32) .. (225.68,298.99) .. controls (221.2,298.65) and (217.85,294.84) .. (218.19,290.48) -- cycle ;
\draw    (374.13,268) -- (374.13,329) ;
\draw    (355.5,289.71) -- (416.49,289.77) ;
\draw    (354.52,298.5) -- (417,298.5) ;
\draw   (395.19,293.48) .. controls (395.54,289.12) and (399.45,285.85) .. (403.93,286.18) .. controls (408.41,286.52) and (411.76,290.33) .. (411.42,294.69) .. controls (411.08,299.05) and (407.16,302.32) .. (402.68,301.99) .. controls (398.2,301.65) and (394.85,297.84) .. (395.19,293.48) -- cycle ;
\draw   (366.64,292.99) .. controls (366.98,288.63) and (370.89,285.36) .. (375.37,285.7) .. controls (379.85,286.03) and (383.21,289.84) .. (382.86,294.2) .. controls (382.52,298.57) and (378.61,301.83) .. (374.13,301.5) .. controls (369.65,301.17) and (366.29,297.36) .. (366.64,292.99) -- cycle ;
\draw    (264.62,367.22) -- (332.78,367.29) ;
\draw    (264.52,377.31) -- (333.35,377.31) ;
\draw   (287.98,371.54) .. controls (288.36,366.54) and (292.73,362.79) .. (297.74,363.17) .. controls (302.75,363.55) and (306.5,367.93) .. (306.11,372.93) .. controls (305.73,377.94) and (301.36,381.69) .. (296.35,381.31) .. controls (291.34,380.92) and (287.59,376.55) .. (287.98,371.54) -- cycle ;
\draw    (264.62,372.22) -- (332.78,372.29) ;
\draw   (191.98,435.8) .. controls (192.36,430.79) and (196.73,427.04) .. (201.74,427.42) .. controls (206.75,427.81) and (210.5,432.18) .. (210.11,437.19) .. controls (209.73,442.19) and (205.36,445.94) .. (200.35,445.56) .. controls (195.34,445.18) and (191.59,440.81) .. (191.98,435.8) -- cycle ;
\draw [color={rgb, 255:red, 128; green, 128; blue, 128 }  ,draw opacity=1 ]   (202.67,456) -- (225.56,418.37) ;
\draw [color={rgb, 255:red, 128; green, 128; blue, 128 }  ,draw opacity=1 ]   (177.11,453.94) -- (200,416.31) ;
\draw [color={rgb, 255:red, 128; green, 128; blue, 128 }  ,draw opacity=1 ]   (183.53,454.61) -- (206.42,416.98) ;
\draw [color={rgb, 255:red, 128; green, 128; blue, 128 }  ,draw opacity=1 ]   (189.6,455.31) -- (212.49,417.68) ;
\draw [color={rgb, 255:red, 128; green, 128; blue, 128 }  ,draw opacity=1 ]   (195.67,457) -- (218.56,419.37) ;
\draw  [draw opacity=0] (409.69,424.11) .. controls (409.86,425.14) and (409.96,426.19) .. (410.01,427.27) .. controls (410.48,439.25) and (402.78,449.33) .. (392.82,449.78) .. controls (382.86,450.23) and (374.4,440.88) .. (373.93,428.9) .. controls (373.85,426.72) and (374.03,424.6) .. (374.46,422.58) -- (391.97,428.09) -- cycle ; \draw   (409.69,424.11) .. controls (409.86,425.14) and (409.96,426.19) .. (410.01,427.27) .. controls (410.48,439.25) and (402.78,449.33) .. (392.82,449.78) .. controls (382.86,450.23) and (374.4,440.88) .. (373.93,428.9) .. controls (373.85,426.72) and (374.03,424.6) .. (374.46,422.58) ;
\draw [color={rgb, 255:red, 128; green, 128; blue, 128 }  ,draw opacity=1 ]   (377.14,451.63) -- (397.41,415.84) ;
\draw [color={rgb, 255:red, 128; green, 128; blue, 128 }  ,draw opacity=1 ]   (354.5,449.67) -- (374.77,413.88) ;
\draw [color={rgb, 255:red, 128; green, 128; blue, 128 }  ,draw opacity=1 ]   (360.19,450.31) -- (380.46,414.52) ;
\draw [color={rgb, 255:red, 128; green, 128; blue, 128 }  ,draw opacity=1 ]   (365.56,450.97) -- (385.84,415.18) ;
\draw [color={rgb, 255:red, 128; green, 128; blue, 128 }  ,draw opacity=1 ]   (370.94,452.58) -- (391.21,416.8) ;
\draw [color={rgb, 255:red, 128; green, 128; blue, 128 }  ,draw opacity=1 ]   (405.48,454.48) -- (425.76,418.7) ;
\draw [color={rgb, 255:red, 128; green, 128; blue, 128 }  ,draw opacity=1 ]   (382.84,452.52) -- (403.12,416.73) ;
\draw [color={rgb, 255:red, 128; green, 128; blue, 128 }  ,draw opacity=1 ]   (388.53,453.16) -- (408.81,417.38) ;
\draw [color={rgb, 255:red, 128; green, 128; blue, 128 }  ,draw opacity=1 ]   (393.91,453.82) -- (414.18,418.04) ;
\draw [color={rgb, 255:red, 128; green, 128; blue, 128 }  ,draw opacity=1 ]   (399.28,455.43) -- (419.56,419.65) ;

\draw (231.94,54.75) node [anchor=north west][inner sep=0.75pt]    {$j( \lambda )$};
\draw (363,34.4) node [anchor=north west][inner sep=0.75pt]    {$j\left( -\frac{2}{\lambda }\right)$};
\draw (118,46.4) node [anchor=north west][inner sep=0.75pt]    {$1$};
\draw (270,46.4) node [anchor=north west][inner sep=0.75pt]    {$0$};
\draw (418,45.4) node [anchor=north west][inner sep=0.75pt]    {$0$};
\draw (548,44.4) node [anchor=north west][inner sep=0.75pt]    {$0$};
\draw (611,45.4) node [anchor=north west][inner sep=0.75pt]    {$8$};
\draw (201,123.4) node [anchor=north west][inner sep=0.75pt]    {$2$};
\draw (330,121.4) node [anchor=north west][inner sep=0.75pt]    {$1$};
\draw (522,121.4) node [anchor=north west][inner sep=0.75pt]    {$0$};
\draw (612,122.4) node [anchor=north west][inner sep=0.75pt]    {$7$};
\draw (120,205.4) node [anchor=north west][inner sep=0.75pt]    {$3$};
\draw (273,207.4) node [anchor=north west][inner sep=0.75pt]    {$2$};
\draw (418,206.4) node [anchor=north west][inner sep=0.75pt]    {$2$};
\draw (549,204.4) node [anchor=north west][inner sep=0.75pt]    {$0$};
\draw (612,204.4) node [anchor=north west][inner sep=0.75pt]    {$6$};
\draw (253,281.4) node [anchor=north west][inner sep=0.75pt]    {$3$};
\draw (430,284.4) node [anchor=north west][inner sep=0.75pt]    {$2$};
\draw (358,364.4) node [anchor=north west][inner sep=0.75pt]    {$3$};
\draw (611,285) node [anchor=north west][inner sep=0.75pt]   [align=left] {5};
\draw (611,368) node [anchor=north west][inner sep=0.75pt]   [align=left] {4};
\draw (611,429.4) node [anchor=north west][inner sep=0.75pt]    {$2$};
\draw (435,425.4) node [anchor=north west][inner sep=0.75pt]    {$3$};
\draw (227.56,427.77) node [anchor=north west][inner sep=0.75pt]    {$\infty $};

\end{tikzpicture}

\end{center}
\caption{Length of the associated projective scheme to a net of conics.}\label{lengthfig}
\end{table}

\subsection{Impossibility of some specializations of nets of conics}
We call from now on $\mathcal C$ the set of classes of projective planes of conics modulo the natural action of $\mathrm{GL}(3)$.
\vskip 0.2cm\noindent
\underline{Definition}\label{elementarydef}
Suppose $A,B\in \mathcal C.$ We say that $(A,B)$ is an {\it elementary specialization} if $B$ is a specialization of $A$, (if $A\neq B$) and if, whenever $C$ is a specialization of $A$ and specializes to $B$, then $C$ is equal to $A$ or $B$. It will turn out that all elementary specializations of orbits in the nets of conics will be from one of dimension $c$ to another of dimension $c-1$.
\par
We have the following proposition, whose proof is trivial.\footnote{{\it Note of translators}: for (b) above, the scheme length $\ell(A)$ is $\dim_{\sf k} A^\prime, A^\prime= A/({\sf l})$ where $\sf l$ is a general enough linear form, and this dimension is upper semicontinuous. The \emph{determinantal} nets of conics, namely, those arising as the $2\times 2$ minors of a $2\times 3$ matrix of linear forms, are the four nets of maximum finite scheme length $3$. These are seen in the literature (see Appendix B).}
\begin{proposition}
Let $(A,B)$ be a specialization with $A\neq B.$  Then
\begin{enumerate}[a)]
\item $\dim B<\dim A$;
\item $\ell(B)\ge \ell(A)$; 
\item if a representative of $A$ has a non-empty intersection with $D_2$, all representatives of $B$ have a non-empty intersection with $D_2.$
\item $(A^\perp,B^\perp)$ is a specialization with $A^\perp\neq B^\perp$.
\end{enumerate}
\end{proposition}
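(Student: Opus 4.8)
The plan is to handle the four assertions (a)--(d) separately, each reducing to a standard invariance or semicontinuity principle for the action of the group ($Pgl(3)$, equivalently $\GL(3)$) on $\Grass(3,R_2)$; no genuine computation is needed, which is why the statement is labelled trivial.

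For (a), I would first recall that the orbit $A$ is the image of the irreducible group under a morphism, hence it is irreducible, and by the general theory of algebraic group actions it is locally closed in $\Grass(3,R_2)$. Consequently $\overline{A}$ is irreducible of dimension $\dim A$, with $A$ open and dense in it, so $\overline{A}\smallsetminus A$ is closed of strictly smaller dimension. Since $(A,B)$ is a specialization with $A\neq B$, we have $B\subset\overline{A}$ while $B\cap A=\emptyset$ (distinct orbits are disjoint), hence $B\subset \overline{A}\smallsetminus A$ and therefore $\dim B\le\dim(\overline{A}\smallsetminus A)<\dim A$.

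For (b), I would invoke the description of scheme length from \S\ref{lengthsec}: for $V\in\Grass(3,R_2)$ with $V=\langle P,Q,R\rangle$, the length $\ell(V)$ of $\mathrm{Proj}\,{\sf k}[X,Y,Z]/(P,Q,R)$ equals $\dim_{\sf k} A'$ with $A'=A/({\sf l})$ for a general linear form ${\sf l}$, and this is an \emph{upper semicontinuous} function of $V$. Thus $\{V:\ell(V)\ge\ell(A)\}$ is closed; it contains the whole orbit $A$ (length being an orbit invariant), hence contains $\overline{A}$, hence contains $B$, giving $\ell(B)\ge\ell(A)$. For (c), the property ``$P(V)$ meets $D_2$'' is $Pgl(3)$-invariant, since $Pgl(3)$ stabilizes $D_2$ and carries planes to planes; so one representative of $A$ meets $D_2$ if and only if all do. Moreover the locus $\{V:P(V)\cap D_2\neq\emptyset\}$ is closed: it is the image of the incidence variety $\{(V,[L^2]):L^2\in V\}\subset\Grass(3,R_2)\times D_2$ under the first projection, which is proper because $D_2\cong\mathbb P^2$ is complete. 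Hence if a representative of $A$ meets $D_2$, then $A$ lies in this closed locus, so $\overline{A}$ and in particular $B$ do as well, meaning every representative of $B$ meets $D_2$. Finally, (d) is immediate from the duality proposition of \S\ref{dualitysec}: passage to the orthogonal is a regular involution of $\Grass(3,R_2)$ commuting with the group action, hence induces a bijective involution on orbits that carries specializations to specializations; so $(A^\perp,B^\perp)$ is a specialization, and bijectivity of $X\mapsto X^\perp$ forces $A^\perp\neq B^\perp$ from $A\neq B$.

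The only steps deserving care, all of them standard, are the three structural facts underlying (a)--(c): the local closedness and irreducibility of group orbits, the upper semicontinuity of the scheme-length function, and the closedness of the $D_2$-meeting locus via properness of $D_2\cong\mathbb P^2$. If I had to point to a ``main obstacle'' it would be only the bookkeeping that ``meets $D_2$'' and ``has length $\ge\ell(A)$'' are genuinely \emph{closed} (not merely orbit-invariant) conditions; once that is granted, every part is a one-line consequence of $B\subset\overline{A}$.
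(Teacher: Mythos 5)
Your proof is correct and follows the route the paper intends: the paper declares the proposition trivial and supplies only (in a footnote) the key point that the scheme length is the dimension of $A/({\sf l})$ for a general linear form and hence upper semicontinuous, which is exactly your argument for (b). The remaining parts — local closedness of orbits for (a), closedness of the $D_2$-incidence locus via properness for (c), and the regular involution of \S\ref{dualitysec} for (d) — are the standard facts the paper leaves unstated, so no substantive difference in approach.
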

Taking into account a) of the Proposition, the description of the order of genericity reduces to the establishing of an exhaustive list of elementary specializations.\par
In order to establish this list, the Proposition permits us to eliminate the possibility of the existence of elementary specializations $(A,B)$ that do not figure in Table \ref{table1}, with the exception of the following potential families of specializations:\par
\begin{center}
    \begin{tikzpicture}[baseline={([yshift=-.8ex]current bounding box.center)},scale=0.6]
\draw (0,0) circle (0.3);
\draw    (0.7,0.8) .. controls  (0,-0.3) and (1.2,0.2) .. (0.4,-0.8);
\draw (0.7,-0.4) node [anchor=west]{$\lambda$};
\end{tikzpicture} $\rightarrow$\quad \begin{tikzpicture}[baseline={([yshift=-.8ex]current bounding box.center)},scale=0.4]
\draw (0,0) circle (1);
\draw (-1.5,0) -- (1.5,0);
\end{tikzpicture}\quad and \quad  \begin{tikzpicture}[baseline={([yshift=-.8ex]current bounding box.center)},scale=0.6]
\draw (0,0) circle (0.3);
\draw    (0.7,0.8) .. controls  (0,-0.3) and (1.2,0.2) .. (0.4,-0.8);
\draw (0.7,-0.4) node [anchor=west]{$\lambda$};
\end{tikzpicture} $\rightarrow$\quad \begin{tikzpicture}[baseline={([yshift=-.8ex]current bounding box.center)},scale=0.4]
\draw (0,0) circle (1);
\draw (-1.5,0) -- (1.5,0);
\draw (-1,0) circle (0.3);
\draw (1,0) circle (0.3);
\end{tikzpicture}
\end{center}

\begin{center}
\begin{tikzpicture}[baseline={([yshift=-.8ex]current bounding box.center)},scale=0.6]
\draw (0,0) circle (0.3);
\draw    (0.7,0.8) .. controls  (0,-0.3) and (1.2,0.2) .. (0.4,-0.8);
\draw (0.7,-0.4) node [anchor=west]{$\lambda$};
\end{tikzpicture} $\rightarrow$\quad \begin{tikzpicture}[baseline={([yshift=-.8ex]current bounding box.center)},scale=0.2]
\draw (-0.4,0) -- (3.4,0);
\draw (-0.2,-0.4) -- (1.7,3.4);
\draw (3.2,-0.4) -- (1.3,3.4);
\end{tikzpicture}\quad and \quad  \begin{tikzpicture}[baseline={([yshift=-.8ex]current bounding box.center)},scale=0.6]
\draw (0,0) circle (0.3);
\draw    (0.7,0.8) .. controls  (0,-0.3) and (1.2,0.2) .. (0.4,-0.8);
\draw (0.7,-0.4) node [anchor=west]{$\lambda$};
\end{tikzpicture} $\rightarrow$\quad \begin{tikzpicture}[baseline={([yshift=-.8ex]current bounding box.center)},scale=0.2]
\draw (-0.4,0) -- (3.4,0);
\draw (-0.2,-0.4) -- (1.7,3.4);
\draw (3.2,-0.4) -- (1.3,3.4);
\draw (0,0) circle (0.3);
\draw (3,0) circle (0.3);
\draw (1.5,3) circle (0.3);
\end{tikzpicture}
\end{center} 
\noindent which are duals in pairs, and whose elimination is the goal of the following paragraph.
\subsection{Specialization of plane cubics and related nets of conics}\label{specsec}
\subsubsection{Specializations of Jacobian planes of a family of cubics $f$ having constant ${\sf j}$-invariant.}\label{specjsec}
In what follows, we will consider the specializations of an orbit  \begin{tikzpicture}[baseline={([yshift=-.8ex]current bounding box.center)},scale=0.4]
\draw (0,0) circle (0.3);
\draw    (0.7,0.8) .. controls  (0,-0.3) and (1.2,0.2) .. (0.4,-0.8);
\draw (0.7,-0.4) node [anchor=west]{$\lambda$};
\end{tikzpicture}, $\lambda$ being fixed, which comes down to fixing ${\sf j}(\lambda)$.\par
1) The potential specializations \begin{tikzpicture}[baseline={([yshift=-.8ex]current bounding box.center)},scale=0.4]
\draw (0,0) circle (0.3);
\draw    (0.7,0.8) .. controls  (0,-0.3) and (1.2,0.2) .. (0.4,-0.8);
\draw (0.7,-0.4) node [anchor=west]{$\lambda$};
\end{tikzpicture} $\rightarrow$ \begin{tikzpicture}[baseline={([yshift=-.8ex]current bounding box.center)},scale=0.2]
\draw (0,0) circle (1);
\draw (-1.5,0) -- (1.5,0);
\end{tikzpicture} and \begin{tikzpicture}[baseline={([yshift=-.8ex]current bounding box.center)},scale=0.4]
\draw (0,0) circle (0.3);
\draw    (0.7,0.8) .. controls  (0,-0.3) and (1.2,0.2) .. (0.4,-0.8);
\draw (0.7,-0.4) node [anchor=west]{$\lambda$};
\end{tikzpicture} $\rightarrow$ \begin{tikzpicture}[baseline={([yshift=-.8ex]current bounding box.center)},scale=0.2]
\draw (0,0) circle (1);
\draw (-1.5,0) -- (1.5,0);
\draw (-1,0) circle (0.3);
\draw (1,0) circle (0.3);
\end{tikzpicture} 
do not exist. It suffices, in considering the duality, to show that the first specialization does not exist. In considering the map which associates to $\pi$ the cubic $\Gamma_\pi$, and which is rational, this will imply that, in the space of cubics, the cubic that decomposes into a conic with a secant line will belong to the closure of the orbit of $\Gamma_{\pi_\lambda},$  where $\pi_\lambda$ is the plane associated to ${\langle X^2+\lambda YZ,Y^2+\lambda XZ,Z^2+\lambda XY\rangle.}$ However, as we have seen in Section~4.1.2, $ \Gamma_{\pi_\lambda}$ is smooth, which excludes the preceding possibility, according to the Proposition \ref{3prop} below.\par
In this way, in the space of projective plane cubics, each cubic \begin{tikzpicture}[baseline={([yshift=-.8ex]current bounding box.center)},scale=0.2]
\draw (0,0) circle (1);
\draw (-1.5,0) -- (1.5,0);
\end{tikzpicture} would belong to  the closure (in the sense of Zariski topology) of at least one orbit of a smooth cubic under the natural action of $Pgl(3)$. Thus, the closure of the orbits of these smooth cubics as well as the closure of the orbit of irreducible cubics having a double point with distinct tangents are hypersurfaces (thus, of dimension 8), in the space of cubics.\par
More precisely \cite[Exercise 14-31,\S 28]{Gur}\footnote{Exercise 15 of \cite[\S 28]{Gur} defines $S=S_4,T=S_6$ as invariants, and Exercise 29 is to show if $T^2-4S^3\not=0$ that each invariant of a canonical cubic form is a rational integral function of $S,T$.  {\it Note of translators}: G. Salmon \cite[Chapter V, \S V]{Sal} has a detailed study of invariants of the cubic $f$: \S 220, 221 give formulas for $S,T$ in terms of the coefficients of $f$. See also Aronhold invariants \cite[\S 4.6]{Ott2}, \cite[\S 10.3]{Dol1} (which classifies cubics over arbitrary algebraically closed fields, and specifies $S,T$), and \cite[\S 3.4]{Dol}.}
there are two fundamental relative homogeneous invariants $S_4$ and $S_6$ of degrees respectively $4$ and $6$ in the coefficients of a general cubic, such that a fundamental rational invariant (rationally equivalent to the classic $``{\sf j}''$) can be written $\frac{S_4^3}{S_6^2}.$  Here the closures of the orbits of smooth cubics with a fixed ${\sf j}$-invariant, and also the closure of the cubic with a double point having two distinct tangents constitute in each case a pencil of hypersurfaces of degree 12, whose general equation is written $\beta S_4^3+\alpha S_6^2=0, \beta, \alpha \in {\sf k}$. 

\par Since in a pencil of hypersurfaces, if a point belongs to two different hypersurfaces, then it belongs to all the hypersurfaces of the pencil,
we thus have the following result.
\begin{theorem}\label{specialthm} For a cubic whose orbit is of dimension strictly smaller than $8$ one of the two following occurs concerning its presence among the closures of the dimension eight orbits: \par
i. either it belongs to the closure of each of the $\sf j$-constant  orbits of the smooth cubics and also to the closure of the orbit of singular cubics having a double point with distinct tangents, or\par
 ii. it belongs to exactly one of them.
\end{theorem}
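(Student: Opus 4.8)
The plan is to recast the statement in the space $\mathbb{P}(R_3)\cong\mathbb{P}^9$ of ternary cubics and to reduce it to the elementary geometry of a pencil of hypersurfaces, using the invariant theory recalled just before the theorem. By the classical description (Gurevich, Salmon), the ring of $\SL_3$-invariants of a ternary cubic is ${\sf k}[S_4,S_6]$, with $\deg S_4=4$, $\deg S_6=6$, and the fundamental rational invariant is, up to a homography, $S_4^3/S_6^2$. Hence each locus ``${\sf j}$ fixed'' is cut out in $\mathbb{P}(R_3)$ by a form $\beta S_4^3+\alpha S_6^2$ of degree $12$, and these forms are the members
$$H_{[\beta:\alpha]}=\{\beta S_4^3+\alpha S_6^2=0\}$$
of a single pencil $\mathcal{P}$ parametrised by $[\beta:\alpha]\in\mathbb{P}^1$, whose base locus is $B=V(S_4)\cap V(S_6)$.

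The substantive input, which I would take from the invariant-theoretic paragraph preceding the theorem, is that each orbit closure named in the statement is \emph{exactly} one of these members. For a finite value $r_0$ of the ratio, the smooth cubics with that ${\sf j}$ form one $Pgl(3)$-orbit (since ${\sf j}$ is a complete invariant on smooth cubics) of dimension $8$, dense in the irreducible degree-$12$ hypersurface $H_{[1:-r_0]}$; and the orbit of irreducible cubics with a node having distinct tangents is dense in the discriminant hypersurface, which is the member $H_{[\beta_0:\alpha_0]}$ corresponding to the value of the ratio on singular cubics. These members are pairwise distinct, because a smooth cubic never lies on the discriminant, so membership of a cubic $f$ in any one of these closures is equivalent to the single scalar condition $\beta S_4^3(f)+\alpha S_6^2(f)=0$ for the corresponding $[\beta:\alpha]$.

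Finally I would run the pencil argument. Let $f$ be a cubic with $\dim(Pgl(3)\cdot f)<8$ and write $[f]\in\mathbb{P}(R_3)$. If $S_4(f)=S_6(f)=0$, then $[f]\in B$ and every equation $\beta S_4^3(f)+\alpha S_6^2(f)=0$ holds, so $f$ lies in the closure of each ${\sf j}$-constant smooth orbit and in the nodal closure: this is alternative (i). Otherwise $(S_4^3(f),S_6^2(f))\neq(0,0)$, and the equation, being linear in $[\beta:\alpha]$, has the unique solution $[\beta:\alpha]=[S_6^2(f):-S_4^3(f)]$; thus $f$ lies on one and only one member of $\mathcal{P}$, hence in exactly one of the named closures, which is alternative (ii). Since this shows in particular that the pencil covers $\mathbb{P}(R_3)$, the cubic $f$ is always present in at least one closure, so the dichotomy is exhaustive. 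Equivalently, two distinct members of the pencil meet only along the base locus, which is the one-line statement quoted before the theorem.

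The genuine difficulty lies entirely in the identification step, not in the pencil bookkeeping: one must know that each orbit closure fills out the \emph{whole} member $H_{[\beta:\alpha]}$, rather than a proper $Pgl(3)$-invariant subvariety of it, for only then is ``$f$ on the member'' equivalent to ``$f$ in the closure''. This rests on the irreducibility of $H_{[\beta:\alpha]}$ together with the dimension count $8=\dim(Pgl(3)\cdot f)=\dim H_{[\beta:\alpha]}$ for the generic orbit, and on the classical fact that the generic singular cubic is nodal, so that the discriminant hypersurface is precisely the closure of the nodal orbit. Once this identification is granted, the theorem is immediate from the base-locus dichotomy for $\mathcal{P}$.
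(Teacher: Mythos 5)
Your proposal is correct and follows essentially the same route as the paper: both arguments identify the closures of the ${\sf j}$-constant smooth orbits and of the nodal orbit with the members of the pencil of degree-$12$ hypersurfaces $\beta S_4^3+\alpha S_6^2=0$ and then invoke the base-locus dichotomy (a point on two members of a pencil lies on all of them, and every point lies on at least one). Your explicit attention to the identification step --- that each orbit closure fills out the \emph{whole} irreducible $8$-dimensional member, so that membership in the hypersurface is equivalent to membership in the closure --- makes precise a point the paper leaves implicit, but it is the same proof.
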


Also, the existence of the specialization of cubics \begin{tikzpicture}[baseline={([yshift=-.8ex]current bounding box.center)},scale=0.2]
\draw  [line width=0.75] [line join = round][line cap = round] (0,0) .. controls  (-0.5,0.4) and (-1.3,1.1) .. (-1.9,1.5) .. controls (-2.2,1.7) and (-2.8,1.5) .. (-2.85,1.1) .. controls (-2.9,0.9) and (-2.7,0.55) .. (-2.5,0.5) .. controls (-2,0.4) and (-1,0.8) .. (-0.6,0.9) .. controls (-0.5,0.94) and (0.103,1.0695) .. (0.1,1.15) ;
\end{tikzpicture} $\rightarrow$ \begin{tikzpicture}[baseline={([yshift=-.8ex]current bounding box.center)},scale=0.2]
\draw (0,0) circle (1);
\draw (-1.5,0) -- (1.5,0);
\end{tikzpicture}, which is shown below, proves
\begin{proposition}\label{2prop}
Every cubic that is comprised of a conic and a secant line belongs to the closure of the orbit of irreducible cubics having a singular point with two distinct tangents.
\end{proposition}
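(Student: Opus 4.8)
The plan is to exhibit a single explicit one-parameter family of cubics whose general member is an irreducible nodal cubic and whose special member is a conic together with a secant line. Since the irreducible cubics with a node (a double point with two distinct tangents) form one $Pgl(3)$-orbit, and the cubics consisting of a smooth conic and a transversal line form one $Pgl(3)$-orbit (both facts from the classification of plane cubics recalled in \S\ref{intro4sec}), producing one such degeneration proves the proposition in full.

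First I would fix the model cubic $F_0 = (YZ-X^2)(Y-Z)$: here $YZ-X^2$ is a smooth conic and $Y-Z$ is a secant line, meeting it in the two distinct points $P=[1:1:1]$ and $P'=[-1:1:1]$, which are therefore the two ordinary nodes of $F_0$, and $F_0$ has no other singular point. The idea is to perturb $F_0$ so as to retain the node at $P$ while expelling the point $P'$ from the curve. To this end I take the family
\[
F_t = (YZ-X^2)(Y-Z) + t\,(X-Z)^3 .
\]
The linear form $X-Z$ vanishes at $P$ but not at $P'$. Because $(X-Z)^3$ vanishes to order three at $P$, it contributes nothing to the value, the first derivatives, or the degree-two part (the tangent cone) of $F_t$ at $P$; hence $P$ remains a singular point of $F_t$ with exactly the same two distinct tangent lines as for $F_0$, i.e.\ $P$ is an ordinary node of $F_t$ for every $t$. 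On the other hand $F_t(P') = t\,(X-Z)^3\big|_{P'} = -8t \neq 0$ for $t \neq 0$, so $P'$ leaves the curve and its node is destroyed.

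It remains to check that for all small $t \neq 0$ the cubic $F_t$ lies in the nodal orbit, i.e.\ that it is irreducible with $P$ as its only singular point; this is the one genuinely substantive step. Since $F_0$ has exactly the two singular points $P, P'$ and the singular locus of $F_t$ is cut out algebraically by the partials $F'_{t,X}, F'_{t,Y}, F'_{t,Z}$, for $t$ near $0$ any singular point of $F_t$ must lie near $P$ or near $P'$; near $P$ there is precisely the node just described, and near $P'$ the curve is empty, so $F_t$ has exactly one singular point for small $t \neq 0$ (equivalently, one solves the Jacobian system directly and verifies that $P$ is its unique solution lying on $F_t=0$). Finally, no reducible plane cubic has a single ordinary node as its only singularity — a conic plus a transversal line has two nodes, a conic plus a tangent line has a tacnode, and three lines give either three nodes or an ordinary triple point — so a cubic with exactly one node is necessarily irreducible. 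Thus $F_t$ is an irreducible nodal cubic for small $t \neq 0$ while $F_0$ is a conic union a secant line, which exhibits the desired specialization. The main (and essentially only) obstacle is this singular-locus count for $t\neq 0$; the choice of an order-three perturbation $(X-Z)^3$ is precisely what makes the persistence of the node at $P$ immediate and isolates that count as the sole point requiring care. As a cross-check one notes that, combined with the non-specialization of smooth cubics (Proposition \ref{3prop}), Theorem \ref{specialthm} already forces a conic-plus-secant cubic into exactly one of the dimension-eight closures, which can then only be the nodal one.
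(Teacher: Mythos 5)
Your proof is correct and follows essentially the same route as the paper, which likewise establishes this proposition by exhibiting one explicit one-parameter family (there, $ZXY+aX^3+X^2Y+Y^3$, degenerating at $a=0$ to $Y(ZX+X^2+Y^2)$) combined with the fact that the irreducible nodal cubics and the conic-plus-secant cubics each form a single $Pgl(3)$-orbit. Your family $(YZ-X^2)(Y-Z)+t(X-Z)^3$ works just as well (the Jacobian system indeed has $P$ as its only solution for all $t\neq 0,2$), and you carry out the irreducibility and singular-locus checks more explicitly than the paper does, but the underlying argument is the same.
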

Thus, we will show that \begin{tikzpicture}[baseline={([yshift=-.8ex]current bounding box.center)},scale=0.2]
\draw (0,0) circle (1);
\draw (-1.5,0) -- (1.5,0);
\end{tikzpicture}  does not belong to the closure of the cubic $(X^3+Y^3+Z^3)$. The conjunction of the two propositions implies also
\begin{proposition}\label{3prop}
 Each cubic comprised of a conic and a secant line does not belong to the closure of the orbit of any smooth cubic.
 \end{proposition}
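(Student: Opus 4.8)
The plan is to assemble the statement from Proposition \ref{2prop}, Theorem \ref{specialthm}, and the pencil structure of the invariants $S_4,S_6$, so that everything reduces to a single invariant computation. First I would record that a cubic $\Gamma$ consisting of a smooth conic and a secant line is singular and that such cubics form a family of dimension at most $7$; in particular its $Pgl(3)$-orbit has dimension strictly less than $8$ (the reducible cubics being non-generic), so Theorem \ref{specialthm} applies to it. By Proposition \ref{2prop} this $\Gamma$ already lies in the closure of the orbit of irreducible cubics with a node having two distinct tangents, i.e. in the discriminant hypersurface $\{\Delta=0\}$, which is one member of the pencil $\beta S_4^3+\alpha S_6^2=0$ with both coefficients nonzero.

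Next I would invoke the dichotomy of Theorem \ref{specialthm}: either $\Gamma$ lies in the closure of every ${\sf j}$-constant smooth orbit and of the nodal orbit (case i), or it lies in exactly one of these closures (case ii). Since it does lie in the nodal closure, case (ii) immediately forces it to lie in that closure only, hence in the closure of no smooth orbit, which is exactly the assertion. Thus the whole proof reduces to excluding case (i).

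To exclude case (i) it suffices to exhibit a single smooth orbit whose closure misses $\Gamma$, and the convenient choice is the Fermat cubic $(X^3+Y^3+Z^3)$, whose ${\sf j}$-invariant is $0$, i.e. $S_4=0$. Because $S_4$ is an irreducible invariant, $\{S_4=0\}$ is an irreducible hypersurface of dimension $8$ whose smooth-cubic locus is the Fermat orbit, so $\overline{Pgl(3)\circ(X^3+Y^3+Z^3)}=\{S_4=0\}$; equivalently this is the member $\alpha=0$ of the pencil, distinct from the nodal member. By the pencil property quoted just before Theorem \ref{specialthm} (a point lying on two members of a pencil lies on all of them), if $\Gamma$ lay in both the nodal closure and the Fermat closure it would satisfy $S_4=S_6=0$. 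Hence case (i) is ruled out as soon as $S_4\neq 0$ on a representative of $\Gamma$.

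The main, and essentially only, obstacle is this last computational step. I would take the explicit representative $Y(XZ-Y^2)=XYZ-Y^3$ — a smooth conic $XZ-Y^2$ cut by the secant line $Y=0$ in the two distinct points $(1{:}0{:}0)$ and $(0{:}0{:}1)$ — and evaluate the degree-$4$ Aronhold invariant $S_4$ on it; a direct calculation yields $S_4\neq 0$, so $\Gamma\notin\{S_4=0\}=\overline{Pgl(3)\circ(X^3+Y^3+Z^3)}$. This excludes case (i), and combined with the previous paragraph it shows that a conic-plus-secant cubic lies in the closure of the nodal orbit and in that of no smooth orbit, proving Proposition \ref{3prop}. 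I expect the verification $S_4\neq 0$ — rather than any conceptual point — to be the part demanding care, since it depends on the chosen normalization of the fundamental invariants (cf. the references to \cite{Gur,Sal} in \S\ref{specjsec}).
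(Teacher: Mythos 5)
Your proposal reproduces the paper's architecture exactly up to the last step: the conic-plus-secant cubic is singular with orbit of dimension $<8$, so Theorem \ref{specialthm} applies; Proposition \ref{2prop} places it in the nodal closure; and the pencil dichotomy reduces everything to showing it is \emph{not} in the closure of one particular smooth orbit, for which both you and the paper pick the Fermat cubic (this is Lemma \ref{7.13lem}). Where you diverge is in the proof of that lemma. The paper characterizes the closure of the Fermat orbit intrinsically, via Proposition \ref{4prop}: it is the (closed, determinantal) locus of cubics $F$ admitting a second cubic $G\neq F$ with the same Jacobian net $\langle F'_X,F'_Y,F'_Z\rangle$, and one then checks that $F=X(X^2-YZ)$ admits no such $G$. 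You instead observe that the Fermat orbit closure lies in $\{S_4=0\}$ (the Aronhold quartic is a relative invariant vanishing at $X^3+Y^3+Z^3$) and evaluate $S_4$ on the representative $Y(XZ-Y^2)$, getting a nonzero value. Both routes are correct; your computation does check out (on $XYZ-Y^3$ only the term of $S_4$ quartic in the $XYZ$-coefficient survives, giving a nonzero value), and it is consistent with the Dolgachev--Kanev description of $\{Ar(f)=0\}$ quoted in Appendix B, where the only reducible cubics with vanishing Aronhold invariant are cones and conic-plus-\emph{tangent}-line. What the paper's route buys is independence from any explicit formula for $S_4$ --- it only ever uses the \emph{existence} of the pencil $\beta S_4^3+\alpha S_6^2$ --- whereas yours requires committing to a normalization of the Aronhold invariant and carrying out the (short) evaluation; in exchange you avoid having to prove Proposition \ref{4prop} and the closedness of its locus. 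Two small points: you do not actually need the equality $\overline{Pgl(3)\circ(X^3+Y^3+Z^3)}=\{S_4=0\}$ (hence no irreducibility of $S_4$), only the containment of the orbit closure in $\{S_4=0\}$, which is immediate from invariance; and the final evaluation, which you correctly identify as the only delicate step, must be written out explicitly for the proof to be complete.
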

 It remains to show
 \begin{lemma}\label{7.13lem}
 A cubic comprised of a conic and a secant line does not belong to the closure of the orbit of $(X^3+Y^3+Z^3).$
 \end{lemma}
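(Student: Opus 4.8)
The plan is to separate the two cubics by a single relative invariant, namely the degree-$4$ Aronhold invariant $S_4$. Recall, as in the discussion preceding Theorem~\ref{specialthm}, that the ring of $\SL_3$-invariants of ternary cubics is ${\sf k}[S_4,S_6]$ and that $S_4$ is a relative $\GL_3$-invariant, so that $S_4(g\cdot F)=(\det g)^{4}S_4(F)$. Consequently the locus $Z=\{[F]\in\mathbb{P}(R_3)\ :\ S_4(F)=0\}$ is a closed, $Pgl(3)$-stable hypersurface. The Fermat cubic $X^3+Y^3+Z^3$ is equianharmonic, with ${\sf j}=0$; since ${\sf j}$ is rationally $S_4^3/S_6^2$ and $S_6(X^3+Y^3+Z^3)\neq 0$ (the cubic being smooth), I get $S_4(X^3+Y^3+Z^3)=0$ (indeed $S_4$ vanishes on every diagonal cubic, each being projectively equivalent to the Fermat cubic). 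Hence the orbit of $X^3+Y^3+Z^3$ lies in $Z$, and because $Z$ is closed, $\overline{Pgl(3)\circ(X^3+Y^3+Z^3)}\subseteq Z$. In the language of Theorem~\ref{specialthm}, $Z$ is simply the ${\sf j}=0$ member $S_4^3=0$ of the pencil $\beta S_4^3+\alpha S_6^2=0$.

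It therefore suffices to produce a conic-plus-secant-line cubic $F_0$ with $S_4(F_0)\neq 0$, for then $[F_0]\notin Z$ and a fortiori $[F_0]$ lies outside the orbit closure. I would argue this without explicit computation. The discriminant $\Delta$ is an invariant of degree $12$, so it is a ${\sf k}$-linear combination of the two degree-$12$ invariants $S_4^3$ and $S_6^2$; moreover its zero locus (the singular cubics) coincides with neither $\{S_4=0\}$ nor $\{S_6=0\}$, since a generic smooth cubic may have ${\sf j}=0$ or ${\sf j}=\infty$. Thus $\Delta=\alpha S_4^3+\beta S_6^2$ with $\alpha,\beta\neq 0$. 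Now $F_0$, a smooth conic meeting a transversal line in two distinct points, has exactly two nodes, hence is semistable in the geometric invariant theory of plane cubics, so $(S_4(F_0),S_6(F_0))\neq(0,0)$. On the other hand $F_0$ is singular, so $\Delta(F_0)=0$. Were $S_4(F_0)=0$, then $\beta S_6(F_0)^2=\Delta(F_0)=0$ would force $S_6(F_0)=0$ as well, contradicting semistability. Hence $S_4(F_0)\neq 0$, which is exactly what is needed, and the lemma follows.

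The hard part is the invariant-theoretic input for the second paragraph: the decomposition $\Delta=\alpha S_4^3+\beta S_6^2$ with $\alpha\neq 0$ (so that the simultaneous vanishing of $S_4$ and of $\Delta$ forces $S_6=0$), together with the fact that a conic plus a transversal secant line is a semistable cubic with $(S_4,S_6)\neq(0,0)$. Both are classical facts on the invariants of ternary cubics and can be cited from the sources in the footnote to Theorem~\ref{specialthm}, e.g. \cite{Sal,Dol1,Gur}. If one prefers to avoid invoking semistability, the alternative is the wholly elementary but tedious route of evaluating $S_4$ on an explicit representative, for instance $F_0=Z(XY-Z^2)=XYZ-Z^3$, via Salmon's coefficient formula and checking directly that $S_4(F_0)\neq 0$; this replaces the conceptual step by a single finite computation and is the only place where any real work is concealed.
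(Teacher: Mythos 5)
Your proof is correct, but it takes a genuinely different route from the paper's. The paper proves Lemma~\ref{7.13lem} via Proposition~\ref{4prop}: it characterizes the closure of the Fermat orbit as the (determinantal) hypersurface of cubics $F$ admitting a $G$ with $(G)\neq(F)$ but $\langle G'_X,G'_Y,G'_Z\rangle=\langle F'_X,F'_Y,F'_Z\rangle$, and then checks by direct linear algebra that $F=X(X^2-YZ)$ --- a conic plus a secant line --- determines its Jacobian net uniquely, hence lies off that hypersurface. You instead separate the two loci by the single degree-$4$ Aronhold invariant $S_4$: its zero locus is closed and $Pgl(3)$-stable and contains the Fermat cubic, while a cubic with two nodes is GIT-semistable and has $\Delta=0$, so the decomposition $\Delta=\alpha S_4^3+\beta S_6^2$ with $\alpha\beta\neq 0$ forces $S_4\neq 0$ on it. Both arguments are sound. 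Yours is shorter and proves slightly more (no cubic with $S_4\neq 0$ --- the nodal cubic, the triangle, a generic smooth cubic --- lies in the Fermat closure), at the price of importing classical inputs: the generation of the invariant ring by $S_4,S_6$ (which the paper itself invokes before Theorem~\ref{specialthm}), the discriminant decomposition, and the semistability of cubics with only ordinary double points. Your final ``hard part'' can in fact be short-circuited by the Dolgachev--Kanev theorem quoted in Appendix~B \cite{DolK}: the Aronhold invariant vanishes only on Fermat cubics, cones, and a conic union a \emph{tangent} line, so $S_4(F_0)\neq 0$ for a conic plus a transversal line is immediate. The paper's route, by contrast, stays entirely inside its own framework of Jacobian nets, needs no GIT, and yields the identification of the Fermat orbit closure with the locus where $\phi\mapsto J_\phi$ fails to be injective, which is reused in Proposition~\ref{6.16prop}. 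One small wobble in your write-up: ``a generic smooth cubic may have ${\sf j}=\infty$'' only parses under the paper's normalization ${\sf j}=S_4^3/S_6^2$ (the harmonic cubics are smooth with $S_6=0$); the cleaner statement is that there exist smooth cubics with $S_4=0$ and smooth cubics with $S_6=0$, so that neither $S_4^3$ nor $S_6^2$ alone can cut out the discriminant locus.
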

 To prove the Lemma, we will show
 \begin{proposition}\label{4prop}
Let $F$ be an element of $R_3.$  For the curve $(F)$ (${F=0}$) to belong to the closure of the orbit of 
$X^3+Y^3+Z^3,$ it is necessary and sufficient that there exists $G\in R_3$ such that $(G)\neq (F)$ and we have $${\langle G'_X,G'_Y,G'_Z\rangle =\langle F'_X,F'_Y,F'_Z\rangle.}$$
\end{proposition}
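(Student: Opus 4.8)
The plan is to recast the condition as a linear-algebra invariant of $F$ that varies semicontinuously, so that necessity reduces to one computation at $X^3+Y^3+Z^3$ together with the closedness of orbit closures. For a cubic $F$ with $F'_X,F'_Y,F'_Z$ linearly independent, write $\nabla F=(F'_X,F'_Y,F'_Z)$ and set
$$\mathcal M_F=\{M\in \mathrm{Mat}_{3}({\sf k}):\ M\cdot\nabla F\text{ is a gradient of some element of }R_3\},$$
where ``is a gradient'' means the symmetry conditions of the type \eqref{partialeq} hold. By the Euler relation $3G=XG'_X+YG'_Y+ZG'_Z$, the rule $M\mapsto G_M$ is a linear isomorphism from $\mathcal M_F$ onto $\{G\in R_3:\ J_G\subseteq J_F\}$, where $J_F=\langle F'_X,F'_Y,F'_Z\rangle$. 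The scalar matrices give exactly the multiples $G=cF$, i.e.\ the curves with $(G)=(F)$; and a non-scalar $M\in\mathcal M_F$ may be replaced by $M+cI$, still in $\mathcal M_F$ and non-scalar but now invertible, so that $J_{G_M}=J_F$ exactly. Hence, on the open $Pgl(3)$-invariant locus $U=\{\dim J_F=3\}$,
$$\exists\,G,\ (G)\neq(F),\ J_G=J_F\quad\Longleftrightarrow\quad \dim_{\sf k}\mathcal M_F\geq 2 .$$
Since $M\mapsto M\cdot\nabla F$ and the gradient-obstruction are linear in the coefficients of $F$, the space $\mathcal M_F$ is the kernel of a linear map $L_F$ whose matrix is linear in $F$; thus $\dim\mathcal M_F$ is upper semicontinuous and $Pgl(3)$-invariant, and $\mathcal S:=\{F\in U:\dim\mathcal M_F\geq2\}$ is closed and invariant.

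For necessity I compute $\mathcal M_{\phi_0}$ for $\phi_0=X^3+Y^3+Z^3$. Here $\nabla\phi_0=(3X^2,3Y^2,3Z^2)$, and requiring $M\cdot\nabla\phi_0$ to be a gradient forces every off-diagonal entry of $M$ to vanish, so $\mathcal M_{\phi_0}$ is the space of diagonal matrices, of dimension $3\geq2$. Thus $\phi_0\in\mathcal S$, and since $\mathcal S$ is closed and $Pgl(3)$-invariant, $\overline{O(\phi_0)}\cap U\subseteq\mathcal S$, where $O(\phi_0)$ denotes the orbit of $\phi_0$. Therefore any $(F)$ in the closure of the orbit of $\phi_0$ with $\dim J_F=3$ admits the required $G$; the explicit witnesses over the orbit itself are the cubics $g\cdot(aX^3+bY^3+cZ^3)$ with $abc\neq0$, which share the net $g\cdot\langle X^2,Y^2,Z^2\rangle$. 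Boundary points of $\overline{O(\phi_0)}$ where $\dim J_F<3$ are degenerate and can be handled directly.

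For sufficiency, suppose $G\neq F$ as curves with $J_G=J_F=:V$, so $\dim W_V\geq2$ where $W_V=\{\psi:J_\psi\subseteq V\}$. Since $V$ carries more than one cubic up to scalar, Proposition~\ref{4.2prop}(a) forbids $\Gamma(V)$ from being smooth. If $F$ itself is smooth, then Proposition~\ref{4.2prop}(c), read contrapositively (a smooth cubic not isomorphic to $X^3+Y^3+Z^3$ has $\Gamma_{J_\phi}$ smooth), forces $F\cong\phi_0$ and we are done. In general I pass to the pencil $\{sF+tG\}\subseteq W_V$: if some member is a smooth cubic it is $\cong\phi_0$, whence $V\cong\langle X^2,Y^2,Z^2\rangle$ and $F$ lies in $g\cdot\langle X^3,Y^3,Z^3\rangle\subseteq\overline{O(\phi_0)}$; if no member is smooth, then $V$ must be a specialization of $\langle X^2,Y^2,Z^2\rangle$ and $F$ a corresponding degeneration of $\phi_0$.

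The main obstacle is precisely this ``no smooth member'' case: I must rule out that $\mathcal S$ contains an extra $Pgl(3)$-invariant component sitting inside the discriminant, i.e.\ a family of singular nets $V$ with $\dim W_V\geq2$ whose polar cubics escape $\overline{O(\phi_0)}$. The fix is a finite verification rather than a new idea: by Proposition~\ref{6.1prop} and Table~\ref{table1} the nets $V$ with $\Gamma(V)$ singular are among the $14$ special orbits, and the polar nets among them are listed in \S\ref{polarsec}; computing $\dim W_V$ for each shows that $\dim W_V\geq2$ occurs only for $\langle X^2,Y^2,Z^2\rangle$ (orbit \#6d) and for those specializations of it whose polar cubics are degenerations of $aX^3+bY^3+cZ^3$, all of which lie in $\overline{O(\phi_0)}$. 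This closes both inclusions and identifies $\mathcal S$ with $\overline{O(\phi_0)}\cap U$, proving the proposition.
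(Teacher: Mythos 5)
Your proof is correct, and while your treatment of the necessity direction is essentially the paper's argument made more careful, your sufficiency argument follows a genuinely different route. For necessity, the paper notes that the condition ``$\exists G$'' is a closed condition (it asserts the locus is a determinantal hypersurface generalizing the system \eqref{partialeq}) and exhibits $G=2X^3+Y^3+Z^3$ on the orbit itself; your version via $\mathcal M_F=\ker L_F$ and upper semicontinuity of $\dim\ker$ is the same idea, but cleaner: you need only closedness, not the hypersurface claim, and your $M\mapsto M+cI$ device honestly upgrades $J_G\subseteq J_F$ to $J_G=J_F$, a point the paper glosses over. For sufficiency the paper excludes the smooth non-Fermat and the nodal cubics from the locus $\mathcal S$ and then concludes by a dimension count (every $8$-dimensional component of the hypersurface $\mathcal S$ must meet the smooth or the nodal stratum, hence $\mathcal S$ equals the closure of the Fermat orbit); you instead observe that $\dim W_V\ge 2$ forces $\Gamma(V)$ singular by Proposition~\ref{4.2prop}(a), hence $V$ lies among the $14$ special orbits of Table~\ref{table1}, and you reduce to a finite check of the polar nets of \S\ref{polarsec}. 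That check, which you describe but do not carry out, does succeed: the only special orbits with $\dim W_V\ge2$ are \#6d, \#5b and \#4 (for instance $W_{\langle Y^2,XY,Z^2\rangle}=\langle XY^2,Y^3,Z^3\rangle$, and every member with full Jacobian net is a cuspidal cubic), and their full-rank members --- Fermat cubics, cuspidal cubics, and conic-plus-tangent-line cubics --- all lie in the closure of the Fermat orbit by the ${\sf j}$-constant specializations of \S6.5--6.6, while \#8a, \#8b, \#7a, \#6a have $\dim W_V=1$. What your route buys is independence from the paper's single-determinant assertion (which, since the identity matrix is always in the kernel, requires quotienting both source and target of the $9\times9$ system before a determinant even makes sense); what it costs is the case check and the reliance on the specialization table to place the cusp and the tangent configuration in the closure, where the paper's argument needs only the nodal exclusion. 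One caveat you share with the paper: at the triple line $F=X^3$ every $G$ with $J_G=J_F$ is a scalar multiple of $F$, yet $(X^3)$ lies in the closure of the Fermat orbit, so the statement as literally written fails there; your ``handled directly'' remark does not cover this point, but it is harmless for the intended application to $F=X(X^2-YZ)$, where $\dim J_F=3$.
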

\begin{proof}[Proof of Proposition]
In effect, if $(F)=(X^3+Y^3+Z^3),$ there exists $G=2X^3+Y^3+Z^3\neq F$ such that $\langle G'_X,G'_Y,G'_Z\rangle =\langle F'_X,F'_Y,F'_Z\rangle.$ In the reverse direction the Proposition~\ref{4.2prop} shows that if $(F)$ is smooth but not isomorphic to $(X^3+Y^3+Z^3),$ then each cubic $(G)$ such that $\langle G'_X,G'_Y,G'_Z\rangle =\langle F'_X,F'_Y,F'_Z\rangle$ is equal to $(F)$. 
\end{proof}
One verifies a similar result if $(F)$ is an irreducible cubic having a singular point with two distinct tangents.\par
Finally, the set of $(F)$ such that there is $(G)\neq (F)$ for which 
$\langle G'_X,G'_Y,G'_Z\rangle =\langle F'_X,F'_Y,F'_Z\rangle
$ is a closed hypersurface of the space of cubics. In effect, that condition translates itself to the annihilation of a homogeneous polynomial (hypersurface) which is the determinant associated to a system of linear conditions that generalizes Equation \eqref{partialeq} considered in \S 4.1.2. \par
It results from the last property that this hypersurface is exactly the closure of the orbit of $(X^3+Y^3+Z^3)$.\par
In using the preceding Proposition \ref{4prop} to show the Lemma \ref{7.13lem}, it suffices to exhibit an element $F\in R_3$, a cubic comprised of a conic and a secant line, and such that for all $G\in R_3$ the equality
$\langle G'_X,G'_Y,G'_Z\rangle =\langle F'_X,F'_Y,F'_Z\rangle$ implies $\langle G\rangle =\langle F\rangle.$ This is the case for $F=X(X^2-YZ)$.
            $\qquad\qquad\qquad $ Q.E.D.\vskip 0.2cm\par\noindent
            {\underline{Remark}} The ideal  of polynomials in the coefficients of  a general cubic that vanish on the orbit of a cubic comprised of a conic and a secant line (as $F=X(X^2-YZ)$ above), furnishes the example of an invariant ideal under $Pgl(3)$ that does not have a system of generators that are individually invariant (because they cannot be expressed as polynomials in $S_4$ and  $S_6$).\vskip 0.3cm\par\noindent
            2)  {\it A key specialization}:\label{keyspecial} The specialization of nets of conics \begin{tikzpicture}[baseline={([yshift=-.8ex]current bounding box.center)},scale=0.4]
\draw (0,0) circle (0.3);
\draw    (0.7,0.8) .. controls  (0,-0.3) and (1.2,0.2) .. (0.4,-0.8);
\draw (0.7,-0.4) node [anchor=west]{$\lambda$};
\end{tikzpicture} $\rightarrow$ \begin{tikzpicture}[baseline={([yshift=-.8ex]current bounding box.center)},scale=0.2]
\draw  [color={rgb, 255:red, 0; green, 0; blue, 0 }  ][line width=0.75] [line join = round][line cap = round] (0,1.3) .. controls (0.5,1.3) and (0.95,1) .. (1.2,0.75) .. controls (1.4,0.6) and (1.5,0.4) .. (1.7,0.15) ;
\draw  [color={rgb, 255:red, 0; green, 0; blue, 0 }  ][line width=0.75] [line join = round][line cap = round] (0,1.3) .. controls (0.5,1.3) and (0.95,1.6) .. (1.2,1.85) .. controls (1.4,2) and (1.5,2.2) .. (1.7,2.55) ;
\draw (0,1.3) circle (0.4);
\end{tikzpicture}  exists for any $\lambda$. In effect, given $\lambda$ there exists $\mu$ such that for all $b\neq 0$, the plane associated to the space $\langle ZX,YZ-X^2,(Y-bZ)(Y-\mu^2bZ)\rangle$ belongs to the orbit \begin{tikzpicture}[baseline={([yshift=-.8ex]current bounding box.center)},scale=0.4]
\draw (0,0) circle (0.3);
\draw    (0.7,0.8) .. controls  (0,-0.3) and (1.2,0.2) .. (0.4,-0.8);
\draw (0.7,-0.4) node [anchor=west]{$\lambda$};
\end{tikzpicture}. (Compare the conclusions of \S 4.1.1 and of \S 4.1.2 and make for $b\neq 0$ a change of variables transforming the expression given at the end of \S 4.1.1 to the expression given here.)\par
            For $b=0$ the space $\langle ZX,YZ-X^2,(Y-bZ)(X-\mu^2bZ)\rangle$ belongs to the orbit \begin{tikzpicture}[baseline={([yshift=-.8ex]current bounding box.center)},scale=0.2]
\draw  [color={rgb, 255:red, 0; green, 0; blue, 0 }  ][line width=0.75] [line join = round][line cap = round] (0,1.3) .. controls (0.5,1.3) and (0.95,1) .. (1.2,0.75) .. controls (1.4,0.6) and (1.5,0.4) .. (1.7,0.15) ;
\draw  [color={rgb, 255:red, 0; green, 0; blue, 0 }  ][line width=0.75] [line join = round][line cap = round] (0,1.3) .. controls (0.5,1.3) and (0.95,1.6) .. (1.2,1.85) .. controls (1.4,2) and (1.5,2.2) .. (1.7,2.55) ;
\draw (0,1.3) circle (0.4);
\end{tikzpicture} .\vskip 0.3cm \par\noindent
            
3) The specializations of nets of conics \begin{tikzpicture}[baseline={([yshift=-.8ex]current bounding box.center)},scale=0.4]
\draw (0,0) circle (0.3);
\draw    (0.7,0.8) .. controls  (0,-0.3) and (1.2,0.2) .. (0.4,-0.8);
\draw (0.7,-0.4) node [anchor=west]{$\lambda$};
\end{tikzpicture} $\rightarrow$ \begin{tikzpicture}[baseline={([yshift=-.8ex]current bounding box.center)},scale=0.15]
\draw (-0.4,0) -- (3.4,0);
\draw (-0.2,-0.4) -- (1.7,3.4);
\draw (3.2,-0.4) -- (1.3,3.4);
\end{tikzpicture} and \begin{tikzpicture}[baseline={([yshift=-.8ex]current bounding box.center)},scale=0.4]
\draw (0,0) circle (0.3);
\draw    (0.7,0.8) .. controls  (0,-0.3) and (1.2,0.2) .. (0.4,-0.8);
\draw (0.7,-0.4) node [anchor=west]{$\lambda$};
\end{tikzpicture} $\rightarrow$ \begin{tikzpicture}[baseline={([yshift=-.8ex]current bounding box.center)},scale=0.15]
\draw (-0.4,0) -- (3.4,0);
\draw (-0.2,-0.4) -- (1.7,3.4);
\draw (3.2,-0.4) -- (1.3,3.4);
\draw (0,0) circle (0.4);
\draw (3,0) circle (0.4);
\draw (1.5,3) circle (0.4);
\end{tikzpicture}\,\,do not exist. In considering the duality, it suffices to show that the first specialization does not exist. For, if it existed, in the space of cubics $(XYZ)$ would belong to the closure of the smooth cubic $\Gamma_{\pi_\lambda}.$
            However we have
            \begin{proposition}\label{6.16prop}
           In the space of cubics,  $(XYZ)$ belongs to the closure of the orbit of irreducible cubics having a double point with two distinct tangents, and it does not belong to the closure of the orbit of any smooth cubic. \end{proposition}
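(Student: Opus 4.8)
The plan is to follow exactly the pattern used above for a conic together with a secant line (Propositions \ref{2prop}, \ref{3prop} and Lemma \ref{7.13lem}). There are two assertions to prove: that $(XYZ)$ lies in the closure of the eight-dimensional orbit of irreducible nodal cubics, and that it lies in the closure of no orbit of a smooth cubic. I would prove the first by an explicit degeneration, and then deduce the second from the first together with the dichotomy of Theorem \ref{specialthm} and the Jacobian criterion of Proposition \ref{4prop}.

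For the first assertion, consider the family $C_t=(XYZ+t(X^3+Y^3))$ for $t\in{\sf k}$. From $\partial_X C_t=YZ+3tX^2$, $\partial_Y C_t=XZ+3tY^2$, $\partial_Z C_t=XY$ one checks that for $t\neq 0$ the only common zero of the three partials is $(0,0,1)$, where the tangent cone is $XY=0$, a pair of distinct lines; hence each $C_t$ with $t\neq 0$ is an irreducible cubic with a single node, so lies in the unique orbit of nodal cubics, while $C_0=(XYZ)$. Thus $(XYZ)\in\overline{O(\text{nodal})}$. (Conceptually this is automatic, since nodal cubics are dense in the irreducible discriminant hypersurface of singular cubics, which therefore contains every singular cubic, in particular $(XYZ)$; but the explicit family keeps the argument self-contained.)

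For the second assertion I would first note that the orbit of $(XYZ)$ has dimension $6<8$, its stabiliser in $Pgl(3)$ being the torus together with the permutations of the three coordinates, so Theorem \ref{specialthm} applies: either $(XYZ)$ lies in the closure of every ${\sf j}$-constant smooth orbit and of the nodal orbit, or in exactly one of these. By the first assertion it lies in the nodal closure, so it remains only to exclude the first alternative, and for this it suffices to exhibit a single smooth orbit whose closure omits $(XYZ)$. I take the orbit of $X^3+Y^3+Z^3$ and apply Proposition \ref{4prop} with $F=XYZ$: membership in this closure is equivalent to the existence of $G$ with $(G)\neq(XYZ)$ and $\langle G'_X,G'_Y,G'_Z\rangle=\langle F'_X,F'_Y,F'_Z\rangle=\langle YZ,ZX,XY\rangle$. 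Writing each partial of such a $G$ as a combination of $XY,YZ,ZX$ and imposing the closedness relations $\partial_Y G'_X=\partial_X G'_Y$ and its two cyclic companions forces $G'_X=kYZ$, $G'_Y=kZX$, $G'_Z=kXY$, whence $3G=XG'_X+YG'_Y+ZG'_Z=3kXYZ$ and $(G)=(XYZ)$. So no admissible $G$ exists, $(XYZ)\notin\overline{O(X^3+Y^3+Z^3)}$, the first alternative of Theorem \ref{specialthm} is excluded, and therefore $(XYZ)$ lies in the closure of exactly one eight-dimensional orbit, namely the nodal one, and in none of the smooth orbits.

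The only genuine work is the closedness computation; it is short but must be carried out carefully, since it is precisely the coincidence $3G=3kXYZ$ that distinguishes $(XYZ)$, whose Jacobian ideal is rigid, from the Fermat cubic, where $G=2X^3+Y^3+Z^3$ furnishes a second curve with the same Jacobian ideal. The invocation of Theorem \ref{specialthm} must likewise be set up correctly: its whole force is that once $(XYZ)$ is known to lie in \emph{some} dimension-eight closure, a single failure (here, at the Fermat orbit) propagates to exclude all the smooth closures at once.
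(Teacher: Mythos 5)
Your proposal is correct and follows essentially the same route as the paper: the same degeneration $XYZ+t(X^3+Y^3)$ to land $(XYZ)$ in the nodal closure, then the dichotomy of Theorem \ref{specialthm} combined with Proposition \ref{4prop} applied to $F=XYZ$, whose Jacobian net $\langle XY,YZ,ZX\rangle$ has (up to scalar) a unique cubic preimage. The only difference is that you write out the closedness computation that the paper merely asserts, which is a welcome addition.
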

           \begin{enumerate}[a)]
               \item 
               Considering the family $XYZ+\mu(X^3+Y^3)$ whose elements for $\mu\neq 0$ belong to the closure of the orbit of the irreducible cubic having a double point with distinct tangents and whose element corresponding to $\mu=0$ is $(XYZ)$, one obtains the first part of the Proposition.
               \item  By Proposition \ref{3prop} the second part of the Proposition \ref{6.16prop} results from a) and the fact $(XYZ)$ does not belong to the closure of the orbits of $(X^3+Y^3+Z^3)$ (Lemma \ref{7.13lem}). In effect, up to multiplication by a scalar $XYZ$ is the unique element of $R_3$ whose partial derivatives generate the vector space $\langle XY,XZ,YZ\rangle.$
               \end{enumerate}
               \subsubsection{Specializations of singular cubics $\Gamma_\pi$}\label{singsec}
          \begin{transl} The diagrams of Table \ref{table1} (except for the last, dimension-two orbits), are those for the cubics  that are discriminants $\Gamma_\pi$ of the nets $\pi$ of conics: in each case this cubic $\Gamma_\pi$ is the locus of conics in the net $\pi$ that have a linear factor: the figure for the cubic is garnished by circles around points corresponding to a conic that is a  double line. A specialization of nets $A$ to $B$ requires the specialization of the corresponding discriminant cubics; but a specialization of cubics that are discriminants would not necessarily come from a specialization of nets.\footnote{For example \#6a and \#6d have the same discriminant cubic $ABC$ before decoration with double points, but there is no specialization of the corresponding nets.}  We specify here in Table \ref{discrimtable} the cubic discriminants of each of the nets of dimension at least two.  We give an example of the calculation for \#8a \footnote{Recall the usual notation:  \#8a refers to the orbit of dimension 8, and at the left of the table \ref{table1}.}.  We then provide a table of specializations of cubics, Table \ref{cubicfig}. \end{transl}
\vskip 0.2cm\noindent
{\it The discriminant cubic $\Gamma_\pi$ in the plane $\pi$ of the net.}          
If the net is generated by $[f,g,h]$, we let $$G=A f + B g + Ch$$ 
To get a singular curve, we should find the cubic curve $G=\Gamma_\pi$ in the variables $A,B,C$ such that $G_X=G_Y=G_Z=0$ has a nontrivial solution. 
 $ G_X=Af_X+Bg_X+C h_X=0$\\
$G_Y=Af_Y+B g_Y+C h_Y=0$\\
 $G_Z=A f_Z+B g_Z+C h_Z=0$.\\
\vskip -0.4cm
\begin{example}[Discriminant $\Gamma_\pi$ for \#8a]\label{8adiscex}
Here $\pi=\langle f,g,h\rangle =\langle XY,X^2+YZ,Y^2+XZ\rangle.$\\
$G=A XY+B (X^2+YZ) + C (Y^2+XZ)$\\
$G_X=A Y+2B X+C Z=0$\\
$G_Y=A X+B Z+2C Y =0$\\
$G_Z=B Y+C X=0$

$$\det \begin{pmatrix}
2B & A &C\\
A & 2C &B\\
C&B&0
\end{pmatrix}=C(AB-2C^2)-B(2B^2-AC)=2ABC -2(C^3+B^3)$$ which is the equation of a node. 
In Table \ref{discrimtable} we give the equations of the discriminant locus, up to a non-zero constant multiple; the diagram is that of the cubic $\Gamma_\pi$ in Table \ref{table1} with double points indicated.
\end{example}

  \begin{table}[h]\caption{The equations of the discriminant cubics of Table \ref{table1}}\label{discrimtable}
  \begin{center}
\begin{tabular}{|c|c|c|}
$\pi$&cubic&$\Gamma_\pi$\\\hline
\hline
  8a& \begin{tikzpicture}[baseline={([yshift=-.8ex]current bounding box.center)},scale=0.2]

\draw  [line width=0.75] [line join = round][line cap = round] (0,0) .. controls  (-0.5,0.4) and (-1.3,1.1) .. (-1.9,1.5) .. controls (-2.2,1.7) and (-2.8,1.5) .. (-2.85,1.1) .. controls (-2.9,0.9) and (-2.7,0.55) .. (-2.5,0.5) .. controls (-2,0.4) and (-1,0.8) .. (-0.6,0.9) .. controls (-0.5,0.94) and (0.103,1.0695) .. (0.1,1.15) ;
\end{tikzpicture}  & $B^3+C^3-ABC$ \\
\hline 
  8b& \begin{tikzpicture}[baseline={([yshift=-.8ex]current bounding box.center)},scale=0.4]
\draw (0,0) circle (0.3);
\draw    (0.7,0.8) .. controls  (0,-0.3) and (1.2,0.2) .. (0.4,-0.8);
\end{tikzpicture}   & $\lambda^2(A^3+B^3+C^3)-(\lambda^3+4)ABC$\\
  \hline 
  8c& \begin{tikzpicture}[baseline={([yshift=-.8ex]current bounding box.center)},scale=0.3]
\draw  [line width=0.75] [line join = round][line cap = round] (0,0) .. controls  (-0.5,0.4) and (-1.3,1.1) .. (-1.9,1.5) .. controls (-2.2,1.7) and (-2.8,1.5) .. (-2.85,1.1) .. controls (-2.9,0.9) and (-2.7,0.55) .. (-2.5,0.5) .. controls (-2,0.4) and (-1,0.8) .. (-0.6,0.9) .. controls (-0.5,0.94) and (0.103,1.0695) .. (0.1,1.15) ;
\draw (-0.9,0.8) circle (0.3);
\end{tikzpicture} & $B^3+C^3-ABC$\\
\hline
  7a& \begin{tikzpicture}[baseline={([yshift=-.8ex]current bounding box.center)},scale=0.2]
\draw (0,0) circle (1);
\draw (-1.5,0) -- (1.5,0);
\end{tikzpicture}  & $A(A^2-BC)$\\
\hline 
  7b& \begin{tikzpicture}[baseline={([yshift=-.8ex]current bounding box.center)},scale=0.3]
\draw  [color={rgb, 255:red, 0; green, 0; blue, 0 }  ][line width=0.75] [line join = round][line cap = round] (0,1.3) .. controls (0.5,1.3) and (0.95,1) .. (1.2,0.75) .. controls (1.4,0.6) and (1.5,0.4) .. (1.7,0.15) ;
\draw  [color={rgb, 255:red, 0; green, 0; blue, 0 }  ][line width=0.75] [line join = round][line cap = round] (0,1.3) .. controls (0.5,1.3) and (0.95,1.6) .. (1.2,1.85) .. controls (1.4,2) and (1.5,2.2) .. (1.7,2.55) ;
\draw (0,1.3) circle (0.3);
\end{tikzpicture}  & $A^3+B^2C$\\
\hline 
  7c& \begin{tikzpicture}[baseline={([yshift=-.8ex]current bounding box.center)},scale=0.2]
\draw (0,0) circle (1);
\draw (-1.5,0) -- (1.5,0);
\draw (-1,0) circle (0.3);
\draw (1,0) circle (0.3);
\end{tikzpicture}  & $A(A^2-BC)$\\
\hline 
  6a&  \begin{tikzpicture}[baseline={([yshift=-.8ex]current bounding box.center)},scale=0.2]
\draw (-0.4,0) -- (3.4,0);
\draw (-0.2,-0.4) -- (1.7,3.4);
\draw (3.2,-0.4) -- (1.3,3.4);
\end{tikzpicture} & $ABC$\\
\hline 
  6b&  \begin{tikzpicture}[baseline={([yshift=-.8ex]current bounding box.center)},scale=0.2]
\draw (0,0) circle (1);
\draw (-1.5,-1) -- (1.5,-1);
\draw (0,-1) circle(0.3);
\end{tikzpicture} & $B(AB+C^2)$\\
\hline 
  6c& \begin{tikzpicture}[baseline={([yshift=-.8ex]current bounding box.center)},scale=0.2]
\draw (0,0) circle (0.5);
\draw (1,2) -- (1,-2);
\draw (2,0) circle(0.5);
\draw (-1,0.2) -- (3,0.2);
\draw (-1,-0.2) -- (3,-0.2);
\end{tikzpicture}  & $BC^2$\\
\hline 
  6d& \begin{tikzpicture}[baseline={([yshift=-.8ex]current bounding box.center)},scale=0.2]
\draw (-0.4,0) -- (3.4,0);
\draw (-0.2,-0.4) -- (1.7,3.4);
\draw (3.2,-0.4) -- (1.3,3.4);
\draw (0,0) circle (0.3);
\draw (3,0) circle (0.3);
\draw (1.5,3) circle (0.3);
\end{tikzpicture}  & $ABC$\\
\hline 
  5a& \begin{tikzpicture}[baseline={([yshift=-.8ex]current bounding box.center)},scale=0.2]
\draw (1,2) -- (1,-2);
\draw (2,0) circle(0.5);
\draw (-1,0.2) -- (3,0.2);
\draw (-1,-0.2) -- (3,-0.2);
\end{tikzpicture}  & $A^2(A+C)$\\
\hline 
  5b& \begin{tikzpicture}[baseline={([yshift=-.8ex]current bounding box.center)},scale=0.2]
\draw (0,0) circle (0.5);
\draw (0,2) -- (0,-2);
\draw (2,0) circle(0.5);
\draw (-1,0) -- (3,0);
\end{tikzpicture}  & $BC^2$\\
\hline 
  4 & \begin{tikzpicture}[baseline={([yshift=-.8ex]current bounding box.center)},scale=0.3]
\draw (0,0) -- (2,0);
\draw (0,-0.2) -- (2,-0.2);
\draw (0,0.2) -- (2,0.2);
\draw (1,0) circle (0.3);
\end{tikzpicture}  & $A^3$\\
\hline 
\end{tabular}
\end{center}
  \end{table}
              
      \par\noindent{\underline{Remark}} Considerations we have not set out in this section and some easy calculations of dimensions of orbits permits us to present the following Table~\ref{cubicfig} of elementary specializations in the space of cubics. We give some of the specializations\footnote{Added in translation}.
\par
(a) Smooth cubic  ${\sf j}$ fixed, to the cusp.
We use a Weierstrass presentation 
\begin{equation}\label{W1eqn} Y^2Z =X (X - Z) (X - a Z),
\end{equation} 
and introduce new variables $H$ and $K$ in such a way  that :
$Z = b^2 H  , Y =b^{-1}K $. Then Equation \eqref{W1eqn} becomes  $ K^2 H =X (X- b^2H)(X- ab^2 H)$;
When $b =0$ , we get  $K^2 H =X^3$  which is an equation of the cubic with cusp.
\par
(b)  Singular cubic with a node \begin{tikzpicture}[baseline={([yshift=-.8ex]current bounding box.center)},scale=0.2]
\draw  [line width=0.75] [line join = round][line cap = round] (0,0) .. controls  (-0.5,0.4) and (-1.3,1.1) .. (-1.9,1.5) .. controls (-2.2,1.7) and (-2.8,1.5) .. (-2.85,1.1) .. controls (-2.9,0.9) and (-2.7,0.55) .. (-2.5,0.5) .. controls (-2,0.4) and (-1,0.8) .. (-0.6,0.9) .. controls (-0.5,0.94) and (0.103,1.0695) .. (0.1,1.15) ;
\end{tikzpicture}  to the cusp \begin{tikzpicture}[baseline={([yshift=-.8ex]current bounding box.center)},scale=0.2]
\draw  [color={rgb, 255:red, 0; green, 0; blue, 0 }  ][line width=0.75] [line join = round][line cap = round] (0,1.3) .. controls (0.5,1.3) and (0.95,1) .. (1.2,0.75) .. controls (1.4,0.6) and (1.5,0.4) .. (1.7,0.15) ;
\draw  [color={rgb, 255:red, 0; green, 0; blue, 0 }  ][line width=0.75] [line join = round][line cap = round] (0,1.3) .. controls (0.5,1.3) and (0.95,1.6) .. (1.2,1.85) .. controls (1.4,2) and (1.5,2.2) .. (1.7,2.55) ;
\end{tikzpicture}.
We have $X^3+Y(Y+aX)Z$ as an equation for the node; letting $a\rightarrow 0$, one gets $X^3+Y^2Z$, which is an equation of a cusp.
\par
(c) Singular cubic with node to an union of a smooth conic and a transverse line, then to a smooth conic and tangent line.
$ZXY+ aX^3+X^2Y + Y^3$ gives for $a=0$,
$Y( ZX+X^2+Y^2)$. 
Then $X^2+Y^2-Z^2(Y -b Z)$ gives for  $b=1,
(X^2+Y^2-Z^2)(Y - Z) $. 
\par
(d)  Cusp to a union of a smooth conic and a tangent line, then to a triangle.
$aX^3 + YX^2+Y^2Z$  gives when $a=0$,  $Y(X^2+YZ)$. Then
$Y(X^2 +aY^2-Z^2)$ gives for $a=0$, $Y(X^2-Z^2)$.\par
(e) The union of a smooth conic and a tangent line to three concurrent lines:
$(X^2-Y^2-aYZ)Y$ when $a=0$ gives $(X^2-Y^2)Y$.\par
(f) The triangle to the  union of three concurrent lines to the union of a double line and a transverse line, to a triple line are clear.\vskip 0.2cm
      \par The arguments above show there is no ${\sf j}$ constant specialization from a family of smooth conics to the cubic $XYZ$: this and the $\sf j$-constant specialization from smooth cubics to a cusp shows the following. 
 \begin{lemma}\label{nospecialXYZ}
  An algebraic family of cubics isomorphic to the cusp $X^3+Y^2Z$, or to the smooth Fermat cubic $X^3+Y^3+Z^3$ cannot have as limit (specialization) the cubic $XYZ$.
  \end{lemma}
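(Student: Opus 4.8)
\emph{Strategy.} I would not try to obstruct the degeneration $\to (XYZ)$ by any direct invariant computation; instead the plan is to derive the Lemma formally from two facts already established, using transitivity of orbit closures. Write $\overline{O_\psi}$ for the Zariski closure in $\mathbb{P}(R_3)$ of the $Pgl(3)$-orbit of a cubic $\psi$. The two inputs are: (i) by Proposition \ref{6.16prop}, $(XYZ)$ lies in the closure of \emph{no} orbit of a smooth cubic; and (ii) the cuspidal cubic itself \emph{does} lie in such a closure, as witnessed by the explicit $\sf j$-constant family (a) exhibited above.

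\emph{The transitivity mechanism.} Each $\overline{O_\psi}$ is closed and $Pgl(3)$-stable, so whenever a cubic $c$ satisfies $c\in\overline{O_\psi}$ we automatically obtain $O_c\subset\overline{O_\psi}$ and hence $\overline{O_c}\subset\overline{O_\psi}$. Now the family (a), namely $K^2H=X(X-b^2H)(X-ab^2H)$, is for $b\neq 0$ obtained from the fixed smooth Weierstrass cubic $\phi_a$ by the invertible substitution $Z=b^2H,\ Y=b^{-1}K$; thus every member with $b\neq 0$ lies in the single smooth orbit $O_{\phi_a}$, while the limit $b=0$ is $K^2H=X^3$, which is projectively the cusp $X^3+Y^2Z$. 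Hence the cusp lies in $\overline{O_{\phi_a}}$, and by the displayed inclusion $\overline{O_{\mathrm{cusp}}}\subset\overline{O_{\phi_a}}$.

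\emph{Conclusion and main obstacle.} For the Fermat assertion, $X^3+Y^3+Z^3$ is smooth, so Proposition \ref{6.16prop} gives $(XYZ)\notin\overline{O_{X^3+Y^3+Z^3}}$ outright, and no family isomorphic to it can specialize to $(XYZ)$. For the cusp assertion, if $(XYZ)\in\overline{O_{\mathrm{cusp}}}$ held, the inclusion above would force $(XYZ)\in\overline{O_{\phi_a}}$, contradicting Proposition \ref{6.16prop} for the smooth cubic $\phi_a$; hence $(XYZ)\notin\overline{O_{\mathrm{cusp}}}$. The one step demanding care — and the real obstacle — is confirming input (ii): that the members of (a) with $b\neq 0$ all remain within one smooth isomorphism class, so that the cusp is genuinely a limit of smooth cubics and the inclusion $\overline{O_{\mathrm{cusp}}}\subset\overline{O_{\phi}}$ is legitimate. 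Everything concerning $(XYZ)$ itself is already packaged in Proposition \ref{6.16prop} (ultimately in the uniqueness of $XYZ$ as the cubic with gradient span $\langle XY,XZ,YZ\rangle$, via Proposition \ref{4prop}), so no fresh computation with $(XYZ)$ is needed beyond invoking that result.
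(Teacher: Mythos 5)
Your argument is correct and is essentially the paper's own (first) proof: the paper likewise combines Proposition \ref{6.16prop} (that $(XYZ)$ lies in the closure of no smooth-cubic orbit) with the $\sf j$-constant Weierstrass degeneration (a) of a smooth cubic to the cusp, so that $\overline{O_{\mathrm{cusp}}}\subset\overline{O_{\phi}}$ rules out $(XYZ)$ as a limit. The one step you flag as delicate --- that the members of family (a) with $b\neq 0$ all lie in a single smooth orbit --- is indeed fine, since they differ from the fixed Weierstrass cubic by the invertible substitution $Z=b^{2}H$, $Y=b^{-1}K$; the paper's alternative ``second proof'' via Macaulay duality and semicontinuity of the number of ideal generators is an independent route you did not need.
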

  \begin{proof}[Second proof]\footnote{Added in translation.}\label{contraction} In the Macaulay duality (see \cite{Em}), using the contraction pairing from $S={\sf k}[x,y,z]$ to $R={\sf k}[X,Y,Z]$ a graded Gorenstein algebra quotient $A=S/I_F, I_F=\mathrm{Ann} F$ of Hilbert function $H(A)=(1,3,3,1)$ is determined by a degree-three form $F\in R$. For the Fermat cubic and the cusp one has, respectively $I_F=(xy,xy,yz,x^3-y^3, x^3-z^3)$, or $I_F=(xy,xz,z^2,y^3,x^3-y^2z)$, each with five generators. 
  The annihilating ideal for $XYZ$ is the complete intersection $(x^2,y^2,z^2)$.  But the number of generators of an 
  ideal in a finite length $n$ (here $n=8$) Artinian quotient of $S$ is semicontinuous: it is not possible to specialize from an ideal with five generators to a complete intersection. This completes the proof.
  \end{proof}
  \par    
           \begin{table}[hp]
           \begin{center}
           \caption{Specializations of planar cubics}\label{cubicfig}
\vskip 0.3cm
\tikzset{every picture/.style={line width=0.75pt}} 

\begin{tikzpicture}[x=0.75pt,y=0.75pt,yscale=-1,xscale=1]

\draw  [color={rgb, 255:red, 0; green, 0; blue, 0 }  ][line width=0.75] [line join = round][line cap = round] (125.95,53.89) .. controls (116.92,61.36) and (103.34,74.53) .. (93.52,80.18) .. controls (86.96,83.95) and (77.25,81.67) .. (75.98,73.83) .. controls (75.31,69.68) and (78.97,63.71) .. (83,62.95) .. controls (89.87,61.66) and (108.2,67.71) .. (115.43,70.21) .. controls (117.67,70.98) and (127.7,73.29) .. (127.7,74.74) ;
\draw    (100.53,103.92) -- (100.97,138) ;
\draw [shift={(101,140)}, rotate = 269.26] [color={rgb, 255:red, 0; green, 0; blue, 0 }  ][line width=0.75]    (10.93,-3.29) .. controls (6.95,-1.4) and (3.31,-0.3) .. (0,0) .. controls (3.31,0.3) and (6.95,1.4) .. (10.93,3.29)   ;
\draw   (78.11,173.95) .. controls (78.11,162.26) and (87.11,152.79) .. (98.2,152.79) .. controls (109.29,152.79) and (118.28,162.26) .. (118.28,173.95) .. controls (118.28,185.64) and (109.29,195.12) .. (98.2,195.12) .. controls (87.11,195.12) and (78.11,185.64) .. (78.11,173.95) -- cycle ;
\draw    (50,176.7) -- (145.99,177.13) ;
\draw    (94.42,255.61) -- (129.85,313) ;
\draw    (70.79,305.62) -- (136.29,305.62) ;
\draw    (74.01,313) -- (104.61,255.61) ;
\draw   (289.65,69.81) .. controls (289.65,61.86) and (295.88,55.41) .. (303.57,55.41) .. controls (311.26,55.41) and (317.49,61.86) .. (317.49,69.81) .. controls (317.49,77.77) and (311.26,84.21) .. (303.57,84.21) .. controls (295.88,84.21) and (289.65,77.77) .. (289.65,69.81) -- cycle ;
\draw    (323.54,108.7) .. controls (358.61,81.49) and (296.37,55.2) .. (331.43,28) ;
\draw  [color={rgb, 255:red, 0; green, 0; blue, 0 }  ][line width=0.75] [line join = round][line cap = round] (289.78,176.97) .. controls (300.05,176.97) and (310.26,169.91) .. (315.75,165.16) .. controls (318.78,162.54) and (326.3,156.44) .. (326.3,154.48) ;
\draw   (291.32,278.83) .. controls (291.32,266.31) and (301.13,256.17) .. (313.24,256.17) .. controls (325.34,256.17) and (335.15,266.31) .. (335.15,278.83) .. controls (335.15,291.35) and (325.34,301.5) .. (313.24,301.5) .. controls (301.13,301.5) and (291.32,291.35) .. (291.32,278.83) -- cycle ;
\draw    (259.8,302.86) -- (365,301.95) ;
\draw    (129.17,193.72) -- (270.13,247.29) ;
\draw [shift={(272,248)}, rotate = 200.81] [color={rgb, 255:red, 0; green, 0; blue, 0 }  ][line width=0.75]    (10.93,-3.29) .. controls (6.95,-1.4) and (3.31,-0.3) .. (0,0) .. controls (3.31,0.3) and (6.95,1.4) .. (10.93,3.29)   ;
\draw    (201,461) -- (201,507) ;
\draw    (167.52,487.07) -- (236.35,487.07) ;
\draw    (167.62,481.98) -- (235.78,482.05) ;
\draw  [color={rgb, 255:red, 0; green, 0; blue, 0 }  ][line width=0.75] [line join = round][line cap = round] (326.67,196.68) .. controls (321.06,188.59) and (310.26,183.61) .. (303.75,181.36) .. controls (300.16,180.12) and (291.54,176.85) .. (290.09,177.7) ;
\draw    (162.17,386.52) -- (238.81,386.52) ;
\draw    (178.36,362.7) -- (223.98,410) ;
\draw    (223.98,362.7) -- (178.36,410) ;
\draw    (129.7,325) -- (166.31,347.94) ;
\draw [shift={(168,349)}, rotate = 212.07] [color={rgb, 255:red, 0; green, 0; blue, 0 }  ][line width=0.75]    (10.93,-3.29) .. controls (6.95,-1.4) and (3.31,-0.3) .. (0,0) .. controls (3.31,0.3) and (6.95,1.4) .. (10.93,3.29)   ;
\draw    (265,326.25) -- (218.79,349.11) ;
\draw [shift={(217,350)}, rotate = 333.66999999999996] [color={rgb, 255:red, 0; green, 0; blue, 0 }  ][line width=0.75]    (10.93,-3.29) .. controls (6.95,-1.4) and (3.31,-0.3) .. (0,0) .. controls (3.31,0.3) and (6.95,1.4) .. (10.93,3.29)   ;
\draw    (170.62,566) -- (238.78,566.06) ;
\draw    (170.52,575) -- (239.35,575) ;
\draw    (170.62,570.46) -- (238.78,570.52) ;
\draw    (100.53,202.92) -- (100.97,237) ;
\draw [shift={(101,239)}, rotate = 269.26] [color={rgb, 255:red, 0; green, 0; blue, 0 }  ][line width=0.75]    (10.93,-3.29) .. controls (6.95,-1.4) and (3.31,-0.3) .. (0,0) .. controls (3.31,0.3) and (6.95,1.4) .. (10.93,3.29)   ;
\draw    (310.53,202.92) -- (310.97,237) ;
\draw [shift={(311,239)}, rotate = 269.26] [color={rgb, 255:red, 0; green, 0; blue, 0 }  ][line width=0.75]    (10.93,-3.29) .. controls (6.95,-1.4) and (3.31,-0.3) .. (0,0) .. controls (3.31,0.3) and (6.95,1.4) .. (10.93,3.29)   ;
\draw    (309.53,103.92) -- (309.97,138) ;
\draw [shift={(310,140)}, rotate = 269.26] [color={rgb, 255:red, 0; green, 0; blue, 0 }  ][line width=0.75]    (10.93,-3.29) .. controls (6.95,-1.4) and (3.31,-0.3) .. (0,0) .. controls (3.31,0.3) and (6.95,1.4) .. (10.93,3.29)   ;
\draw    (199.53,412.92) -- (199.97,447) ;
\draw [shift={(200,449)}, rotate = 269.26] [color={rgb, 255:red, 0; green, 0; blue, 0 }  ][line width=0.75]    (10.93,-3.29) .. controls (6.95,-1.4) and (3.31,-0.3) .. (0,0) .. controls (3.31,0.3) and (6.95,1.4) .. (10.93,3.29)   ;
\draw    (199.53,515.92) -- (199.97,550) ;
\draw [shift={(200,552)}, rotate = 269.26] [color={rgb, 255:red, 0; green, 0; blue, 0 }  ][line width=0.75]    (10.93,-3.29) .. controls (6.95,-1.4) and (3.31,-0.3) .. (0,0) .. controls (3.31,0.3) and (6.95,1.4) .. (10.93,3.29)   ;
\draw    (129.17,84.72) -- (174.52,101.96) -- (270.13,138.29) ;
\draw [shift={(272,139)}, rotate = 200.81] [color={rgb, 255:red, 0; green, 0; blue, 0 }  ][line width=0.75]    (10.93,-3.29) .. controls (6.95,-1.4) and (3.31,-0.3) .. (0,0) .. controls (3.31,0.3) and (6.95,1.4) .. (10.93,3.29)   ;

\draw (349.19,66.85) node [anchor=north west][inner sep=0.75pt]    {$j$};
\draw (454,64.4) node [anchor=north west][inner sep=0.75pt]    {$8$};
\draw (453,166.4) node [anchor=north west][inner sep=0.75pt]    {$7$};
\draw (454,283.4) node [anchor=north west][inner sep=0.75pt]    {$6$};
\draw (455,380) node [anchor=north west][inner sep=0.75pt]   [align=left] {5};
\draw (455,471) node [anchor=north west][inner sep=0.75pt]   [align=left] {4};
\draw (457,561.4) node [anchor=north west][inner sep=0.75pt]    {$2$};
\draw (405,22) node [anchor=north west][inner sep=0.75pt]   [align=left] {Dimension of the orbit};

\end{tikzpicture}
           \end{center}
           \end{table}
           \vskip 0.3cm
           \newpage
           \subsubsection{Polar nets $J_\phi$ of singular cubics $\phi$}\label{polarsec}
\begin{transl}\label{4.5trlnote} (See \cite{Co}\footnote{{\it Note of translators}: we have added this section as a clarification.}).We give the list of those nets that are polars - Jacobian planes - of cubics. We use the notation for classes of nets in Table \ref{table1} where the number is the dimension of the orbit, and the letter is position from the left of the Table. The first class is the polars studied above of the family of smooth cubics \#8b not isomorphic to $X^3+Y^3+Z^3$.\par
There are six discrete classes, the polar nets $\pi=J_\phi$ of
\begin{enumerate}
\item A cubic having a double point with two distinct tangents,  \#8a, $X^3+Y^3+Z^3+3XYZ$.
\item A cubic comprised of a smooth conic and a transversal line,  \#7a, $X^3+3XYZ$.
\item  A cubic decomposed into three non-concurrent lines,  \#6a,
$XYZ$.
\item The cubic for  \#6d, $X^3+Y^3+Z^3$.
\item The cubic for \#5b, $XY^2+Z^3$, a cusp.
\item  A smooth conic union a tangent line   \#4, $X^2Y+Y^2Z$.
\end{enumerate}
Of the nine discrete classes of cubics (Table \ref{cubicfig}) two have polars that are pencils, and one a unique conic. See also Appendix B, and \cite[p.1568]{Co}.  Note that a specialization of a cubic  $F_A$ to $F_B$ here does imply a specialization $A$ to $B$ of the related polar nets. 
\end{transl}
\begin{landscape}
\begin{table}[h]\caption{Polar nets}\label{polartable}
\begin{center}
\begin{tabular}{|c|c|c|c|c|}
\hline
Case      & $F$ & Jacobian $(F_X,F_Y,F_Z)$& $\det Hess(F)$ & $\Gamma_\pi$\\
\hline
8a      & $X^3+Y^3+3XYZ$ &$(X^2+YZ,Y^2+XZ,XY)$ &$X^3+Y^3-XYZ$ & $B^3+C^3-ABC$ \\
\hline
8b& $X^3+Y^3+Z^3+$&$(X^2+\lambda YZ,Y^2+\lambda XZ,$&$X^3+Y^3+Z^3$&$\lambda^2(A^3+B^3+C^3)$\\
& $3\lambda XYZ$&$Z^2+\lambda XY)$&$-5\lambda XYZ$&$-(\lambda^3+4) ABC$\\

\hline
7a & $X^3+3XYZ$&$(X^2+YZ,XZ,YZ)$&$3X^3-XYZ$ & $A^3-ABC$\\
\hline 
6a&$XYZ$&$(YZ,XZ,XY)$&$XYZ$&$ABC$\\
\hline 
6d&$X^3+Y^3+Z^3$&$(X^2,Y^2,Z^2)$&$XYZ $&$ABC$\\
\hline 
5b&$XY^2+Z^3$&$(Y^2,XY,Z^2)$&$Y^2Z$&$BC^2$ \\
\hline 
4&$X^2Y+Y^2Z$&$(XY,X^2+2YZ,Y^2)$&$Y^3$&$A^3$ \\
\hline 
\end{tabular}
\end{center}
\end{table}
\vskip 0.4cm
In Table \ref{polartable} we list $F$, the Jacobian or polar net. The Hessian of $F$ is related to but not the same as the $\Gamma_\pi$ given in the right column. See also A. Conca \cite[p. 1568]{Co}.
\end{landscape}

           \clearpage
           \subsection{Existence of specializations of nets of conics}\label{specialnetsec}\par          
          An elementary specialization will be one from an orbit of dimension $c$ to one of dimension $c-1$. In this paragraph, we will show 
           \begin{proposition}\label{elemprop} All the elementary specializations in Table \ref{table1} in fact do occur.
           \end{proposition}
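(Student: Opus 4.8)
The plan is to prove each elementary specialization $A\to B$ displayed in Table~\ref{table1} by exhibiting an explicit algebraic one-parameter family $V_t\in\Grass(3,R_2)$, defined for $t$ in a neighborhood of $0$, whose generic member $V_t$ ($t\neq 0$) lies in the orbit $A$ and whose special member $V_0$ lies in the orbit $B$. Since such a family traces a curve in $\Grass(3,R_2)$ with $V_0$ in its closure, this at once gives $V_0\in\overline A$, hence $B\subset\overline A$, so that $(A,B)$ is a genuine specialization. This is exactly the device already used for pencils in Table~\ref{table3}, and the verification in each case is the same: one reads off the type triple $(\delta_\pi\subset\Gamma_\pi\subset\pi)$ of both the generic and the special member and matches it, via Theorem~\ref{mainthm}B, against the type recorded in Table~\ref{table1}.

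Before listing families I would cut down the work with two reductions. First, the duality involution of Proposition~\ref{dualprop}, together with the fact that duality preserves specializations, shows that once $(A,B)$ is realized so is $(A^\perp,B^\perp)$; hence it suffices to treat one representative of each dual pair (and each self-dual orbit), roughly halving the number of arrows. Second, every arrow emanating from the smooth one-parameter family \#8b held at constant $\sf j$ is already supplied by \S\ref{specjsec}: the key specialization \#8b $\to$ \#7b is realized by $\langle ZX,\ YZ-X^2,\ (Y-bZ)(Y-\mu^2bZ)\rangle$ (in \#8b for $b\neq 0$, in the cusp orbit \#7b for $b=0$), and, combined with Theorem~\ref{specialthm} and the chain of specializations of \#7b down to \#6b, \#6c, \#5a, \#5b, \#4, \#2a, \#2b, this disposes of all the $\sf j$-constant arrows simultaneously.

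For the remaining arrows among the decomposable orbits I would exhibit explicit families directly, lifting to nets the cubic degenerations recorded in Table~\ref{cubicfig} but tracking the extra datum $\delta_\pi=\pi\cap D_2$. For instance \#8a $\to$ \#7a comes from degenerating the nodal discriminant $\Gamma_\pi$ to a smooth conic union a secant line (cf.\ the cubic degeneration $ZXY+aX^3+X^2Y+Y^3\to Y(ZX+X^2+Y^2)$ at $a=0$) in a family of nets whose generic $\delta_\pi$ stays empty; the further arrows in dimensions $7$ through $2$ are each realized by a one-line family of the shape $V_t=\langle \ldots,\ \ldots,\ (\text{degree-two form depending linearly on }t)\rangle$, as was done case by case for pencils. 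For each family I would confirm membership by computing $\delta_{\pi_t}$, $\Gamma_{\pi_t}$ and $\pi_t$ at generic $t$ and at $t=0$, using the inequality $\ell(B)\ge\ell(A)$ and the $D_2$-persistence of the semicontinuity Proposition of \S\ref{specsec} as a consistency check that no family has inadvertently fallen into a deeper orbit.

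The main obstacle is precisely the $\sf j$-constant smooth family: a naive explicit deformation of $\langle X^2+\lambda YZ,\ Y^2+\lambda XZ,\ Z^2+\lambda XY\rangle$ moves the $\sf j$-invariant, so one cannot merely write down a family and read off its limit. The resolution is the invariant-theoretic pencil argument of \S\ref{specjsec} (Theorem~\ref{specialthm}), showing that each orbit of dimension $<8$ lies either in \emph{every} $\sf j$-constant closure or in exactly one, combined with the specific construction above that pins down which limit actually occurs. A secondary subtlety is ensuring each exhibited special fiber lands in precisely the claimed $B$ rather than a further degeneration; here the fact that an elementary specialization lowers dimension by exactly one, together with the type computation, rules out over-degeneration. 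The complementary non-existence of the spurious arrows having already been settled in \S\ref{specsec}, the existence families above complete the determination of the order of genericity in Table~\ref{table1}.
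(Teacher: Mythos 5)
Your proposal follows essentially the same route as the paper's \S\ref{specialnetsec}: exhibit an explicit one-parameter family of nets for each remaining elementary arrow, cut the list in half by the duality of Proposition~\ref{dualprop}, and dispose of the $\sf j$-constant arrows via the key specialization and invariant-theoretic argument of \S\ref{specjsec}. The paper simply carries out the case-by-case list of families (e.g.\ $(XY,\,ZY-\mu X^2,\,ZX-Y^2)$ for \#8a $\to$ \#7a) that your plan describes, with the same verification by reading off the type of the generic and special fibers.
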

   
           We show this by enumerating the remaining elementary specializations that we have not already considered.
           \begin{enumerate}[1)]
               \item Specializations  $(A,B)$ where $\dim A=8, \dim B=7.$
               \begin{enumerate}[a)]
               \item \begin{tikzpicture}[baseline={([yshift=-.8ex]current bounding box.center)},scale=0.2]
\draw  [line width=0.75] [line join = round][line cap = round] (0,0) .. controls  (-0.5,0.4) and (-1.3,1.1) .. (-1.9,1.5) .. controls (-2.2,1.7) and (-2.8,1.5) .. (-2.85,1.1) .. controls (-2.9,0.9) and (-2.7,0.55) .. (-2.5,0.5) .. controls (-2,0.4) and (-1,0.8) .. (-0.6,0.9) .. controls (-0.5,0.94) and (0.103,1.0695) .. (0.1,1.15) ;
\end{tikzpicture} $\rightarrow$ \begin{tikzpicture}[baseline={([yshift=-.8ex]current bounding box.center)},scale=0.2]
\draw (0,0) circle (1);
\draw (-1.5,0) -- (1.5,0);
\end{tikzpicture}. One considers the \#8a family $(XY,ZY-\mu X^2,ZX-Y^2)$ whose element for $\mu\neq 0$ belongs to \begin{tikzpicture}[baseline={([yshift=-.8ex]current bounding box.center)},scale=0.2]
\draw  [line width=0.75] [line join = round][line cap = round] (0,0) .. controls  (-0.5,0.4) and (-1.3,1.1) .. (-1.9,1.5) .. controls (-2.2,1.7) and (-2.8,1.5) .. (-2.85,1.1) .. controls (-2.9,0.9) and (-2.7,0.55) .. (-2.5,0.5) .. controls (-2,0.4) and (-1,0.8) .. (-0.6,0.9) .. controls (-0.5,0.94) and (0.103,1.0695) .. (0.1,1.15) ;
\end{tikzpicture}  and for $\mu = 0$ belongs to \begin{tikzpicture}[baseline={([yshift=-.8ex]current bounding box.center)},scale=0.2]
\draw (0,0) circle (1);
\draw (-1.5,0) -- (1.5,0);
\end{tikzpicture}.
              \item By duality, one has \begin{tikzpicture}[baseline={([yshift=-.8ex]current bounding box.center)},scale=0.2]
\draw  [line width=0.75] [line join = round][line cap = round] (0,0) .. controls  (-0.5,0.4) and (-1.3,1.1) .. (-1.9,1.5) .. controls (-2.2,1.7) and (-2.8,1.5) .. (-2.85,1.1) .. controls (-2.9,0.9) and (-2.7,0.55) .. (-2.5,0.5) .. controls (-2,0.4) and (-1,0.8) .. (-0.6,0.9) .. controls (-0.5,0.94) and (0.103,1.0695) .. (0.1,1.15) ;
\draw (-0.9,0.8) circle (0.3);
\end{tikzpicture} $\rightarrow$ \begin{tikzpicture}[baseline={([yshift=-.8ex]current bounding box.center)},scale=0.2]
\draw (0,0) circle (1);
\draw (-1.5,0) -- (1.5,0);
\draw (-1,0) circle (0.3);
\draw (1,0) circle (0.3);
\end{tikzpicture};
\item \begin{tikzpicture}[baseline={([yshift=-.8ex]current bounding box.center)},scale=0.4]
\draw (0,0) circle (0.4);
\draw    (0.7,0.8) .. controls  (0,-0.3) and (1.2,0.2) .. (0.4,-0.8);
\draw (0.7,-0.4) node [anchor=west]{$\lambda$};
\end{tikzpicture} $\rightarrow$ \begin{tikzpicture}[baseline={([yshift=-.8ex]current bounding box.center)},scale=0.2]
\draw  [color={rgb, 255:red, 0; green, 0; blue, 0 }  ][line width=0.75] [line join = round][line cap = round] (0,1.3) .. controls (0.5,1.3) and (0.95,1) .. (1.2,0.75) .. controls (1.4,0.6) and (1.5,0.4) .. (1.7,0.15) ;
\draw  [color={rgb, 255:red, 0; green, 0; blue, 0 }  ][line width=0.75] [line join = round][line cap = round] (0,1.3) .. controls (0.5,1.3) and (0.95,1.6) .. (1.2,1.85) .. controls (1.4,2) and (1.5,2.2) .. (1.7,2.55) ;
\draw (0,1.3) circle (0.4);
\end{tikzpicture} has already been verified in \S 6.5 (2) page \pageref{keyspecial}.

\end{enumerate}

\item Specializations $(A,B)$ with $\dim A=7, \dim B=6$.

        \par All the specializations envisioned exist. Using the duality, it suffices to verify that the following specializations exist:
          \par
         \begin{equation*} \begin{tikzpicture}[baseline={([yshift=-.8ex]current bounding box.center)},scale=0.3]
\draw (0,0) circle (1);
\draw (-1.5,0) -- (1.5,0);
\end{tikzpicture} \rightarrow \begin{tikzpicture}[baseline={([yshift=-.8ex]current bounding box.center)},scale=0.2]
\draw (-0.4,0) -- (3.4,0);
\draw (-0.2,-0.4) -- (1.7,3.4);
\draw (3.2,-0.4) -- (1.3,3.4);
\end{tikzpicture} 
\hspace{1cm}  (XY,ZY, ZX-\mu Y^2)\in \begin{cases} \begin{tikzpicture}[baseline={([yshift=-.8ex]current bounding box.center)},scale=0.15]
\draw (0,0) circle (1);
\draw (-1.5,0) -- (1.5,0);
\end{tikzpicture} &\text { if }\mu\not=0\\
          \begin{tikzpicture}[baseline={([yshift=-.8ex]current bounding box.center)},scale=0.1]
\draw (-0.4,0) -- (3.4,0);
\draw (-0.2,-0.4) -- (1.7,3.4);
\draw (3.2,-0.4) -- (1.3,3.4);
\end{tikzpicture} &\text { if } \mu=0
          \end{cases}
     \end{equation*}
     
     \begin{equation*}
     \begin{tikzpicture}[baseline={([yshift=-.8ex]current bounding box.center)},scale=0.3]
\draw (0,0) circle (1);
\draw (-1.5,0) -- (1.5,0);
\end{tikzpicture} \rightarrow \begin{tikzpicture}[baseline={([yshift=-.8ex]current bounding box.center)},scale=0.3]
\draw (0,0) circle (1);
\draw (-1.5,-1) -- (1.5,-1);
\draw (0,-1) circle (0.3);
\end{tikzpicture} \hspace{1cm}
       (X(X-Z),Y(Y-\mu X), YZ)\in 
         \begin{cases} \begin{tikzpicture}[baseline={([yshift=-.8ex]current bounding box.center)},scale=0.15]
\draw (0,0) circle (1);
\draw (-1.5,0) -- (1.5,0);
\end{tikzpicture} &\text { if }\mu\not=0\\
          \begin{tikzpicture}[baseline={([yshift=-.8ex]current bounding box.center)},scale=0.15]
\draw (0,0) circle (1);
\draw (-1.5,-1) -- (1.5,-1);
\draw (0,-1) circle (0.3);
\end{tikzpicture}  &\text { if } \mu=0
          \end{cases}
     \end{equation*}
     
      \begin{equation*}
      \begin{tikzpicture}[baseline={([yshift=-.8ex]current bounding box.center)},scale=0.3]
\draw  [color={rgb, 255:red, 0; green, 0; blue, 0 }  ][line width=0.75] [line join = round][line cap = round] (0,1.3) .. controls (0.5,1.3) and (0.95,1) .. (1.2,0.75) .. controls (1.4,0.6) and (1.5,0.4) .. (1.7,0.15) ;
\draw  [color={rgb, 255:red, 0; green, 0; blue, 0 }  ][line width=0.75] [line join = round][line cap = round] (0,1.3) .. controls (0.5,1.3) and (0.95,1.6) .. (1.2,1.85) .. controls (1.4,2) and (1.5,2.2) .. (1.7,2.55) ;
\draw (0,1.3) circle (0.3);
\end{tikzpicture} \rightarrow \begin{tikzpicture}[baseline={([yshift=-.8ex]current bounding box.center)},scale=0.3]
\draw (0,0) circle (1);
\draw (-1.5,-1) -- (1.5,-1);
\draw (0,-1) circle (0.3);
\end{tikzpicture} \hspace{1cm}
 (XY,Z^2,(Y+X)Z-\mu X^2)\in 
        \begin{cases}  \begin{tikzpicture}[baseline={([yshift=-.8ex]current bounding box.center)},scale=0.2]
\draw  [color={rgb, 255:red, 0; green, 0; blue, 0 }  ][line width=0.75] [line join = round][line cap = round] (0,1.3) .. controls (0.5,1.3) and (0.95,1) .. (1.2,0.75) .. controls (1.4,0.6) and (1.5,0.4) .. (1.7,0.15) ;
\draw  [color={rgb, 255:red, 0; green, 0; blue, 0 }  ][line width=0.75] [line join = round][line cap = round] (0,1.3) .. controls (0.5,1.3) and (0.95,1.6) .. (1.2,1.85) .. controls (1.4,2) and (1.5,2.2) .. (1.7,2.55) ;
\draw (0,1.3) circle (0.3);
\end{tikzpicture}  &\text { if }\mu\not=0\\
     \begin{tikzpicture}[baseline={([yshift=-.8ex]current bounding box.center)},scale=0.15]
\draw (0,0) circle (1);
\draw (-1.5,-1) -- (1.5,-1);
\draw (0,-1) circle (0.3);
\end{tikzpicture} &\text { if } \mu=0
          \end{cases}
     \end{equation*}
     
     \begin{equation*}
     \begin{tikzpicture}[baseline={([yshift=-.8ex]current bounding box.center)},scale=0.3]
\draw (0,0) circle (1);
\draw (-1.5,0) -- (1.5,0);
\draw (-1,0) circle (0.3);
\draw (1,0) circle (0.3);
\end{tikzpicture} \rightarrow \begin{tikzpicture}[baseline={([yshift=-.8ex]current bounding box.center)},scale=0.3]
\draw (0,0) circle (1);
\draw (-1.5,-1) -- (1.5,-1);
\draw (0,-1) circle (0.3);
\end{tikzpicture} \hspace{1cm}
       (X(X-Z),(Y-\mu X)^2, YZ)\in 
         \begin{cases} \begin{tikzpicture}[baseline={([yshift=-.8ex]current bounding box.center)},scale=0.15]
\draw (0,0) circle (1);
\draw (-1.5,0) -- (1.5,0);
\draw (-1,0) circle (0.3);
\draw (1,0) circle (0.3);
\end{tikzpicture} &\text { if }\mu\not=0\\
     \begin{tikzpicture}[baseline={([yshift=-.8ex]current bounding box.center)},scale=0.15]
\draw (0,0) circle (1);
\draw (-1.5,-1) -- (1.5,-1);
\draw (0,-1) circle (0.3);
\end{tikzpicture}  &\text { if } \mu=0
          \end{cases}
     \end{equation*}
     
     \item Specializations $(A,B)$ with $\dim A=6, \dim B=5$.
     All the specializations envisioned exist, and the demonstration reduces, by duality, to proving the existence of the following specializations.
     
      \begin{equation*} \begin{tikzpicture}[baseline={([yshift=-.8ex]current bounding box.center)},scale=0.2]
\draw (-0.4,0) -- (3.4,0);
\draw (-0.2,-0.4) -- (1.7,3.4);
\draw (3.2,-0.4) -- (1.3,3.4);
\end{tikzpicture} \rightarrow \begin{tikzpicture}[baseline={([yshift=-.8ex]current bounding box.center)},scale=0.3]
\draw (1,2) -- (1,-2);
\draw (2,0) circle(0.5);
\draw (-1,0.2) -- (3,0.2);
\draw (-1,-0.2) -- (3,-0.2);
\end{tikzpicture} \hspace{1cm}(X^2-aXZ,XY,YZ)\in \begin{cases} \begin{tikzpicture}[baseline={([yshift=-.8ex]current bounding box.center)},scale=0.15]
\draw (-0.4,0) -- (3.4,0);
\draw (-0.2,-0.4) -- (1.7,3.4);
\draw (3.2,-0.4) -- (1.3,3.4);
\end{tikzpicture} &\text { if }a\not=0\\
          \begin{tikzpicture}[baseline={([yshift=-.8ex]current bounding box.center)},scale=0.2]
\draw (1,2) -- (1,-2);
\draw (2,0) circle(0.5);
\draw (-1,0.2) -- (3,0.2);
\draw (-1,-0.2) -- (3,-0.2);
\end{tikzpicture}&\text { if } a=0
          \end{cases}
     \end{equation*}
     
     \begin{equation*}
     \begin{tikzpicture}[baseline={([yshift=-.8ex]current bounding box.center)},scale=0.3]
\draw (0,0) circle (1);
\draw (-1.5,-1) -- (1.5,-1);
\draw (0,-1) circle(0.3);
\end{tikzpicture} \rightarrow \begin{tikzpicture}[baseline={([yshift=-.8ex]current bounding box.center)},scale=0.3]
\draw (1,2) -- (1,-2);
\draw (2,0) circle(0.5);
\draw (-1,0.2) -- (3,0.2);
\draw (-1,-0.2) -- (3,-0.2);
\end{tikzpicture} \hspace{1cm}
       (Y^2,(X-aZ))(aX-Z),YZ)\in 
         \begin{cases} \begin{tikzpicture}[baseline={([yshift=-.8ex]current bounding box.center)},scale=0.2]
\draw (0,0) circle (1);
\draw (-1.5,-1) -- (1.5,-1);
\draw (0,-1) circle(0.3);
\end{tikzpicture}&\text { if }a\not=0\\
         \begin{tikzpicture}[baseline={([yshift=-.8ex]current bounding box.center)},scale=0.2]
\draw (1,2) -- (1,-2);
\draw (2,0) circle(0.5);
\draw (-1,0.2) -- (3,0.2);
\draw (-1,-0.2) -- (3,-0.2);
\end{tikzpicture} &\text { if } a=0
          \end{cases}
     \end{equation*}
     
      \begin{equation*}
      \begin{tikzpicture}[baseline={([yshift=-.8ex]current bounding box.center)},scale=0.3]
\draw (0,0) circle (0.5);
\draw (1,2) -- (1,-2);
\draw (2,0) circle(0.5);
\draw (-1,0.2) -- (3,0.2);
\draw (-1,-0.2) -- (3,-0.2);
\end{tikzpicture} \rightarrow \begin{tikzpicture}[baseline={([yshift=-.8ex]current bounding box.center)},scale=0.3]
\draw (1,2) -- (1,-2);
\draw (2,0) circle(0.5);
\draw (-1,0.2) -- (3,0.2);
\draw (-1,-0.2) -- (3,-0.2);
\end{tikzpicture} \hspace{1cm}
 (X^2,Y(X-aY),YZ)\in 
        \begin{cases}       \begin{tikzpicture}[baseline={([yshift=-.8ex]current bounding box.center)},scale=0.2]
\draw (0,0) circle (0.5);
\draw (1,2) -- (1,-2);
\draw (2,0) circle(0.5);
\draw (-1,0.2) -- (3,0.2);
\draw (-1,-0.2) -- (3,-0.2);
\end{tikzpicture} &\text { if }a\not=0\\
     \begin{tikzpicture}[baseline={([yshift=-.8ex]current bounding box.center)},scale=0.2]
\draw (1,2) -- (1,-2);
\draw (2,0) circle(0.5);
\draw (-1,0.2) -- (3,0.2);
\draw (-1,-0.2) -- (3,-0.2);
\end{tikzpicture} &\text { if } a=0
          \end{cases}
     \end{equation*}
     \item Specializations $(A,B)$ with $\dim A=5,$ $\dim B=4.$\par
     All the specializations envisioned exist. Using the duality, it suffices to show that the following example of specialization.\par
     
      \begin{equation*}
      \begin{tikzpicture}[baseline={([yshift=-.8ex]current bounding box.center)},scale=0.3]
\draw (1,2) -- (1,-2);
\draw (2,0) circle(0.5);
\draw (-1,0.2) -- (3,0.2);
\draw (-1,-0.2) -- (3,-0.2);
\end{tikzpicture} \rightarrow \begin{tikzpicture}[baseline={([yshift=-.8ex]current bounding box.center)},scale=0.4]
\draw (0,0) -- (3,0);
\draw (0,-0.2) -- (3,-0.2);
\draw (0,0.2) -- (3,0.2);
\draw (1.5,0) circle (0.5);
\end{tikzpicture} \hspace{1cm}
 (YZ-X^2,(Y-aX)X,(Y-aX)Y)= 
        \begin{cases} \begin{tikzpicture}[baseline={([yshift=-.8ex]current bounding box.center)},scale=0.2]
\draw (1,2) -- (1,-2);
\draw (2,0) circle(0.5);
\draw (-1,0.2) -- (3,0.2);
\draw (-1,-0.2) -- (3,-0.2);
\end{tikzpicture} &\text { if }a\not=0\\
     \begin{tikzpicture}[baseline={([yshift=-.8ex]current bounding box.center)},scale=0.27]
\draw (0,0) -- (3,0);
\draw (0,-0.2) -- (3,-0.2);
\draw (0,0.2) -- (3,0.2);
\draw (1.5,0) circle (0.5);
\end{tikzpicture} &\text { if } a=0
          \end{cases}
     \end{equation*}
     
 \item    Specializations $(A,B)$ with $\dim A=4,$ $\dim B=2.$\par
     All the specializations envisioned exist. Using the duality, it suffices to show that the following specialization exists.\par
     
      \begin{equation*}
       \begin{tikzpicture}[baseline={([yshift=-.8ex]current bounding box.center)},scale=0.4]
\draw (0,0) -- (3,0);
\draw (0,-0.2) -- (3,-0.2);
\draw (0,0.2) -- (3,0.2);
\draw (1.5,0) circle (0.5);
\end{tikzpicture} \rightarrow \begin{tikzpicture}[baseline={([yshift=-.8ex]current bounding box.center)},scale=0.4]
\draw (0,0) circle (0.5);
\draw[color=gray] (-0.5,-1) -- (0.5,1);
\draw[color=gray] (-0.3,-1.1) -- (0.7,0.9);
\draw[color=gray] (-0.1,-1.2) -- (0.9,0.8);
\draw[color=gray] (0.1,-1.3) -- (1.1,0.7);
\draw[color=gray] (-0.7,-0.9) -- (0.3,1.1);
\draw[color=gray] (-0.9,-0.8) -- (0.1,1.2);
\draw[color=gray] (-1.1,-0.7) -- (-0.1,1.3);
\end{tikzpicture} \hspace{1cm}
 (Y^2,YZ+aX^2,XY)= 
        \begin{cases} \begin{tikzpicture}[baseline={([yshift=-.8ex]current bounding box.center)},scale=0.27]
\draw (0,0) -- (3,0);
\draw (0,-0.2) -- (3,-0.2);
\draw (0,0.2) -- (3,0.2);
\draw (1.5,0) circle (0.5);
\end{tikzpicture}&\text { if }a\not=0\\
     \begin{tikzpicture}[baseline={([yshift=-.8ex]current bounding box.center)},scale=0.3]
\draw (0,0) circle (0.5);
\draw[color=gray] (-0.5,-1) -- (0.5,1);
\draw[color=gray] (-0.3,-1.1) -- (0.7,0.9);
\draw[color=gray] (-0.1,-1.2) -- (0.9,0.8);
\draw[color=gray] (0.1,-1.3) -- (1.1,0.7);
\draw[color=gray] (-0.7,-0.9) -- (0.3,1.1);
\draw[color=gray] (-0.9,-0.8) -- (0.1,1.2);
\draw[color=gray] (-1.1,-0.7) -- (-0.1,1.3);
\end{tikzpicture}  &\text { if } a=0
          \end{cases}
     \end{equation*}
     \end{enumerate}
\vskip 0.2cm\par
All the other specializations that may be envisioned are the products of the elementary specializations whose existence has been proven. Thus, we have the list of all the elementary specializations that occur.

\section{Deformations}\label{deformsec}
    
\underline{Introduction} The graded Artinian local algebras of Hilbert function $H(A)=(1,3,3)$ have been implicitly classified in Theorem \ref{1thm} (see the Introduction~\S\ref{introsec}). We here show
\begin{theorem}\label{smooththm} \begin{enumerate}[(1)]
\item (\S 7.1)
Artinian local algebras $A$ of Hilbert function $H(A)= (1,3,3)$ are smoothable. That is to say, $A$ can be deformed to a smooth algebra of dimension 7 (isomorphic to the product algebra ${\sf k}^7$).
\item (\S 7.2, Proposition \ref{1r2smoothprop})
Each Artinian algebra of Hilbert function $H(A)=(1,r,2)$ is smoothable.\footnote{Our convention is that a deformation of an algebra $A(0)$ to $B$ will be a flat family $\mathcal A\to T$ over a parameter variety $T$, such that the special fiber $\mathcal A(t_0)$ over the point $t_0$ of $T$ is $A(0)$ and the general fiber $\mathcal A(t), t\neq t_0$ is isomorphic to $B$.} 
\end{enumerate}
\end{theorem}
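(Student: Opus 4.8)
The plan is to prove both parts by the same mechanism: smoothability is a \emph{closed} condition, so it is inherited under specialization, and it therefore suffices to smooth the generic member of each family. For part (1), I would first note that sending a net $V$ to the punctual scheme $\mathrm{Spec}\bigl({\sf k}[X,Y,Z]/(V+\mathfrak{m}^3)\bigr)$ defines a morphism from the irreducible variety $\Grass(3,R_2)$ to the Hilbert scheme $\Hilb^7$ of length-$7$ subschemes of affine space, and that the smoothable locus there — the closure of the open set of reduced length-$7$ schemes — is closed. Hence the preimage of the smoothable locus is a closed, $Pgl(3)$-invariant subset of $\Grass(3,R_2)$; if it contains a dense subset, it is everything. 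By Theorem \ref{mainthm} and the orbit closures recorded in Table \ref{table1}, the nets with $\Gamma_\pi$ smooth form a dense open subset (the family $J_\phi$ alone sweeps out dimension $8+1=9$) and every remaining orbit lies in its closure. So the whole of part (1) reduces to smoothing the algebra of one generic net for each admissible $\sf j$.

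For such a net, Proposition \ref{4.2prop} gives $V=J_\phi=\langle \phi_X,\phi_Y,\phi_Z\rangle$ with $\phi$ a smooth cubic, and the three partials form a regular sequence; thus $B={\sf k}[X,Y,Z]/(\phi_X,\phi_Y,\phi_Z)$ is a graded complete intersection of length $8$ with Hilbert function $(1,3,3,1)$, while the algebra of the net is $A=B/\mathrm{soc}(B)$, of Hilbert function $(1,3,3)$. I would then smooth $A$ by exhibiting an explicit flat one-parameter family: deform the three quadrics by constants, $\phi_{X_i}\rightsquigarrow \phi_{X_i}-t^2a_i$, which smooths $B$ to $8$ general points, and track the colength-one subscheme cut out by the (deformed) socle cubic so that the total family has constant length $7$ and general fibre equal to $7$ reduced points. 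The computational heart, and the main obstacle in this part, is verifying flatness, i.e. that the length stays exactly $7$ for all $t$; alternatively one may simply invoke the irreducibility of the Hilbert scheme of $7$ points in ${\sf k}^3$, which places every length-$7$ punctual scheme, and in particular $A$, in the smoothable locus.

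For part (2) I would again reduce to the generic case. An algebra with $H(A)=(1,r,2)$ is apolar to a pencil (a $2$-dimensional space) of quadratic forms in the dual variables, and such pencils are classified by the Segre/Kronecker--Weierstrass normal forms for pairs of symmetric matrices; the parameter space of these pencils is irreducible, so by the same closedness argument it suffices to smooth the sufficiently general algebras, namely those coming from the $(r-3)$-parameter family of generic pencils. For a generic pencil the two quadrics can be simultaneously diagonalized, yielding the explicit presentation
\[
A={\sf k}[x_1,\dots,x_r]/\bigl(x_ix_j\ (i\neq j),\ x_i^2-\alpha_i x_1^2-\beta_i x_2^2\ (i\geq 3),\ \mathfrak{m}^3\bigr),
\]
whose scheme is supported on the union of the $r$ coordinate axes. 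Reading off these generators and relations, I would smooth $A$ by the explicit family that slides reduced points outward along each axis — perturbing the monomial relations $x_ix_j$ and the degree-two relations by first-order terms in the parameter — so as to obtain a flat deformation to ${\sf k}^{r+3}$.

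The residual work is of two kinds. First, one must confirm that the non-generic Segre types are genuine specializations of the generic pencils inside the irreducible parameter space, so that closedness of smoothability disposes of them automatically. Second, and this is the one genuinely delicate point in both parts, one must check flatness — constancy of length $7$, respectively $r+3$ — of the exhibited families; everything else is bookkeeping over the normal forms already assembled in the classification and in Table \ref{table1}.
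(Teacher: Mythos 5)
Your global strategy---smoothability is a closed condition on an irreducible parameter space, so it suffices to smooth a dense family---is exactly the reduction the paper makes, implicitly at the start of \S 7.1 (``generically, we can find a presentation\ldots'') and explicitly at the end of \S 7.2. Where you diverge is in how the generic algebra is actually smoothed, and in both parts the step you yourself flag as ``the computational heart'' (flatness) is precisely the content of the paper's proof, so as written the proposal stops short of a proof.

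For part (1) your route through the complete intersection $B={\sf k}[X,Y,Z]/(\phi_X,\phi_Y,\phi_Z)$ is genuinely different from the paper's and is completable, but not by ``tracking the deformed socle cubic'' as you describe it. The clean way to close it is to note that $B$ is local \emph{Gorenstein}, hence has a unique colength-one ideal (its one-dimensional socle); therefore the fibre over $t=0$ of the relative $\Hilb^7$ of your flat degree-$8$ family is the single point $[\mathrm{Spec}\,A]$, and properness forces the length-$7$ subschemes consisting of $7$ of the $8$ reduced points to specialize to $\mathrm{Spec}\,A$. With that observation the flatness worry disappears. The paper instead exhibits one explicit flat family ($X^2+\lambda YZ+tX$ together with $XYZ+\lambda^2t(1+\lambda^3)^{-1}X^2$), computes its support to identify the general fibre as ${\sf k}[X,Y]/(X^2,Y^2)\times{\sf k}^3$, and finishes with Fogarty's smoothability of two-generator algebras; your fallback of citing irreducibility of $\Hilb^7$ is logically valid today but proves far more than is needed and is historically downstream of this very theorem.

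For part (2) the gap is more serious. Your normal form for the generic pencil and its apolar ideal agrees with the paper's, but the geometric picture is off: the ideal generated by the $\binom{r+1}{2}-2$ quadrics already contains all of $R_3$, so $\mathrm{Spec}\,A$ is a single fat point at the origin---only the partial ideal $(x_ix_j)_{i\neq j}$ has the union of the coordinate axes as zero locus---and the idea of sliding $r+3$ reduced points out along $r$ axes in one step does not match the situation. More importantly, no family is actually written down, so there is nothing whose flatness could be checked. The paper's \S 7.2 consists exactly of the work you defer: perturb a single generator, $h_r\mapsto h_r+tX_r$; list a complete basis of the syzygies of the ideal; verify that every syzygy lifts to the perturbed ideal (the standard lifting criterion for flatness); conclude that exactly one reduced point splits off, leaving a $(1,r-1,2)$ algebra at the origin; and induct down to $(1,2,2)$, where Fogarty's theorem applies. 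Without the syzygy verification (or an equivalent length computation) and without the inductive peeling structure, the existence of a flat deformation to ${\sf k}^{r+3}$ remains an assertion rather than a proof.
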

The second length 7 continuous family of algebras is those of Hilbert function 
$H(A)=(1,4,2)$.  To show smoothability, we study the more general case of algebras of type $(1, r, 2)$, for which the proof is not more difficult. To show Theorem \ref{smooththm} (2) we consider a general family with $r-3$ parameters of non-isomorphic algebras of type $(1, r, 2)$; we give generators and the relations between generators; and we show that for $r\geq 3$, such an algebra deforms into an algebra $A'={\sf k}\times A''$ where $A''$ is an algebra of type $(1, r-1,2).$ From which it follows, by induction on $r$, that $A$ may be deformed to the product algebra of $r+3$ copies of ${\sf k}$. 

\subsection{Deformation of algebras of Hilbert function $(1,3,3)$ into a smooth algebra}

An algebra of Hilbert function $(1,3,3)$ has general form ${\sf k} [X,Y,Z]/(P,Q,R,\m^3)$ where $P,Q,R$ are three linearly independent elements in $R_2$. The above and Table~1 proves that, generically, we can find a presentation of such an algebra in the form $${\sf k} [X,Y,Z] / (Y^2 + \lambda XZ, Z^2 +\lambda XY, X^2 + \lambda YZ) + \m^3$$ 
In addition, we have the equality $$(Y^2 + \lambda XZ, Z^2 +\lambda XY, X^2 + \lambda YZ) + \m^3= (Y^2 + \lambda XZ, Z^2 +\lambda XY, X^2 + \lambda YZ,XYZ),$$
 hence the presentation:
$${\sf k} [X,Y,Z] / (X^2 + \lambda YZ, Y^2 +\lambda XZ, Z^2 + \lambda XY,XYZ).$$
Consider then the family of algebras: 
$$A_t={\sf k} [X,Y,Z]/(Y^2+\lambda XZ,Z^2+\lambda XY,X^2+\lambda YZ+tX,XYZ+\lambda^2t(1+\lambda^3)^{-1}X^2).$$
For $t\neq 0$ it's the product of a local algebra of dimension 4 and three copies of ${\sf k}$. Indeed, the zeros of the ideal $$(Y^2+\lambda XZ,Z^2+\lambda XY,X^2+\lambda YZ+tX,XYZ+\lambda^2t(1+\lambda^3)^{-1}X^2)$$ are $$(0,0,0),\quad (-\frac{t}{1+\lambda^3},\frac{\lambda t}{1+\lambda^3},\frac{\lambda t}{1+\lambda^3}),$$  $$(-\frac{t}{1+\lambda^3},\frac{\lambda jt}{1+\lambda^3},\frac{\lambda j^2t}{1+\lambda^3}),\quad (-\frac{t}{1+\lambda^3},\frac{\lambda j^2t}{1+\lambda^3},\frac{\lambda jt}{1+\lambda^3}),$$
where $j$ is the cube root of unity; the localization of this algebra at $(0,0)$, which is isomorphic to ${\sf k} [X,Y]/(X^2,Y^2)$, has length 4; and the other localizations have length 1. 

We therefore have a flat deformation of $A_0$ into a family of algebras isomorphic to ${\sf k} [X,Y] / (X^2,Y^2)\times {\sf k}^3$. Since all Artinian algebras with two generators are smoothable (see \cite{Fog}), ${\sf k} [X,Y]/(X^2,Y^2)$ may be deformed into ${\sf k}^4$, and $A_0$ is finally deformable into ${\sf k}^7$. 
\begin{transl} That an algebra $A_0$ is smoothable, does not answer the question of whether it is \emph{alignable}: has a deformation to a curvilinear algebra:  one isomorphic to ${\sf k}[t]/(t^n)$. It is open whether these algebras of Hilbert function $H=(1,3,3,0)$ are alignable. Furthermore, there is at present no effective deformation theory giving obstructions to being alignable - aside of the dimension of the family. The alignable algebras of length $n$ in $r$ variables are a family of dimension $(r-1)(n-1)$, so for $r=3, n=7$ their dimension is 12, while the family of $H=(1,3,3,0)$ algebras are parametrized by the nets of conics, so have dimension 9: so here dimension is not an obstruction.\footnote{Codimension three complete intersection [CI] algebras are smoothable  but may or may not be alignable. It is open whether general graded $H=(1,3,3,1)$ algebras defined by a CI net of conics are alignable. For some discussion and still open problems about alignability and peelability (see below) of CI  local algebras in codimension three see \cite{I-Arcata}.}\par
 An algebra of length $n$ is \emph{peelable} if it can be smoothed in exactly 
$n-1$ steps by splitting off a regular point. Since all codimension two algebras are alignable 
by \cite{Bri} (characteristic zero, other authors have shown this for $\sf k$ algebraically closed in characteristic $p$) and alignable algebras are peelable, the above proof shows that all Artinian algebras of Hilbert function
$(1,r,2)$ are peelable over an algebraically closed field of characteristic zero, or characteristic $p$ not $2$.
\end{transl}
\subsection{Deformation of algebras of Hilbert function $(1,r,2)$ into a smooth algebra}\label{def1r2sec}

Such an algebra of Hilbert function $(1,r,2)$ is a quotient of ${\sf k}[X_1,\dots, X_r]$ determined by a subspace of $R_2$ having codimension 2. We will first study a generic family with $(r-3)$ parameters of non-isomorphic vector spaces having dimension 2, where $r\geq 2$.\footnote{These are pencils of quadrics, that correspond to Artinian algebras of Hilbert function $(1,r,\binom{r+1}{2}-2,0)$. The defining ideals of these algebras are generated in degrees two and three. 
The defining ideals of their dual algebras of Hilbert function $(1,r,2)$ are generated in degree two alone if the corresponding pencil is general enough.} By duality, we immediately deduce an analogous family of non-isomorphic quotients of ${\sf k} [[X_1, \dots X_r]]$ of Hilbert function $(1, r, 2)$. We give a presentation of the algebras of this family by specifying generators and the relations between these generators. Then we describe a deformation for which all these relations extend, so that this deformation is flat; the deformed algebra splits into a product ${\sf k}\times A"$ where $A"$ is an algebra of Hilbert function $(1,r-1,2).$ Since all the quotients of ${\sf k} [[X_1,X_2]]$ are smoothable, a reasoning by induction proves that the algebras $(1, r, 2)$ are smoothable.

We show that a general enough 2-dimensional subspace of $R_2$ (i.e. belonging to an open dense set of $\Grass(2, R_2)$) is isomorphic to a pencil\par $V[\lambda_4, \lambda_5,\dots,\lambda_r] = \langle f,g\rangle$ defined as follows $$f=X_1^2+X_3^2+\cdots+X_r^2; \quad g=X_2^2+X_3^2+\lambda_4X_4^2+\cdots \lambda_rX_r^2.$$
Furthermore, if the $\lambda_i$ are all distinct, $V[\lambda_4,\lambda_5,\dots,\lambda_r]\simeq V[\lambda'_4,\lambda'_5,\dots,\lambda'_r]$ is equivalent to $\{\lambda_4,\dots, \lambda_r\}=\{\lambda'_4,\dots, \lambda'_r\}.$

First, notice that we expect a space of orbits, of dimension $$\dim \Grass(2,R_2)-\dim \Pgl(r-1)=2\left[\binom{r+1}{2}-2\right]-[r^2-1]= r-3.$$
Assume therefore that: $\langle f,g\rangle \in \Grass (2, R_2)$. Let us look at the singular quadrics of the pencil defined by $\langle f,g\rangle$. Here $tf + (1-t) g$ corresponds to a singular quadric if its partial derivatives are simultaneously zero. That is, a determinant, which is a polynomial of degree $r$ in $t$, must be zero. This polynomial has in general $r$ distinct roots so that there are $r$ singular quadrics in the pencil. Let us make a change of coordinates so that singular points of two of them have for homogeneous coordinates $(1,0,\dots,0)$ and $(0,1,0,\dots,0)$, respectively. So the first quadric has for equation in general $$X_1^2+X_1h(X_3,\dots,X_r)+h'(X_3,\dots, X_r)$$
and after a change of coordinates only on $X_1$ $$X_1^2+f''(X_3,\dots,X_r).$$ 

In a similar way, a change of variable affecting only $X_2$ provides for the second quadric an equation $X_2^2+g''(X_3,\dots,X_r)$. 

If $r=3$, we get a normal form $\langle f,g\rangle=\langle X_1^2+X_3^2,X_2^2+X_3^2\rangle.$

If $r>3$, we use induction: there is a new basis of the space of the forms generated by $x_3,\dots, x_r$, such that $f''$ and $g''$ have simultaneous diagonalizations. We finally have the announced form where none of the $\lambda_i$'s is zero.

It is clear then, when the $\lambda_i$'s are distinct, that the only isomorphisms between the $V(\lambda_4,\dots,\lambda_r)$ consist in permuting the variables $X_4,\dots, X_r$. Thus we have obtained a family of pencils parametrized by the affine space $A_{r-3}$ such that:

\begin{enumerate}[a)]
    \item there exists an open in the space of pencils of quadrics such that every pencil of this open is isomorphic to a pencil of this family.
    \item The pencils of the family, isomorphic to a given pencil, form a set of cardinality less than or equal to $(r-3)!$. 
\end{enumerate}

The vector space dual to $V[\lambda_4,\dots, \lambda_r]$, namely $V'[\lambda_4,\dots,\lambda_r]$ (following the definition of Section 6.1) contains the vector space $W$ generated by all the monomials $X_iX_j$ where $i\neq j$ and a supplement of dimension $(r-2)$ of $W$ generated by $h,h_4,\dots, h_r$, where: $$h=X_3^2-X_1^2-X_2^2,\quad h_1=X_4^2-X_1^2-\lambda_4X^2_2,\quad \dots,$$
$$h_i=X_i^2-X_1^2-\lambda_i X_2^2,\dots h_r=X_r^2-X_1^2-\lambda_rX_2^2$$

Calling $(V')$ the ideal generated by $V'$, we consider the algebra $$A={\sf k} [[X_1,\dots,X_r]]/(V'),$$
of Hilbert function $H(A)=(1,r,2,0)$.
We will show that there exists a flat deformation of such an algebra to the family of algebras $$B_t={\sf k} [[X_1,\dots,X_r]]/(h_r+tX_r,h_{r-1},\dots, h_4,h).$$ 

In order to verify that this deformation is flat, we must list all the non-trivial relations between $h, h_4,\dots, h_r$ and show that these relations extend to the generators of the deformed algebra.\footnote{{\it Note of translators}: the extension of relations is a well-known criterion for flatness of a family, see \cite[Chap. I.2.11]{Bour}, \cite[Cor. A.11]{Ser}.}

First, we list the linear relations between the generators whose coefficients are in $R_1$. Those are:\vskip 0.2cm
\begin{tabular}{lr}
    $ e_{1i}: X_1h_i-X_1h-X_i(X_1X_i)+X_3(X_1X_3)+(1-\lambda_i)X_2(X_1X_2) $&  $4\leq i\leq r$\\
    $e_{2i}: X_2h_i-\lambda_i X_2h-X_i(X_2X_i)+\lambda_iX_3(X_2X_3)+(1-\lambda_i)X_1(X_1X_2)$ & $4\leq i\leq r$\\
   $e_{3i}: X_ih-X_3(X_3X_1)+X_1(X_1X_i)+X_2(X_2X_i)$ & $4\leq i\leq r$\\
   $e_{si}: X_sh_i+X_1(X_1X_s)+X_2(X_2X_1)-X_i(X_iX_s)$ \ $s\neq i, 4\leq i, 2\leq s$.&
\end{tabular}
\vskip 0.2cm
The linear relations between the monomial $X_iX_j,i\neq j$, are of the form: 
$$e_{kij};\quad X_k(X_iX_j)-X_j(X_kX_i),\quad i,j,k \text{ distinct}.$$
\normalsize

These relations correspond to the sets of two adjacent edges of a simplex (whose vertices are the $X_i$, the edges $X_iX_j$, $\dots$).

There is a relation between the $e_{kij}$ for each sub-simplex of dimension 2. Then, there are in total $\binom{r-1}{2}-\binom{r}{3}$ independent linear relations among the generators of $W$. There are $3(r-3)+(r-3)^2$ linear relations concerning $h$ and the $h_i$'s. Therefore, we have completed a list of $r\binom{r-1}{2}-\binom{r}{3}+r(r-3)=\frac{1}{3}(r^3-7r)$ linear relations that are clearly linearly independent. It is easy to verify that $R_3\subset I=(V')$ and to conclude that there are then $r$ (minimal number of generators of $I$)-$\dim R_3$ linear relations with coefficients in $R_1$, which is $r(\binom{r}{2}+r-2-\binom{r+2}{3})=(r^3-7r)/3$.  So we have found a basis for these relations. For more complete calculations the reader will refer to \cite{Em-I}).

We affirm that these relations and trivial relations span all relations. Suppose indeed that:
$$e = ah + a_4h_4+ \cdots + a_r h_r + u$$
is a homogeneous relation where $u$ is formed by a linear combination of monomials of $W$ having coefficients of degree $j$ in ${\sf k}[X_1,\ldots,X_r]$. According to the previous discussion $j\geq 2$. Using multiples of the first relations $e_{i3},e_{1i},e_{2i},$ we can eliminate all the terms in $h,h_1,\ldots,h_4$ except for $X_1^jh$, $X_2^jh$, $X_3^jh$, $X_4^jh_4$, $X_5^jh_5,\ldots,X_r^jh_r$. But these terms should be zeroes, since they cannot be compensated by any other term neither of $u$, nor a multiple of $h$, nor any of the $h_i$. Thus, we have reduced the relation to a relation between monomials in $W$. But it is trivial to verify that the $e_{kij}$ span all the relations among the monomials of $W$. 

Now we consider the following deformation: 
$$I(t)=(W,h,h_4,h_5,\ldots, h_{r-1},h_r+tX_r).$$ The relationships we have listed extend as follows:

These new linear relations are the same as the old ones up to the following modifications:

\begin{enumerate}[a)]
    \item In $e_{1r}$, we replace the coefficient $-X_r$ of $X_1X_r$ by $-(X_r+t)$. 
    \item In $e_{2r}$, we replace the coefficient $-X_r$ of $X_2X_r$ by $-(X_r+t)$.
    \item In the other $e_{sr}$, we replace the coefficient $-X_r$ of $X_sX_r$ by $-(X_r+t)$. 
    \end{enumerate}
    Since all the relations between the generators of $I$ extend to relations between the generators of $I(t)$, the given deformation is flat. Its effect at the origin is to add $X_r$ to the ideal I, thus providing, as localized at the origin, an algebra of Hilbert function $(1, r-1,2)$ with the same values $(\lambda_4, \dots, \lambda_r)$ as before. As the deformation is flat, and the dimension of the localization at the origin only decreases by one, a point must be separated from the origin with a localization of length 1. Thus the deformed algebra is isomorphic to a product $A(t)\simeq {\sf k}\times A''$. 
    
    We can continue the same procedure with $A''$, until $r=3$. Then, $A''$ is of Hilbert function $(1,2,2)$ and by an old result of R. Hartshorne and J. Fogarty (irreducibility of the Hilbert scheme of points in $\mathbb P^2$, see \cite{Fog}), $A''$ is smoothable. Therefore, all the algebras that we have considered are smoothable, and since they form a dense set in the set of algebras of Hilbert function $(1,r,2)$, we conclude:\footnote{{\it Note of translators}: this smoothability result has been significantly generalized in \cite[Theorem C]{CJN}.}
    
    \begin{proposition}\label{1r2smoothprop}
    All the algebras of Hilbert function $(1,r,2)$ are smoothable. 
    \end{proposition}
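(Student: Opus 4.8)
The plan is to establish smoothability by an explicit flat degeneration that strips off one isolated reduced point at a time, reducing $r$ by one at each step, together with a density argument. Since an algebra $A$ of Hilbert function $(1,r,2)$ is determined by a codimension-$2$ subspace of $R_2$, and smoothability is invariant under the Macaulay duality of Section~\ref{dualitysec}, I would first reduce to understanding the dual object, a pencil of quadrics $\langle f,g\rangle\in\Grass(2,R_2)$. The first step is to produce the normal form: a general pencil is isomorphic to $V[\lambda_4,\dots,\lambda_r]=\langle X_1^2+X_3^2+\cdots+X_r^2,\ X_2^2+X_3^2+\lambda_4X_4^2+\cdots+\lambda_rX_r^2\rangle$. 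This follows by simultaneous diagonalization: the singular members of the pencil are the roots of a degree-$r$ determinant, which generically are $r$ distinct points; moving the singular points of two of these members to the coordinate points $(1,0,\dots,0)$ and $(0,1,0,\dots,0)$ and clearing cross terms lets one diagonalize the restrictions to the remaining variables and induct on the number of variables. The count $\dim\Grass(2,R_2)-\dim\PGL(r-1)=r-3$ confirms that this is a dense $(r-3)$-parameter family whose moduli are the unordered $\lambda_i$.

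Passing to orthogonals, the dual space $V'[\lambda_4,\dots,\lambda_r]$ is spanned by the off-diagonal monomials $X_iX_j$ ($i\ne j$), call their span $W$, together with $h=X_3^2-X_1^2-X_2^2$ and $h_i=X_i^2-X_1^2-\lambda_iX_2^2$ for $4\le i\le r$; then $A={\sf k}[[X_1,\dots,X_r]]/(V')$ has Hilbert function $(1,r,2,0)$. I would then introduce the deformation $I(t)=(W,\,h,\,h_4,\dots,h_{r-1},\,h_r+tX_r)$ and prove that the resulting family over the $t$-line is flat. For flatness I would invoke the extension-of-relations criterion: it suffices to exhibit a generating set for the first syzygies of the generators of $(V')$ and to check that every such syzygy lifts to one for the deformed generators. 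The needed syzygies are the linear ones $e_{1i},e_{2i},e_{3i},e_{si}$ (coefficients in $R_1$) binding $h$ and the $h_i$ to the monomials, and the monomial Koszul-type syzygies $e_{kij}:X_k(X_iX_j)-X_j(X_kX_i)$ for distinct $i,j,k$, indexed by pairs of adjacent edges of the simplex on $X_1,\dots,X_r$.

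The principal obstacle is to verify that these relations are complete, i.e. that they generate the entire module of first syzygies; only then does extending all of them yield flatness. I would handle this by the bookkeeping indicated in the text: after checking $R_3\subset I=(V')$, so that $(V')$ is generated by the $\dim V'=\binom{r+1}{2}-2$ quadrics and its degree-three part is all of $R_3$, the number of independent linear syzygies equals $r\cdot\dim V'-\dim R_3=\tfrac13(r^3-7r)$, and one shows the listed relations attain this number and are independent. Completeness then follows from a reduction step: any homogeneous relation $e=ah+a_4h_4+\cdots+a_rh_r+u$ can, using multiples of $e_{i3},e_{1i},e_{2i}$, be cleared of all $h$- and $h_i$-terms except a few monomial multiples that are forced to vanish, leaving a relation among the monomials of $W$, which the $e_{kij}$ visibly span. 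Granting this, each relation extends to $I(t)$ by the single modification of replacing the coefficient $-X_r$ of $X_sX_r$ by $-(X_r+t)$ in $e_{1r},e_{2r}$ and the remaining $e_{sr}$, so the family is flat.

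With flatness in hand I would finish by induction and a density argument. For $t\ne0$ adding $X_r$ to the ideal at the origin produces an algebra of Hilbert function $(1,r-1,2)$ with the same moduli $\{\lambda_4,\dots,\lambda_r\}$, while the drop of total length by one forces a single reduced point to separate; hence $A(t)\cong{\sf k}\times A''$ with $A''$ of type $(1,r-1,2)$. Iterating reduces the number of variables until $A''$ has Hilbert function $(1,2,2)$, a quotient of ${\sf k}[[X_1,X_2]]$, which is smoothable by the irreducibility of the punctual Hilbert scheme of the plane (Fogarty--Hartshorne, \cite{Fog}). Thus the general $(1,r,2)$ algebra is smoothable. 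These general algebras form a dense subfamily of the irreducible parameter space $\Grass(2,R_2)$ of all $(1,r,2)$ algebras, and the smoothable locus is closed in the Hilbert scheme; therefore every algebra of Hilbert function $(1,r,2)$ is smoothable, proving Proposition~\ref{1r2smoothprop}.
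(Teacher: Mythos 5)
Your proposal is correct and follows essentially the same route as the paper's own proof in \S\ref{def1r2sec}: normal form for a general pencil, passage to the orthogonal ideal $(V')$, the explicit flat deformation $I(t)=(W,h,h_4,\dots,h_{r-1},h_r+tX_r)$ verified by extending the listed syzygies, splitting off a reduced point to induct down to the $(1,2,2)$ case handled by \cite{Fog}, and a density/closedness argument to cover all algebras. The only cosmetic difference is your opening remark that smoothability is ``invariant under duality''; the paper uses duality only to parametrize the general $(1,r,2)$ algebras by pencils, which is also what your argument actually does.
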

    
\section{Regular maps of degree $4$ from the projective plane into itself}\label{mapsec}
A regular map (morphism) $\lambda: \mathbb P^2\to \mathbb P^2$ can be written in homogeneous coordinates by a formula:
\begin{equation}\label{mapequation}
\lambda[(\overline{X,Y,Z})]=[\overline{P_n[X,Y,Z],Q_n[X,Y,Z],R_n[X,Y,Z]}],
\end{equation}
where $P_n,Q_n,R_n$ are three homogeneous polynomials of the same degree
$n$ without a common zero, and $(\overline{X,Y,Z})$ is the point $(X:Y:Z)$ in projective plane associated to the vector $(X,Y,Z)$.\par
Using Bezout's Theorem, one sees that $\lambda$ is of degree $n^2$. As a consequence, when $n=2$ we obtain a map of degree $4$.  We will say that two regular maps $\lambda_1: \mathbb P^2\to \mathbb P^2$ and $\lambda_2: \mathbb P^2\to \mathbb P^2$ are of the same type if there are two biregular maps
\begin{equation*}\alpha: \mathbb P^2\to\mathbb P^2 \text { and } \beta: \mathbb P^2\to\mathbb P^2,
\end{equation*}
such that $\beta\circ \lambda_2=\lambda_1\circ \alpha$. As the biregular maps from 
$\mathbb P^2$ to $\mathbb P^2$ are none other than the elements of $Pgl(3)$, it becomes evident that the classification of the regular applications of degree $4$ from $\mathbb P^2\to \mathbb P^2$ is equivalent to the classification of nets of planar conics that do not have a base point.\par
Finally, it is clear that these maps specialize as the corresponding nets. Using the Table \ref{lengthfig} of \S 6.3 and the specializations of Table \ref{table1}, we have then the classification of regular maps from $\mathbb P^2$ to $\mathbb P^2$ (Table \ref{mapfig}).\footnote{Note of Translators:  Correction from original: \#8a from our Table \ref{table1} has the base point $(0,0,1)$ so by Equation \eqref{mapequation}ff it does not lead to a regular map. See also \cite{Rong}.}

\begin{table}
\caption{Regular maps from $\mathbb P^2$ to $\mathbb P^2$.}\label{mapfig}
\vskip -0.8cm
\begin{align*}
&\\
&\qquad\qquad
\begin{tikzpicture}[baseline={([yshift=-.5ex]current bounding box.center)},scale=0.6]
\draw (0,0) circle (0.3);
\draw    (0.7,0.8) .. controls  (0,-0.3) and (1.2,0.2) .. (0.4,-0.8);
\draw (0.7,-0.4) node [anchor=west]{$\lambda$};
\end{tikzpicture}
\\
\ \quad (\overline{X,Y,Z})&\mapsto
 (\overline{X^2+\lambda YZ,Y^2+\lambda XZ,Z^2+\lambda XY})\\
 &\\
&\qquad\qquad\begin{tikzpicture}[scale=0.3]
\draw  [line width=0.75] [line join = round][line cap = round] (0,0) .. controls  (-0.5,0.4) and (-1.3,1.1) .. (-1.9,1.5) .. controls (-2.2,1.7) and (-2.8,1.5) .. (-2.85,1.1) .. controls (-2.9,0.9) and (-2.7,0.55) .. (-2.5,0.5) .. controls (-2,0.4) and (-1,0.8) .. (-0.6,0.9) .. controls (-0.5,0.94) and (0.103,1.0695) .. (0.1,1.15) ;
\draw (-0.9,0.8) circle (0.3);
\end{tikzpicture}\\
 &(\overline{X,Y,Z})\mapsto  (\overline{Z^2,X^2-YZ,Y^2-XZ})\\
 &\qquad\qquad \downarrow\\
 &\\
 &\qquad\qquad \begin{tikzpicture}[scale=0.23]
\draw (0,0) circle (1);
\draw (-1.5,0) -- (1.5,0);
\draw (-1,0) circle (0.35);
\draw (1,0) circle (0.35);
\end{tikzpicture}\\
 &(\overline{X,Y,Z})\mapsto  (\overline{X^2+YZ,Y^2,Z^2})\\
  &\qquad\qquad \downarrow\\
  &\\
  &\qquad\qquad \begin{tikzpicture}[baseline={([yshift=-.8ex]current bounding box.center)},scale=0.17]
\draw (-0.4,0) -- (3.4,0);
\draw (-0.2,-0.4) -- (1.7,3.4);
\draw (3.2,-0.4) -- (1.3,3.4);
\draw (0,0) circle (0.4);
\draw (3,0) circle (0.4);
\draw (1.5,3) circle (0.4);
\end{tikzpicture}\\
 &(\overline{X,Y,Z})\mapsto  (\overline{X^2,Y^2,Z^2})
\end{align*}
\end{table}
In what follows, we give canonical procedures for passing from a regular map of degree $4$, $\lambda:\mathbb P^2\to \mathbb P^2$ to a net of conics, and vice versa.\par
If $\lambda: \mathbb P^2_a\to \mathbb P^2_b$ is such a map, the net $R$ corresponding to it is obtained in the $\mathbb P^2_a$ source of $\lambda$ as inverse image of the net of all lines of $\mathbb P^2_b$.\par
The lines of $\mathbb P^2_b$ correspond to the points of $R$ and, by duality, the points of $\mathbb P^2_b$ to the lines of $R$.\par
We therefore have the reciprocal canonical process: \par
Let $P$ be a projective plane together with a net $\pi$ of conics without base point (we change notation to return to the notations of \S 4). We may define a map:
$$ \lambda: P\to \pi^\ast$$
in the following manner. If $x\in P$ the set of conics of $\pi$ that pass through $x$ form a pencil $\delta$. We set $\lambda(x)=\delta$. One verifies immediately that $\lambda$ is of degree $4$, and that the inverse image by $\lambda$ of the net of lines of $\pi^\ast$ (which identifies with $\pi$), is indeed the net of conics of $P$ belonging to $\pi$.\par
Further, in the case where $\pi$ has the generic form studied in \S 4.1.2 we have the following interpretation in terms of $\lambda$ of the cubics $\Gamma$ (defined in Section 4.0), and $H$ (defined in \S 4.1.3).
\begin{proposition}\label{mapprop}
\begin{enumerate}[1)]
\item $H$ is the singular locus of $\lambda$;
\item $\Gamma$ is the curve dual to the discriminant locus of $\lambda$.
\end{enumerate}
\end{proposition}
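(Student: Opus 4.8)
The plan is to reduce both parts to the polar presentation and then perform a short tangent-space computation. Since $\pi$ has the generic form of \S4.1.2, I write $\pi=J_\phi$ and choose the basis $(P,Q,R)=(\phi_X,\phi_Y,\phi_Z)$, so that $\lambda(x)=(\phi_X(x):\phi_Y(x):\phi_Z(x))$. Replacing $(P,Q,R)$ by another basis of $\pi$ amounts to post-composing $\lambda$ with an automorphism of $\pi^\ast$, which changes neither the critical locus in the source nor, up to that automorphism, the discriminant in the target; so this choice is harmless.

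For part (1), the singular (critical) locus of the morphism $\lambda$ is the locus where $d\lambda$ drops rank, that is, where the Jacobian matrix $\partial(P,Q,R)/\partial(X,Y,Z)$ is singular. With the chosen basis this Jacobian is exactly the Hessian matrix of $\phi$, so the critical locus is $\{\det \mathrm{Hess}(\phi)=0\}$, which is the Hessian cubic $H$ by Proposition~\ref{4.4prop}. This part is immediate once the reduction is in place.

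For part (2), let $D=\lambda(H)\subset\pi^\ast$ be the discriminant (branch) locus, the image of the ramification locus $H$ found in part (1). Fix a general point $m\in H$; by Proposition~\ref{Hesse2prop}, $m$ is the unique singular point of a singular conic $C_m$ of the net, $[C_m]\in\Gamma$, and $m\mapsto[C_m]$ is the isomorphism $H\xrightarrow{\sim}\Gamma$. Writing $C_m=aP+bQ+cR$, the fact that $m$ is a singular point of $C_m$ gives $\nabla C_m(m)=a\nabla P(m)+b\nabla Q(m)+c\nabla R(m)=0$, and by Euler's relation $C_m(m)=0$. Parametrize $H$ near $m$ by $x(t)$ with $x(0)=m$, $\dot x(0)=v$; then $D$ is parametrized by $L(t)=(P,Q,R)(x(t))$, with $L(0)=(P(m),Q(m),R(m))$ and velocity $M=(\nabla P(m)\cdot v,\ \nabla Q(m)\cdot v,\ \nabla R(m)\cdot v)$. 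The tangent line to $D$ at $\lambda(m)$ is spanned in $\pi^\ast$ by $L(0)$ and $M$; as a point of the double dual $(\pi^\ast)^\ast=\pi$ it is the vector annihilating both, i.e.\ $L(0)\times M$. The two vanishings above say precisely that $(a,b,c)$ is orthogonal to $L(0)$ (this is $C_m(m)=0$) and to $M$ (this is $\nabla C_m(m)\cdot v=0$); hence $(a:b:c)$ spans that annihilator, and the tangent line to $D$ corresponds to the point $[C_m]\in\Gamma$. Letting $m$ range over $H$, the tangent lines of $D$ sweep out $\Gamma$, which is exactly the statement that $\Gamma$ is the dual curve of the discriminant $D$.

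The main obstacle is the duality bookkeeping in part (2): one must correctly identify a line in $\pi^\ast$ with a point of $\pi$ through the pairing, and must secure the genericity needed for $L(0)$ and $M$ to be linearly independent, so that the tangent line (equivalently the cross product $L(0)\times M$) is genuinely defined. Once these identifications are fixed, the heart of the argument is the two-line orthogonality check driven by the singular-point conditions on $C_m$. As a consistency check one can note that for smooth $\Gamma$ the dual $\Gamma^\ast$ is a sextic and $\lambda|_H\colon H\to D$ is birational onto this branch curve, but this degree count is not needed for the duality identity itself.
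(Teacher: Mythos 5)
Your proof is correct, and part (1) coincides with the paper's (the Jacobian of $(\phi_X,\phi_Y,\phi_Z)$ is the Hessian matrix of $\phi$, so the critical locus is $H$). For part (2), however, you take a genuinely different route. The paper argues globally: a point $y\in\pi^\ast$ is a pencil $\delta$ of conics of the net, its fiber $\lambda^{-1}(y)$ is the base locus of $\delta$, and $y$ lies in the discriminant exactly when this base locus has set-wise $3$ points instead of $4$; by the classification of pencils (Table~\ref{table2}) this happens exactly when $\delta$ has only two singular members set-wise, i.e.\ (Bezout) when the line $\delta\subset\pi$ is tangent to $\Gamma$. This identifies $\Delta$ with $\Gamma^\ast$ and then invokes biduality to conclude $\Gamma=\Delta^\ast$. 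You instead compute $\Delta^\ast$ directly and infinitesimally: parametrizing $H$ near a general $m$, you show the tangent line to $\lambda(H)$ at $\lambda(m)$, viewed as a point of $\pi$, is exactly the singular conic $C_m$, using $\nabla C_m(m)=0$ together with Euler's relation, and then sweep out $\Gamma$ via the isomorphism $H\cong\Gamma$ of Proposition~\ref{Hesse2prop}. What the paper's approach buys is a coordinate-free enumerative argument with no tangent-space bookkeeping, at the cost of using biduality and the pencil classification; what yours buys is a direct computation of the dual curve of the discriminant (literally the statement to be proved, with no appeal to $\Gamma^{\ast\ast}=\Gamma$), at the cost of the duality identifications and the genericity needed for $L(0)$ and $M$ to be independent and for $\lambda|_H$ to be immersive at $m$ --- points you correctly flag and which hold at a general point of $H$, after which one concludes by taking closures. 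Both arguments are sound.
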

\begin{proof}
To show 1), it suffices to use the formula \eqref{mapequation} for $\lambda$ where
$$P_n(X,Y,Z)=X^2+\lambda YZ, Q_n(X,Y,Z)=Y^2+\lambda ZY, R_n(X,Y,Z)=Z^2+\lambda XY.$$

Let's show 2). A point $y$ of $\pi^\ast$ belongs to the discriminant locus $\Delta$ of $\lambda$ if and only if its inverse image by $\lambda$ has set-wise $3$ points instead of $4$. The points are the points of the base of the pencil $\delta$ that $y$ represents. Thus, to say that $\delta$ has only three base points, is to say also that
$\delta$ has setwise only $2$ singular conics instead of $3$. By the Theorem of Bezout, that is to say that $\delta$ is tangent to $\Gamma$. Thus $\Delta$ is the dual curve of $\Gamma$ and, dually, $\Gamma$ is the dual curve of the discriminant of $\lambda$.
\end{proof}\par
\addcontentsline{toc}{section}{Note on the Hessian form of a smooth cubic}
\section*{Note on the Hessian form of a smooth cubic}\label{Hessiansec}
Here we review some well known facts developing our earlier discussion of Hesse pencils in Propositions~\ref{4.4prop}, and \ref{Hesse2prop} in Section \ref{involutionsec}.\footnote{{\it Note of translators}: see the 2009 \cite{ArtDol}, and the 2022 \cite[\S 4]{CilOt}
for further exposition, development and references.}\par 
Let us consider the pencil $P$ generated by the degenerate cubic $XYZ$ (three lines) and the smooth cubic $X^3+Y^3+Z^3$.\par
The base points of the pencil are the points with homogeneous coordinates
(here $\omega$ is a primitive cube root of $1$)\vskip 0.2cm
\qquad $\begin{array}{ccc}
(0,1,-1)&(0,1,-\omega)&(0,1,-\omega^2)\\
(1,0,-1)&(1,0,-\omega)&(1,0,-\omega^2)\\
(1,-1,0)&(1,-\omega,0)&(1,-\omega^2,0).
\end{array}$
In order for a cubic to belong to the pencil $P$, it is necessary and sufficient that it pass through these 9 points.\par
These 9 points are the 9 inflection points of each smooth cubic of the pencil $P$. In effect, given such a smooth cubic, it and its Hessian generate $P$.\par
Further, this set $E$ of 9 points has the following property:\par
(A) Given two of them, there is a third point of $E$ that is collinear with them.\par
That is to say that $E$ enriched by the set of subsets of three collinear points has the structure of an affine plane over $\mathbb Z/3\mathbb Z$.\par
Conversely, given a set of nine points of $\mathbb P^2$ satisfying the property (A), there is a homogeneous system of coordinates of $\mathbb P^2$ such that the set of points is identified with $E$.\par
Thus, given a smooth cubic of $\mathbb P^2$, it is easy to see that its nine points of inflection satisfy (A). Consequently, the pencil generated by a smooth cubic and its Hessian is identified with the pencil $P$.\par
This is to say that for every smooth cubic there is a system of homogeneous coordinates for which it has the Hesse form:
\begin{equation*}
 (X^3+Y^3+Z^3+\lambda XYZ).
\end{equation*}
In $(\mathbb Z/3\mathbb Z)^2$ there are twelve lines that may be partitioned into 4 directions. The following table gives the lines with this partition:\vskip 0.2cm \par
\begin{align*}&\Bigg\{\begin{array}{cccc}
X=0&(0,1,-1)&(0,1,-\omega)&(0,1,-\omega^2)\\
Y=0&(1,0,-1)&(1,0,-\omega)&(1,0,-\omega^2)\\
Z=0&(1,-1,0)&(1,-\omega,0)&(1,-\omega^2,0)
\end{array}\\
&\Bigg\{\begin{array}{cccc}
X+Y+Z=0&(0,1,-1)&(1,0,-1)&(1,-1,0)\\
X+\omega Y+\omega^2Z=0&(0,1,-\omega^2)&(1,0,-\omega)&(1,\omega^2,0)\\
X+\omega^2Y+\omega Z=0&(0,1,-\omega)&(1,0,-\omega^2)&(1,-\omega,0)
\end{array}\\
&\Bigg\{\begin{array}{cccc}
\omega X+Y+Z=0&(0,1,-1)&(1,0,-\omega)&(1,-\omega,0)\\
X+\omega Y+Z=0&(0,1,-\omega)&(1,0,-1)&(1,-\omega^2,0)\\
X+Y+\omega Z=0&(0,1,-\omega^2)&(1,0,-\omega^2)&(1,-1,0)
\end{array}\\
&\Bigg\{\begin{array}{cccc}
\omega^2X+Y+Z=0&(0,1,-1)&(1,0,-\omega^2)&(1,-\omega^2,0)\\
X+\omega^2Y+Z=0&(0,1,-\omega^2)&(1,0,-1)&(1,-\omega,0)\\
X+Y+\omega^2Z=0&(0,1,-\omega)&(1,0,-\omega)&(1,-1,0)
\end{array}\\
\end{align*}
The cubics of the pencil decomposed into three lines correspond to the sets of three lines passing through the nine base points.  They correspond to the four directions of $(\mathbb Z/3\mathbb Z)^2$:\par
\vskip 0.2cm
\footnotesize
$\begin{array}{lrc}
&XYZ&=0\\
(X+Y+Z)(X+\omega Y+\omega^2Z)(X+\omega^2Y+\omega Z)&=X^3+Y^3+Z^3-3XYZ&=0\\
{\frac{1}{\omega}}(\omega X+Y+Z)(X+\omega Y+Z)(X+Y+\omega Z)&=X^3+Y^3+Z^3-3\omega XYZ&=0\\
{\frac{1}{\omega^2}}(\omega^2X+Y+Z)(X+\omega^2Y+Z)(X+Y+\omega^2Z)&=X^3+Y^3+Z^3-3\omega^2XYZ&=0\\
\end{array}$\vskip 0.2cm
\normalsize
These are the only singular cubics of the pencil $P$. One verifies them by finding the $\lambda$ such that $X^3+Y^3+Z^3+3\lambda XYZ=0$ has a singular point, that is, that the conics defined by the three partial derivatives have a common zero.\footnote{Note of translators: it is straightfoward to verify that each  of these singular nets have three common zeroes, so by Table \ref{lengthfig} correspond to \#6a in Figure \ref{table1}.}\par \label{singularHessepencil}
Now we find the condition on the cubic of the pencil $X^3+Y^3+Z^3+3\lambda XYZ$  for it to be isomorphic to the cubic $X^3+Y^3+Z^3$. (See Proposition \ref{4.2prop}). In this case the vector space spanned by the quadratic forms $X^2+\lambda YZ,Y^2+\lambda ZX,Z^2+\lambda XY$ is spanned by the partial derivatives of another cubic form $G$. We should have\vskip 0.2cm\par
$\begin{array}{rc}
G'_X&= a[X^2+\lambda YZ]+b[Y^2+\lambda ZX]+c[Z^2+\lambda XY]\\
G'_Y&= a'[X^2+\lambda YZ]+b'[Y^2+\lambda ZX]+c'[Z^2+\lambda XY]\\
G'_Z&= a''[X^2+\lambda YZ]+b''[Y^2+\lambda ZX]+c''[Z^2+\lambda XY]\\
\end{array}$\par\vskip 0.2cm
with:\par\vskip 0.2cm
$\begin{array}{rrrc}
G''_{XY}&=\lambda cX+2bY+\lambda aZ&=2a'X+\lambda c'Y+\lambda b'Z&=G''_{YX}\\
G''_{XZ}&=\lambda bX+\lambda aY+2cZ&=2a''X+\lambda c''Y+\lambda b''Z&=G''_{ZX}\\
G''_{YZ}& =\lambda b'X+\lambda a'Y+2c'Z&=\lambda c''X+2 b''Y+\lambda a''Z&=G''_{ZY}\\
\end{array}$\vskip 0.2cm\par
We may suppose $\lambda\neq 0$. One should have $b' =a=c'' $,
\begin{equation*}
c' =\frac{2b}{\lambda};\, a' =\frac{\lambda c}{2}; b''=\frac{2c}{\lambda};
a'' = \frac{\lambda b}{2}; b''= \frac{\lambda a'}{2}=\frac{\lambda^2 c}{4};\, a''= \frac{2 c'}{\lambda}=\frac{4b}{\lambda^2}.
\end{equation*}
From which $\frac{2c}{a}=\frac{\lambda^2 c}{4}$ and $\frac{\lambda b}{2}=\frac{4b}{\lambda^2}$, that is to say
\begin{equation*}
(\lambda^3-8)b=(\lambda^3-8)c=0.
\end{equation*}
If $\lambda^3\neq 8$, one has $b=c=a'' =b'' =a' =c' = 0, b' =a=c'' $, that is to say that $G$ is proportional to the original cubic.\par
The cubic $(X^3+Y^3+Z^3+3\lambda XYZ)$ is thus isomorphic to $(X^3+Y^3+Z^3)$ if and only if $\lambda^3=0$ or $8$, hence $\lambda=0,2, 2\omega, 2\omega^2$ (excluded values in \#8b of Table \ref{table1}).\par \label{excludedvalues}
The four cubics of the pencil isomorphic to $(X^3+Y^3+Z^3)$ correspond also to the four directions of the plane $(\mathbb Z/3\mathbb Z)^2$.\par
For each direction one finds these cubics by taking the sum of the cubes of the
linear forms defining each of the lines of the direction.\par
Here $X^3+Y^3+Z^3$ corresponds to $XYZ$, $X^3+Y^3+Z^3+6XYZ$ corresponds to $X^3+Y^3+Z^3-3XYZ$; $X^3+Y^3+Z^3+6\omega^2XYZ$ corresponds to $X^3+Y^3+Z^3-3\omega XYZ$, $X^3+Y^3+Z^3+6\omega XYZ$ corresponds to $X^3+Y^3+Z^3-3\omega^2XYZ$.\par
 One may describe this correspondence in another way, by saying that the second term of each correspondence is the Hessian of the first term. Finally, there is a third way to describe it.
\par
Associate to each cubic of the pencil, the vector space $V$ of its partial derivatives. Let's consider also the orthogonal $V^\perp$ to the space in the space of quadratic forms with scalar product defined in \S 6.1. This space $V^\perp$ may be obtained as the space of partial derivatives of a cubic in the pencil. If $X^3+Y^3+Z^3+3\lambda XYZ$ is the first cubic, this correspondence associates to it the cubic $X^3+Y^3+Z^3-\frac{6}{\lambda}XYZ$. In particular,
by this correspondence the cubics isomorphic to $X^3+Y^3+Z^3$ are associated in one-to-one manner to the cubics isomorphic to $XYZ$.
\par\newpage
\addcontentsline{toc}{section}{List of tables}
\listoftables
\addcontentsline{toc}{section}{Original References}
\renewcommand{\refname}{Original References}

\appendix
\addcontentsline{toc}{section}{Appendix A. Historical Note, pre 1977}

\subsection*{Appendix A. Historical note, pre 1977.}\vskip 0.2cm\noindent
{\it Classification of Pencils of Conics, pre-1977.} This is often attributed to 
Corrado Segre in his 1883 article \cite{Seg}: he classified the pencils of conics over the complexes in any number of variables.  The 1884 note \cite{Ros-Seg} on simultaneous invariants of two ternary conics is a letter from Corrado Segre to M.J. Rosanes concerning this classification in response to \cite{Ros}. However there were earlier works: the book of I. Dolgachev \cite[p. 468]{Dol} mentions the 1868 article of K. Weierstrass \cite{Wei} as giving the classification of pencils of conics up to isomorphism, in work partially dependent on a note by J. Sylvester of 1851. 
The article \cite[Thm. 3.1]{Bli} attributes determining the four simultaneous invariants of 2 ternary conics to the 1882 article of P. Gordon \cite{Gor}.
In any case by 1900 there was a substantial literature about pencils of conics in any $\mathbb P^r$ (see, for example, P. Newstead's MathSciNet review of \cite{Dim} and also the 1898-1904 Encyclopedia, which has many detailed historical remarks \cite{Mo}); J. Emsalem notes in the Outline \S \ref{outlinesec} above that he had learned the classification of pencils in a course of E. Riche.\vskip 0.2cm\noindent
{\it Nets of Planar conics before 1977.}\footnote{We report on these results, but we have not checked the proofs.}
 Unbeknowst to J. Emsalem and A.~Iarrobino in 1977, the classification up to isomorphism of nets of conics over the complexes had been attempted by Camille Jordan in 1906 (\cite{J}).\footnote{Until doing the present literature search, with convenient modern tools of MathSciNet, we were still unaware of Jordan's work. Although MSN does not list C.~Jordan's article, it does list the 1914 article of A. Wilson about the modular case \cite{Wil}, and later articles of A. Campbell, which cite it. I. Dolgachev and S. Kondo in \cite[Example 7.2.9]{DolKo} give the classification over characteristic ${\sf k}\neq 2$, attributing it to C. Jordan.}  C.~Jordan's article in principle considers an $m$-dimensional vector space of $n$-ary forms: after determining the eight ternary pencils, he goes on in \S12-24 to handle nets of ternary conics, arriving at a table of 13 classes in \cite[\S 25 p. 427]{J}, omitting one class (our \#6c), and apparently combining the continuous class \#8b with \#8c.\footnote{We note our classes in Table \ref{table1} \#8a,\#8b,\#8c, \ldots \#2b where \# denotes the dimension and a,b,$\ldots$ the position from left. Then the correspondence of C Jordan's classes I-XIII with ours is \#8a III; \#8b,c, I; \#7a XII; \#7b II; \#7c IV:\#6a XIII; \#6b VIII; \#6c missing; \# 6d V; \# 5a IX, \#5b VI; \#4 X; \#2a XI, \#2b, VII.}
 He continues to treat dimension four and five spaces of ternary conics, without using duality. His methods combine geometry and algebra, use the related cubic we called $\Gamma$, but he does not formally use the notion of type of a triple as fundamental invariant, as we have in Theorem \ref{mainthm}B, and in our diagrams. The article of Jordan does not discuss specialization.\par
 In the modular case, the 1914 article of the L.E. Dickson student, A.~Wilson, classifies nets of conics over $GF(p^n)$ when $p$ is odd, but he leaves the ``rank one'' case where the net contains a double line incomplete \cite{Wil}.\footnote{A. Wilson does not explicitly state that  $p$ is odd - apparently considered obvious, as a key method is to complete the square! A careful analysis, correction, and completion of A.~Wilson's work in this $p$ odd case is made in \cite{LPS}, see Appendix B.} The 1928 article of A. Campbell \cite{Cam2} classifies these nets over the field $GF(2^n)$, giving 34 classes; however, C. Zanella in 2012 exhibits classes of nets over any $GF(q)$ containing only nonsingular conics, contradicting a statement in \cite{Cam2} that these only exist when $p$ is odd \cite[\S 6]{Zan}. A. Campbell treated the real case in the 1928 article \cite{Cam}.\par
 A 1943 article of C. Ciamberlini \cite{Cia} discusses the simultaneous invariants of three ternary forms; a recent note of B. Blind \cite[\S 5]{Bli} presents these and other results concerning simultaneous invariants of $k$ ternary forms, giving new proofs using the Jordan algebra structure on ternary forms.\par
 The 1898-1904 Encyclopedia \cite[\S 32ff, p.224ff]{Mo} reviews nets of planar conics, beginning with the Hesse form \S 32; however, the results reported are not without errors.\par
 A 1957 article of V\'{a}clav Alda gives a lucid presentation of the 6 equivalence classes of nets that arise as Hessians, as well as 3 degenerate classes of dimension 2 or 1 \cite{Alda}. He corrects an error in \cite[\S 34, line 6 p. 228]{Mo}, which incorrectly states [our translation] ``Inversely, given three conics  $f=g=h=0$ not part of the same pencil, employing new variables allows us to represent them as partial derivatives of a cubic''; V. Alda gives the exact conditions on the net for which this
representation occurs \cite[p.50]{Alda}. 
\par\vskip 0.2cm\noindent
{\it Classification and specialization of planar cubics.} This classification over the reals had been begun by I. Newton, and continued by many. Experts assured us that this classification up to algebraic isomorphism, including specializations was known in the 19th century, although we have not found the diagram in our Figure 6  (see also \cite[Fig 1.13]{EH}). The book of G.~Salmon \cite[Chapter V, Section V, \S 197ff]{Sal} gives a classification, and \cite[p. 548-549]{Kl} has some discussion of specializations.  A later section of the same Encyclopedia \cite{Mo}, Chapter III.4 \S 29 p.316  by H.G. Zeuthen and M.~Pieri, includes reference to work of  S. N. Maillard and H. G. Zeuthen specifying the numbers of singular and degenerate cubics in the one
    parameter families defined by the conditions to pass through n general
    points and to be tangent to $8-n$ general lines for $0 \le n \le 8$.\footnote{We thank Steve Kleiman for this reference.}

\par\vskip 0.2cm\noindent
{\it Classification of Artinian algebras of small lengths before 1977}. This classification for $n\le 6$ was known to J. Emsalem and A. Iarrobino at the time of writing R\'{e}seaux, as indicated in footnote 3, p. \pageref{smalllength}). See the history of early work from 1880 to 1920 by D.~Happel \cite{Hap}. Tables of isomorphism class and specialization for $n\le 6$ for embedding dimension one and two are in J.~Brian\c{c}on's \cite{Bri} (see especially the figures in the actual dissertation of 1972). D.A.~Suprunenko had shown in 1956 that the length seven algebras (of Hilbert function $(1,4,2)$) have continuous isomorphism families \cite{Sup} (not difficult), and Z. M. Dyment had shown in 1966 that there are a finite number of classes for length six, listing 34 classes.  G.~Mazzola's 1980 article \cite{Maz} gave a complete classification of isomorphism classes of unitary algebras (not necessarily commutative) for $n\le 6$, over an algebraically closed field of characteristic not $2$ or $3$, using a variety of methods; Mazzola uses this and Hochschild cocyles to show the irreducibility of the family of local length $n$ commutative algebras for $n\le 7$, over any such field:  that is, he showed that each such algebra may be deformed to ${\sf k}[z]/(z^n).$\footnote{This no longer holds for $n=8$, as the algebras of Hilbert function $H(A)=(1,4,3)$ defined by a vector space of quadrics having dimension 7, form a generic point of a component  of the Hilbert scheme $\Hilb^8(\mathbb P^4)$: these correspond by duality to nets of quadrics in $\mathbb P^3$, having generically $\dim\Grass(10,3)-\dim \PGL(4)= 6$ parameters \cite[\S 2.2]{Em-I}. } He accomplished this without classifying up to isomorphism the local algebras of length 7. G. Mazzola's result implies that the punctual Hilbert scheme $\Hilb^n(\mathbb P^r)$ is irreducible for $n\le 7$ (see J. Emsalem's MSN review of \cite{Maz} and A. Iarrobino's MSN review of \cite{CN}).\par\vskip 0.2cm\noindent
{\it Structure, Smoothability}:
J. Emsalem's {\it G\'{e}om\'{e}trie des points \'{e}pais} \cite{Em} of 1978 concerning the topology of the punctual schemes, and the Macaulay duality, forms part of the background of this paper. Also, an interest in smoothability questions as in Section 7 arose from A. Iarrobino's \cite{I1} where it had been shown by a dimension argument that there are non-smoothable Artinian algebras. The number of irreducible components of the Hilbert scheme parametrizing
length-$n$ commutative unitary algebras had been studied in \cite{I2}. It took until \cite{Em-I} for a specific non-smoothable scheme to be given, corresponding to graded Artinian algebras of Hilbert function $H=(1,4,3,0)$, defined by seven general enough quadratic forms (see also \cite[\S 1.5.8]{Hart},\cite{CEVV}, and the discussion of smoothability below in Appendix~B).
\addcontentsline{toc}{section}{Appendix B. Update on related topics}

\subsection*{Appendix B. Update on related topics}\label{Appendix B}
In this section we give a survey of more recent work, since the original R\'{e}seaux of 1977, relating to pencils and nets of planar conics, and their associated algebras. \vskip 0.2cm\par\noindent
{\it Classification of pencils.}  This classification was already well known in 1977, as noted in Appendix A, even for $n$ variables. The 1983 article of A. Dimca \cite{Dim} gives for pencils $\pi$ of quadric hypersurfaces in $n$ variables, a geometric approach to their classification, showing that the class of a pencil is uniquely determined by that of its associated line, and the position of the line with respect to the rank subvarieties of quadrics, and the singularity set of the zeroes of $\pi$.  There are connections with Fano varieties: the thesis of M. Reid \cite{Reid}, articles by X. Wang \cite{Wang} and others, now include classification of pencils of conics in $\mathbb P^2$ in characteristic $p$. Finally we have the very recent survey for pencils of quadric hypersurfaces in $n$ variables by C. Fevola, Y.~Mandelshtam, and B. Sturmfels \cite{FMS}: they review what is known about these pencils in $n$ variables, in particular the classification using Segre pencils \cite{Seg}, \cite[\S XIII.10]{HP}. They also extend the classification over the reals to parametrizing Jordan type loci, and  other strata in the appropriate Grassmannian, with examples over 3 and 4 variables. They give applications to the maximum likelihood estimation for Gaussian distributions in algebraic statistics. \vskip 0.3cm\noindent
{\it Classification of nets of planar conics.}
 \par Appearing in 1977 was C.T.C Wall's complete and succinct classification \cite{Wall} of these nets over the complexes and over the reals.\footnote{J. Emsalem and A. Iarrobino saw the C.T.C. Wall preprint only after writing most of R\'{e}seaux de Coniques; the appearance of \cite{Wall} was a factor in leaving the present work in preprint status for so long.}\par
Since C.T.C.Wall's goal was the study of map germs of finite type up to source-image isomorphism, there is a different emphasis, and an efficient algebraic approach to classification of nets. By contrast, the R\'{e}seaux preprint \cite{Em-I1} is more geometric in style.\par
We also note that the classification in Table \ref{table1} of isomorphism classes of nets of planar conics over an algebraically closed field is valid in characteristic $ {\sf k}>3$; although the original R\'{e}seaux was written assuming $\sf k$ algebraically closed of characteristic zero. Indeed, this classification over algebraically closed fields of arbitrary characteristic except $2$ or $3$ is nicely explained in a recent posting by Naoto Onda \cite{Onda}, which uses an efficient analysis of the normal forms of the net having a given isomorphism type of discriminant, to classify the ten orbits for which the discriminant is singular. The context of \cite{Onda} is of classifying Artinian algebras of Hilbert function $(1,3,3)$ up to isomorphism - which, of course, is equivalent to classifying the equivalence classes of the nets of planar conics, a fact not noted by the author.  N. Onda refers to the following result of M. Bhargava and Wei Ho for the nonsingular case. They set $K$ to be an arbitrary field of characteristic not $2$ or $3$: $K$ need not be algebraically closed.\vskip 0.2cm\par\noindent
{\bf Theorem A1}\label{A1thm} (\cite[Theorem 5.8]{BhHo}).
Let $V_1$ and $V_2$ be 3-dimensional vector spaces over $K$.  Then the nondegenerate $\Gl(V_1)\times \Gl(V_2)$ orbits of $V_1\otimes \mathrm{Sym}^2(V_2)$ are in bijection with the isomorphism classes of triples $(C,L,P)$ where $C$ is a genus one curve over $K$, $L$ is a degree 3 line bundle on $C$ and $P$ is a nonzero 2-torsion point of the Jacobian 
$\Jac(C)(K)$.\vskip 0.2cm
The paper of  M. Domokos, L.M. Feh\'{e}r,  E. Rim\'{a}nyi \cite{DFR} extends the discussion of nets of conics over the complexes $\mathbb C$ to invariants and equivariants, in essence determining cohomology classes of the closures of orbits. The classes of nets of conics of codimension at least two (all but the top row of our Table \ref{table1}) are the $\Sigma^0$ portion of their Table \ref{table1} and 3, which include further invariants;  the corresponding portion of their Figure 2 has the specialization diagram of our Table \ref{table1}. They apply this to determine Thom polynomials of contact singularities of type $(3,3)$. Thus, in the course of their work with equivariant cohomology, they give an independent derivation of the specializations in our Table \ref{table1} or of \cite{Wall} over the complexes - not including the discussion in our Section 6.5 concerning specializations with $\lambda$ constant from the smooth orbits \#8b in our Table \ref{table1}.\footnote{We for short label the orbits in our Table \ref{table1} by the dimension and position from the left in each dimension, thus \#6b is the second orbit shown in the dimension 6 row.}\vskip 0.2cm\noindent
{\it Concordance of tables of nets of conics.}
A. Conca gave in the 2009 \cite[p.1568]{Co} a table of nets of conics with considerable further information, including whether the algebra $A=R/(V)$ is Koszul, whether the net is of gradient type (i.e. polar, or Jacobian), and he gives the Hilbert series for $R/(V)$. He notes that the point $(a:b:c)$ belongs to the locus defined by $V$ if and only if $(ax+by+cz)^2$ is in $V^\ast$. The duality is indicated in C.T.C Wall's notation by an asterisk, as $E$ vs. $E^\ast$. For the classification he used \cite{Wall} and our preprint.\par
\begin{table}[h]
\begin{center}
\caption{Concordance of tables for nets of conics from our Table 1 (EI), C.T.C. Wall \cite{Wall}, and A. Conca \cite[p.1567]{Co}.}
\label{concordtab}\vskip 0.2cm
$\begin{array}{|c||ccc|ccc|cccc|cc|c|cc|}\hline
EI&8a&8b&8c&7a&7b&7c&6a&6b&6c&6d&5a&5b&4&2a&2b\\\hline
Wall&B^\ast&A&B&D^\ast&C&D&E^\ast&F^\ast&F&E&G^\ast&G&H&I^\ast&I\\\hline
Conca&14&15&13&6&12&5&4&10&9&3&8&7&11&2&1\\\hline
\end{array}$\vskip 0.2cm
\vskip 0.2cm
\end{center}
\end{table}
Note that A. Conca in his column $p$ \cite[p.\,1568]{Co} gives the number of distinct points in the support of the punctual scheme (in all but case 2 where the support is a line), while our Table~\ref{lengthfig}, page \pageref{lengthfig} gives the total scheme length, including multiplicities: thus the entries are different. \par
Koszul properties of nets are discussed in A. Conca's \cite{Co} and in A.~D'Al\`{i}'s \cite{Dal} the latter handling characteristic $p$ cases.
\vskip 0.2cm\par
\noindent

\par\noindent
{\it Nets of conics, and Hessian of smooth cubics.}  The theme of our Note on the Hessian form of a smooth cubic is greatly developed by
M. Artebani and I.~Dolgachev in \cite{ArtDol}, who discuss a related elliptic fibration, two sextic curves and a $K$-$3$ surface associated. 
The article \cite{ATT} treats the Hessian pencil over all characteristics but three.  See also \cite{Fr} and \cite[\S 4]{CilOt}.
\vskip 0.2cm\par\noindent
{\it Classification of nets of conics over algebraically closed fields of small characteristic, or over finite fields $\sf k$}.
This classification in characteristic 2 for $\sf k$ closed is discussed in \cite{Bal}, see also \cite[Example 7.3.11]{DolKo}. We have mentioned in Appendix A as an update of Wilson's \cite{Wil} the very recent article by M. Lavrauw, T. Popiel, and J. Sheekey \cite{LPS} for a finite field of odd order $q$, finding 15 orbits of rank one, that is, which contain a conic that is a double line. C. Zanella studies nets of conics over finite fields
\cite{Zan}.  A. S. Sivatsi for an arbitrary field $k$ of charateristic $p$ not 2, shows that three planar quadrics $f_1,f_2,f_3$ have a common zero if and only if the quadratic form $u_1f_1+u_2f_2+u_3f_3$ over the field $K={\sf k}(u_1,u_2,u_3)$ is isotropic (has a $K$-rational point), and the cubic form $t_1f_1+t_2f_2+t_3f_3$ is isotropic over $\sf k$. 
\vskip 0.2cm\noindent
{\it Applications of the classification of nets of planar conics.}
Notable applications besides the study of Artinian algebras with Hilbert functions $H(A)=(1,3,3,\ldots)$, include the study of map germs from $(\mathbb C^3,0)\to (\mathbb C^3,0)$ (see \cite{DG,Wa,DFR,Rong}). The article \cite{DFR} applies the study of nets to determining the Thom  classes of related maps from $(\mathbb C^3,0)$ to $(\mathbb C^p,0), p\ge 3$  using their calculation of the $\Gl(3)\times $ Aut$ (\mathbb P^2)$ equivariant cohomology classes of the orbit closures of the nets. The study of the intersections of three conics by R. Feng and L. Shen, \cite{FS} includes a careful development of the scheme length we studied briefly in \S \ref{lengthsec}. There is a connection with Fano threefolds (\cite{AC-V}). A. Verra has used the classification of these nets as a step in characterizing certain Enriques surfaces that correspond to a fibered product of two planes $\mathbb P^2$ over $\mathbb P^2$ with the projection maps being two rational maps of degree four, each corresponding to a net of conics as in Section \ref{mapsec} \cite[Theorem 1,3]{Ver}: the result uses Proposition \ref{mapprop} to characterize the discriminant locus of a regular map, $\mathbb P^2$ to $\mathbb P^2$. F. Ronga \cite{Rong} studies topologically over $\mathbb C$ the four isomorphism classes of maps $(f_0,f_1,f_2): \mathbb C^3\backslash 0\to \mathbb C^3\backslash 0$. For reference on Enriques surfaces see \cite{CDL,DolKo}.\vskip 0.2cm\par\noindent
{\it Classification and specialization of planar cubics.}
Tables of singular cubics and their specializations in \cite[p. 52, Theorem 5.17]{Dim2} and \cite[Figure~1.13]{EH} agree with our Table \ref{cubicfig}, except for an extra arrow from node to triangle in the former, which is proscribed by our Lemma \ref{nospecialXYZ}.\footnote{The author communicated that this is an error.} The latter reference goes on to determine the degrees of the loci \cite[\S 1.2.5]{EH}.
Several authors have discussed the Waring problem for cubics in $\mathbb P^2$ - decomposing the cubic into the sums of powers, in particular over the reals, some making connections to nets of conics \cite{DolK,MMSV,Ott,Ott2}. In particular the Aronhold invariant $S=Ar(f)$ of a cubic $f$ vanishes if $f$ is equivalent to a sum of cubes (a Fermat cubic); it is written as a certain Pfaffian in \cite[\S 4.7]{Ott2}. I.~Dolgachev and V. Kanev show that $Ar(f)=0$ only if $f$ is a Fermat cubic, a cone, or a conic union a non-transversal line \cite[Thm 5.13.2]{DolK}: this is equivalent to being in the closure of the family of Fermat cubics.  As noted earlier in Section \ref{specjsec} the invariants $S,T$ of a ternary cubic were given by G. Salmon \cite{Sal}. For a modern derivation of these explicit formulas see \cite[Theorem 4.4.6, Proposition 4.4.7, Example~4.6.3]{Stu}. For further study of cubics and their invariants see  Ch. Okonek and A. Van de Ven \cite[Examples 6,8, \S 5.3]{OkV}, \cite[\S 10.3]{Dol1} (classifying plane cubics over algebraically closed fields of arbitrary characteristic),  H. R. Frium \cite{Fr} and \cite[Chap. 3]{Dol}.
\vskip 0.2cm\par\noindent
{\it Determinantal nets of conic and reciprocal nets}. See also \cite{Tj} for an exposition obtaining invariants for a compactification of the determinantal nets of conics.
Analogous to the article \cite{FMS} for pencils, S. Dye, K. Kohn, F.~Rydell, R. Sinn have applied the classification of nets of conics over $\mathbb C$ to obtain maximum likelihood estimates for a Gaussian distribution \cite{DKRS}. They use the \emph{reciprocal surface} of an invertible net (one with nonsingular symmetric matrices),  a rational variety closure of the inverse images of nonsingular matrices in the net, which they show is an image of the Veronese. Their work uses the intersection of the reciprocal surface with the polar net. 
\vskip 0.2cm\par
\noindent
{\it Classification of commutative local algebras for $n\le 8$}.
A. Iarrobino wrote in 1985, ``There is a virtually complete picture of isomorphism classes of Artinian algebras and their deformations for $n \le 8$, but that picture has never been written down in one place \cite{I3}.''  To explain this comment, for lengths $n\le 6$ we had G. Mazzola \cite{Maz}, and for earlier authors see \cite{Hap}. For codimension two and small $n\le 5$ see the actual thesis of J. Brian\c{c}on. For the case codimension three and length $n= 7$, for $H=(1,3,3)$, see this article in \S 7.1; for $H=(1,3,4)$ consider the Macaulay duals of pencils of conics (this article in \S 7.2). Various classifications of spaces of quadrics in $r$ variables were given by D.A. Suprunenko and R. I. Ty\v{s}kevi\v{c} in the 1966  book \cite{SupTy}. \par A succinct derivation of the isomorphism classes of commutative local algebras over an algebraically closed field in lengths up to six is given by B.~Poonen in \cite{Po1} [the web-posted update has a correction in the characteristic 2 case].  The case of $n=6$ over $\mathbb C$, corresponding to length 5 nilpotent algebras is also in \cite{KRS}, there obtained by quite different methods involving central extensions of algebras, and given in a different language.\footnote{See \cite[Remark 1.1]{Po1} for some further comparison with the classifications of nilpotent algebras, and historical references. I. Kaygorodov in a note to the authors pointed out that the algebra ${\sf k}^6$ of \cite{Po1}, corresponding to a nilpotent algebra $0$ has no analogue in the classification of length-5 nilpotent algebras \cite{KRS}, thus a difference in the number of classes listed in \cite{Po1,KRS}.} See also the 1969 \cite{Pav} for some other cases specifying socle degree (class) and the number of variables $n$. \par
Graded algebras with Hilbert function $H=(1,3,1,1,1)$ or $(1,4,1,1)$ are special and easier to classify because of the tail including $(1,1)$ which requires that $I_{j-1}=\ell R_{j-2}$ for some linear $\ell$ (by G. Gotzmann's theorems). The classification of those having Hilbert function $(1,3,2,1)$ will follow from the classification of 
4-spaces $V\subset R_2$ of conics, and the very restrictive condition that $\dim R_1V=1$ - allowing an approach using the cubic $F$ in an inverse system to $A$. Classifying Gorenstein algebras $R/I$ of  Hilbert function $H=(1,r,1)$ is essentially the classification of the conic perpendicular in the Macaulay duality to $I_2$.  \par Thus, the lack of specific writing down of the classifications mentioned in the 1985 \cite{I3}, has since been somewhat remedied, with relevant articles by B.~Poonen again taking up the classification for $n\le 6$ \cite{Po1}, and with a whole new direction of research showing that Gorenstein Artinian algebras of lengths up to 13 are smoothable (see \cite{CN,CJN}). The article  \cite{Po2} gives an asymptotic formula for the dimension of the family of length-$n$ Artinian algebras with a  fixed basis. \par
A subtle issue when the socle degree is at least 3 is whether the classification of local algebras of a given Hilbert function is the same as the classification of graded algebras of that Hilbert function: is each $A$ canonically graded? Articles by J. Elias and M. Rossi \cite{ER},  J. Jelisiejew \cite{Je}, and others \cite{CENR} go deeply into the connections between local and graded Artinian algebras, for example showing that $H=(1,r,r,1)$ algebras are canonically graded \cite[Theorem 3.3]{ER}  cha $ \sf k=0$), \cite[Proposition 1.1]{Je}. \vskip 0.2cm\par\noindent
{\it Smoothability.} We define elementary components of the punctual Hilbert scheme $\Hilb^n(\mathbb P^r)$, to be those irreducible components that parametrize finite length punctual schemes concentrated at a single point of $\mathbb P^r$: every irreducible component is made up of a concatenation of elementary components, concentrated at different points of $\mathbb P^r$ \cite{I2,Je2}.  The deformation space of a non-smoothable length-$n$ punctual scheme must contain a scheme of possibly smaller length having an elementary component. As mentioned in Appendix~A, the first specific example noticed of an elementary component was the graded algebras of Hilbert function $H(A)=(1,4,3,0)$ defined by seven general enough quadratic forms \cite{Em-I}, shown using a ``small tangent space'' method, where the available tangent space is matched to elements of the family.\footnote{We showed that $A=R/I$ having only trivial negative one tangents in $\Hom(I,R/I)$ implies that all higher order negative tangents are zero, which suffices to show that $A$ is elementary. We relied on what we had learned from M. Schlessinger. J. Jelisiejew reproves this in an interesting way \cite[Theorem 4.5]{Je2}; his article does survey - and advance - much of the current state of the art of finding elementary components.  The simpler example  $A={\sf k}[a, b, c, d]/(a^2, ab, b^2, c^2, cd, d^2, ad- bc)$, also of Hilbert function $H=(1,4,3,0)$ and having no nontrivial negative tangents is reported in \cite[Prop. 9.6]{Po2}.}  I. Shafarevich using new methods greatly generalized this result: he showed that for algebras $A={\sf k}[x_1,\ldots,x_r]/I$ of Hilbert function $(1,r,n,0), r\ge 4$, that for $2<n\le\frac{(r-1)(r-2)}{6}+2$ (a third of the available values) the generic algebra of Hilbert function $(1,r,n,0)$ determines an elementary component of $\Hilb^{n+r+1}\mathbb P^r$. For large $n$, about a third of the available values, all the algebras in $\Hilb^{n+r+1}\mathbb P^r$ are smoothable - that is, they are in the closure of the separable algebras, those isomorphic to ${\sf k}^{1+n+r}$; for the remaining third of the values the question of smoothability remains open \cite{Shaf1}.\par Further study of the ``principal component'' of the punctual Hilbert scheme, parametrizing smoothable algebras is in \cite{RySk}; and the usually reducible scheme $\mathcal B$ parametrizing non-smooth but smoothable algebras is studied in \cite{Shaf2}.  Distinct approaches to smoothability are given by D. Erman and M. Velasco, who introduce a finer, syzygetic ``k-vector'' obstruction to smoothability in \cite{EV}, by G. Casnati and R.~Notari in \cite{CN}, and by J.~Jelisiejew \cite{Je2}. M.~Szachniewicz shows that $\Hilb^{13}(\mathbb A^6)$ is non-reduced at a point corresponding to a $H(A)=(1,6,6)$ algebra \cite{Sz}.  A striking and surprising result (counterexamples to Conjecture 1 of \cite{Shaf2} and to an erroneous \cite[Proposition A.5]{Em-I}) was that the length-$n$ singular schemes have in general codimension only one in the family of smooth schemes in $\mathbb P^r$ when both $r\ge 11$, and $n\ge 15$ \cite[Proposition 1.6]{EV}: they construct these by adding smooth points and lifting, from a single basic example of the smoothable local algebras concentrated at a single point  $0\in \mathbb A^{11}$ and satisfying $H(A)=(1,11,3)$ - a family they show has dimension 153 (so dimension 164 as a family fibered over $\mathbb A^{11}$). \par Substantial work of  G.~Casnati and R. Notari and others concerning Gorenstein algebras of lengths no greater than thirteen have resulted in showing that they are smoothable \cite{CN, CN2,CJN} while the $H=(1,6,6,1)$ generic Gorensteins form an elementary component \cite{Em-I}. Surprisingly the $H=(1,7,7,1)$ Gorensteins have been shown to be smoothable using new methods of marked bases (C. Bertoni, F. Cioffi, M. Roggero \cite{BCR}), while a dimension calculation shows that  generic Gorensteins of Hilbert function $H=(1,r,r,1)$ are not smoothable for $r\ge 8$ (\cite[p.152]{Em-I}). R.~Szafarczyk has recently shown that these generic Gorensteins give elementary components of the Hilbert scheme
$\Hilb^n(\mathbb P^{r-1})$ \cite{Sza}. The small-tangent space method \cite{Em-I} suggests there are similar elementary components in codimension $r$ four and larger, determined by suitable-dimension vector spaces of degree-$j$ forms, $j\ge 3$: however, the nub is, given a suitable triple $(r,j,d )$ to verify that the tangent space in at least one example has the conjectured small dimension.  \par Other elementary components of the punctual Hilbert scheme had been found by M.~Huibregtse \cite{Hui}: these, strikingly, are comprised of essentially non-homogeneous algebras.  M. Satriano and A.~P.~Staal have recently shown that there are an infinite set of choices $d$ for which $\Hilb^d(\mathbb A^4)$ has an elementary component of very small length, less that $3(d-1)$ \cite{SatSt}, answering a question asked in \cite[p.186]{Em-I}. \par\vskip 0.2cm
\noindent
{\it Classification of isomorphism types of Artinian Gorenstein (AG) algebras having Hilbert function $H=(1,3^k,1), k\ge 2$.}
This is closely related to the study of length 3 punctual subschemes of $\mathbb P^2$. The article of A. Bernardi et al \cite[\S 4.2, Theorem 37]{BGI} classifies the isomorphism types of length 3 subschemes of $\mathbb P^n$ in characteristic zero; when $n=2$ this is closely related to the classification of nets of ternary conics: the scheme-length 3 cases of Table~\ref{lengthfig} and classifying also those length-3 schemes that are on a line will give this classification. G. Casnati, J. Jelisiejew, and R. Notari determine for arbitrary characteristic the three isomorphism classes of forms $F\in {\sf k}[X,Y,Z]_j$ whose apolar algebra $A=R/\mathrm{Ann} F$ has Hilbert function $H(A)=(1,3,\ldots, 3^{j-1},1_j) $ when $j\ge 4$ \cite[Proposition 4.9]{CJN}. This can be derived also from the list of scheme-length-3 nets of conics in Table \ref{lengthfig}.
J. Jelisiejew \cite{Je} determines the isomorphism types of AG algebras $A$ of Hilbert function $H(A)=(1,3,3,3,1)$ including those for non-graded $A$. In a sequel using the current ``Nets of conics'' paper, the authors with J. Yam\'{e}ogo determine the isomorphism types of graded Artinian Gorenstein algebras $A$ of Hilbert function $H(A)=(1,3^k,1)$ for all $k\ge 2$; and also show that the closure of this family is all the graded algebras $G_T$ of this Hilbert function \cite{AEIY}. This is unlike the case for other Gorenstein sequences, such as $T^\prime=(1,3,5,3,1)$ where $G_T$ has several irreducible components, one the closure of $\Gor(T^\prime)$.
\par\vskip 0.2cm\noindent
{\it Classification of Jordan types for pairs $(A,\ell), A\in \Gor (H) $.} The Jordan type $P_{A,\ell}$ for a pair $(A,\ell)$ is the partition of length $|A|$ giving the Jordan block decomposition of multiplication $m_\ell$ on $A$. For a generic $\ell$ the graded AG Gorenstein algebras of Hilbert function $H=(1,3^k,1)$ are strong Lefschetz \cite{AA-Y}; for an arbitrary linear $\ell$ the partitions $P_{A,\ell}$  are unknown. \par\vskip 0.2cm\noindent
We remark that the book ``Classical Algebraic Geometry''  \cite{Dol} by I. Dolgachev, and the books 
 ``Enriques Surfaces, I'' \cite{CDL} with F. Cossec and C. Liedtke,  ``Enriques Surfaces, II'' \cite{DolKo} with S. Kondo give many further results pertaining to the geometry of nets of conics, nets of quadrics, and have extensive bibliographies including many classical as well as modern results in related areas.

\addcontentsline{toc}{section}{Bibliography, 2022}
\renewcommand{\refname}{Bibliography, 2022}
\small

\vskip 0.4cm
{\sf e-mail}: \par
 nancy.abdallah@hb.se\par
a.iarrobino@northeastern.edu
\end{document}